\numberwithin{equation}{section}
\definecolor{myred}{rgb}{0.75,0,0}
\definecolor{mygreen}{rgb}{0,0.5,0}
\definecolor{myblue}{rgb}{0,0,0.65}
    \def\AM{{\mathbb{A}}}
  \def\bg{{\mathfrak b}}  \def\BM{{\mathbb{B}}}
    \def\CM{{\mathbb{C}}}
    \def\DM{{\mathbb{D}}}
    \def\EM{{\mathbb{E}}}
    \def\FM{{\mathbb{F}}}
  \def\gg{{\mathfrak g}}  \def\GM{{\mathbb{G}}}
    \def\HM{{\mathbb{H}}}
\def\IG{{\mathfrak I}}    
    \def\KM{{\mathbb{K}}}
    \def\LM{{\mathbb{L}}}
    \def\MM{{\mathbb{M}}}
  \def\ng{{\mathfrak n}}  
  \def\pg{{\mathfrak p}}  \def\PM{{\mathbb{P}}}
    \def\QM{{\mathbb{Q}}}
\def\RG{{\mathfrak R}}    
  \def\sg{{\mathfrak s}}  \def\SM{{\mathbb{S}}}
  \def\tg{{\mathfrak t}}  \def\TM{{\mathbb{T}}}
    \def\ZM{{\mathbb{Z}}}
\def\AB{{\mathbf A}}    
\def\BB{{\mathbf B}}    
    \def\CC{{\mathcal{C}}}
    \def\DC{{\mathcal{D}}}
    \def\EC{{\mathcal{E}}}
    \def\FC{{\mathcal{F}}}
\def\GB{{\mathbf G}}    \def\GC{{\mathcal{G}}}
\def\HB{{\mathbf H}}    \def\HC{{\mathcal{H}}}
\def\IB{{\mathbf I}}    \def\IC{{\mathcal{I}}}
\def\JB{{\mathbf J}}
    \def\MC{{\mathcal{M}}}
    \def\NC{{\mathcal{N}}}
    \def\OC{{\mathcal{O}}}
\def\PB{{\mathbf P}}    \def\PC{{\mathcal{P}}}
\def\SB{{\mathbf S}}    \def\SC{{\mathcal{S}}}
\def\TB{{\mathbf T}}    \def\TC{{\mathcal{T}}}
\def\UB{{\mathbf U}}    \def\UC{{\mathcal{U}}}
\def\XB{{\mathbf X}}    \def\XC{{\mathcal{X}}}
\def\ES{{\EuScript E}}
\def\IS{{\EuScript I}}
\def\JS{{\EuScript J}}
\def\KS{{\EuScript K}}
\def\O{\Omega}
\newcommand{\nc}{\newcommand} \newcommand{\renc}{\renewcommand}
\newcommand{\rdots}{\mathinner{ \mkern1mu\raise1pt\hbox{.}
    \mkern2mu\raise4pt\hbox{.}
    \mkern2mu\raise7pt\vbox{\kern7pt\hbox{.}}\mkern1mu}}
\def\reg{{\mathrm{reg}}}
\def\rs{{\mathrm{rs}}}
\DeclareMathOperator{\Coh}{Coh}
\DeclareMathOperator{\rk}{rk}
\DeclareMathOperator{\Lie}{Lie}
\DeclareMathOperator{\Modgr}{Mod^{gr}}
\DeclareMathOperator{\Dist}{Dist}
\def\to{\rightarrow}
\def\longto{\longrightarrow}
\def\onto{\twoheadrightarrow}
\nc{\triright}{\stackrel{[1]}{\to}}
\nc{\longtriright}{\stackrel{[1]}{\longto}}
\nc{\Br}{\mathcal{B}}
\nc{\HotRR}{{}_R\mathcal{K}_R}
\nc{\HotR}{\mathcal{K}_R}
\nc{\excise}[1]{}
\nc{\defect}{\text{df}}
\nc{\h}[1]{\underline{H}_{#1}}
\nc{\Ga}{\mathbb{G}_a} % additive group
\nc{\Gm}{\mathbb{G}_{\mathrm{m}}} % multiplicative group
\nc{\GmR}{\mathbb{G}_{\mathrm{m}, \RG}}
\nc{\GmF}{\mathbb{G}_{\mathrm{m}, \FM}}
\nc{\GmE}{\mathbb{G}_{\mathrm{m}, \EM}}
\nc{\IH}{{\mathrm{IH}}}
\nc{\ic}{\mathbf{IC}}
\nc{\gl}{{\mathfrak{gl}}}
\renc{\sl}{{\mathfrak{sl}}}
\renc{\sp}{{\mathfrak{sp}}}
\nc{\HBM}{H^{BM}}
\DeclareMathOperator{\For}{For} % Forgetful functor
 \DeclareMathOperator{\Hom}{Hom}
 \DeclareMathOperator{\ch}{ch}
\DeclareMathOperator{\End}{End} %\DeclareMathOperator{\BS}{BS}
\DeclareMathOperator{\Rep}{Rep}
\newtheorem{thm}{Theorem}[section]
\newtheorem{lem}[thm]{Lemma}
\newtheorem{prop}[thm]{Proposition}
\newtheorem{cor}[thm]{Corollary}
\newtheorem{conj}[thm]{Conjecture}
\theoremstyle{definition}
\newtheorem{defi}[thm]{Definition}
\theoremstyle{remark}
\newtheorem{remark}[thm]{Remark}
\DeclareMathOperator{\Ext}{Ext}
\DeclareMathOperator{\Tor}{Tor}
\DeclareMathOperator{\Spec}{Spec}
\DeclareMathOperator{\grk}{grk}
\DeclareMathOperator{\PParity}{PParity}
\DeclareMathOperator{\Parity}{Parity}
\DeclareMathOperator{\Perv}{Perv}
\DeclareMathOperator{\Tilt}{Tilt}
\newcommand{\into}{\hookrightarrow}
\def\pt{{\mathrm{pt}}}
\def\Gr{{\EuScript Gr}}
\def\Fl{{\EuScript Fl}}
\def\tNC{\widetilde{\NC}}
\def\Flag{\mathscr{B}}
\newcommand{\tSC}{\widetilde{\SC}}
\newcommand{\tUp}{\widetilde{\Upsilon}}
\def\GD{\check{G}}
\def\BD{\check{B}}
\def\TD{\check{T}}
\def\ID{\check{I}}
\def\JD{\check{J}}
\def\XBD{\check{\XB}}
\def\Mod{\mathrm{Mod}}
\newcommand{\tgg}{\widetilde{\gg}}
\def\lotimes{\@ifnextchar_{\@lotimessub}{\@lotimesnosub}}
\def\@lotimessub_#1{\mathchoice{\mathbin{\mathop{\otimes}^L}_{#1}}%
  {\otimes^L_{#1}}{\otimes^L_{#1}}{\otimes^L_{#1}}}
\def\@lotimesnosub{\mathbin{\mathop{\otimes}^L}}
\newcommand{\Waff}{W_{\mathrm{aff}}}
\newcommand{\Baff}{\BM_{\mathrm{aff}}}
\newcommand{\simto}{\xrightarrow{\sim}}
\newcommand{\Cox}{\mathrm{Cox}}
\newcommand{\Haff}{\HM_{\mathrm{aff}}}
\newcommand{\HW}{\HM_{W}}
\newcommand{\Msph}{\MM_{\mathrm{sph}}}
\newcommand{\BStop}{\mathsf{BS}^{\mathrm{top}}}
\newcommand{\BSalg}{\mathsf{BS}^{\mathrm{alg}}}
\newcommand{\BSgeom}{\mathsf{BS}^{\mathrm{coh}}}
\newcommand{\Db}{D^{\mathrm{b}}}
\newcommand{\Dmix}{D^{\mathrm{mix}}}
\newcommand{\Kb}{K^{\mathrm{b}}}
\newcommand{\tIB}{\widetilde{\IB}}
\newcommand{\tJB}{\widetilde{\JB}}
\newcommand{\tIG}{\widetilde{\IG}}
\newcommand{\lav}{{\check \lambda}}
\newcommand{\Inv}{\mathsf{Inv}}
\newcommand{\vv}{\mathsf{v}}
\newcommand{\mm}{\mathbf{m}}
\newcommand{\us}{\underline{s}}
\newcommand{\ut}{\underline{t}}
\newcommand{\SSC}{\mathscr{S}}
\newcommand{\mix}{\mathrm{mix}}
\newcommand{\dmix}{\Delta^\mix}
\newcommand{\nmix}{\nabla^\mix}
\newcommand{\tTC}{\widetilde{\TC}}
\newcommand{\gr}{\mathrm{gr}}
\newcommand{\tC}{\widetilde{C}}
\newcommand{\simple}{\Phi^{\mathrm{s}}}
\newcommand{\co}{\mathsf{H}}
\newcommand{\tco}{\widetilde{\mathsf{H}}}
\newcommand{\Lder}{\mathsf{L}}
\newcommand{\Rder}{\mathsf{R}}
\newcommand{\TF}{\mathsf{T}}
\newcommand{\DNC}{\mathrm{DNC}}
\begin{document}

\begin{abstract}
Let $\GB$ be a connected reductive group over an algebraically closed field $\FM$ of good characteristic, satisfying some mild conditions. In this paper we relate tilting objects in the heart of Bezrukavnikov's exotic t-structure on the derived category of equivariant coherent sheaves on the Springer resolution of $\GB$, and Iwahori-constructible $\FM$-parity sheaves on the affine Grassmannian of the Langlands dual group. As applications we deduce in particular the missing piece for the proof of the Mirkovi{\'c}--Vilonen conjecture in full generality (i.e.~for good characteristic), a modular version of an equivalence of categories due to Arkhipov--Bezrukavnikov--Ginzburg, and an extension of this equivalence.
\end{abstract}

\title[Exotic tilting sheaves and parity sheaves]{Exotic tilting sheaves, \\
parity sheaves on affine Grassmannians, \\
and the Mirkovi{\'c}--Vilonen conjecture}

\author{Carl Mautner}
 \address{Department of Mathematics, University of California, Riverside, CA 92521, USA}
\email{mautner@math.ucr.edu}

\thanks{The material in this article is based upon work supported by the National Science Foundation under Grant No. 0932078 000 while the first author was in residence at the Mathematical Sciences Research Institute in Berkeley, California, during the Fall 2014 semester.  C.M. thanks MSRI and the Max Planck Institut f\"ur Mathematik in Bonn for excellent working conditions.}
  
\author{Simon Riche}
\address{Universit{\'e} Blaise Pascal - Clermont-Ferrand II, Laboratoire de Math{\'e}matiques, CNRS, UMR 6620, Campus universitaire des C{\'e}zeaux, F-63177 Aubi{\`e}re Cedex, France}
\email{simon.riche@math.univ-bpclermont.fr}

\thanks{S.R. was supported by ANR Grants No.~ANR-2010-BLAN-110-02 and ANR-13-BS01-0001-01.}

\maketitle

%%%%%%%%%%%%%%%%
%%%%%%%%%%%%%%%%
\section{Introduction}
%%%%%%%%%%%%%%%%
%%%%%%%%%%%%%%%%

%------------------------------------------------------------------
\subsection{Summary}
\label{ss:intro}
%------------------------------------------------------------------

Let $\FM$ be an algebraically closed field, and let $\GB$ be a connected reductive group over $\FM$ which is a product of simply-connected quasi-simple groups and of general linear groups. We assume that the characteristic $p$ of $\FM$ is very good for each quasi-simple factor of $\GB$. Let also $\GD$ be the complex Langlands dual group. The main result of this paper is an equivalence of categories relating tilting objects in the heart of Bezrukavnikov's \emph{exotic t-structure} on the derived category of $\GB \times \Gm$-equivariant coherent sheaves on the Springer resolution $\tNC$ of $\GB$, and Iwahori-constructible parity sheaves on the affine Grassmannian $\Gr$ of $\GD$, with coefficients in $\FM$ (in the sense of~\cite{jmw}).

We provide several applications of this result; in particular
\begin{enumerate}
\item
\label{it:intro-perverse}
a proof that spherical parity sheaves on $\Gr$, with coefficients in a field of good characteristic, are perverse, which provides the last missing step in the proof of the Mirkovi{\'c}--Vilonen conjecture~\cite{mv} on stalks of standard spherical perverse sheaves on $\Gr$ in the expected generality;
\item
\label{it:intro-equiv}
a construction of an equivalence of categories relating $\Db \Coh^{\GB \times \Gm}(\tNC)$ to the ``modular mixed derived category'' of Iwahori-constructible sheaves on $\Gr$ (in the sense of~\cite{modrap2}), which is a modular generalization of an equivalence due to Arkhipov--Bezrukavnikov--Ginzburg~\cite{abg}; 
\item
an ``extension'' of~\eqref{it:intro-equiv} to an equivalence relating 
$\Db \Coh^{\GB \times \Gm}(\tgg)$ (where $\tgg$ is the Grothendieck resolution of $\GB$) to the modular mixed derived category of Iwahori-\emph{equivariant} sheaves on $\Gr$.
\end{enumerate}

A weaker version of~\eqref{it:intro-perverse} was obtained earlier by Juteau--Mautner--Williamson \cite{jmw2} using a case-by-case argument. The application to the Mirkovi{\'c}--Vilonen conjecture is due to Achar--Rider~\cite{arider}. In the case $p=0$, the equivalence in~\eqref{it:intro-equiv} plays an important role in the representation theory of Lusztig's quantum groups at a root of unity, see~\cite{abg, bezru-tilting, bl}. We expect our equivalence to play a comparable role in the modular representation theory of connected reductive groups.\footnote{One year after this paper was written, this expectation was indeed confirmed in the article~\cite{prinblock} by P.~Achar and the second author.} A similar result has been obtained independently by Achar--Rider~\cite{arider2}, under the assumption that spherical parity sheaves are perverse. Our methods are different from theirs; see~\S\ref{ss:comparison} below for a detailed comparison.

%------------------------------------------------------------------
\subsection{Main result}
%------------------------------------------------------------------

To state our results more precisely, let us choose a Borel subgroup $\BB \subset \GB$, and a maximal torus $\TB \subset \BB$. Let $\bg$ be the Lie algebra of $\BB$. Then the Springer resolution $\tNC$ is defined as
\[
\tNC:=\{(\xi, g\BB) \in \gg^* \times \GB/\BB \mid \xi_{|g \cdot \bg}=0\}.
\]
This variety is endowed with a natural action of $\GB \times \Gm$, defined by
\[
(g,x) \cdot (\xi, h\BB) := (x^{-2} g \cdot \xi, gh\BB),
\]
so that we can consider the derived category $\Db \Coh^{\GB \times \Gm}(\tNC)$ of $\GB \times \Gm$-equiva\-riant coherent sheaves on $\tNC$. This category possesses a remarkable t-structure, called the \emph{exotic t-structure}, defined by Bezrukavnikov~\cite{bezru-tilting} in the case $p=0$, and studied in our generality in~\cite{mr}. The heart $\ES^{\GB \times \Gm}(\tNC)$ of this t-structure has a natural structure of a graded highest weight category, with weights the lattice $\XB = X^*(\TB)$ of characters of $\TB$, and ``normalized'' standard, resp.~costandard, objects denoted $\Delta^\lambda_{\tNC}$, resp.~$\nabla^\lambda_{\tNC}$. In particular, we will be interested in the category $\Tilt(\ES^{\GB \times \Gm}(\tNC))$ of \emph{tilting objects} in $\ES^{\GB \times \Gm}(\tNC)$ (i.e.~those objects which possess both a standard filtration and a costandard filtration). This category is Krull--Schmidt, and its indecomposable objects are parametrized in a natural way by $\XB \times \ZM$. For $\lambda \in \XB$, we denote by $\TC^\lambda$ the indecomposable object attached to $(\lambda,0)$. Then for any $n \in \ZM$ the object associated with $(\lambda,n)$ is $\TC^\lambda \langle n \rangle$, where $\langle n \rangle$ is the $n$-th power of the functor $\langle 1 \rangle$ of tensoring with the tautological $1$-dimensional $\Gm$-module (see~\S\ref{ss:notation-KW-tilting} for details). For any $\TC$ in $\Tilt(\ES^{\GB \times \Gm}(\tNC))$, $\mu \in \XB$ and $m \in \ZM$, we denote by $(\TC : \Delta^\lambda_{\tNC} \langle m \rangle)$, resp.~$(\TC : \nabla^\lambda_{\tNC} \langle m \rangle)$, the multiplicity of the standard object $\Delta^\lambda_{\tNC} \langle m \rangle$, resp.~of the costandard object $\nabla^\lambda_{\tNC} \langle m \rangle$, in a standard (resp. costandard) filtration of $\TC$.

Let now $\GD$ be a complex connected group, with a maximal torus $\TD \subset \GD$, and assume that $(\GD, \TD)$ is Langlands dual to $(\GB, \TB)$, in the sense that the root datum of $(\GD, \TD)$ is dual to that of $(\GB, \TB)$. In particular, we have an identification $\XB=X_*(\TD)$. We let $\BD \subset \GD$ be the Borel subgroup containing $\TD$ whose roots are the coroots of $\BB$ (which we will consider as the \emph{negative} coroots). Let $\mathscr{O}:=\CM[ \hspace{-1pt} [z] \hspace{-1pt} ]$ and $\mathscr{K}:=\CM( \hspace{-1pt} (z) \hspace{-1pt} )$. Then the affine Grassmannian $\Gr$ of $\GD$ is defined as
\[
\Gr := \GD(\mathscr{K}) / \GD (\mathscr{O}),
\]
with its natural ind-variety structure. We denote by $\ID$ the Iwahori subgroup of $\GD(\mathscr{O})$ determined by $\BD$, i.e.~the inverse image of $\BD$ under the morphism $\GD(\mathscr{O}) \to \GD$ defined by the evaluation at $z=0$. Then $\ID$ acts naturally on $\Gr$ via left multiplication on $\GD(\mathscr{K})$, and the orbits of this action are parametrized in a natural way by $\XB$; we denote by $\Gr_\lambda$ the orbit associated with $\lambda$ and by $i_\lambda : \Gr_\lambda \hookrightarrow \Gr$ the inclusion. We let
\[
\Parity_{(\ID)}(\Gr,\FM)
\]
be the category of parity sheaves on $\Gr$, with coefficients in $\FM$, with respect to the stratification by $\ID$-orbits (in the sense of~\cite{jmw}). This category is defined as an additive subcategory of the derived category $\Db_{(\ID)}(\Gr, \FM)$ of $\ID$-constructible $\FM$-sheaves on $\Gr$. It is Krull--Schmidt, and its indecomposable objects are parametrized in a natural way by $\XB \times \ZM$. We denote by $\EC_\lambda$ the indecomposable object attached to $(\lambda,0)$; then for any $n \in \ZM$ the object associated with $(\lambda,n)$ is $\EC_\lambda[n]$.

The main result of this paper (whose proof is given in~\S\ref{ss:proof-main}) is the following.

\begin{thm}
\label{thm:main}
There exists an equivalence of additive categories
\begin{equation}
\label{eqn:equiv-main}
\Theta \colon
\Parity_{(\ID)}(\Gr,\FM) \simto \Tilt(\ES^{\GB \times \Gm}(\tNC))
\end{equation}
which satisfies the following properties:
\begin{enumerate}
\item
\label{it:main-thm-shifts}
$\Theta \circ [1] \cong \langle -1 \rangle \circ \Theta$;
\item
\label{it:main-thm-ch}
for all $m \in \ZM$, $\lambda \in \XB$ and $\EC \in \Parity_{(\ID)}(\Gr,\FM)$, we have
\begin{align*}
(\Theta(\EC) : \Delta^\lambda_{\tNC} \langle m \rangle) &= \dim_\FM \bigl( \HM^{m-\dim(\Gr_{-\lambda})}(\Gr_{-\lambda}, i_{-\lambda}^* \EC) \bigr); \\
(\Theta(\EC) : \nabla^\lambda_{\tNC} \langle m \rangle) &= \dim_\FM \bigl( \HM^{m-\dim(\Gr_{-\lambda})}(\Gr_{-\lambda}, i_{-\lambda}^! \EC) \bigr);
\end{align*}
\item
\label{it:main-thm-indec}
$\Theta(\EC_\lambda) \cong \TC^{-\lambda}$
for any $\lambda \in \XB$.
\end{enumerate}
\end{thm}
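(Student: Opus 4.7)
The plan is to construct $\Theta$ by matching Bott-Samelson-type generators on both sides and then comparing morphism algebras. Both $\Parity_{(\ID)}(\Gr,\FM)$ and $\Tilt(\ES^{\GB \times \Gm}(\tNC))$ are Krull-Schmidt additive categories whose indecomposables are parametrized by $\XB \times \ZM$, so the theorem reduces to a compatible matching of these parametrizations together with an isomorphism of graded $\Hom$ spaces. On the geometric side, every indecomposable parity complex is a summand, up to shift, of a Bott-Samelson parity complex $\EC_\us = \EC_{s_1} \star \cdots \star \EC_{s_r}$ obtained by iterated convolution associated to a sequence $\us$ of simple affine reflections. On the algebraic side, for each simple reflection $s$ I would use a wall-crossing endofunctor $\Xi_s$ of $\ES^{\GB \times \Gm}(\tNC)$, constructed via push-pull along the natural morphism from $\tNC$ to a partial Grothendieck--Springer resolution, and define Bott-Samelson tilting objects $\TC_\us$ by iteratively applying the $\Xi_{s_i}$ to a base tilting object. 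A standard highest-weight argument then shows every indecomposable tilting is a summand of some $\TC_\us$ up to shift.

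With these generators in hand, $\Theta$ is defined by $\EC_\us \mapsto \TC_\us$ (with the shift convention forced by property~(1)) and extended to all of $\Parity_{(\ID)}(\Gr,\FM)$ by Karoubian completion. The crucial step is to show the assignment is fully faithful, which requires identifying the graded $\Hom$-algebras of the two Bott-Samelson families. The natural strategy is to relate both to a common combinatorial model, namely the Elias-Williamson diagrammatic Hecke category attached to the affine Weyl group, in its modular integral form; on the geometric side this realization would use the description of the endomorphism algebra of $\bigoplus_\us \EC_\us$ coming from the theory of parity sheaves, while on the algebraic side it should follow by analyzing the interplay between the $\Xi_s$ and the natural action of $\GB$-equivariant coherent sheaves on $\GB/\BB$. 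This morphism-space comparison is the main obstacle of the proof, since it has to be carried out without relying on the purity and formality features available in characteristic zero and, crucially, without using the yet-unproved perversity of spherical parity sheaves, which is one of the advertised applications.

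Properties (1)--(3) would then be verified directly. Property~(1) is a bookkeeping check on shift conventions built into the matching of generators. Property~(2) is proved by induction on the length of $\us$: on the algebraic side the multiplicities of standards and costandards in $\TC_\us$ satisfy a recursion dictated by how $\Xi_s$ acts on standard and costandard filtrations, and one matches this step by step with the recursion for stalks and costalks of $\EC_s \star (-)$ on the geometric side, the normalizing shift by $\dim \Gr_{-\lambda}$ being the usual dimension shift from the Bruhat stratification. Property~(3) is then read off the parametrization: any equivalence of Krull-Schmidt categories permutes indecomposables, and the sign $\lambda \mapsto -\lambda$ comes from the convention that the roots of $\BD$ are the negative coroots of $\BB$, so that the identification $\XB = X_*(\TD)$ exchanges dominant and antidominant cones.
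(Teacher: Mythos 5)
Your overall architecture agrees with the paper's: reduce both sides to Bott--Samelson generators, match them, and prove that the graded $\Hom$-spaces agree. But the one step you yourself flag as ``the main obstacle'' --- the comparison of morphism algebras --- is exactly the content of the theorem, and the mechanism you propose for it does not work as stated. Passing through the Elias--Williamson diagrammatic Hecke category is not available here: the relevant objects live on the affine \emph{Grassmannian}, not the affine flag variety, so you would need a singular (spherical) version of the diagrammatic category, and more seriously there is no known direct realization of such a category by exotic tilting sheaves on $\tNC$; ``analyzing the interplay between the $\Xi_s$ and equivariant sheaves on $\GB/\BB$'' is precisely the open-ended part. The paper's actual solution is to introduce a concrete common model --- graded modules over the algebra $C = \DNC_{I_2}(\OC(\tg^*/W \times \tg^*))/\hbar$, a deformation to the normal cone of the diagonal --- and to map both sides into it: on the constructible side by total $\ID$-equivariant cohomology (with fully-faithfulness proved by a Ginzburg-type parity argument, Proposition~\ref{prop:global-coh-ff}), and on the coherent side by the Kostant--Whittaker reduction functor $\kappa$ of Section~\ref{sec:KW-reduction}, whose fully-faithfulness is obtained by matching graded ranks of $\Hom$-spaces (both equal to $\langle \mm(\omega,\us),\mm(\omega',\ut)\rangle$ in $\Msph$) and checking injectivity after restriction to the regular semisimple locus. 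Crucially, all of this is done over a localization $\RG$ of $\ZM$ rather than over $\FM$, because $\HM^\bullet_{\ID}(\Gr;\FM)$ is not isomorphic to $C$; the two only agree up to torsion, and Lemma~\ref{lem:key} is what allows the simplified model to be substituted on $\RG$-free modules. None of this integral machinery appears in your proposal, and without it (or an equivalent substitute) the identification of $\Hom$-spaces in positive characteristic is unproved.

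A secondary gap: your wall-crossing functors $\Xi_s$ are defined by push-pull to partial resolutions, but for the affine Grassmannian the Bott--Samelson sequences necessarily involve \emph{affine} simple reflections, for which no partial Springer resolution exists. The paper handles these by conjugating to a finite simple reflection inside the affine braid group (Lemma~\ref{lem:affine-simple-conjugate}) and transporting $\Xi_t$ by the geometric braid group action; some such device is needed in your setup too. Your treatment of properties (1)--(3), granting the equivalence, is essentially the paper's (the multiplicity recursions are the identities $\ch^*_{\Gr}(\EC) = \ch_\Delta(\Theta(\EC))$ and $\ch^!_{\Gr}(\EC)=\ch_\nabla(\Theta(\EC))$ in $\Msph$, and (3) follows by induction on $\ell(w_\lambda)$ from the characterization of the indecomposables as the new summands of Bott--Samelson objects), so the issues above are the only substantive ones.
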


%------------------------------------------------------------------
\subsection{Outline of the proof}
\label{ss:outline}
%------------------------------------------------------------------

Our strategy of proof of Theorem~\ref{thm:main} is based on the description of both sides in~\eqref{eqn:equiv-main} in terms of an appropriate category of ``Soergel bimodules.''\footnote{Our ``Soergel bimodules'' are in fact not bimodules over any ring, but rather modules over a certain algebra built out of two copies of a polynomial algebra. We use this terminology since these objects play the role which is usually played by actual Soergel bimodules.} This idea is very classical, see e.g.~\cite{soergel-kategorie, by, dodd} for examples in characteristic zero, and~\cite{soergel, modrap1} for examples in positive characteristic. 

On the ``constructible side'' (i.e.~the left-hand side in~\eqref{eqn:equiv-main}), this description is obtained using the total cohomology functor. The arguments in this section follow well-established techniques; see~\S\ref{ss:key-lemma} below for a discussion of the only new idea that is needed.

On the ``coherent side'' (i.e.~the right-hand side in~\eqref{eqn:equiv-main}), we adapt a construction due to Dodd~\cite{dodd} in characteristic zero, which uses a ``Kostant--Whittaker reduction'' functor. This construction uses modular (and integral) versions of some classical results of Kostant, which are treated in the companion paper~\cite{riche3}. Our constructions are slightly different from Dodd's, however, in that we do not use a deformation to (asymptotic) $\mathcal{D}$-modules, but only to coherent sheaves on the Grothendieck resolution 
\[
\tgg:=\{(\xi, g\BB) \in \gg^* \times \GB/\BB \mid \xi_{|g \cdot \ng}=0\},
\]
where $\ng$ is the Lie algebra of the unipotent radical of $\BB$. (On the constructible side, this deformation amounts to replacing the category $\Parity_{(\ID)}(\Gr, \FM)$ by the category $\Parity_{\ID}(\Gr, \FM)$ of $\ID$-equivariant $\FM$-parity sheaves on $\Gr$.)

Our proof of fully-faithfulness of the Kostant--Whittaker reduction functor is also different from the proof in~\cite{dodd}. One of the crucial ideas in our proof, which we learnt in papers of Soergel~\cite{soergel-kategorie, soergel} and was used also in~\cite{modrap1}, is to compute $\Hom$-spaces between Soergel bimodules from the analogue for parity sheaves.  We use this computation to prove the ``coherent side''; see the proof of Theorem~\ref{thm:main-geom} for more details.

We also use some ideas from categorification, related to the fact that both categories appearing in~\eqref{eqn:equiv-main} provide categorifications of the spherical module $\Msph$ of the affine Hecke algebra $\Haff$ attached to $\GB$ (in the sense that their split Grothen\-dieck groups are equipped with natural actions of $\Haff$, and are naturally isomorphic to $\Msph$). On the left-hand side this uses the realization of $\Haff$ in terms of constructible sheaves on the affine flag variety of $\GD$, and on the right-hand side this uses the Kazhdan--Lusztig--Ginzburg description of $\Haff$ in terms of equivariant coherent sheaves on the Steinberg variety of $\GB$, a ``categorical'' counterpart of which is provided by the ``geometric braid group action'' studied in~\cite{riche, br}.

Note that, surprisingly, our proof does \emph{not} use the geometric Satake equivalence from~\cite{mv}.

%------------------------------------------------------------------
\subsection{Equivalences of triangulated categories}
%------------------------------------------------------------------

One of our motivations for the study of the equivalence in Theorem~\ref{thm:main} was the desire to obtain a ``modular version'' of an equivalence of categories due to Arkhipov--Bezrukavnikov--Ginzburg~\cite{abg} in the case $p=0$. Our version involves the ``modular mixed derived category''
\[
\Dmix_{(\ID)}(\Gr, \FM) := \Kb \Parity_{(\ID)}(\Gr, \FM),
\]
introduced and studied (in a more general setting) in~\cite{modrap2}. In particular, it can be endowed with a ``perverse t-structure'' and a ``Tate twist'' autoequivalence $\langle 1 \rangle$, and possesses ``standard objects'' $\dmix_{\lambda}$ ($\lambda \in \XB$) and ``costandard objects'' $\nmix_\lambda$ ($\lambda \in \XB$) which have the same properties as ordinary standard and costandard perverse sheaves. For $\lambda \in \XB$, we denote by $\EC^\mix_\lambda$ the object $\EC_\lambda$, considered as an object of $\Dmix_{(\ID)}(\Gr, \FM)$. In case $\FM=\overline{\QM}_\ell$, the category $\Dmix_{(\ID)}(\Gr, \FM)$ is equivalent to the bounded derived category of the category $\tilde{\PC}$ of~\cite[\S 4.4]{bgs}, see~\cite[Remark~2.2]{modrap2}.

As an immediate application of Theorem~\ref{thm:main} we obtain the following theorem, whose proof is given in~\S\ref{ss:proof-equivalence-tNC}, and which provides a ``modular analogue'' of the equivalence of~\cite[Theorem~9.4.1]{abg}.

\begin{thm}
\label{thm:equivalence-tNC}
There exists an equivalence of triangulated categories
\[
\Phi \colon \Dmix_{(\ID)}(\Gr,\FM) \simto \Db \Coh^{\GB \times \Gm}(\tNC)
\]
which satisfies
\[
\Phi \circ \langle 1 \rangle \cong \langle 1 \rangle [1] \circ \Phi, \quad \Phi(\dmix_{\lambda}) \cong \Delta_{\tNC}^{-\lambda}, \quad \Phi(\nmix_{\lambda}) \cong \nabla_{\tNC}^{-\lambda}, \quad \Phi(\EC^\mix_\lambda) \cong \TC^{-\lambda}
\]
for all $\lambda \in \XB$.
\end{thm}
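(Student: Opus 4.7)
The plan is to derive Theorem~\ref{thm:equivalence-tNC} as an essentially formal consequence of Theorem~\ref{thm:main}. First, apply the bounded homotopy category functor $\Kb$ to the additive equivalence $\Theta$. Since $\Dmix_{(\ID)}(\Gr, \FM)$ is defined as $\Kb \Parity_{(\ID)}(\Gr, \FM)$, this produces a triangulated equivalence
\[
\Kb \Theta \colon \Dmix_{(\ID)}(\Gr, \FM) \simto \Kb \Tilt(\ES^{\GB \times \Gm}(\tNC)).
\]
Then compose with a ``tilting realization'' equivalence
\[
\Kb \Tilt(\ES^{\GB \times \Gm}(\tNC)) \simto \Db \Coh^{\GB \times \Gm}(\tNC),
\]
i.e.~the standard principle that for a graded highest weight category possessing enough tilting objects, the bounded homotopy category of tiltings is naturally triangle-equivalent to the ambient bounded derived category, with the heart picked out by the highest weight t-structure. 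For the exotic t-structure this equivalence is supplied by the companion paper~\cite{mr}. Define $\Phi$ as the resulting composition.

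Shift-compatibility is a bookkeeping exercise. The Tate twist $\langle 1\rangle$ on $\Dmix_{(\ID)}(\Gr, \FM)$ is defined in~\cite{modrap2} as a composition of the internal shift on $\Parity_{(\ID)}(\Gr, \FM)$ and an inverse homotopy shift; property~(\ref{it:main-thm-shifts}) of Theorem~\ref{thm:main} (that $\Theta \circ [1] \cong \langle -1\rangle \circ \Theta$) is exactly what is needed to transport this definition to the coherent side and produce the asserted intertwiner $\Phi \circ \langle 1\rangle \cong \langle 1\rangle[1] \circ \Phi$; the tilting realization preserves both the homotopy shift (which becomes the cohomological shift on $\Db\Coh^{\GB \times \Gm}(\tNC)$) and the coherent Tate twist $\langle 1\rangle$.

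For the identification of specific objects, $\EC^\mix_\lambda$ is the one-term complex $\EC_\lambda$ in $\Kb \Parity_{(\ID)}(\Gr, \FM)$ concentrated in degree zero, so property~(\ref{it:main-thm-indec}) of Theorem~\ref{thm:main} gives $\Kb\Theta(\EC^\mix_\lambda) = \TC^{-\lambda}$, which the tilting realization sends to $\TC^{-\lambda}$ in $\Db \Coh^{\GB \times \Gm}(\tNC)$. For the standard and costandard objects, observe that tilting realization identifies $\Delta^{-\lambda}_{\tNC}$ with its canonical minimal complex of tiltings, whose multiplicities in the basis $\{\TC^\mu\langle n\rangle\}$ are dictated by the graded highest weight structure. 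Property~(\ref{it:main-thm-ch}) of Theorem~\ref{thm:main} matches these multiplicities exactly with the stalk/costalk dimensions entering the definition of $\dmix_\lambda$ and $\nmix_\lambda$ in $\Dmix_{(\ID)}(\Gr, \FM)$; since standard and costandard objects in $\Dmix_{(\ID)}(\Gr, \FM)$ are characterized (up to unique isomorphism) by their multiplicities in tilting resolutions, this forces $\Phi(\dmix_\lambda) \cong \Delta^{-\lambda}_{\tNC}$ and $\Phi(\nmix_\lambda) \cong \nabla^{-\lambda}_{\tNC}$.

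The only substantial external input beyond Theorem~\ref{thm:main} itself is the tilting realization equivalence $\Kb \Tilt(\ES^{\GB \times \Gm}(\tNC)) \simto \Db \Coh^{\GB \times \Gm}(\tNC)$; this is the expected potential obstacle, but it is a result standard for graded highest weight categories and is already available from~\cite{mr}. Everything else is formal assembly and sign bookkeeping.
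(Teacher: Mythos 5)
Your construction of $\Phi$ is exactly the paper's: $\Kb(\Theta)$ followed by the tilting realization $\Kb \Tilt(\ES^{\GB \times \Gm}(\tNC)) \simto \Db\Coh^{\GB \times \Gm}(\tNC)$ from~\cite{mr}, and your treatment of the shift compatibility and of $\Phi(\EC^\mix_\lambda) \cong \TC^{-\lambda}$ is correct and identical to the paper's.

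The gap is in your last step, the identification of standard and costandard objects. You assert that $\dmix_\lambda$ and $\nmix_\lambda$ are ``characterized (up to unique isomorphism) by their multiplicities in tilting resolutions,'' and that matching these multiplicities against those of $\Delta^{-\lambda}_{\tNC}$ via Theorem~\ref{thm:main}\eqref{it:main-thm-ch} forces the isomorphism. This is not a valid characterization: an object of $\Kb \Tilt$ (or of $\Kb\Parity_{(\ID)}(\Gr,\FM)$) is \emph{not} determined by the graded multiplicities of the indecomposable summands appearing in its minimal complex --- two complexes with the same terms but different differentials are generally non-isomorphic. Moreover, the multiplicities supplied by Theorem~\ref{thm:main}\eqref{it:main-thm-ch} are standard-filtration multiplicities of tilting objects (equivalently, stalk dimensions of parity complexes), not multiplicities of tilting terms in a tilting (co)resolution of $\Delta^\lambda_{\tNC}$; passing between the two is itself a nontrivial reciprocity. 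So the numerical matching you invoke does not produce an isomorphism of objects.

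What is actually needed --- and what the paper does, following~\cite[Lemma~5.2]{modrap2} --- is an induction on $\ell(w_\lambda)$ using a categorical characterization of $\dmix_{-\lambda}$: one considers the (unique up to scalar) nonzero map $\Delta^\lambda_{\tNC} \to \TC^\lambda$ and its cone $N$, shows via Theorem~\ref{thm:main}\eqref{it:main-thm-ch}--\eqref{it:main-thm-indec} and the inductive hypothesis that $\Phi^{-1}(N)$ is supported on $\overline{\Gr_{-\lambda}} \smallsetminus \Gr_{-\lambda}$ and that $\Phi^{-1}(\Delta^\lambda_{\tNC})$ has no nonzero maps to $\dmix_{-\mu}\langle m\rangle[n]$ for $\mu$ indexing boundary strata, and then observes that these two properties uniquely characterize the triangle $\dmix_{-\lambda} \to \EC^\mix_{-\lambda} \to C \longtriright$. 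Your proof needs this (or an equivalent uniqueness argument) to close; as written, the final step does not follow.
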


\begin{remark}
One should think of $\Phi$ as some kind of ``Ringel--Koszul duality,'' i.e.~a composition of a Koszul duality (as in~\cite[Theorem~2.12.6]{bgs}, which sends simple objects to projective objects) and a Ringel duality (which sends projective objects to tilting objects). The relevance of such equivalences in Lie-theoretic contexts was pointed out in~\cite{bg}; see also~\cite{by}. The idea that, in a modular context, simple perverse sheaves should be replaced by parity sheaves in ``Koszul-type'' statements is implicit in~\cite{soergel}, and was developed more explicitly in~\cite{rsw}.
\end{remark}

As explained in~\S\ref{ss:outline}, our proof of Theorem~\ref{thm:main} is based on the consideration of a ``deformation'' of the picture, replacing $\ID$-constructible sheaves by $\ID$-equivariant sheaves, and equivariant coherent sheaves on $\tNC$ by equivariant coherent sheaves on $\tgg$. Using these considerations, we define objects $\Delta^\lambda_{\tgg}$ and $\nabla^\lambda_{\tgg}$ in $\Db \Coh^{\GB \times \Gm}(\tgg)$ which satisfy
\[
\Lder i^*(\Delta^\lambda_{\tgg}) \cong \Delta^\lambda_{\tNC}, \qquad \Lder i^*(\nabla^\lambda_{\tgg}) \cong \nabla^\lambda_{\tNC}
\]
for all $\lambda \in \XB$, where $i \colon \tNC \hookrightarrow \tgg$ is the inclusion (see~\S\ref{ss:standard-KW}). We also define an additive and Karoubian subcategory ${\EuScript Tilt}$  of $\Db \Coh^{\GB \times \Gm}(\tgg)$, stable under the shift $\langle n \rangle$ for any $n \in \ZM$, and indecomposable objects $\tTC^\lambda$ in ${\EuScript Tilt}$ satisfying $\Lder i^*(\tTC^\lambda) \cong \TC^\lambda$ and such that ${\EuScript Tilt}$ is Krull--Schmidt with indecomposable objects $\tTC^\lambda \langle n \rangle$ for $\lambda \in \XB$ and $n \in \ZM$, see~\S\ref{ss:proof-equivalence-tgg}. On the other hand, we consider the equivariant modular mixed derived category
\[
\Dmix_{\ID}(\Gr, \FM) := \Kb \Parity_{\ID}(\Gr, \FM),
\]
and denote by $\dmix_{\ID, \lambda}$, resp.~$\nmix_{\ID, \lambda}$, the standard, resp.~costandard, object associated with $\lambda \in \XB$. For any $\lambda \in \XB$, the parity complex $\EC_\lambda$ can be naturally ``lifted'' to the category $\Parity_{\ID}(\Gr, \FM)$; we again denote by $\EC_\lambda^\mix$ this object viewed as an object in $\Dmix_{\ID}(\Gr, \FM)$.

The following ``deformation'' of Theorem~\ref{thm:equivalence-tNC} is proved in~\S\ref{ss:proof-equivalence-tgg}. (In case $p=0$, a similar result can be deduced from the main result of~\cite{dodd}, though this equivalence is not explicitly stated there.)

\begin{thm}
\label{thm:equivalence-tgg}
There exists an equivalence of triangulated categories
\[
\Psi \colon \Dmix_{\ID}(\Gr,\FM) \simto \Db \Coh^{\GB \times \Gm}(\tgg)
\]
which satisfies
\[
\Psi \circ \langle 1 \rangle \cong \langle 1 \rangle [1] \circ \Psi, \quad \Psi(\dmix_{\ID, \lambda}) \cong \Delta_{\tgg}^{-\lambda}, \quad \Psi(\nmix_{\ID, \lambda}) \cong \nabla_{\tgg}^{-\lambda}, \quad \Psi(\EC^\mix_{\lambda}) \cong \tTC^{-\lambda}
\]
for all $\lambda \in \XB$.
\end{thm}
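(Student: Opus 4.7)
The plan is to imitate the proof of Theorem~\ref{thm:equivalence-tNC} in the ``deformed'' setting, replacing the $\ID$-constructible category on the left by the $\ID$-equivariant one and the Springer resolution on the right by the Grothendieck resolution. The first step is to upgrade Theorem~\ref{thm:main} to an additive equivalence
\[
\widetilde{\Theta} \colon \Parity_{\ID}(\Gr,\FM) \simto {\EuScript Tilt}
\]
intertwining $[1]$ with $\langle -1 \rangle$ and sending $\EC_\lambda$ to $\tTC^{-\lambda}$. I would construct $\widetilde{\Theta}$ by exactly the same method as $\Theta$, using a Soergel-bimodule description of both sides, but working before specialization: on the coherent side the Kostant--Whittaker reduction is applied to $\Coh^{\GB \times \Gm}(\tgg)$ rather than $\Coh^{\GB \times \Gm}(\tNC)$, and on the topological side one uses total \emph{equivariant} cohomology. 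The construction should be arranged so that $\Lder i^* \circ \widetilde{\Theta} \cong \Theta \circ \For$, where $\For \colon \Parity_{\ID}(\Gr,\FM) \to \Parity_{(\ID)}(\Gr,\FM)$ is the forgetful functor, which then automatically forces $\widetilde{\Theta}(\EC_\lambda) \cong \tTC^{-\lambda}$.

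The second, main step is to promote this additive equivalence to a triangulated one. Taking $\Kb$ yields
\[
\Kb \widetilde{\Theta} \colon \Dmix_{\ID}(\Gr,\FM) \simto \Kb {\EuScript Tilt},
\]
so it remains to show that the natural realization functor $\Kb {\EuScript Tilt} \to \Db \Coh^{\GB \times \Gm}(\tgg)$ is a triangulated equivalence. Fully-faithfulness amounts to the vanishing of $\Ext^{>0}_{\Db \Coh^{\GB \times \Gm}(\tgg)}(\tTC, \tTC')$ for $\tTC, \tTC' \in {\EuScript Tilt}$, which by the standard and costandard filtrations built into ${\EuScript Tilt}$ reduces to
\[
\Ext^{>0}\bigl( \Delta_{\tgg}^\mu \langle n \rangle,\ \nabla_{\tgg}^\nu \langle m \rangle \bigr) = 0.
\]
I expect to obtain this by reducing, via the regular closed embedding $i \colon \tNC \hookrightarrow \tgg$ and the contracting $\Gm$-action on its normal bundle, to the corresponding statement on $\tNC$, which is known from the exotic highest weight structure (and is implicit in Theorem~\ref{thm:equivalence-tNC}). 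Essential surjectivity follows once $\{\tTC^\lambda \langle n \rangle : \lambda \in \XB,\ n \in \ZM\}$ is shown to generate $\Db \Coh^{\GB \times \Gm}(\tgg)$ as a triangulated category, which is a Nakayama-type consequence of $\Lder i^*(\tTC^\lambda) \cong \TC^\lambda$ together with the known generation of $\Db \Coh^{\GB \times \Gm}(\tNC)$ by $\{\TC^\lambda \langle n \rangle\}$.

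One then defines $\Psi$ as the composite
\[
\Dmix_{\ID}(\Gr,\FM) \xrightarrow{\,\Kb \widetilde{\Theta}\,} \Kb {\EuScript Tilt} \simto \Db \Coh^{\GB \times \Gm}(\tgg).
\]
The shift compatibility $\Psi \circ \langle 1 \rangle \cong \langle 1\rangle [1] \circ \Psi$ is a formal consequence of $\widetilde{\Theta} \circ [1] \cong \langle -1\rangle \circ \widetilde{\Theta}$ via the usual mechanism by which $\Kb$ exchanges a homological shift with the additive shift $\langle -1 \rangle$. The identity $\Psi(\EC^\mix_\lambda) \cong \tTC^{-\lambda}$ is immediate from the construction. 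For $\Psi(\dmix_{\ID,\lambda}) \cong \Delta_{\tgg}^{-\lambda}$ and the costandard analogue, one presents $\dmix_{\ID,\lambda}$ and $\nmix_{\ID,\lambda}$ as bounded complexes of parity objects and matches them, via the multiplicity formulas in Theorem~\ref{thm:main}\eqref{it:main-thm-ch}, with analogous resolutions of $\Delta_{\tgg}^{-\lambda}$ and $\nabla_{\tgg}^{-\lambda}$ by objects of ${\EuScript Tilt}$.

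The principal obstacle is the triangulated equivalence $\Kb {\EuScript Tilt} \simto \Db \Coh^{\GB \times \Gm}(\tgg)$: unlike in the $\tNC$ case, ${\EuScript Tilt}$ does not arise as the tilting part of an a priori highest weight structure on $\tgg$, so the required $\Ext$-vanishing and generation statements cannot be read off from abstract formalism and must be reduced to their $\tNC$ analogues through the regular embedding $i$ and the $\Gm$-grading.
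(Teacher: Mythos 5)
Your overall route is the paper's: construct the additive equivalence $\Parity_{\ID}(\Gr,\FM)\simto{\EuScript Tilt}$ from the equivariant/deformed Soergel-bimodule descriptions (this is exactly the equivalence~\eqref{eqn:equiv-Parity-Tilt-equiv}, obtained by combining Theorems~\ref{thm:main-top} and~\ref{thm:main-geom} and extending scalars from $\RG$ to $\FM$), apply $\Kb$, and then show that the natural functor $\Kb{\EuScript Tilt}\to D^{\GB\times\Gm}(\tgg)$ is a triangulated equivalence. Your treatment of fully-faithfulness also matches the paper: the vanishing of $\Hom(\Delta^\mu_{\tgg}\langle n\rangle,\nabla^\nu_{\tgg}\langle m\rangle[k])$ for $k\neq 0$ is Proposition~\ref{prop:morphism-D-N-tgg}, proved via the base change isomorphism $\FM\lotimes_{\OC(\tg^*)}R\Hom_{\tgg}(-,-)\cong R\Hom_{\tNC}(\Lder i^*(-),\Lder i^*(-))$ and the graded Nakayama lemma, i.e.\ precisely the reduction to $\tNC$ through the embedding $i$ and the $\Gm$-grading that you describe.

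The one genuine gap is your argument for essential surjectivity. Knowing that $\Lder i^*(\tTC^\lambda)\cong\TC^\lambda$ and that the objects $\TC^\lambda\langle n\rangle$ generate $D^{\GB\times\Gm}(\tNC)$ does not yield, by any Nakayama-type argument, that the $\tTC^\lambda\langle n\rangle$ generate $D^{\GB\times\Gm}(\tgg)$: graded Nakayama detects when an object (or a cone) vanishes, not when an object belongs to a given triangulated subcategory, and membership downstairs does not lift along $\Lder i^*$. One could instead try to show that the right orthogonal of the image vanishes, which Nakayama does give; but since $\tgg$ is not proper and the relevant $\Hom$-spaces are only finite over $\OC(\tg^*)$, vanishing of the orthogonal does not imply generation without admissibility of the subcategory. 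The paper sidesteps this entirely in Lemma~\ref{lem:Tilt-generates}: the triangulated subcategory generated by ${\EuScript Tilt}$ is identified with the one generated by the $\Delta^\lambda_{\tgg}\langle m\rangle$ (via the exact sequences~\eqref{eqn:ses-tgg}), then with the one generated by the line bundles $\OC_{\tgg}(\lambda)\langle m\rangle$, and generation by line bundles is proved directly as in~\cite[Corollary~5.8]{achar}. A smaller point: ``matching resolutions via multiplicity formulas'' does not by itself produce the isomorphisms $\Psi(\dmix_{\ID,\lambda})\cong\Delta^{-\lambda}_{\tgg}$, since multiplicities do not determine a complex up to isomorphism; the identification is obtained by induction on $\ell(w_\lambda)$, using the uniqueness of the triangle $\Delta^{-\lambda}_{\tgg}\to\tTC^{-\lambda}\to N\xrightarrow{[1]}$ characterized by orthogonality to lower standard objects together with support conditions, exactly as in the proof of Theorem~\ref{thm:equivalence-tNC}.
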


In~\S\ref{ss:compatibilities} we also prove that the equivalences $\Psi$ and $\Phi$ are compatible in the natural way.
%, and that they send the (mixed) Wakimoto sheaves to the natural line bundles on $\tgg$ and $\tNC$, respectively.

%------------------------------------------------------------------
\subsection{Parity, tilting and the Mirkovi{\'c}--Vilonen conjecture}
\label{ss:mv-conjecture}
%------------------------------------------------------------------

Our other main motivation for the study of Theorem~\ref{thm:main} comes from the Mirkovi{\'c}--Vilonen conjecture~\cite[Conjecture~13.3]{mv}. In this subsection, we let $\GB$ be more generally any split connected reductive group scheme over an arbitrary commutative ring $k$.  As before, we fix a (split) maximal torus $\TB$ and let $\XB$ denote the weight lattice.  We again let $\GD$ and $\TD \subset \GD$ denote the unique (up to isomorphism) complex connected reductive group and maximal torus with Langlands dual root datum, and consider its affine Grassmannian $\Gr:=\GD(\mathscr{K})/\GD(\mathscr{O})$.

The \emph{geometric Satake equivalence}, proven by Mirkovi{\'c}--Vilonen~\cite{mv} for any $k$ which is Noetherian and of finite global dimension, is an equivalence of abelian categories
\[
\mathsf{S}_k \colon \Perv_{(\GD(\mathscr{O}))}(\Gr, k) \simto \Rep(\GB),
\]
where $\Perv_{(\GD(\mathscr{O}))}(\Gr, k)$ is the category of $\GD(\mathscr{O})$-constructible $k$-perverse sheaves on $\Gr$, and $\Rep(\GB)$ is the category of algebraic $\GB$-modules which are of finite type over $k$. (This equivalence is compatible with the natural monoidal structures on these categories.)

After a choice of positive roots, the $\GD(\mathscr{O})$-orbits on $\Gr$ are parametrized in a natural way by the set  of dominant weights $\XB^+ \subset \XB$; we denote by $\Gr^\lambda$ the orbit associated with $\lambda \in \XB^+$, and by $i^\lambda \colon \Gr^\lambda \hookrightarrow \Gr$ the inclusion. For any $\lambda \in \XB^+$, we consider the perverse sheaf
\[
\IC_!(\lambda, k) := {}^p (i^\lambda)_! \underline{k}_{\Gr^\lambda} [\dim(\Gr^\lambda)].
\]
By~\cite[Proposition~13.1]{mv}, this object corresponds, under $\mathsf{S}_k$, to the Weyl $\GB$-module associated with $\lambda$. 
Towards the end of their paper, Mirkovi{\'c}--Vilonen state the following conjecture.

\begin{conj}[Mirkovi{\'c}--Vilonen~\cite{mv}]
\label{conj:mv}
The cohomology modules of the stalks of $\IC_!(\lambda, \ZM)$ are free.
\end{conj}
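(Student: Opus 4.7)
\emph{Proof strategy.} The plan is to deduce the conjecture from application~\eqref{it:intro-perverse} of Theorem~\ref{thm:main}, following the strategy of Achar--Rider~\cite{arider}. The conjecture ``in the expected generality'' means that for every prime $p$ good for $\GB$ one proves absence of $p$-torsion in $\HM^k(i_x^* \IC_!(\lambda, \ZM))$; since these modules are finitely generated, freeness after inverting the bad primes is the honest content of the statement, and torsion at bad primes is a genuine phenomenon already visible in small examples.

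First, I would work with coefficients in $\ZM_{(p)}$ for $p$ good and, following Juteau--Mautner--Williamson~\cite{jmw}, introduce the indecomposable spherical $\ZM_{(p)}$-parity complex $\EC^\lambda_{\ZM_{(p)}}$ supported on $\overline{\Gr^\lambda}$. By construction its stalks and costalks are free $\ZM_{(p)}$-modules concentrated in a single parity, and it specializes to the corresponding parity sheaves under $\otimes^{\mathrm{L}} \QM$ and $\otimes^{\mathrm{L}} \FM_p$. Over $\QM$ the spherical simple perverse sheaves are parity (a classical consequence of parity vanishing for the spherical Kazhdan--Lusztig polynomials of $\Gr$), so $\EC^\lambda_{\QM}$ is perverse; over $\FM_p$ with $p$ good, perversity of $\EC^\lambda_{\FM_p}$ is precisely the spherical case of application~\eqref{it:intro-perverse} of Theorem~\ref{thm:main}. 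Combined with the stalk freeness, a universal coefficient argument then forces $\EC^\lambda_{\ZM_{(p)}}$ itself to lie in the heart of the perverse t-structure.

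Next, I would exploit the graded highest weight structure on the category of $\GD(\mathscr{O})$-equivariant $\ZM_{(p)}$-perverse sheaves on $\Gr$, whose normalized standard objects are the $\IC_!(\mu, \ZM_{(p)})$ for $\mu \in \XB^+$. The parity behavior of its stalks and costalks makes $\EC^\lambda_{\ZM_{(p)}}$ a \emph{tilting} object in this highest weight category; in particular it carries a finite filtration whose subquotients are of the form $\IC_!(\mu, \ZM_{(p)})[n]$ with $\mu \leq \lambda$, and in which $\IC_!(\lambda, \ZM_{(p)})$ occurs exactly once. A descending induction on $\XB^+$ then shows that $\HM^\bullet(i_x^* \IC_!(\lambda, \ZM_{(p)}))$ is $\ZM_{(p)}$-free: the long exact sequences of stalks coming from the filtration exhibit it as a repeated extension between the (free) stalks of $\EC^\lambda_{\ZM_{(p)}}$ and the stalks of the smaller $\IC_!(\mu, \ZM_{(p)})$ (free by induction), and an extension of a free $\ZM_{(p)}$-module by a free $\ZM_{(p)}$-module is free.

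The main obstacle is the perversity input, which was previously available only case-by-case via~\cite{jmw2}; Theorem~\ref{thm:main} supplies it uniformly, because an Iwahori-constructible parity sheaf corresponds under $\Theta$ to a tilting exotic sheaf, and the multiplicity formulas of part~\eqref{it:main-thm-ch} confine the associated spherical parity complex to a single perverse degree. Once this input is granted, the remaining steps (localization to $\ZM_{(p)}$, tilting filtrations in the highest weight category, stalk induction) are essentially formal.
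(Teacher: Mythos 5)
First, a point of framing: the statement you are proving is false as written. The paper records (citing Juteau) that the stalks of $\IC_!(\lambda,\ZM)$ \emph{can} have $p$-torsion when $p$ is bad for $\GD$, so Conjecture~\ref{conj:mv} only survives in the corrected form of Theorem~\ref{thm:mv} (no $p$-torsion for $p$ good). You correctly identify this and pivot to the good-prime statement, and you correctly identify the paper's actual contribution: Corollary~\ref{cor:parity-perverse} (perversity of spherical $\FM_p$-parity sheaves, proved in \S\ref{ss:proof-parity-perverse} via $\Theta(\EC_\lambda)\cong\TC^{-\lambda}\cong\mathsf{T}(-\lambda)\otimes\OC_{\tNC}$, self-duality, and the $\Hom(\FC,\FC[n])$-vanishing criterion of Lemma~\ref{lem:perverse-criterion} --- not, as you suggest, directly from the multiplicity formulas of Theorem~\ref{thm:main}\eqref{it:main-thm-ch}). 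Where you diverge from the paper is that the paper then simply \emph{cites} the main result of~\cite{arider} for the implication ``spherical parity sheaves perverse $\Rightarrow$ no $p$-torsion,'' whereas you attempt to re-derive that implication. That is where your argument has a genuine gap.

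The gap is in the final induction. From the standard filtration $0\to\IC_!(\lambda,\ZM_{(p)})\to\EC^\lambda_{\ZM_{(p)}}\to Q\to 0$ (whose subquotients, incidentally, carry no shifts $[n]$), the long exact sequence of stalks gives
\[
\HM^{k-1}(i_x^*\EC^\lambda_{\ZM_{(p)}})\to\HM^{k-1}(i_x^*Q)\xrightarrow{\;\partial\;}\HM^{k}(i_x^*\IC_!(\lambda,\ZM_{(p)}))\to\HM^{k}(i_x^*\EC^\lambda_{\ZM_{(p)}}),
\]
so the stalk of $\IC_!(\lambda)$ is an extension of a submodule of a free module by $\mathrm{im}(\partial)$, which is a \emph{quotient} of a free module and need not be free. ``Extension of free by free is free'' does not apply unless you know the connecting maps vanish, i.e.\ that the standard filtration induces short exact sequences on stalks --- and that is essentially equivalent to parity-vanishing of the stalks of the standard objects, which is (an equivalent form of) the very statement being proved. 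Breaking this circularity is exactly the nontrivial content of~\cite{arider}, and also of the ``tilting $\Leftrightarrow$ parity'' equivalence~\cite[Proposition~3.3]{jmw2} that you invoke as if it were formal (it is not, since the spherical strata are not affine spaces and $\IC_!(\lambda)$ involves a perverse truncation). So your proposal is sound in architecture --- same key input as the paper, namely Corollary~\ref{cor:parity-perverse} --- but the deduction of torsion-freeness from it needs either the actual argument of~\cite{arider} or a genuinely new idea at the step you elide.
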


We understand this conjecture was formulated based on evidence in type $A$ and as part of an attempt to produce a ``modular analogue'' of results of Bezukavnikov and collaborators (in particular, the equivalence of~\cite[Theorem~9.4.1]{abg}), like the one we prove as Theorem~\ref{thm:equivalence-tNC}.

Using~\cite[Proposition~8.1(a)--(b)]{mv}, one can also state this conjecture equivalently as the property that, for any field $\Bbbk$, the dimensions 
\[
\dim_\Bbbk \bigl( \HM^m(\Gr_\mu, i_\mu^* \IC_!(\lambda, \Bbbk)) \bigr)
\]
are independent of $\Bbbk$. 
Note that, by~\cite[Lemma~7.1]{mv}, if $\Bbbk$ has characteristic zero, we have $\IC_!(\lambda, \Bbbk) = \IC \CC(\Gr^\lambda, \underline{\Bbbk})$, and the graded dimensions of stalks of these objects can be computed in terms of affine Kazhdan--Lusztig polynomials~\cite{kl} and vanish in either all odd or all even degrees.

It was realized by Juteau~\cite{juteau} that Conjecture~\ref{conj:mv} does not hold as stated, namely that the cohomology modules of the stalks of $\IC_!(\lambda, \ZM)$ can have $p$-torsion if $p$ is a prime number which is bad for $\GD$.

On the other hand, if $\IC_!(\lambda, \ZM)$ does not have $p$-torsion for any $\lambda \in \XB^+$, then it follows that for any field $\Bbbk$ of characteristic $p$, the tilting objects in $\Perv_{(\GD(\mathscr{O}))}(\Gr, \Bbbk)$ are parity complexes, or equivalently that all the parity sheaves $\EC_\lambda$ for $\lambda \in -\XB^+$ (i.e.~the parity sheaves which are $\GD(\mathscr{O})$-constructible) are perverse. (The equivalence between these properties follows from~\cite[Proposition~3.3]{jmw2}.)

The statement that the parity sheaves $\EC_\lambda$ for $\lambda \in -\XB^+$ with coefficients in a field $\Bbbk$ of characteristic $p$ are perverse, was proved using a case-by-case analysis by Juteau--Mautner--Williamson~\cite[Theorem 1.8]{jmw2} provided that $p$ is bigger than explicit bounds.  The bounds in~\cite{jmw2} are a byproduct of the method of proof and are in general stronger than the condition that $p$ is good (most notably when $\GB$ is quasi-simple of type $C_n$, in which case the bound is $p>n$).

Achar--Rider~\cite{arider} then proved that if all the parity sheaves $\EC_\lambda$ for $\lambda \in -\XB^+$ with coefficients in a field $\Bbbk$ of characteristic $p$ are perverse, then the cohomology modules of the stalks of the perverse sheaves $\IC_!(\lambda, \ZM)$ have no $p$-torsion.

In~\S\ref{ss:proof-parity-perverse} we give a uniform proof of the following.

\begin{cor}
\label{cor:parity-perverse}
If $p$ is good for $\GD$, then for any $\lambda \in -\XB^+$ the parity sheaf $\EC_\lambda$ is perverse.
\end{cor}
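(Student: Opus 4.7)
The plan is to use Theorem~\ref{thm:main} to translate perversity of $\EC_\lambda$ for $\lambda\in-\XB^+$ into a statement about the indecomposable tilting object $\TC^{-\lambda}$ in $\ES^{\GB\times\Gm}(\tNC)$, and then to prove that this statement holds for all tilting objects indexed by dominant weights.

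A parity sheaf $\EC$ on $\Gr$ is perverse exactly when, for every $\ID$-orbit $\Gr_\mu$, one has $\HM^k(\Gr_\mu, i_\mu^*\EC)=0$ for $k>-\dim\Gr_\mu$ and $\HM^k(\Gr_\mu, i_\mu^!\EC)=0$ for $k<-\dim\Gr_\mu$. Substituting $\mu=-\nu$ and $m=k+\dim\Gr_{-\nu}$ and invoking part~\eqref{it:main-thm-ch} of Theorem~\ref{thm:main}, these vanishings become the requirements that the standard filtration of $\Theta(\EC)$ contain no subquotient $\Delta^\nu_{\tNC}\langle m\rangle$ with $m>0$, and dually that its costandard filtration contain no $\nabla^\nu_{\tNC}\langle m\rangle$ with $m<0$. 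Applied to $\EC=\EC_\lambda$ for $\lambda\in-\XB^+$, part~\eqref{it:main-thm-indec} identifies $\Theta(\EC_\lambda)$ with $\TC^\mu$ where $\mu:=-\lambda\in\XB^+$, so it suffices to check these two vanishings for the indecomposable tilting object attached to a dominant weight.

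The cleanest way to do this is to prove the stronger claim that, for any $\mu\in\XB^+$, the standard object $\Delta^\mu_{\tNC}$ already coincides with the costandard object $\nabla^\mu_{\tNC}$. Any such object is simultaneously standard, costandard, indecomposable and tilting, so it must be isomorphic to $\TC^\mu$, and its standard (resp.~costandard) filtration is reduced to the single term $\Delta^\mu_{\tNC}\langle 0\rangle$ (resp.~$\nabla^\mu_{\tNC}\langle 0\rangle$), which makes the required vanishings automatic. The coincidence $\Delta^\mu_{\tNC}\cong\nabla^\mu_{\tNC}$ for dominant $\mu$ is a standard feature of the exotic t-structure on $\tNC$: both are realized, up to the chosen normalization, by the line bundle $\pi^*\OC_{\GB/\BB}(\mu)$ pulled back from the flag variety along $\pi\colon\tNC\to\GB/\BB$, and their agreement ultimately rests on the vanishing of the higher direct images $\Rder^{>0}\pi_*\pi^*\OC_{\GB/\BB}(\mu)$, which follows from Kempf vanishing for dominant $\mu$ in good characteristic.

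The main obstacle is precisely this last step: the identification $\Delta^\mu_{\tNC}\cong\nabla^\mu_{\tNC}$ for dominant $\mu$ requires genuine modular-representation-theoretic input in the form of Kempf vanishing, and the assumption that $p$ is good for $\GD$ is exactly the minimal hypothesis under which this input is available uniformly across types. Once this is in place, the rest of the argument is entirely formal, combining the multiplicity dictionary of Theorem~\ref{thm:main}\eqref{it:main-thm-ch} with the indexing in Theorem~\ref{thm:main}\eqref{it:main-thm-indec}.
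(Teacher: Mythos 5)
Your reduction in the first two paragraphs is sound: by Theorem~\ref{thm:main}\eqref{it:main-thm-ch}--\eqref{it:main-thm-indec}, perversity of $\EC_\lambda$ for $\lambda \in -\XB^+$ is equivalent to the statement that the standard filtration of $\TC^{-\lambda}$ contains no subquotient $\Delta^\nu_{\tNC}\langle m \rangle$ with $m>0$ and its costandard filtration no $\nabla^\nu_{\tNC}\langle m \rangle$ with $m<0$. But the claim you use to establish this --- that $\Delta^\mu_{\tNC} \cong \nabla^\mu_{\tNC}$ for every dominant $\mu$, so that $\TC^\mu = \Delta^\mu_{\tNC}$ has a one-step standard filtration --- is false, and the argument collapses there. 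By [MR, Corollary~4.8] (used throughout Section~\ref{sec:proofs}) one has $\TC^\mu \cong \mathsf{T}(\mu) \otimes \OC_{\tNC}$ for $\mu \in \XB^+$, whose standard filtration has $\dim \mathsf{T}(\mu)$ terms with multiplicities governed by Lusztig's $q$-analogues --- this is exactly what formula~\eqref{eqn:formula-stalks-parity} records --- and is not a single $\Delta^\mu_{\tNC}$. Equivalently, your claim would force $\EC_\lambda \cong (i_\lambda)_! \, \underline{\FM}_{\Gr_\lambda}[\dim \Gr_\lambda]$, i.e.\ the parity sheaf would have vanishing stalks on all boundary strata; this already fails for $\GD = \mathrm{PGL}_2$ and $\lambda = -2$, where $\overline{\Gr_{-2}}$ is singular and $\EC_{-2}$ has a nontrivial stalk at the point stratum. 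What is true is that, for dominant $\mu$, both $\Delta^\mu_{\tNC}$ and $\nabla^\mu_{\tNC}$ are the line bundle $\OC_{\tNC}(\mu)$ up to \emph{different} internal shifts $\langle \cdot \rangle$; a standard exotic object is essentially never tilting, and the Kempf-vanishing heuristic does not bridge this gap.

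The statement you actually need --- that $\ch_\Delta(\TC^{-\lambda})$ involves only non-positive powers of $\vv$, and dually for $\ch_\nabla$ --- can be extracted from [MR, Proposition~4.6] together with $\TC^{-\lambda} \cong \mathsf{T}(-\lambda) \otimes \OC_{\tNC}$, since $\MC^\chi_\eta(\vv^{-2})$ contributes only non-positive powers of $\vv$; that would repair your argument. The paper takes a different route: parity complexes are Verdier self-dual, so by Lemma~\ref{lem:perverse-criterion} it suffices to show $\Hom(\EC_\lambda, \EC_\lambda[n]) = 0$ for $n < -1$, which under $\Theta$ becomes $\Hom_{D^{\GB \times \Gm}(\tNC)}(\mathsf{T}(-\lambda) \otimes \OC_{\tNC}, \mathsf{T}(-\lambda) \otimes \OC_{\tNC} \langle -n \rangle)$ and vanishes because $\Gamma(\tNC, \OC_{\tNC})$ is concentrated in non-negative even internal degrees.
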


By the main result of~\cite{arider}, this implies:

\begin{thm}[Mirkovi{\'c}--Vilonen conjecture]
\label{thm:mv}
If $p$ is good for $\GD$, then
the cohomology modules of the stalks of the perverse sheaves $\IC_!(\lambda, \ZM)$ have no $p$-torsion.
\end{thm}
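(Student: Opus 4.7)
The plan is to reduce Theorem~\ref{thm:mv} to Corollary~\ref{cor:parity-perverse} and then prove the latter via Theorem~\ref{thm:main}. The reduction is immediate: Achar--Rider~\cite{arider} establish precisely the implication that if every spherical parity sheaf $\EC_\lambda$ (with $\lambda \in -\XB^+$) over a field $\FM$ of good characteristic $p$ is perverse, then the cohomology of the stalks of $\IC_!(\lambda,\ZM)$ has no $p$-torsion at any good prime. Granting Corollary~\ref{cor:parity-perverse}, which supplies this hypothesis uniformly for every good $p$, Theorem~\ref{thm:mv} follows at once. All the real work therefore lies in proving Corollary~\ref{cor:parity-perverse}.

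To prove Corollary~\ref{cor:parity-perverse}, I would transport the question across the equivalence $\Theta$ of Theorem~\ref{thm:main}. By part~\eqref{it:main-thm-indec}, $\Theta(\EC_\lambda) \cong \TC^{-\lambda}$, so for $\lambda \in -\XB^+$ the target is an exotic tilting object attached to the dominant weight $\nu := -\lambda \in \XB^+$. Since each Iwahori orbit $\Gr_\mu$ is an affine space and $\EC_\lambda$ is constructible with respect to these orbits, perversity of $\EC_\lambda$ amounts to
\[
\HM^k(\Gr_\mu, i_\mu^* \EC_\lambda) = 0 \ \text{ for } k > -\dim(\Gr_\mu), \qquad \HM^k(\Gr_\mu, i_\mu^! \EC_\lambda) = 0 \ \text{ for } k < -\dim(\Gr_\mu),
\]
for every $\mu \in \XB$. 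After substituting $\mu \rightsquigarrow -\mu$, the multiplicity formulas in Theorem~\ref{thm:main}\eqref{it:main-thm-ch} convert these conditions into the grading-shift bounds
\[
(\TC^\nu : \Delta^\mu_{\tNC} \langle m \rangle) = 0 \ \text{ for } m > 0, \qquad (\TC^\nu : \nabla^\mu_{\tNC} \langle m \rangle) = 0 \ \text{ for } m < 0, \qquad \nu \in \XB^+, \mu \in \XB.
\]

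The principal obstacle is establishing these grading bounds on the filtrations of exotic tilting objects attached to dominant weights. My approach would be to exhibit $\TC^\nu$ for $\nu \in \XB^+$ explicitly: one expects the pullback of the dominant line bundle $\OC_{\GB/\BB}(\nu)$ to $\tNC$, with an appropriate Tate twist, to lie in $\ES^{\GB \times \Gm}(\tNC)$, to be tilting, and to be forced by a highest-weight argument to coincide with $\TC^\nu$. The standard filtration could then be read off from a BGG-type resolution of $\OC_{\GB/\BB}(\nu)$, while the costandard filtration would follow by Serre duality together with Kempf vanishing for dominant line bundles; positivity of the relevant coweight pairings would force the grading shifts into the claimed half-ranges. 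The real virtue of this route, as against the case-by-case stalk computations of~\cite{jmw2}, is that all of its ingredients --- the equivalence $\Theta$, Kempf vanishing, and the highest-weight characterization of $\TC^\nu$ --- are uniformly available under only the good-characteristic hypothesis; the detailed bookkeeping then lies in tracking the Tate twist $\langle 1 \rangle$ against the cohomological shift $[1]$ throughout.
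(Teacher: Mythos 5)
Your reduction of the theorem to Corollary~\ref{cor:parity-perverse} via Achar--Rider is exactly the paper's, and your translation of perversity of $\EC_\lambda$ into the grading bounds $(\TC^\nu : \Delta^\mu_{\tNC}\langle m\rangle)=0$ for $m>0$ and $(\TC^\nu : \nabla^\mu_{\tNC}\langle m\rangle)=0$ for $m<0$ via Theorem~\ref{thm:main}\eqref{it:main-thm-ch} is correct. The gap is in how you propose to verify these bounds: the pullback of $\OC_{\GB/\BB}(\nu)$ to $\tNC$ is \emph{not} the tilting object $\TC^\nu$. Up to a shift $\langle \ell(t_\nu)-\ell(w_\nu)\rangle$, the line bundle $\OC_{\tNC}(\nu)$ for $\nu$ dominant is the \emph{costandard} object $\nabla^\nu_{\tNC}$ (apply Remark~\ref{rk:nabla-delta-Baff} with $w=t_\nu$, using that $\theta_\nu=T_{t_\nu}$ acts by tensoring with $\OC_{\tNC}(\nu)$); a costandard object is tilting only in degenerate cases. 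Already for $\SL_2$ and $\nu$ the fundamental weight, $\TC^\nu$ is the rank-two free sheaf $V\otimes\OC_{\tNC}$ sitting in an extension of $\OC_{\tNC}(1)$ by a twist of $\OC_{\tNC}(-1)$, not the line bundle itself. Consequently the BGG-resolution/Kempf-vanishing machinery you invoke is aimed at the wrong object, and no highest-weight argument will force $\OC_{\tNC}(\nu)$ to coincide with $\TC^\nu$.

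The correct identification, which is the genuinely hard input, is $\TC^\nu\cong\mathsf{T}(\nu)\otimes\OC_{\tNC}$ where $\mathsf{T}(\nu)$ is the indecomposable tilting $\GB$-module of highest weight $\nu$; this is~\cite[Corollary~4.8]{mr} and cannot be reproduced by the elementary means you describe. Granting it, your route can be completed: \cite[Proposition~4.6]{mr} expresses $\ch_\Delta(\mathsf{T}(\nu)\otimes\OC_{\tNC})$ in terms of Lusztig's $q$-analogues evaluated at $\vv^{-2}$, whose exponents are all $\leq 0$, giving the bound on $m$. The paper instead avoids filtrations altogether: it checks that $\EC_\lambda$ is Verdier self-dual and that $\Hom(\EC_\lambda,\EC_\lambda[n])=0$ for $n<-1$ (which by Lemma~\ref{lem:perverse-criterion} forces perversity), the $\Hom$-vanishing being immediate from $\Theta(\EC_\lambda)\cong\mathsf{T}(-\lambda)\otimes\OC_{\tNC}$ and the fact that $\Inv^{\Gm}(R\Gamma(\tNC,\OC_{\tNC})\langle -n\rangle)$ vanishes unless $n\in 2\ZM_{\geq 0}$. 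Finally, note that Theorem~\ref{thm:main} requires $\GB$ to be a product of simply-connected quasi-simple groups and general linear groups with $p$ very good, whereas the corollary assumes only that $p$ is good; you need the standard reduction of~\cite[Lemma~3.6]{jmw2} (replacing type-$A$ factors by general linear groups) before $\Theta$ is available.
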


Our proof of Corollary~\ref{cor:parity-perverse} is based on the explicit description of the tilting objects $\TC^\lambda$ for $\lambda \in \XB^+$ obtained in~\cite{mr}.

%------------------------------------------------------------------
\subsection{Other applications}
\label{ss:intro-other}
%------------------------------------------------------------------

Now, let us come back to our assumptions on $\GB$ and $p$ from~\S\ref{ss:intro}.
In this subsection we state some other applications of Theorem~\ref{thm:main}. Most of these results are already known (at least when $p=0$), but their proof usually requires the geometric Satake equivalence of~\S\ref{ss:mv-conjecture}. We find it useful to explain how these results can be derived directly using our methods.

We
denote by $\Parity_{(\GD(\mathscr{O}))}(\Gr,\FM)$ the subcategory of $\Parity_{(\ID)}(\Gr,\FM)$ consisting of objects which are $\GD(\mathscr{O})$-constructible, i.e.~which are direct sums of objects $\EC_\lambda[i]$ where $\lambda \in -\XB^+$ and $i \in \ZM$.
We also denote by $\PParity_{(\GD(\mathscr{O}))}(\Gr,\FM)$ the subcategory of $\Parity_{(\GD(\mathscr{O}))}(\Gr,\FM)$ consisting of objects which are perverse sheaves. By Corollary~\ref{cor:parity-perverse}, this category consists of objects which are direct sums of parity sheaves $\EC_\lambda$ where $\lambda \in -\XB^+$. 

To conform to the notation used most of the time in this context, for $\lambda \in \XB^+$ we set $\EC^\lambda:=\EC_{w_0 \lambda}$ (where $w_0$ is the longest element in the Weyl group $W$ of $(\GB,\TB)$), and (as in~\S\ref{ss:mv-conjecture}) $\Gr^\lambda = \sqcup_{\mu \in W\lambda} \Gr_{\mu}$. We also denote by $L_\lambda$ the unique $\TD$-fixed point in $\Gr_\lambda$, and by $\imath_\lambda \colon \{L_\lambda\} \hookrightarrow \Gr$ the inclusion.

We denote by $\Tilt(\GB)$ the additive category of tilting $\GB$-modules, and by $\mathsf{T}(\lambda)$ the indecomposable tilting $\GB$-module with highest weight $\lambda$ (for $\lambda \in \XB^+$). We also denote by $\mathsf{M}(\lambda)$ and $\mathsf{N}(\lambda)$ the standard and costandard $\GB$-modules with highest weight $\lambda$ respectively (see~\cite[\S 4.3]{mr}). We denote by $(\mathsf{T}(\lambda) : \mathsf{M}(\mu))$ and $(\mathsf{T}(\lambda) : \mathsf{N}(\mu))$ the corresponding multiplicities.

From Theorem~\ref{thm:main} one can deduce the following result (see~\S\ref{ss:proof-Satake}).

\begin{prop}
\label{prop:equiv-parity-tilt}
There exists an equivalence of additive categories
\[
\overline{\mathsf{S}}_\FM \colon \PParity_{(\GD(\mathscr{O}))}(\Gr,\FM) \simto \Tilt(\GB)
\]
which satisfies $\overline{\mathsf{S}}_\FM(\EC^{\lambda}) \cong \mathsf{T}(\lambda)$ for all $\lambda \in \XB^+$. Moreover, for $\lambda,\mu \in \XB^+$ we have
\begin{equation}
\label{eqn:formula-stalks-parity}
\sum_{k \in \ZM} \dim \bigl( \HM^{k-\dim(\Gr^{\mu})}(\imath_\mu^* \EC^\lambda) \bigr) \cdot \vv^k = \sum_{\nu \in \XB^+} \bigl( \mathsf{T}(\lambda) : \mathsf{N}(-w_0\nu) \bigr) \cdot \MC^{-w_0\mu}_\nu(\vv^{-2}),
\end{equation}
where $\MC_\eta^\chi(\vv)$ is Lusztig's $q$-analogue~\cite{lusztig}.
\end{prop}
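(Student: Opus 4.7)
The plan is to combine Theorem~\ref{thm:main}, Corollary~\ref{cor:parity-perverse}, and the identification of the ``dominant'' part of the exotic tilting category with $\Tilt(\GB)$ from the companion paper~\cite{mr}. By Corollary~\ref{cor:parity-perverse}, the indecomposable objects of $\PParity_{(\GD(\mathscr{O}))}(\Gr,\FM)$ are precisely the $\EC^\lambda = \EC_{w_0\lambda}$ for $\lambda \in \XB^+$, and Theorem~\ref{thm:main}(\ref{it:main-thm-indec}) yields $\Theta(\EC^\lambda) \cong \TC^{-w_0\lambda}$ (with no grading shift). Hence $\Theta$ restricts to an additive equivalence between $\PParity_{(\GD(\mathscr{O}))}(\Gr,\FM)$ and the full additive subcategory of $\Tilt(\ES^{\GB \times \Gm}(\tNC))$ generated by $\{\TC^\nu : \nu \in \XB^+\}$. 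The companion paper~\cite{mr} then identifies this ``dominant'' subcategory with $\Tilt(\GB)$ via a functor matching $\TC^{-w_0\lambda}$ with $\mathsf{T}(\lambda)$; composing produces $\overline{\mathsf{S}}_\FM$.

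For the stalk formula, I would apply Theorem~\ref{thm:main}(\ref{it:main-thm-ch}) to $\EC = \EC^\lambda$ with $\nu = -\mu$ for $\mu \in \XB^+$, giving
\[ (\TC^{-w_0\lambda} : \Delta^{-\mu}_{\tNC}\langle m\rangle) \;=\; \dim H^{m-\dim\Gr_\mu}(\Gr_\mu, i_\mu^* \EC^\lambda). \]
For $\mu \in \XB^+$, the Iwahori orbit $\Gr_\mu$ is open in the $\GD(\mathscr{O})$-orbit $\Gr^\mu$, so $\dim\Gr_\mu = \dim\Gr^\mu$, and $\Gr_\mu$ is isomorphic to an affine space containing the point $L_\mu$. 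Since $\EC^\lambda$ is both $\GD(\mathscr{O})$-constructible and parity, the restriction $i_\mu^* \EC^\lambda$ is a direct sum of shifted constant sheaves on this contractible space, whence $H^\bullet(\Gr_\mu, i_\mu^* \EC^\lambda) \cong \imath_\mu^* \EC^\lambda$ as graded $\FM$-vector spaces. Substituting, the left-hand side of~\eqref{eqn:formula-stalks-parity} equals $\sum_m (\TC^{-w_0\lambda} : \Delta^{-\mu}_{\tNC}\langle m\rangle) \vv^m$.

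It then remains to establish the identity
\[ \sum_m (\TC^{-w_0\lambda} : \Delta^{-\mu}_{\tNC}\langle m\rangle) \vv^m \;=\; \sum_{\nu \in \XB^+} (\mathsf{T}(\lambda) : \mathsf{N}(-w_0\nu)) \cdot \MC^{-w_0\mu}_\nu(\vv^{-2}). \]
To prove it, I would expand $[\TC^{-w_0\lambda}]$ along its graded costandard filtration in the exotic highest weight category $\ES^{\GB \times \Gm}(\tNC)$, and translate back to $\Tilt(\GB)$ via $\overline{\mathsf{S}}_\FM$: ungraded costandard multiplicities $(\TC^{-w_0\lambda} : \nabla^{\sigma}_{\tNC})$ correspond to $(\mathsf{T}(\lambda) : \mathsf{N}(-w_0\nu))$ for the appropriate matching of $\sigma$ with $\nu$ coming from~\cite{mr}. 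The remaining transition coefficients are then the graded pairings $\sum_k \dim\Hom(\Delta^{-\mu}_{\tNC}, \nabla^{\sigma}_{\tNC} \langle k\rangle)\, \vv^k$, which~\cite{mr} identifies with $\MC^{-w_0\mu}_\nu(\vv^{-2})$ as the modular incarnation of the Brylinski--Kostant filtration formula. Assembling these ingredients gives the desired identity. The main obstacle is precisely this last step: identifying the graded standard--costandard pairing in the exotic heart with Lusztig's $q$-analogue in the modular setting, for which I would rely on the computations in~\cite{mr} rather than redoing them here.
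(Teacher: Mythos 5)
Your construction of $\overline{\mathsf{S}}_\FM$ is essentially the paper's route, but the stalk computation contains a genuine error. You assert that for $\mu \in \XB^+$ the Iwahori orbit $\Gr_\mu$ is open in $\Gr^\mu$, so that $\dim \Gr_\mu = \dim \Gr^\mu$. Under the conventions of this paper ($\BD$ is the Borel whose roots are the \emph{negative} coroots of $\BB$, and $\ID$ is the corresponding Iwahori), the open $\ID$-orbit in $\Gr^\mu$ is $\Gr_{w_0\mu}$, not $\Gr_\mu$; already for $\GB=\mathrm{SL}_2$ and $\mu = n\varpi$ one has $\dim\Gr_\mu = n-1$ while $\dim\Gr^\mu = \dim\Gr_{w_0\mu} = n$. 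Consequently your substitution introduces an untracked factor $\vv^{\dim\Gr^\mu - \dim\Gr_\mu}$, and — more seriously — Theorem~\ref{thm:main}\eqref{it:main-thm-ch} applied on the orbit $\Gr_\mu$ produces the multiplicities $(\TC^{-w_0\lambda} : \Delta^{-\mu}_{\tNC}\langle m\rangle)$, whereas the formula~\eqref{eqn:formula-stalks-parity} requires $(\TC^{-w_0\lambda} : \Delta^{-w_0\mu}_{\tNC}\langle m\rangle)$: these are multiplicities of \emph{different} standard objects, and their generating series are not related by a simple power of $\vv$. The correct move (the one the paper makes) is to use $\GD(\mathscr{O})$-constructibility to transport the stalk at $L_\mu$ to the open orbit $\Gr_{w_0\mu}$, where the dimension normalizations match and Theorem~\ref{thm:main}\eqref{it:main-thm-ch} directly yields $(\TC^{-w_0\lambda} : \Delta^{-w_0\mu}_{\tNC}\langle k\rangle)$; one then quotes the formula for $\ch_\Delta(\mathsf{T}(-w_0\lambda)\otimes\OC_{\tNC})$ from~\cite[Proposition~4.6]{mr} and finishes with $(\mathsf{T}(-w_0\lambda):\mathsf{M}(\nu)) = (\mathsf{T}(\lambda):\mathsf{N}(-w_0\nu))$.

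Two further points. First, your final step is misdescribed: by~\eqref{eqn:Hom-Delta-nabla} the graded pairing $\sum_k \dim\Hom(\Delta^{-\mu}_{\tNC},\nabla^{\sigma}_{\tNC}\langle k\rangle)\vv^k$ is just $\delta_{\sigma,-\mu}$, so it cannot produce Lusztig's $q$-analogue; what~\cite[Proposition~4.6]{mr} supplies is the graded \emph{standard multiplicity} of $\mathsf{T}(\lambda)\otimes\OC_{\tNC}$ in terms of weight multiplicities and $\MC^\chi_\nu$, not a standard--costandard pairing. Second, the normalization $\overline{\mathsf{S}}_\FM(\EC^\lambda)\cong\mathsf{T}(\lambda)$ does not come for free: since $\TC^{-w_0\lambda}\cong\mathsf{T}(-w_0\lambda)\otimes\OC_{\tNC}$ by~\cite[Corollary~4.8]{mr}, the naive composition sends $\EC^\lambda$ to $\mathsf{T}(-w_0\lambda)$, and one must also verify that $V\mapsto V\otimes\OC_{\tNC}$ is fully faithful on $\Tilt(\GB)$; the paper fixes the normalization by composing with the autoequivalence of $\PParity_{(\GD(\mathscr{O}))}(\Gr,\FM)$ induced by $g\mapsto g^{-1}$, a step your proposal omits.
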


In case $p=0$, $\mathsf{M}(\lambda)$ and $\mathsf{T}(\lambda)$ both coincide with the simple $\GB$-module with highest weight $\lambda$, so that $\bigl( \mathsf{T}(\lambda) : \mathsf{M}(\nu) \bigr)=\delta_{\lambda, \nu}$. On the other hand we have $\EC^\lambda=\IC \CC(\Gr^\lambda, \underline{\FM})$. By~\cite{kl}, the dimensions of the stalks of this perverse sheaf can be expressed in terms of Kazhdan--Lusztig polynomials for the affine Weyl group $\Waff$ of $\GB$. Hence in this case, \eqref{eqn:formula-stalks-parity} provides a geometric proof of the relation between affine Kazhdan--Lusztig polynomials and Lusztig's $q$-analogue, conjectured in~\cite{lusztig} and proved by different methods in~\cite{ka}.

Once it is known that the parity sheaves $\EC^\lambda$ are perverse, as remarked in~\cite[Proposition~3.3]{jmw2}, it follows that $\mathsf{S}_\FM(\EC^\lambda) \cong \mathsf{T}(\lambda)$ (where $\mathsf{S}_\FM$ is the geometric Satake equivalence, see~\S\ref{ss:mv-conjecture}). Hence one can obtain a different construction of an equivalence $\overline{\mathsf{S}}_\FM$ by simply restricting $\mathsf{S}_\FM$.
In this setting, a sketch of a different proof of~\eqref{eqn:formula-stalks-parity} is given in~\cite[Remark~4.2]{jmw2}.

One can also apply our results to describe the (equivariant) cohomology of spherical parity sheaves and their costalks (see \S \ref{ss:proof-cohomology} for the proof).

\begin{prop}
\label{prop:cohomology}
\begin{enumerate}
\item
\label{it:cohom}
For any $\lambda \in \XB^+$, there exist isomorphisms of graded vector spaces, resp.~of graded $\HM^\bullet_{\ID}(\mathrm{pt}; \FM)$-modules,
\[
\HM^\bullet(\Gr, \EC^\lambda) \cong \mathsf{T}(\lambda), \qquad \HM_{\ID}^\bullet(\Gr, \EC^\lambda) \cong \mathsf{T}(\lambda) \otimes \HM^\bullet_{\ID}(\mathrm{pt}; \FM),
\]
where the grading on $\mathsf{T}(\lambda)$ is obtained from the $\Gm$-action through the cocharacter given by the sum of the positive coroots.
\item
\label{it:cohom-stalks}
For any $\lambda,\mu \in \XB^+$, there exist isomorphisms of graded vector spaces, resp.~of graded $\HM^\bullet_{\ID}(\mathrm{pt}; \FM)$-modules,
\begin{align*}
\HM^{\bullet-\dim(\Gr^{\mu})}(\imath_\mu^! \EC^\lambda) &\cong \bigl( \mathsf{T}(\lambda) \otimes \Gamma(\tNC, \OC_{\tNC}(-w_0 \mu)) \bigr)^\GB, \\
\HM_{\ID}^{\bullet-\dim(\Gr^{\mu})}(\imath_\mu^! \EC^\lambda) &\cong \bigl( \mathsf{T}(\lambda) \otimes \Gamma(\tgg, \OC_{\tgg}(-w_0 \mu)) \bigr)^\GB,
\end{align*}
where in both cases $\mathsf{T}(\lambda)$ is in degree $0$, the global sections are equipped with the grading induced by the $\Gm$-actions on $\tNC$ and $\tgg$, and the $\HM^\bullet_{\ID}(\mathrm{pt}; \FM)$-action on $( \mathsf{T}(\lambda) \otimes \Gamma(\tgg, \OC_{\tgg}(-w_0 \mu)))^\GB$ is induced by the natural morphism $\tgg \to \mathrm{Lie}(\TB) = \Spec(\HM^\bullet_{\ID}(\mathrm{pt}; \FM))$.
\end{enumerate}
\end{prop}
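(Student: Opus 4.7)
The plan is to transfer the cohomology computations on the constructible side to computations on the coherent side via the equivalences $\Phi$ and $\Psi$ of Theorems~\ref{thm:equivalence-tNC} and~\ref{thm:equivalence-tgg}. Recall that for $\lambda \in \XB^+$, $\EC^\lambda = \EC_{w_0\lambda}$ corresponds under $\Phi$ (resp.~$\Psi$) to the tilting object $\TC^{-w_0\lambda} \in \Db \Coh^{\GB \times \Gm}(\tNC)$ (resp.~$\tTC^{-w_0\lambda} \in \Db \Coh^{\GB \times \Gm}(\tgg)$).

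The first and crucial step is to establish that the functor $\HM^\bullet_{\ID}(\Gr, -)$, viewed as landing in graded modules over $S := \HM^\bullet_{\ID}(\mathrm{pt}; \FM) \cong \mathcal{O}(\tg)$, corresponds under $\Psi$ to the derived global sections functor $R\Gamma(\tgg, -)$ (landing in graded $\GB \times \Gm$-equivariant $S$-modules via the natural map $\tgg \to \tg$), and similarly for $\HM^\bullet(\Gr, -)$ versus $R\Gamma(\tNC, -)$ under $\Phi$. This identification is the main technical point: it can be checked by constructing a natural transformation between the two functors (coming from the Kostant--Whittaker reduction underlying the construction of $\Psi$ sketched in~\S\ref{ss:outline}) and verifying it is an isomorphism on a generating collection, e.g.~on the standard objects $\dmix_{\ID,\lambda}$ and $\Delta^\lambda_{\tgg}$.

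Granted this identification, part~\eqref{it:cohom} reduces to computing $R\Gamma(\tgg, \tTC^{-w_0\lambda})$ as a graded $\GB \times \Gm$-equivariant $S$-module. One uses the costandard filtration of $\tTC^{-w_0\lambda}$ by $\nabla^\nu_{\tgg}\langle m\rangle$, together with the identifications $R\Gamma(\tgg, \nabla^\nu_{\tgg}) \cong \mathsf{N}(-w_0\nu) \otimes_\FM S$ (concentrated in degree $0$, as should follow from the constructions in~\cite{mr}) and the $\Ext^i$-vanishings in the exotic heart, to conclude that $R\Gamma(\tgg, \tTC^{-w_0\lambda}) \cong \mathsf{T}(\lambda) \otimes_\FM S$ with the claimed $2\rhov$-grading (matched via the highest-weight filtration multiplicities). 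The non-equivariant statement follows by specializing at $0 \in \tg$ via the restriction functor $\Lder i^*$ for $i\colon \tNC \hookrightarrow \tgg$, which on the constructible side corresponds to $-\otimes^L_S \FM$.

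For part~\eqref{it:cohom-stalks}, I would interpret the costalk functor at $L_\mu$ in terms of the line bundle $\OC_{\tgg}(-w_0\mu)$. Specifically, using the compatibility of $\Psi$ with the geometric affine braid group action of~\cite{riche, br} on the coherent side (corresponding to translation by $z^\mu$ on the constructible side), one identifies $\HM^{\bullet-\dim\Gr^\mu}_{\ID}(\imath_\mu^! \EC^\lambda)$ with $\bigl(\Gamma(\tgg, \tTC^{-w_0\lambda} \otimes \OC_{\tgg}(-w_0\mu))\bigr)^\GB$; a projection-formula argument, valid because $\tTC^{-w_0\lambda}$ restricts to a tilting $\GB$-module on each fibre of $\tgg \to \tg$, then yields the stated isomorphism with $\bigl(\mathsf{T}(\lambda) \otimes \Gamma(\tgg, \OC_{\tgg}(-w_0\mu))\bigr)^\GB$, and the $\tNC$-version follows by base change to $0 \in \tg$. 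The main obstacle throughout is Step~1: the identification of hypercohomology with coherent global sections is not formal, but should emerge transparently from the Kostant--Whittaker construction of $\Psi$ whose very definition matches equivariant cohomology on the constructible side with global sections on the coherent side.
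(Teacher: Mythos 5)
Your overall strategy---transferring everything to the coherent side via $\Phi$ and $\Psi$ and exploiting $\TC^\lambda \cong \mathsf{T}(\lambda)\otimes\OC_{\tNC}$, $\tTC^\lambda\cong\mathsf{T}(\lambda)\otimes\OC_{\tgg}$---is the paper's, but your Step~1 is false, and the computation built on it fails. The functor on $\Db\Coh^{\GB\times\Gm}(\tgg)$ corresponding to $\HM^\bullet_{\ID}(\Gr,-)$ under $\Psi$ is \emph{not} $R\Gamma(\tgg,-)$: it is the Kostant--Whittaker reduction $\kappa$, i.e.\ restriction to the Kostant section $\tSC\cong\tg^*$. (This is not something to be verified afterwards by a natural transformation; it is how the equivalence is built, by matching $\HM^\bullet_{\ID}$ on Bott--Samelson parity complexes with $\kappa$ on Bott--Samelson coherent objects through the Soergel category $\BSalg$.) Your own test case already fails at $\nu=0$: $\nabla^0_{\tgg}=\OC_{\tgg}$ and $\Gamma(\tgg,\OC_{\tgg})\cong\OC(\gg^*\times_{\tg^*/W}\tg^*)$, a huge $\OC(\tg^*)$-module, whereas $\mathsf{N}(0)\otimes_\FM \OC(\tg^*)=\OC(\tg^*)$; so $R\Gamma(\tgg,\nabla^\nu_{\tgg})\not\cong\mathsf{N}(-w_0\nu)\otimes\OC(\tg^*)$ and no isomorphism of functors of the kind you posit can exist. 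The source of the confusion is that $\OC_{\tgg}=\Psi(\nmix_{\ID,0})$ corresponds to the \emph{skyscraper} at the base point, not to $\underline{\FM}_{\Gr}$, so $\Hom(\OC_{\tgg},-)$ computes a costalk, not total cohomology. With $\kappa$ in place, part~\eqref{it:cohom} is immediate from $\kappa(\mathsf{T}(\lambda)\otimes\OC_{\tgg})\cong\mathsf{T}(\lambda)\otimes\OC(\tg^*)$ together with the autoequivalence exchanging $\EC^\lambda$ and $\EC^{-w_0\lambda}$; no costandard-filtration argument is needed, and for literal global sections it would give the wrong answer.

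For part~\eqref{it:cohom-stalks} your target formula is right but the mechanism is off: the invariants $(-)^\GB$ do not arise from a projection formula applied to a global-sections functor, and no ``translation by $z^\mu$'' is involved. The correct route is that $\HM^{\bullet-\dim(\Gr^\mu)}(\imath_\mu^!\EC^\lambda)$ is, by adjunction, the graded $\Hom$ from the standard object $\dmix_{-\mu}$ (suitably shifted) to $\EC^\mix_{-\lambda}$; applying $\Phi$ turns this into $\Hom_{D^{\GB\times\Gm}(\tNC)}(\Delta^\mu_{\tNC},\mathsf{T}(\lambda)\otimes\OC_{\tNC}\langle -m\rangle)$, which equals $\bigl(\mathsf{T}(\lambda)\otimes\Gamma(\tNC,\OC_{\tNC}(-w_0\mu))_m\bigr)^\GB$ by the formula for morphisms out of $\Delta^\mu_{\tNC}$ in~\cite{mr}. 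Here $(-)^\GB$ appears because a $\Hom$-space in the equivariant category is the $\GB$-invariants of the corresponding non-equivariant one. The $\ID$-equivariant statement is proved identically with $\Psi$, $\tgg$ and $\Delta^\mu_{\tgg}$, using $\tTC^\lambda\cong\mathsf{T}(\lambda)\otimes\OC_{\tgg}$ (which itself requires an argument, not just the analogous statement on $\tNC$).
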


The first isomorphism in~\eqref{it:cohom} can be alternatively deduced from the fact that $\mathsf{S}_\FM(\EC^\lambda) \cong \mathsf{T}(\lambda)$, see the comments after Proposition~\ref{prop:equiv-parity-tilt}. Then one can deduce the second isomorphism using~\cite[Lemma~2.2]{yz}. In case $p=0$, the isomorphisms in~\eqref{it:cohom-stalks} are proved in~\cite[Corollary~2.4.5 and Proposition~8.7.1]{gr}.\footnote{The conventions for the choice of positive roots and the normalization of line bundles are different in~\cite{gr}, which explains the formal difference between the two formulas. A more important difference is that in~\cite{gr} we construct an \emph{explicit and canonical} isomorphism, while here we only claim the \emph{existence} of such an isomorphism.}

%------------------------------------------------------------------
\subsection{Comparison with~\cite{arider2}}
\label{ss:comparison}
%------------------------------------------------------------------

As mentioned already, in~\cite{arider2} Achar and Rider have obtained a different proof of Theorem~\ref{thm:equivalence-tNC}. Their methods are quite different from ours, and closer to the methods used in~\cite{abg}. In fact, while for us most of the work is required on the ``coherent side,'' in their approach the most difficult constructions appear on the ``topological side.'' Moreover, the exotic t-structure does not play any role in the construction of their equivalence.\footnote{The relation between their equivalence and the exotic t-structure is studied in~\cite[Section~8]{arider2}, but only after the equivalence is constructed.} Another important difference is that their arguments rely on the geometric Satake equivalence, while ours do not.

The assumptions in~\cite{arider2} are also different from ours: in fact they assume that the field $\FM$ is such that
any spherical parity sheaf with coefficients in $\FM$ on $\Gr$ is perverse.  Hence Corollary~\ref{cor:parity-perverse} allows to extend the validity of their results to all good characteristics.

%------------------------------------------------------------------
\subsection{A key lemma}
\label{ss:key-lemma}
%------------------------------------------------------------------

A very important role in our arguments is played by the following easy lemma (see~\cite[Lemma~3.3.3]{by}).

\begin{lem}
\label{lem:key}
Let $k$ be an integral domain, and let $\KM$ be its field of fractions. Let $A$ and $B$ be $k$-algebras, and let $\varphi \colon A \to B$ be an algebra morphism. If the morphism $\KM \otimes_k \varphi \colon \KM \otimes_k A \to \KM \otimes_k B$ is an isomorphism, then the ``restriction of scalars'' functor
\[
\Mod(B) \to \Mod(A)
\]
is fully-faithful on modules which are $k$-free.
\end{lem}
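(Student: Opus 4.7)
The statement is a standard localization argument, and the only subtlety is keeping track of where $k$-freeness is used. Here is how I would organize the proof.

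The functor $\Mod(B)\to\Mod(A)$ is always faithful (any $B$-linear map is automatically $A$-linear via restriction along $\varphi$), so only fullness needs argument. Fix $B$-modules $M$ and $N$ that are free as $k$-modules, and let $f\colon M\to N$ be an $A$-linear morphism. My goal is to show that $f(bm)=bf(m)$ for every $b\in B$ and every $m\in M$.

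The plan is to pass to the field of fractions. Applying $\KM\otimes_k(-)$ to $M$ and $N$ produces modules over $\KM\otimes_k B$, and in particular over $\KM\otimes_k A$ via $\KM\otimes_k\varphi$. The induced $k$-linear map $1\otimes f\colon \KM\otimes_k M\to \KM\otimes_k N$ is $\KM\otimes_k A$-linear by functoriality of extension of scalars. Now by hypothesis $\KM\otimes_k\varphi$ is an isomorphism of $\KM$-algebras, so the $\KM\otimes_k A$-module and $\KM\otimes_k B$-module structures on $\KM\otimes_k M$ and $\KM\otimes_k N$ agree under this identification; hence $1\otimes f$ is in fact $\KM\otimes_k B$-linear. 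In particular, for any $b\in B$ and $m\in M$,
\[
1\otimes f(bm)=(1\otimes b)\cdot(1\otimes f)(1\otimes m)=1\otimes bf(m)
\]
inside $\KM\otimes_k N$, which is to say $1\otimes(f(bm)-bf(m))=0$.

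To conclude, I would invoke the $k$-freeness of $N$: since $N$ is a free $k$-module and $k\hookrightarrow\KM$ is flat (indeed injective on any $k$-free module), the canonical map $N\to\KM\otimes_k N$ is injective. Therefore $f(bm)-bf(m)=0$ in $N$, completing the proof of $B$-linearity. The only potential pitfall is forgetting to use $k$-freeness of $N$ (freeness of $M$ is not actually required, although it costs nothing to assume both): without it, torsion elements of $N$ could lie in the kernel of $N\to\KM\otimes_k N$ and the argument would break down. There is no deeper obstacle; the statement is essentially a one-paragraph localization check.
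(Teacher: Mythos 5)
Your proof is correct, and it is the standard localization argument that the paper itself delegates to the cited reference (\cite[Lemma~3.3.3]{by}): reduce to the generic fibre using surjectivity of $\KM \otimes_k \varphi$, then descend via the injectivity of $N \to \KM \otimes_k N$ for $N$ free (indeed torsion-free) over $k$. Your remark that only the target module's freeness is actually used is accurate.
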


We learnt how useful this observation can be in~\cite{by}. It also plays an important role in~\cite{modrap1}. In practice, this lemma can be used 
when we are interested in modules over a $k$-algebra $B$ which is ``complicated'' or not well understood, but for which we have a ``simplified model'' $A$ which is ``isomorphic to $B$ up to torsion'', i.e.~such that we have a $k$-algebra morphism $A \to B$ inducing an isomorphism $\KM \otimes_k A \simto \KM \otimes_k B$. 

For instance, on the ``constructible side'' of our proof of Theorem~\ref{thm:main}, working over $\FM$ directly we would have to consider the equivariant cohomology algebra $\HM^\bullet_{\ID}(\Gr;\FM)$. Using the results in~\cite{yz} (which rely on the geometric Satake equivalence) one can obtain a description of this algebra in terms of the distribution algebra of the universal centralizer associated with $\GB$. This algebra is a rather complicated object; in particular it is not finitely generated over $\FM$. On the other hand, if $\KM$ is a field of characteristic zero, the algebra $\HM^\bullet_{\ID}(\Gr;\KM)$ has a nice description (in terms similar to the description of the cohomology of a finite flag variety as a coinvariant algebra) which follows from~\cite[Theorem~1]{bf}.\footnote{This simpler description is related to the preceding one using the fact that, over a field of characteristic zero, the distribution algebra of a smooth group scheme is isomorphic to the enveloping algebra of its Lie algebra.} Hence, instead of working over $\FM$, our main constructions are done with coefficients in a finite localization $\RG$ of $\ZM$. In this case, we do not have a very explicit description of the algebra $\HM^\bullet_{\ID}(\Gr;\RG)$. But the same construction as the one used by Bezrukavnikov--Finkelberg in the case of $\KM$ provides a ``simplified model'' for this algebra, which is isomorphic to $\HM^\bullet_{\ID}(\Gr;\RG)$ ``up to $\RG$-torsion,'' see~\S\ref{ss:cohomology-Gr}. Since the modules over this algebra that we want to consider are all $\RG$-free, using Lemma~\ref{lem:key} we can replace the ``complicated algebra'' $\HM^\bullet_{\ID}(\Gr;\RG)$ by its ``simplified model'' without loosing any information.

Given this strategy, we also have to work over $\RG$ on the ``coherent side.'' Most of the complications appearing in this setting are treated in~\cite{br} and in~\cite{riche3}.

%------------------------------------------------------------------
\subsection{Contents}
%------------------------------------------------------------------

In Section~\ref{sec:preliminaries} we prove some preliminary technical results, and introduce some objects which will play an important role in the later sections. In Section~\ref{sec:constructible-side} we describe the left-hand side in~\eqref{eqn:equiv-main} in terms of our Soergel bimodules. In Section~\ref{sec:KW-reduction} we define the Kostant--Whittaker reduction functor, building on the main results of~\cite{riche3}. In Section~\ref{sec:tilting-KW} we use this functor to describe the right-hand side in~\eqref{eqn:equiv-main} in terms of Soergel bimodules. Finally, in Section~\ref{sec:proofs} we prove the results stated in the introduction.

%------------------------------------------------------------------
\subsection{Some notation and conventions}
%------------------------------------------------------------------

All rings in this paper are tacitly assumed to be commutative and unital.

If $A$ is an algebra, we denote by $\Mod(A)$ the category of left $A$-modules.
If $A$ is a $\ZM$-graded algebra, we denote by $\Modgr(A)$ the category of $\ZM$-graded left $A$-modules. We denote by $\langle 1 \rangle$ the shift of the grading defined by $(M \langle 1 \rangle)_n=M_{n-1}$. If $M$ is a free graded $A$-module of finite rank, we denote by $\grk_A(M) \in \ZM[\vv,\vv^{-1}]$ its graded rank, with the convention that $\grk_A(A\langle n \rangle)=\vv^{n}$.

If $X$ is a scheme, we denote by $\OC_X$ its structure sheaf, and by $\OC(X)$ the global sections of $\OC_X$.
If $Y$ is a scheme and $X$ is a $Y$-scheme, we denote by $\Omega_{X/Y}$, or simply $\Omega_X$, the sheaf of relative differentials, and by $\Omega(X/Y)$, or simply $\Omega(X)$, its global sections.

If $k$ is a ring and $V$ is a free $k$-module of finite rank, by abuse we still denote by $V$ the affine $k$-scheme $\Spec \bigl( \mathrm{S}_k(\Hom_k(V,k)) \bigr)$, where $\mathrm{S}$ denotes the symmetric algebra.

If $\RG$ is a finite localization of $\ZM$, we define a \emph{geometric point} of $\RG$ to be an algebraically closed field whose characteristic $p \geq 0$ is not invertible in $\RG$. If $\FM$ is a such a geometric point, then there exists a unique algebra morphism $\RG \to \FM$, so that tensor products on the form $\FM \otimes_\RG (-)$ make sense.

If $X$ is a Noetherian scheme and $\AB$ is an affine $X$-group scheme, we denote by $\Rep(\AB)$ the category of representations of $\AB$ which are coherent as $\OC_X$-modules. If $\AB$ is flat over $X$, then this category is abelian.

At various points in the paper we consider certain schemes and affine group schemes that could be defined over various base rings.  When it is not clear from context, we will use a subscript to specify the base ring.  For example, we write $X_\ZM$, resp.~$X_\FM$, to denote the $\ZM$-scheme $X$, resp.~its base change to $\FM$.  In order to avoid notational clutter, we will affix a single subscript $k$ to some constructions like fiber products, for example writing $(X \times_Z Y)_k$ and $\OC(X \times_Z Y)_k$ rather than $X_k \times_{Z_k} Y_k$ and $\OC(X_k \times_{Z_k} Y_k)$, or categories of equivariant coherent sheaves, writing $\Coh^G(X)_k$ instead of $\Coh^{G_k}(X_k)$. We will also use the abbreviation
$D^G(X)_k := \Db \Coh^{G_k} (X_k)$.

%------------------------------------------------------------------
\subsection{Acknowledgements}
%------------------------------------------------------------------

Soon after we started working on this paper, we were informed by
Pramod Achar and Laura Rider that they were also working on a similar project, which was completed in~\cite{arider2}. We thank them for keeping us informed of their progress, and for useful discussions. We also thank the referee for helpful comments.

%%%%%%%%%%%%%%%%
%%%%%%%%%%%%%%%%
\section{Preliminary results}
\label{sec:preliminaries}
%%%%%%%%%%%%%%%%
%%%%%%%%%%%%%%%%

%------------------------------------------------------------------
\subsection{Reminder on parity sheaves}
\label{ss:reminder-parity}
%------------------------------------------------------------------

Let $X$ be a complex algebraic variety, and
\[
X=\bigsqcup_{s \in \SSC} X_s
\]
be a finite (algebraic) stratification of $X$ into affine spaces. For any $s \in \SSC$, we denote by $i_s \colon X_s \hookrightarrow X$ the embedding. If $k$ is a Noetherian ring of finite global dimension, we denote by $\Db_\SSC(X,k)$ the $\SSC$-constructible derived category of sheaves of $k$-modules on $X$. Note that any $k$-local system on any stratum $X_s$ is constant. If $k'$ is a $k$-algebra which is also Noetherian and of finite global dimension, we will denote by
\[
k'(-) \colon \Db_\SSC(X,k) \to \Db_\SSC(X,k')
\]
the derived functor of extension of scalars.

The following is a slight generalization of~\cite[Definition~2.4]{jmw} (where it is assumed that $k$ is a complete local principal ideal domain).

\begin{defi}
\label{def:parity}
An object $\FC$ in $\Db_\SSC(X,k)$ is called $*$-\emph{even}, resp.~$!$-\emph{even}, if $\HC^n(i_s^*\FC)$, resp.~$\HC^n(i_s^! \FC)$, vanishes if $n$ is odd, and is a projective $k$-local system if $n$ is even. It is called \emph{even} if it is both $*$-even and $!$-even.

An object $\FC$ is called $*$-\emph{odd}, resp.~$!$-\emph{odd}, resp.~\emph{odd} if $\FC[1]$ is $*$-even, resp.~$!$-even, resp.~even.

Finally, an object is called a \emph{parity complex} if it is isomorphic to the direct sum of an even and an odd object.
\end{defi}

We will denote by $\Parity_\SSC(X,k)$ the full subcategory of $\Db_\SSC(X,k)$ consisting of parity complexes; it is stable under direct sums and direct summands. It is clear also that if $k'$ is a $k$-algebra satisfying the assumptions above, then the functor $k'(-)$ restricts to a functor from $\Parity_\SSC(X,k)$ to $\Parity_\SSC(X,k')$ (which we denote similarly).\footnote{This remark uses the property that the functor $k'(-)$ commutes with the functors $i_s^!$ and $i_s^*$. For $i_s^*$, this follows from~\cite[Proposition~2.6.5]{ks}. For $i_s^!$, we observe that~\cite[Proposition~3.1.11]{ks} provides a morphism of functors $k'(-) \circ i_s^! \to i_s^! \circ k'(-)$. To prove that this morphism is an isomorphism on $\Db_{\SSC}(X,k)$, it suffices to remark that $\Db_{\SSC}(X,k)$ is generated (as a triangulated category) by objects of the form $(i_{t})_*(\underline{M}_{X_t})$ where $t \in \SSC$ and $M$ is a finitely generated flat $k$-module, and that our morphism is clearly an isomorphism on such objects.}
%we observe that~\cite[Proposition~3.1.11]{ks} provides a morphism of functors $k'(-) \circ f^! \to f^! \circ k'(-)$. To prove that this morphism is invertible, it suffices to prove that its image under the forgetful functor $\mathrm{For}^{k'}_k$ to sheaves of $k$-modules is so. However, it follows from~\cite[Proposition~3.1.7]{ks} that $f^!$ commutes with $\mathrm{For}^{k'}_k$; we conclude using the fact that the composition $\mathrm{For}^{k'}_k \circ k'(-)$ can be computed using a finite projective resolution of $k'$ over $k$.}

We now assume that a connected complex algebraic group $A$ acts on $X$, stabilizing each stratum $X_s$. We will assume that $\HM^n_A(\mathrm{pt}; k)$ vanishes if $n$ is odd, and is projective over $k$ otherwise.
One can consider the $\SSC$-constructible $A$-equivariant derived category $\Db_{A,\SSC}(X,k)$ in the sense of Bernstein--Lunts. We let $\For \colon \Db_{A,\SSC}(X,k) \to \Db_{\SSC}(X,k)$ be the forgetful functor. More generally, if $A' \subset A$ is a closed subgroup satisfying the same assumption as $A$, we have a forgetful functor $\For \colon \Db_{A,\SSC}(X,k) \to \Db_{A',\SSC}(X,k)$. We say that an object $\FC$ in $\Db_{A,\SSC}(X,k)$ is a \emph{parity complex} if $\For(\FC)$ is a parity complex in the sense of Definition~\ref{def:parity}. We denote by $\Parity_{A,\SSC}(X,k)$ the subcategory of $\Db_{A,\SSC}(X,k)$ consisting of parity complexes. For $k'$ a $k$-algebra satisfying the assumptions above, we also have an ``extension of scalars'' functor $k'(-)$ in this setting. If $\SSC$ is the stratification by $A$-orbits, we will omit it from the notation.

We refer to~\cite{jmw} for the main properties of parity sheaves, in the case $k$ is a complete local principal ideal domain. Here we will only need the properties below, which follow from~\cite[Proposition~2.6]{jmw}. (Note that the proof in \emph{loc}.~\emph{cit}.~does not use the running assumptions on the ring of coefficients.)

\begin{lem}
\label{lem:properties-parity}
\begin{enumerate}
\item
\label{it:parity-cohomology}
If $\FC$ is in $\Parity_{A, \SSC}(X,k)$, then $\HM^\bullet_{A}(X, \FC)$
is a finitely generated projective module over $\HM^\bullet_A(\mathrm{pt}; k)$. If $A' \subset A$ is a closed subgroup as above, then the natural morphism
\[
\HM^\bullet_{A'}(\mathrm{pt}; k) \otimes_{\HM^\bullet_A(\mathrm{pt}; k)} \HM^\bullet_{A}(X, \FC) \to \HM^\bullet_{A'}(X, \For(\FC))
\]
is an isomorphism. If $k'$ is a $k$-algebra as above, then the natural morphism
\[
k' \otimes_k \HM^\bullet_{A}(X, \FC) \to \HM^\bullet_{A}(X, k'(\FC))
\]
is also an isomorphism.
\item
\label{it:morph-parity-For}
If $\FC$, $\GC$ are in $\Parity_{A, \SSC}(X,k)$, then $\Hom^\bullet_{\Db_{A,\SSC}(X, k)}(\FC, \GC)$ is a finitely generated projective $\HM^\bullet_A(\mathrm{pt}; k)$-module.
If $A' \subset A$ is a closed subgroup as above, then the natural morphism
\[
\HM^\bullet_{A'}(\mathrm{pt}; k) \otimes_{\HM^\bullet_A(\mathrm{pt}; k)} \Hom^\bullet_{\Db_{A,\SSC}(X, k)}(\FC, \GC) \to \Hom^\bullet_{\Db_{A',\SSC}(X, k)}(\For(\FC), \For(\GC))
\]
is an isomorphism.
If $k'$ is a $k$-algebra as above, then the natural morphism
\[
k' \otimes_k \Hom^\bullet_{\Db_{A,\SSC}(X, k)}(\FC, \GC) \to \Hom^\bullet_{\Db_{A,\SSC}(X, k')}(k'(\FC), k'(\GC))
\]
is also an isomorphism.\qed
\end{enumerate}
\end{lem}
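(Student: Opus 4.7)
The plan is to follow the standard reduction along the stratification: reduce everything to computations on individual strata, where both the sheaves and the equivariant cohomology ring have controlled parity so that all relevant long exact sequences split.

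For part~\ref{it:parity-cohomology} I would argue by induction on the number of strata. If $U$ is a locally closed union of strata and $X_s \hookrightarrow U$ is an open stratum with closed complement $Z$, the standard recollement triangle together with the functor $\HM^\bullet_A(U,-)$ yields a long exact sequence of equivariant cohomology groups whose outer terms are $\HM^\bullet_A(X_s, i_s^! \FC)$ and $\HM^\bullet_A(Z, \FC|_Z)$. In the base case $U = X_s$, the stratum is an affine space and the hypothesis that $\FC$ is parity forces $i_s^* \FC$ (and $i_s^! \FC$) to decompose as a direct sum of its cohomology objects placed in even (or odd) degrees and valued in finitely generated projective $k$-local systems; standard computations then give equivariant cohomology groups of the form $\HM^\bullet_A(\mathrm{pt}; k) \otimes_k M$ for finitely generated projective $k$-modules $M$, concentrated in a single parity. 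In the induction step, the parity hypothesis on $\FC$ combined with the assumed even concentration of $\HM^\bullet_A(\mathrm{pt}; k)$ places the two outer terms of the long exact sequence in opposite parities; the connecting maps therefore vanish, and the middle term decomposes as a direct sum of the outer ones, preserving finite generation and projectivity over $\HM^\bullet_A(\mathrm{pt}; k)$.

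The two base-change claims in~\ref{it:parity-cohomology} follow by the same induction: both restriction of equivariance $A \supset A'$ and extension of coefficients $k \to k'$ commute with the recollement triangle and with the single-stratum computation, which is a tensor product of a trivial local system with $\HM^\bullet$ of a point. Projectivity (hence flatness) of the outer terms over $\HM^\bullet_A(\mathrm{pt}; k)$ preserves exactness of the long exact sequence after applying $\HM^\bullet_{A'}(\mathrm{pt};k) \otimes_{\HM^\bullet_A(\mathrm{pt};k)} -$ or $k' \otimes_k -$, so the five lemma propagates the isomorphism from the base case through the induction.

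Part~\ref{it:morph-parity-For} proceeds in parallel, replacing $\HM^\bullet_A(U,-)$ by $\Hom^\bullet_{\Db_{A,\SSC}(U,k)}(\FC|_U, \GC|_U)$. The recollement triangle induces a long exact sequence of graded $\Hom$-spaces whose outer terms are computed on $X_s$ (involving $i_s^* \FC$ and $i_s^! \GC$) and on $Z$. Over a single stratum the relevant group reduces to $\HM^\bullet_A(\mathrm{pt}; k) \otimes_k \Hom_k(M, N)$ for finitely generated projective $k$-modules $M$, $N$ with an appropriate parity shift controlled by $\dim X_s$, and is again finitely generated projective over $\HM^\bullet_A(\mathrm{pt}; k)$. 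The parity hypotheses on both $\FC$ and $\GC$ put the outer terms in opposite parities, the long exact sequence splits, and the base-change statements follow exactly as in part~\ref{it:parity-cohomology}. The main subtle point, and what I expect to be the principal obstacle, is the parity bookkeeping here: one must verify that the $*$-parity of $\FC$ combined with the $!$-parity of $\GC$ and the dimension shift coming from the $!$-direction align correctly with the even concentration of $\HM^\bullet_A(\mathrm{pt}; k)$ so that the outer terms really do end up in opposite parities. Once this is checked, all the long exact sequences degenerate and the five-lemma arguments go through without further difficulty.
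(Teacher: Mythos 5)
Your argument is correct and is essentially the proof the paper relies on: the lemma is not proved in the text but quoted from \cite[Proposition~2.6]{jmw}, whose proof is exactly this induction over the strata using the recollement triangles, with the connecting maps killed by the opposite parities of the outer terms, and with the base-change statements propagated by projectivity and the five lemma. The parity bookkeeping you single out as the potential obstacle does work out: on a single stratum (an affine space) the $*$-restriction of $\FC$ and the $!$-restriction of $\GC$ are sums of evenly shifted projective local systems, so the graded $\Hom$ is $\HM^\bullet_A(\mathrm{pt};k) \otimes_k \Hom_k(-,-)$ concentrated in even degrees, and no further dimension shift interferes.
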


\begin{remark}
In~\cite{jmw}, the ring $k$ is assumed to be a principal ideal domain, so every finitely generated projective $k$-module is free.
\end{remark}

We will also use the following observation.

\begin{lem}
\label{lem:parity-indec-For}
Assume that $k$ is complete local.
If $\FC$ is an object of $\Parity_{A,\SSC}(X,k)$ which is indecomposable, then its image $\For(\FC)$ in $\Parity_{\SSC}(X,k)$ is also indecomposable.
\end{lem}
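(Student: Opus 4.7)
The plan is to translate indecomposability into a locality statement about endomorphism rings, and then use Lemma~\ref{lem:properties-parity} to reduce the equivariant version to the non-equivariant one.

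First, I would verify that under the running assumptions, both $\Parity_{A,\SSC}(X,k)$ and $\Parity_{\SSC}(X,k)$ are Krull--Schmidt, with an object being indecomposable iff its endomorphism ring is local (in the noncommutative sense that the quotient by its Jacobson radical is a division ring). Indeed, by Lemma~\ref{lem:properties-parity}\eqref{it:morph-parity-For}, $\Hom^\bullet_{\Db_{A,\SSC}(X,k)}(\FC,\FC)$ is finitely generated over the non-negatively graded ring $\HM^\bullet_A(\pt;k)$ whose graded pieces are finitely generated over $k$, hence its degree-zero part $\End_{\Db_{A,\SSC}(X,k)}(\FC)$ is a finitely generated $k$-module. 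Since $k$ is complete local, this endomorphism ring is semiperfect, so the standard Krull--Schmidt dichotomy applies; the argument is identical for $\For\FC$.

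Next, I would specialize Lemma~\ref{lem:properties-parity}\eqref{it:morph-parity-For} to the (trivially closed) subgroup $\{1\} \subset A$, which satisfies the required cohomological hypothesis since $\HM^\bullet_{\{1\}}(\pt;k) = k$ is concentrated in degree zero. This produces an isomorphism of graded algebras
\[
k \otimes_{\HM^\bullet_A(\pt;k)} \Hom^\bullet_{\Db_{A,\SSC}(X,k)}(\FC,\FC) \xrightarrow{\sim} \Hom^\bullet_{\Db_{\SSC}(X,k)}(\For\FC,\For\FC).
\]
Reading off the degree-zero component exhibits $\End_{\Db_{\SSC}(X,k)}(\For\FC)$ as a quotient of $\End_{\Db_{A,\SSC}(X,k)}(\FC)$ through the forgetful map. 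Since $\FC$ is indecomposable, the source is a local ring; as any proper two-sided ideal of a local ring is contained in its Jacobson radical, any nonzero quotient of a local ring is again local. Hence $\End_{\Db_{\SSC}(X,k)}(\For\FC)$ is local, and $\For\FC$ is indecomposable.

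I do not anticipate a genuine obstacle: the only subtle point is the verification that $\End_{\Db_{A,\SSC}(X,k)}(\FC)$ is finitely generated over $k$, which underpins the Krull--Schmidt machinery. This is a mild consequence of the finite generation statement in Lemma~\ref{lem:properties-parity}\eqref{it:morph-parity-For} together with finite generation of each $\HM^n_A(\pt;k)$ over $k$ (a standard property for the groups $A$ considered in this paper). Everything else in the argument is purely formal ring theory.
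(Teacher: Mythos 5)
Your proof is correct and follows essentially the same route as the paper: both reduce to showing $\End_{\Db_{\SSC}(X,k)}(\For(\FC))$ is local by exhibiting it, via Lemma~\ref{lem:properties-parity}\eqref{it:morph-parity-For}, as a (nonzero) quotient of the local ring $\End_{\Db_{A,\SSC}(X,k)}(\FC)$. The only difference is that you re-derive the Krull--Schmidt property from finite generation over the complete local ring $k$, whereas the paper simply cites \cite[Remark~2.1]{jmw} for it.
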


\begin{proof}
Under our assumptions on $k$, $\Parity_{A,\SSC}(X,k)$ and $\Parity_{\SSC}(X,k)$ are Krull--Schmidt categories (see~\cite[Remark~2.1]{jmw}). Hence to prove the indecomposability of $\For(\FC)$ it suffices to prove that $\End_{\Db_{\SSC}(X, k)}(\For(\FC))$ is a local ring. By Lemma~\ref{lem:properties-parity} the ring
$\Hom^\bullet_{\Db_{\SSC}(X, k)}(\For(\FC), \For(\FC))$
is a quotient of $\Hom^\bullet_{\Db_{A, \SSC}(X, k)}(\FC, \FC)$. Hence $\End_{\Db_{\SSC}(X, k)}(\For(\FC))$ is a quotient of $\End_{\Db_{A, \SSC}(X, k)}(\FC)$. Since the latter is local, the former is also local, which finishes the proof.
\end{proof}

Assume now that $k$ is an integral domain. Recall that, in this setting, the \emph{rank} of a projective $k$-module $M$, denoted $\rk_k(M)$, is the dimension of the $\mathrm{Frac}(k)$-vector space $\mathrm{Frac}(k) \otimes_k M$.
To $X$ one can associate the free $\ZM[\vv,\vv^{-1}]$-module $\MM_X$ with basis $(\mathbf{e}_s : s \in \SSC)$ indexed by $\SSC$, and to any $\FC$ in $\Parity_{A,\SSC}(X,k)$ the elements
\begin{align*}
\ch_X^*(\FC)=\sum_{\substack{s \in \SSC \\ j \in \ZM}} \rk_k \bigl( \HM^{j-\dim(X_s)}(X_s, i_s^*\FC) \bigr) \cdot \vv^j \cdot \mathbf{e}_s, \\
\ch_X^!(\FC)=\sum_{\substack{s \in \SSC\\ j \in \ZM}} \rk_k \bigl( \HM^{j-\dim(X_s)}(X_s, i_s^!\FC) \bigr) \cdot \vv^{-j} \cdot \mathbf{e}_s.
\end{align*}
We also define a bilinear form $\langle- , - \rangle$ on $\MM_X$, with values in $\ZM[\vv,\vv^{-1}]$, by setting
\[
\langle \vv^n \mathbf{e}_s, \vv^m \mathbf{e}_t \rangle = \vv^{-n-m} \delta_{s,t}.
\]
The following result is also an easy application of~\cite[Proposition~2.6]{jmw}, whose proof is left to the reader.

\begin{lem}
\label{lem:grk-Hom-parity}
Assume that $k$ is an integral domain.
\begin{enumerate}
\item
\label{it:ch-parity-D}
If $\FC$ is in $\Parity_{A, \SSC}(X,k)$, then its Grothendieck--Verdier dual $\DM_X(\FC)$ is also in $\Parity_{A, \SSC}(X,k)$, and moreover we have
\[
\ch_X^!(\DM_X(\FC)) = \ch_X^*(\FC).
\]
\item
\label{it:dim-Hom-parity-ch}
If $\FC$, $\GC$ are in $\Parity_{A, \SSC}(X,k)$, then we have
\[
\pushQED{\qed}
\grk_{\HM^\bullet_A(\mathrm{pt}; k)} \bigl( \Hom^\bullet_{\Db_{A,\SSC}(X, k)}(\FC, \GC) \bigr) = \langle \ch_X^*(\FC), \ch_X^!(\GC) \rangle.
\qedhere
\popQED
\]
\end{enumerate}
\end{lem}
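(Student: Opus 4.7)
For part~\eqref{it:ch-parity-D}, the plan is to invoke the Grothendieck--Verdier duality isomorphisms $i_s^* \circ \DM_X \cong \DM_{X_s} \circ i_s^!$ and $i_s^! \circ \DM_X \cong \DM_{X_s} \circ i_s^*$. Since each stratum $X_s \cong \AM^{d_s}$ is smooth of dimension $d_s$, its dualizing complex is $\underline{k}_{X_s}[2d_s]$, and for a finitely generated projective $k$-module $M$ one has $\DM_{X_s}(\underline{M}_{X_s}) \cong \underline{M^\vee}_{X_s}[2d_s]$ with $M^\vee = \Hom_k(M, k)$ projective of the same rank. The key technical step is to show that, after forgetting equivariance, $i_s^! \FC$ splits in $\Db_\SSC(X_s, k)$ as $\bigoplus_n \underline{M_n^s}_{X_s}[-n]$ with $M_n^s := \HC^n(i_s^! \FC)$: since $X_s$ is simply connected the local systems are constant, and since the $M_n^s$ are projective (so $\Ext^{>0}$ out of them vanishes) and concentrated in a single parity, the truncation triangles of $i_s^! \FC$ split one degree at a time. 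Applying $\DM_{X_s}$ termwise then shows $i_s^* \DM_X \FC$ is parity, and a comparison of ranks degree by degree yields $\ch_X^!(\DM_X \FC) = \ch_X^*(\FC)$.

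For part~\eqref{it:dim-Hom-parity-ch}, I would first reduce to the non-equivariant setting. Specializing Lemma~\ref{lem:properties-parity} to $A' = \{1\}$ supplies an isomorphism $k \otimes_{\HM^\bullet_A(\mathrm{pt};k)} \Hom^\bullet_{\Db_{A,\SSC}(X,k)}(\FC,\GC) \cong \Hom^\bullet_{\Db_\SSC(X,k)}(\For \FC, \For \GC)$, and since the left-hand side is graded projective over the non-negatively graded connected ring $\HM^\bullet_A(\mathrm{pt};k)$, it is graded free; hence its graded $\HM^\bullet_A$-rank matches the graded $k$-rank of the right-hand side. Using bilinearity of both sides of the desired identity under direct sums and shifts, I may further assume that $\FC$ and $\GC$ are both even. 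I then induct on the number of strata via the recollement triangle $(i_s)_* i_s^!\GC \to \GC \to j_* j^* \GC \xrightarrow{+1}$ with $X_s$ closed and $j$ the inclusion of its open complement: applying $\Hom^\bullet(\FC, -)$ yields a long exact sequence whose outer terms are concentrated in even degrees by induction, so the odd-degree connecting morphism must vanish and the graded ranks add.

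The remaining single-stratum computation is then straightforward. Splitting $i_s^* \FC$ and $i_s^! \GC$ as in part~\eqref{it:ch-parity-D}, using the contractibility of $X_s$ and the identity $R\sheafHom(\underline{M}_{X_s}, \underline{N}_{X_s}) \cong \underline{\Hom_k(M,N)}_{X_s}$ for projective $M$, one finds the stratum's contribution to the graded $k$-rank of the Hom to be $\sum_{n,n'} \vv^{n'-n} \rk_k(M_n^s) \rk_k(N_{n'}^s)$, which is exactly the $\mathbf{e}_s$-coefficient of $\langle \ch_X^*(\FC), \ch_X^!(\GC) \rangle$. The main obstacle in the whole argument is the stratumwise splitting of parity complexes: this is the one step that really uses projectivity of the cohomology sheaves, and everything else is standard recollement and bookkeeping.
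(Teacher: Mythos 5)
Your argument is correct and is exactly what the paper intends: the authors leave the proof to the reader as ``an easy application of [JMW, Proposition~2.6],'' and your two ingredients --- the stratumwise splitting of $i_s^*\FC$ and $i_s^!\GC$ into shifted constant sheaves with projective stalks (which also gives part~(1) via $i_s^*\circ\DM_X\cong\DM_{X_s}\circ i_s^!$), and the recollement induction in which parity kills the connecting maps so that graded ranks add --- are precisely the content of that proposition, with the remaining computation being the bookkeeping you carry out. One small caveat: over a general integral domain $k$ a finitely generated graded projective module over $\HM^\bullet_A(\mathrm{pt};k)$ need not be graded free (the degree-zero part is $k$, not a field), but your isomorphism $k\otimes_{\HM^\bullet_A(\mathrm{pt};k)}\Hom^\bullet_{\Db_{A,\SSC}(X,k)}(\FC,\GC)\cong\Hom^\bullet_{\Db_{\SSC}(X,k)}(\For\FC,\For\GC)$ already gives the equality of graded ranks in the sense the paper uses (ranks after tensoring with $\mathrm{Frac}(k)$), so this does not affect the proof.
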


\begin{remark}
In the paper we will also use the straightforward generalization of the notions and results of this subsection to the case of ind-varieties, in the setting of~\cite[\S 2.7]{jmw}.
\end{remark}

%------------------------------------------------------------------
\subsection{Equivariant coherent sheaves}
\label{ss:equiv-coh}
%------------------------------------------------------------------

Let $X$ be a Noetherian scheme, and let $\HB$ be an affine group scheme over $X$. Consider a Noetherian $X$-scheme $Y$ endowed with an action of $\HB$, i.e.~we are given a morphism $a\colon \HB \times_X Y \to Y$ of $X$-schemes which satisfies the natural compatibility property with the group structure on $\HB$. Let also $p \colon \HB \times_X Y \to Y$ be the projection. Then
one can define the category $\Coh^\HB(Y)$ of $\HB$-equivariant coherent sheaves on $Y$ as the category of pairs $(\FC, \phi)$ where $\FC$ is a coherent sheaf on $Y$ and $\phi \colon p^*(\FC) \simto a^*(\FC)$ is an isomorphism which satisfies the usual cocyle condition. If $\HB$ is flat over $X$, then this category is abelian.

We define the ``universal stabilizer'' as the fiber product
\begin{equation}
\label{eqn:stabilizer}
\SB:= Y \times_{Y \times_X Y} (\HB \times_X Y)
\end{equation}
where the morphism $Y \to Y \times_X Y$ is the diagonal embedding (which we will denote by $\Delta$), and the morphism $\HB \times_X Y \to Y \times_X Y$ is $a \times p$. Then $\SB$ is an affine group scheme over $Y$, and the natural morphism $\SB \to \HB \times_X Y$ is a closed embedding of $Y$-group schemes.

The goal of this subsection is to recall the construction of a faithful functor
\begin{equation}
\label{eqn:stabilizer-functor}
\Coh^{\HB}(Y) \to \Rep(\SB)
\end{equation}
whose composition with the forgetful functor $\Rep(\SB) \to \Coh(Y)$ coincides with the natural forgetful functor $\For^\HB_Y \colon \Coh^\HB(Y) \to \Coh(Y)$.

We begin with a remark. Let $X'$ be a Noetherian $X$-scheme. Then one can consider the $X'$-group scheme $\HB':=\HB \times_X X'$, which acts naturally on $Y':=Y \times_X X'$. If $f \colon Y' \to Y$ is the natural morphism, then we remark that the usual pullback functor $f^*\colon \Coh(Y) \to \Coh(Y')$ induces a functor from $\Coh^\HB(Y)$ to $\Coh^{\HB'}(Y')$, which we also denote $f^*$. In particular, using this construction for the morphism $Y \to X$, we obtain a functor
\[
p_1^* \colon \Coh^{\HB}(Y) \to \Coh^{\HB \times_X Y}(Y \times_X Y),
\]
where $Y \times_X Y$ is considered as a $Y$-scheme through the second projection.

Using this functor we can define~\eqref{eqn:stabilizer-functor} as the composition
\[
\Coh^{\HB}(Y) \xrightarrow{p_1^*} \Coh^{\HB \times_X Y}(Y \times_X Y) \to \Coh^{\SB}(Y \times_X Y) \xrightarrow{\Delta^*} \Coh^{\SB}(Y) = \Rep(\SB),
\]
where the second arrow is the restriction functor, and $\SB$ acts trivially on $Y$. (Note that $\Delta$ is an $\SB$-equivariant morphism of $Y$-schemes.)

More generally,
if $Z \subset Y$ is a (not necessarily $\HB$-stable) closed subscheme, one can consider the restriction $\SB_Z:= Z \times_Y \SB$, and the composition 
\[
\Coh^\HB(Y) \to \Rep(\SB_Z)
\]
of~\eqref{eqn:stabilizer-functor} with restriction to $Z$. The composition of this functor with the forgetful functor $\Rep(\SB_Z) \to \Coh(Z)$ is the composition $\Coh^\HB(Y) \xrightarrow{\For^\HB_Y} \Coh(Y) \to \Coh(Z)$, where the second functor is restriction.

%------------------------------------------------------------------
\subsection{Deformation to the normal cone}
\label{ss:def-normal-cone}
%------------------------------------------------------------------

Let 
%$k$ be a ring, let 
$A$ be a ring, and let
%$X$ be an affine\footnote{The construction of the deformation to the normal cone makes sense for non-affine schemes, but only the affine case will be used in this paper.} $k$-scheme, and let $Y \subset X$ be a closed subscheme, defined by an ideal 
$I \subset A$ be an ideal. We define the associated \emph{deformation to the normal cone}\footnote{Our terminology comes from geometry: if $X$ is the spectrum of $A$ and $Y \subset X$ the closed subscheme associated with $I$, then the spectrum of $\DNC_I(A)$ is (a slight variant of) the usual deformation to the normal cone of $X$ along $Y$ considered e.g.~in~\cite[Chap.~5]{fulton}.} $\DNC_I(A)$ as the subalgebra of $A[\hbar^{\pm 1}]$ generated by $A[\hbar]$ together with the elements of the form $\hbar^{-1} f$ for $f \in I$.
%Then the \emph{deformation to the normal cone} $N_X(Y)$ is, by definition (see e.g.~\cite[Chap.~5]{fulton}), the spectrum of the subalgebra of $\OC(X)[\hbar^{\pm 1}]$ generated by $\OC(X)[\hbar]$ together with the elements of the form $\hbar^{-1} f$ for $f \in I$. There exists a canonical morphism $N_X(Y) \to X \times \AM^1$, associated with the inclusion $\OC(X)[\hbar] \hookrightarrow \OC(N_X(Y))$.

\begin{lem}
\label{lem:def-normal-cone-BC}
Let $A'$ be a ring, and let $A \to A'$ be a flat ring morphism. We denote by $I'$ the ideal of $A'$ generated by the image of $I$. Then there exists a canonical isomorphism of $k$-algebras
\[
A' \otimes_A \DNC_I(A) \simto \DNC_{I'}(A').
\]
\end{lem}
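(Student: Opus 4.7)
The plan is to unwind the definition of $\DNC_I(A)$ as an explicit $A$-submodule of $A[\hbar^{\pm 1}]$ (the extended Rees algebra of the $I$-adic filtration) and then apply flat base change termwise.

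First I would construct the map. The composition $A \to A' \hookrightarrow A'[\hbar^{\pm 1}]$ extends uniquely to an $A$-algebra morphism $A[\hbar^{\pm 1}] \to A'[\hbar^{\pm 1}]$ sending $\hbar$ to $\hbar$. This morphism sends $A[\hbar]$ into $A'[\hbar]$ and sends each $\hbar^{-1}f$ (for $f \in I$) to $\hbar^{-1}f \in \DNC_{I'}(A')$ (since the image of $f$ lies in $I'$). Hence it restricts to an $A$-algebra morphism $\DNC_I(A) \to \DNC_{I'}(A')$, which by the universal property of tensor products yields an $A'$-algebra morphism
\[
\phi \colon A' \otimes_A \DNC_I(A) \longrightarrow \DNC_{I'}(A').
\]

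Next I would describe both sides explicitly. Unraveling the definition, one sees that
\[
\DNC_I(A) \;=\; \bigoplus_{n \geq 0} A \cdot \hbar^n \;\oplus\; \bigoplus_{n > 0} I^n \cdot \hbar^{-n}
\]
as an $A$-submodule of $A[\hbar^{\pm 1}]$, and similarly for $\DNC_{I'}(A')$ with $A'$ and $I'$ replacing $A$ and $I$. Applying the exact functor $A' \otimes_A (-)$ (by flatness) to this decomposition gives
\[
A' \otimes_A \DNC_I(A) \;=\; \bigoplus_{n \geq 0} A' \cdot \hbar^n \;\oplus\; \bigoplus_{n > 0} (A' \otimes_A I^n) \cdot \hbar^{-n}.
\]

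The key step is then to identify $A' \otimes_A I^n$ with $(I')^n$ inside $A'$. Since $I^n \hookrightarrow A$ is injective and $A \to A'$ is flat, the map $A' \otimes_A I^n \to A' \otimes_A A = A'$ is injective with image equal to the $A'$-submodule generated by $I^n$, i.e.\ $A' \cdot I^n$. Because $I'$ is by definition the ideal of $A'$ generated by the image of $I$, one has $(I')^n = A' \cdot I^n$; hence $A' \otimes_A I^n \simto (I')^n$.

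Combining the two descriptions shows that $\phi$ matches the $n$-th graded summand of the source with the $n$-th graded summand of the target for every $n \in \ZM$, hence is an isomorphism of $A'$-algebras. There is no real obstacle here beyond setting up the extended-Rees-algebra description correctly; the only place where flatness of $A \to A'$ is used is in identifying $A' \otimes_A I^n$ with $(I')^n$, and this is exactly where the hypothesis is needed (without flatness one would only get a surjection).
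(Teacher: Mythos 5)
Your proof is correct and follows essentially the same route as the paper's: the paper tensors the inclusion $\DNC_I(A) \hookrightarrow A[\hbar^{\pm 1}]$ with $A'$, uses flatness to get injectivity, and asserts that the image is $\DNC_{I'}(A')$. Your graded decomposition into the pieces $A\hbar^n$ ($n \geq 0$) and $I^n \hbar^{-n}$ ($n>0$), together with the identification $A' \otimes_A I^n \simeq (I')^n$, is simply an explicit verification of that last assertion.
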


\begin{proof}
The 
%morphism $X' \to X$ defines an algebra morphism $\OC(X) \to \OC(X')$, and our assumption implies that $\OC(X')$ is a flat $\OC(X)$-module. It follows that the 
natural morphism
\[
A' \otimes_{A} \DNC_I(A) \to A' \otimes_{A} \bigl( A[\hbar^{\pm 1}] \bigr) = A'[\hbar^{\pm 1}]
\]
is injective. The image of this morphism is clearly $\DNC_{I'}(A')$, which provides the desired isomorphism.
\end{proof}

%------------------------------------------------------------------
\subsection{Two lemmas on graded modules over polynomial rings}
\label{ss:nakayama}
%------------------------------------------------------------------

Let $k$ be a ring, and let $V$ be a free $k$-module of finite rank. We denote by $A$ be the symmetric algebra of $V$ over $k$, which we consider as a $\ZM$-graded $k$-algebra with the generators $V \subset A$ in degree $2$. We consider the trivial $A$-module $k$ as a graded module concentrated in degree $0$.

\begin{lem}
\label{lem:nakayama}
\begin{enumerate}
\item 
\label{it:nakayama-Tor1}
Let $M$ be a graded $A$-module which is bounded below for the internal grading, and assume that
\[ 
k \otimes_A M \text{ is graded free over $k$} \quad \text{and} \quad \Tor_{1}^{A}(k,M)=0.
\]
Then $M$ is graded free over $A$.
\item 
\label{it:nakayama-complex}
Let $M$ be an object of the bounded derived category of graded $A$-modules, and assume that the cohomology modules of $M$ are all bounded below for the internal grading. Assume that the complex $k\lotimes_{A} M$ is concentrated in degree $0$, and graded free over $k$. Then $M$ is concentrated in degree $0$, and is graded free over $A$.
\end{enumerate}
\end{lem}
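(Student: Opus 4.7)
For part~\eqref{it:nakayama-Tor1}, my plan is a direct application of graded Nakayama's lemma, which is available because $A$ is positively graded with $A_0 = k$ and augmentation ideal $A_+ = V \cdot A$: any graded $A$-module bounded below in internal grading whose reduction modulo $A_+$ vanishes must itself be zero (easy degreewise induction). I would lift a homogeneous $k$-basis of $k \otimes_A M$ to homogeneous elements of $M$; these define a morphism $\varphi \colon F \to M$ from a graded free $A$-module $F$, necessarily bounded below in internal grading, such that $k \otimes_A \varphi$ is an isomorphism. Graded Nakayama then makes $\varphi$ surjective. Its kernel $K$ is bounded below as a submodule of $F$, and the $\Tor$ long exact sequence associated with $0 \to K \to F \to M \to 0$, combined with the hypothesis $\Tor^A_1(k, M) = 0$, gives $k \otimes_A K = 0$. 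A second application of Nakayama forces $K = 0$, and hence $M \cong F$ is graded free over $A$.

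For part~\eqref{it:nakayama-complex}, I plan to argue by induction on the cohomological amplitude of $M$. The key preliminary observation is that the topmost cohomological degree $j := \max\{n : H^n(M) \ne 0\}$ of $M$ must be $0$. Indeed, applying $k \lotimes_A (-)$ to the truncation triangle $\tau_{<j} M \to M \to H^j(M)[-j] \to$ yields a long exact sequence in cohomology. The terms involving $k \lotimes_A \tau_{<j} M$ at degrees $j$ and $j+1$ vanish for degree reasons: its cohomology is computed from $\Tor^A_p(k, H^q(\tau_{<j}M))$ with $p \ge 0$ and $q < j$, and so sits in degrees $q - p < j$. The long exact sequence therefore yields an isomorphism $H^j(k \lotimes_A M) \cong k \otimes_A H^j(M)$. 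Since $H^j(M)$ is nonzero and bounded below, graded Nakayama makes the right-hand side nonzero; the hypothesis that $k \lotimes_A M$ is concentrated in cohomological degree $0$ therefore forces $j = 0$, and the same isomorphism shows that $k \otimes_A H^0(M)$ is graded free over $k$.

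With this in hand, I would lift a graded $k$-basis of $k \otimes_A H^0(M)$ to cocycles in a complex representing $M$, producing a morphism $F^0 \to M$ in $\Db(\Modgr(A))$ from a graded free $A$-module $F^0$ such that the induced map $k \otimes_A F^0 \to H^0(k \lotimes_A M)$ is an isomorphism. Set $N := \cone(F^0 \to M)$. A short chase through the long exact sequences of the triangle $F^0 \to M \to N \to F^0[1]$ and of its derived tensor product with $k$ shows on the one hand that $N$ has vanishing cohomology in degrees $\ge 0$, so that the amplitude of $N$ is strictly smaller than that of $M$ (unless $M$ already has amplitude $0$), and on the other hand that $k \lotimes_A N = 0$. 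By the inductive hypothesis (which is trivially satisfied for the zero complex), $N \cong 0$, whence $F^0 \simto M$; this exhibits $M$ as concentrated in cohomological degree $0$ and graded free over $A$. The base of the induction (amplitude $0$) reduces to part~\eqref{it:nakayama-Tor1}, since $k \lotimes_A M$ being concentrated in degree $0$ implies $\Tor^A_1(k, H^0(M)) = 0$. The main obstacle is the preliminary degree analysis pinning down $j = 0$ and the derived-category bookkeeping needed to produce $N$ with strictly smaller amplitude and with $k \lotimes_A N = 0$; once these are set up, both the inductive step and the base case fall out.
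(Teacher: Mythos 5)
Your proof is correct and follows essentially the same route as the paper: part \eqref{it:nakayama-Tor1} is the standard graded Nakayama plus $\Tor$-sequence argument, and part \eqref{it:nakayama-complex} rests on the same two ingredients as the paper's proof, namely the degree analysis of $k\lotimes_A(-)$ applied to truncation triangles (which pins the top cohomology to degree $0$ — the step the paper dismisses as ``an easy argument'') and a reduction to part \eqref{it:nakayama-Tor1}. The only difference is organizational: you run an induction on amplitude via the cone of a lifted free module $F^0 \to M$, whereas the paper concludes in one step by applying \eqref{it:nakayama-Tor1} to $\mathsf{H}^0(M)$ and then killing $\tau_{<0}M$ directly; both are valid.
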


\begin{proof}
\eqref{it:nakayama-Tor1} Let $M'$ be a free graded $A$-module and let $f \colon M' \to M$ be a morphism such that the induced morphism $k \otimes_A M' \to k \otimes_{A} M$ is an isomorphism. (Such an $M'$ exists by our first assumption.) By the graded Nakayama lemma, $f$ is surjective. Let $M''$ be its kernel, and consider the exact sequence
\[
\Tor_{1}^{A}(k,M) \to k \otimes_{A} M'' \to k \otimes_{A} M' \to k \otimes_{A} M \to 0.
\]
This exact sequence shows, using our second assumption, that $k \otimes_{A} M''=0$. By the graded Nakayama lemma again, we deduce that $M''=0$, which finishes the proof.

\eqref{it:nakayama-complex} An easy argument using the graded Nakayama lemma shows that $M$ is concentrated in non-positive degrees. Now, consider the truncation triangle
\[
\tau_{<0} M \to M \to \mathsf{H}^0(M) \xrightarrow{+1}.
\]
Applying the functor $k \lotimes_{A}(-)$ we obtain a distinguished triangle
\[
k \lotimes_{A} \tau_{<0} M \to k \lotimes_{A} M \to k \lotimes_{A}\mathsf{H}^0(M) \xrightarrow{+1}.
\]
The associated long exact sequence of cohomology implies that $\Tor_{1}^{A}(k,\mathsf{H}^0(M))=0$, hence $\mathsf{H}^0(M)$ is free by~\eqref{it:nakayama-Tor1}. Considering again the long exact sequence we obtain that $k \lotimes_{A} \tau_{<0} M=0$, hence that $\tau_{<0} M=0$ by the graded Nakayama lemma, which finishes the proof.
\end{proof}

Now we assume 
%that $k$ is an integral domain, and 
%let $V_k$ be a free $k$-module of finite rank. We assume 
that we are given an open $k$-subscheme $V' \subset V$.
%finite collection $\HG$ of ``hyperplanes'' in $V_\RG$, i.e.~submodules $H \subset V_\RG$ such that $V_\RG/H$ is free of rank one over $\RG$. 
Then, for any algebraically closed field $\FM$ and any ring morphism $k \to \FM$, we set 
\[
V_\FM := \Spec(\FM) \times_{\Spec(k)} V, \qquad
V'_\FM := \Spec(\FM) \times_{\Spec(k)} V'.
%\smallsetminus \bigcup_{H \in \HG} (\FM \otimes_\RG H).
\]
%We consider $\OC(V_k)$ as a $\ZM$-graded $k$-algebra by assigning the elements in the subspace $\Hom_k(V_k,k)$ degree $2$.
%We also set 
%%$A_\FM := \FM \otimes_k A$ and 
%$A'_\FM:=\OC(V'_\FM)$.

\begin{lem}
\label{lem:morphism-grk}
Let $M$, $N$ be free graded $A$-modules of finite rank, and let $\varphi \colon M \to N$ be a morphism of graded $A$-modules. Assume that $\grk_{A}(M)=\grk_{A}(N)$, and that the morphism
\[
\varphi'_\FM \colon \OC(V'_\FM) \otimes_{A} M \to \OC(V'_\FM) \otimes_{A} N
\]
induced by $\varphi$ is injective for any algebraically closed field $\FM$ and any ring morphism $k \to \FM$. Then $\varphi$ is an isomorphism.
\end{lem}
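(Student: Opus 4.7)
The plan is to reduce the statement to showing that $\det(\varphi)$, computed in a suitable basis, is a unit in $k$, and then to extract this from the injectivity hypotheses by a direct determinant computation. First, since $\grk_A(M) = \grk_A(N)$, I will choose homogeneous bases $(e_1,\dots,e_n)$ of $M$ and $(f_1,\dots,f_n)$ of $N$ arranged so that $\deg(e_i) = \deg(f_i)$ for every $i$. In these bases $\varphi$ is represented by a square matrix $(a_{ij})$ with $a_{ij}\in A$ homogeneous of degree $\deg(e_j)-\deg(f_i)$. The permutation expansion
\[
d := \det(a_{ij}) = \sum_{\sigma} \mathrm{sgn}(\sigma)\, \prod_i a_{i,\sigma(i)}
\]
then consists of terms of total degree $\sum_i(\deg(e_{\sigma(i)})-\deg(f_i))=0$, so $d \in A_0 = k$. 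By Cramer's rule, $\varphi$ is an isomorphism iff $d \in A^\times$; inspecting the degree-$0$ component of any hypothetical inverse in $A$ shows that, because $d$ already lies in $k$, this is equivalent to $d \in k^\times$.

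Next, I show $d \in k^\times$ by contradiction. If $d$ is not a unit, fix a maximal ideal $\mathfrak{m} \subset k$ containing $d$, and let $\FM$ be an algebraic closure of $k/\mathfrak{m}$, equipped with the induced ring map $k \to \FM$; the image of $d$ in $\FM$ is then zero. In the intended setup, $V'_\FM$ is a nonempty open subscheme of the irreducible affine space $V_\FM$, so $\OC(V'_\FM)$ is a domain. The matrix representing $\varphi'_\FM$ in the bases induced from $(e_i)$ and $(f_j)$ is the image of $(a_{ij})$, and its determinant is the image of $d$ in $\OC(V'_\FM)$, which vanishes. A square matrix with zero determinant over a domain cannot define an injection between free modules of equal rank: over the fraction field it has a nonzero kernel vector, which after clearing denominators lies in the ring. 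This contradicts the assumed injectivity of $\varphi'_\FM$, forcing $d \in k^\times$.

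The main subtlety is essentially bookkeeping: the crucial observation is that $\det(\varphi)$ lies in degree $0$ thanks to the equality $\grk_A(M)=\grk_A(N)$, and for the argument in the second step to yield a nontrivial constraint at every maximal ideal of $k$ one needs $V'_\FM$ to be nonempty for each residue field $\FM = \overline{k/\mathfrak{m}}$. This will be automatic in the applications appearing in the paper, where $V'$ is the complement of an explicit union of hyperplanes defined over the base ring.
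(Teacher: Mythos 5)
Your proof is correct and follows essentially the same strategy as the paper's: both reduce the lemma to showing that a determinant lying in $k$ avoids every maximal ideal, which is extracted from the injectivity hypothesis at the geometric points $\FM = \overline{k/\mathfrak{m}}$ together with the fact that $\OC(V'_\FM)$ embeds an integral scheme's functions (equivalently, is a domain). The only difference is bookkeeping: the paper first deduces that $\FM \otimes_k \varphi$ is injective, hence bijective on each finite-dimensional graded piece, and then takes the degree-$i$ determinants over $k$, whereas you take a single $A$-linear determinant and use the graded-rank equality to place it in $A_0 = k$; both arguments (like the lemma itself) tacitly require $V'_\FM$ to be nonempty, a point you rightly flag and which holds in the paper's application.
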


\begin{proof}
%We prove the claim by induction on the rank of $M$ (and $N$). First we note that our assumptions imply that the morphism $\varphi_\FM : \FM \otimes_k M \to \FM \otimes_k N$ is injective
%
%Let $i_0=\min\{i \in \ZM \mid M_i \neq 0\}$. We claim that the morphism $\varphi_{i_0} \colon M_{i_0} \to N_{i_0}$ induced by $\varphi$ is an isomorphism. Indeed, since $M_{i_0}$ and $N_{i_0}$ are free of the same rank over $k$ (by our first assumption), to prove this it suffices to prove that the induced morphism $\FM \otimes_k M_{i_0} \to \FM \otimes_k N_{i_0}$ is injective for any algebraically closed field $\FM$ and any ring morphism $k \to \FM$. This fact follows from our second assumption.
%
%Once we know that $\varphi_{i_0}$ is an isomorphism, it follows that $\varphi$ induces an isomorphism
%\[
%\OC(V_k) \cdot M_{i_0} \simto \OC(V_k) \cdot N_{i_0}.
%\]
%By induction, the morphism
%\[
%M/(\OC(V_k) \cdot M_{i_0}) \to N/ (\OC(V_k) \cdot N_{i_0})
%\]
%induced by $\varphi$ is an isomorphism. By the five-lemma, we deduce that $\varphi$ is also an isomorphism.
Let $\varphi_\FM \colon \FM \otimes_k M \to \FM \otimes_k N$ be the morphism induced by $\varphi$. Then,
considering the commutative diagram
\[
\xymatrix@R=0.6cm{
\FM \otimes_k M \ar[d] \ar[r]^-{\varphi_\FM} & \FM \otimes_k N \ar[d] \\
\OC(V'_\FM) \otimes_{A} M \ar[r]^-{\varphi'_\FM} & \OC(V'_\FM) \otimes_{A} N
}
\]
(where the verticall arrows are injective since the morphism $V'_\FM \to V_\FM$ is an open embbeding) we see that $\varphi_\FM$ is injective, hence an isomorphism under our assumption on graded ranks. Now if $i \in \ZM$, the $k$-modules $M_i$ and $N_i$ are free of (the same) finite rank, and the determinant of the restriction $\varphi_i \colon M_i \to N_i$ of $\varphi$ (in any fixed choice of bases) does not belong to any prime ideal of $k$. Hence this determinant is invertible, proving that $\varphi$ is an isomorphism.
\end{proof}

%------------------------------------------------------------------
\subsection{Affine braid groups and associated Hecke algebras}
\label{ss:Haff}
%------------------------------------------------------------------

Let $(\XB,\Phi,{\check \XB},{\check \Phi})$ be a root datum, and let $\Phi^+ \subset \Phi$ be a system of positive roots. We will assume that ${\check \XB}/ \ZM {\check \Phi}$ has no torsion (or in other words that the connected reductive groups with root datum $(\XB,\Phi,{\check \XB},{\check \Phi})$ have a simply-connected derived subgroup). For $\alpha \in \Phi$, we denote by $\alpha^\vee$ the corresponding coroot, and by $s_\alpha$ the associated reflection.

Let $W$ be the corresponding Weyl group, and $\Waff := W \ltimes \XB$ be the associated affine Weyl group. To avoid confusions, for $\lambda \in \XB$ we denote by $t_\lambda$ the corresponding element of $\Waff$. We let $\ZM \Phi \subset \XB$ be the root lattice; then the subgroup $\Waff^\Cox:=W \ltimes (\ZM \Phi) \subset \Waff$ is a Coxeter group with generators given by reflections along walls of the fundamental dominant alcove. (In other words, we use the same conventions as in~\cite[\S 1.4]{LuAff}.)
The simple reflections which belong to $W$ will be called \emph{finite}; the ones which do not belong to $W$ will be called \emph{affine}.

Let us consider the length function $\ell \colon \Waff \to \ZM_{\geq 0}$ defined as follows: for $w \in W$ and $\lambda \in \XB$ we set
\begin{equation}
\label{eqn:length} 
\ell(w \cdot t_\lambda)=\sum_{\alpha \in \Phi^+ \cap w^{-1} (\Phi^+)} |\langle \lambda, \alpha^\vee \rangle | + \sum_{\alpha \in \Phi^+ \cap w^{-1}(-\Phi^+)}
  |1 + \langle \lambda, \alpha^\vee \rangle |.
\end{equation}
Then the restriction of $\ell$ to $\Waff^\Cox$ is the length function associated with the Coxeter structure considered above. We denote by $\O$ the subgroup of $\Waff$ consisting of elements of length $0$; it is a commutative group isomorphic to $\XB/ \ZM\Phi$ via the composition of natural maps $\O \hookrightarrow W \ltimes \XB \twoheadrightarrow \XB \twoheadrightarrow \XB/\ZM\Phi$. Moreover,
any element of $\Waff$ can be written in the form $\omega v$ for unique $\omega \in \Omega$ and $v \in \Waff^\Cox$. (For all of this, see~\cite[\S 1.5]{LuAff}.)

We will also consider the braid group $\Baff$ associated with $\Waff$. It is defined as the group generated by elements $T_w$ for $w \in \Waff$, with relations $T_{vw} = T_v T_w$ for all $v,w \in \Waff$ such that $\ell(vw)=\ell(v)+\ell(w)$. There exists a canonical surjection $\Baff \twoheadrightarrow \Waff$ sending $T_w$ to $w$. One can define (following Bernstein and Lusztig), for each $\lambda \in \XB$, an element $\theta_\lambda \in \Baff$, see e.g.~\cite[\S 1.1]{riche} for details. (This element is denoted $\overline{T}_\lambda$ in~\cite[\S 2.6]{LuAff}.) Then $\Baff$ admits a second useful presentation (usually called the \emph{Bernstein presentation}), with generators $\{T_w, \, w \in W\}$ and $\{\theta_\lambda, \, \lambda \in \XB\}$, and the following relations (where $v,w \in W$, $\lambda, \mu \in \XB$, and $\alpha$ runs over simple roots):
\begin{enumerate}
\item
$T_v T_w = T_{vw}$ \quad if $\ell(vw)=\ell(v)+\ell(w)$;
\item
$\theta_\lambda \theta_\mu = \theta_{\lambda+\mu}$;
\item
$T_{s_\alpha} \theta_\lambda = \theta_\lambda T_{s_\alpha}$ \quad if $\langle \lambda, \alpha^\vee \rangle = 0$;
\item
$\theta_\lambda = T_{s_\alpha} \theta_{\lambda-\alpha} T_{s_\alpha}$ \quad if $\langle \lambda, \alpha^\vee \rangle = 1$.
\end{enumerate}
(See~\cite{br-appendix} for a proof of this fact in the case $\XB/\ZM\Phi$ is finite; the general case is similar.)

The following lemma is proved in~\cite[Lemma~6.1.2]{riche2}.

\begin{lem}
\label{lem:affine-simple-conjugate}
For any affine simple reflection $s_0$, there exist a finite simple reflection $t$ and an element $b \in \Baff$ such that $T_{s_0} = b \cdot T_t \cdot b^{-1}$. \qed
\end{lem}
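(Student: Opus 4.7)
The statement is trivial if $s_0$ is already a finite simple reflection (take $b = 1$ and $t = s_0$), so assume $s_0$ is one of the ``extra'' affine simple reflections in $\Waff^\Cox$. The plan is to construct $b$ as a product of two types of elementary conjugators in $\Baff$.

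The first is an \emph{odd-braid move}: if $s, s' \in \Waff^\Cox$ are simple reflections joined by an edge of odd label $m$ in the affine Coxeter diagram, the braid relation $\underbrace{T_s T_{s'} T_s \cdots}_{m \text{ factors}} = \underbrace{T_{s'} T_s T_{s'} \cdots}_{m \text{ factors}}$ exhibits $T_s$ and $T_{s'}$ as conjugate in $\Baff$; for instance, when $m = 3$ one checks directly that $c = T_s T_{s'}$ satisfies $c T_s c^{-1} = T_{s'}$, and higher odd values of $m$ are analogous. The second is an $\Omega$-\emph{move}: for $\omega \in \Omega$, the identities $\ell(\omega) = \ell(\omega^{-1}) = 0$ combined with the length additivity built into~\eqref{eqn:length} yield the braid-group formulas $T_{\omega^{-1}} = T_\omega^{-1}$ and $T_\omega T_w T_\omega^{-1} = T_{\omega w \omega^{-1}}$ for any $w \in \Waff$. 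Since $\Omega$ normalizes $\Waff^\Cox$ and acts by diagram automorphisms of the affine Coxeter diagram, this conjugates any two affine simple reflections lying in the same $\Omega$-orbit.

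With these two moves in hand, the lemma reduces to the combinatorial assertion that in every irreducible affine Coxeter diagram the extra node $s_0$ is connected to some finite simple node by a zig-zag of odd-labelled edges and $\Omega$-orbit moves. A brief case check handles this: in the simply-laced affine types $\tilde A_n$ (for $n \geq 2$), $\tilde D_n$, $\tilde E_n$ every edge has label $3$, so odd moves alone suffice; in $\tilde B_n$, $\tilde F_4$, $\tilde G_2$ the node $s_0$ is already adjacent to a finite simple node by a label-$3$ edge; and in the remaining cases $\tilde A_1$ and $\tilde C_n$ --- where no odd edge is incident to $s_0$ --- the nontrivial element of $\Omega \cong \XB/\ZM\Phi$ flips the affine Coxeter diagram end-to-end and sends $s_0$ to a finite simple reflection. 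The reducible case reduces immediately to the irreducible one, since $\Waff$ and $\Baff$ decompose as direct products over the irreducible components of $\Phi$. The only delicate point is the careful derivation of the $\Omega$-conjugation formula from the defining relations of $\Baff$; once this is established, the combinatorial case analysis above completes the proof.
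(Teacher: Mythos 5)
Your argument is correct and complete, but it is worth pointing out that the paper itself contains no proof of this lemma: it is quoted from \cite[Lemma~6.1.2]{riche2}, where the argument is a short manipulation in the Bernstein presentation (one expresses $T_{s_0}$ as $\theta_{\beta}\cdot T_{s_\beta}^{-1}$ for $\beta$ the relevant highest root, and then conjugates using the relations between the $\theta_\lambda$ and the finite generators). Your route is instead purely Coxeter-combinatorial: odd-braid moves together with conjugation by the length-zero subgroup $\Omega$, followed by a type-by-type inspection of the affine diagrams. Both of your elementary moves are correctly justified --- the computation $cT_sc^{-1}=T_{s'}$ for an odd bond, and the derivation of $T_{\omega^{-1}}=T_\omega^{-1}$ and $T_\omega T_w T_\omega^{-1}=T_{\omega w\omega^{-1}}$ from length-additivity --- and the case analysis is exhaustive. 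What your approach buys is transparency about where the standing hypothesis of \S\ref{ss:Haff} enters: for $\widetilde{A}_1$ and $\widetilde{C}_n$ no odd-labelled edge meets $s_0$, so the conclusion genuinely requires a nontrivial element of $\Omega$ flipping the diagram, and such an element exists precisely because ${\check \XB}/\ZM{\check \Phi}$ is assumed torsion-free (so that $\Omega=\XB/\ZM\Phi$ surjects onto the quotient of the weight lattice of each component by its root lattice). You should make this dependence explicit, since for the adjoint form of type $C$ the statement is actually false: there $\Omega$ is trivial, and in the abelianization of the braid group of $\widetilde{C}_n$ the generators $T_{s_0}$ and $T_{s_1}$ remain independent, so no conjugation is possible. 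One cosmetic caveat: $\Baff$ need not literally decompose as a direct product over the irreducible components of $\Phi$ (the lattice $\XB$ may not split), but your conjugator only involves generators attached to the component of $s_0$ together with a single $T_\omega$, so the reduction to the irreducible case goes through regardless.
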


We define the affine Hecke algebra $\Haff$ as the quotient of the group algebra of $\Baff$ over $\ZM[\vv,\vv^{-1}]$ by the relations
\[
(T_s+\vv^{-1})(T_s-\vv)=0
\]
for all finite simple reflections $s$. (Note that the same formula for affine simple roots automatically follows by Lemma~\ref{lem:affine-simple-conjugate}.) We denote by $\HW$ the subalgebra of $\Haff$ generated by $\ZM[\vv,\vv^{-1}]$ and the elements $T_w$ for $w \in W$, and by $\MM_{\mathrm{triv}}$ the $\HW$-module which is free of rank one over $\ZM[\vv,\vv^{-1}]$, and where $T_s$ acts by multiplication by $\vv$ for each finite simple reflection $s$. 

We define the ``spherical'' right $\Haff$-module
\[
\Msph := \MM_{\mathrm{triv}} \otimes_{\HW} \Haff.
\]
We denote by $\mm_0 \in \Msph$ the element $1 \otimes 1$. For $\lambda \in \XB$, we denote by $w_\lambda$ the shortest representative in $W t_\lambda \subset \Waff$, and set $\mm_\lambda:=\mm_0 \cdot T_{w_\lambda}$. Then the elements $\mm_\lambda$, $\lambda \in \XB$, form a $\ZM[\vv,\vv^{-1}]$-basis of $\Msph$. We define a bilinear form $\langle -, - \rangle$ on $\Msph$, with values in $\ZM[\vv,\vv^{-1}]$, by setting
\[
\langle \vv^i \mm_\lambda, \vv^j \mm_\mu \rangle = \vv^{-j-i} \delta_{\lambda,\mu}.
\]

For any sequence of simple reflections $\us=(s_1, \cdots, s_r)$ and any $\omega \in \Omega$ we will consider the element
\[
\mm(\omega, \us) := \mm_0 \cdot T_{\omega} \cdot (T_{s_1} + \vv^{-1}) \cdots (T_{s_r}+\vv^{-1}) \qquad \in \Msph.
\]

\begin{remark}
Our element $T_s \in \Haff$ corresponds to the element denoted $H_s$ in~\cite{soergel-kl}, while our $\vv$ corresponds to $v^{-1}$ in~\cite{soergel-kl}.
\end{remark}

%%%%%%%%%%%%%%%%
%%%%%%%%%%%%%%%%
\section{Constructible side}
\label{sec:constructible-side}
%%%%%%%%%%%%%%%%
%%%%%%%%%%%%%%%%

%------------------------------------------------------------------
\subsection{Overview}
\label{ss:overview}
%------------------------------------------------------------------

In this section we describe the category $\Parity_{(\ID)}(\Gr, \FM)$ in terms of an appropriate category of Soergel bimodules using a ``total cohomology'' functor. Similar constructions appear in~\cite{soergel-kategorie, soergel, modrap1} for flag varieties of reductive groups, and in~\cite{by} for flag varieties of Kac--Moody groups (with coefficients in characteristic zero).
The main difference with these works is that in our case the cohomology algebra is much more complicated. To overcome this difficulty we work over a certain ring $\RG$ of integers, which allows to replace this cohomology algebra by a ``simplified model,'' see~\S\ref{ss:key-lemma} for a discussion of this idea.

After setting the notation in~\S\ref{ss:notation-constructible}, we introduce our ``Soergel bimodules'' in~\S\S\ref{ss:deformations}--\ref{ss:algebraic-BS}. In~\S\ref{ss:cohomology-Gr} we study the equivariant cohomology of $\Gr$. In~\S\ref{ss:topological-BS} we explain how the category $\Parity_{(\ID)}(\Gr, \FM)$ can be recovered from a certain category of (equivariant) ``Bott--Samelson parity sheaves'' over $\RG$. Then in~\S\ref{ss:cohomology-functors} we introduce our ``total cohomology functor,'' and in~\S\S\ref{ss:equivalence}--\ref{ss:proof-H-ff} we prove that this functor induces an equivalence between ``Bott--Samelson'' parity sheaves and Soergel bimodules.
Finally, in~\S\ref{ss:graded-ranks-parity} we derive a formula for the graded rank of the space of morphisms between certain Soergel bimodules, which will play an important role in a proof on the ``coherent side'' (see~\S\ref{ss:equivalence-tilting}).

%------------------------------------------------------------------
\subsection{Notation}
\label{ss:notation-constructible}
%------------------------------------------------------------------

In this section we let $\GD$ be a connected reductive algebraic group over $\CM$, with a chosen Borel subgroup $\BD \subset \GD$ and maximal torus $\TD \subset \BD$. We let ${\check \XB}:=X^*(\TD)$ be the lattice of characters of $\TD$, and ${\check \Phi} \subset {\check \XB}$ be the roots of $\GD$. We also fix a finite localization $\RG$ of $\ZM$. In the whole section we will make the following assumptions:
\begin{enumerate}
\item
${\check \XB} / \ZM {\check \Phi}$ has no torsion (or in other words the connected reductive groups which are Langlands dual to $\GD$ have a simply-connected derived subgroup);
\item
all the torsion primes of the ``refined root system'' ${\check \Phi} \subset {\check \XB}$ (in the sense of~\cite[Section~5]{demazure}) are invertible in $\RG$.
\end{enumerate}
In the later sections we will apply our results in the case $\GD$ is a product of simple groups (of adjoint type) and general linear groups; in this case the first condition is automatic, and the second condition means that the prime numbers which are not very good for some simple factor of $\GD$ are invertible in $\RG$.

Let $\mathscr{O}:=\CM[ \hspace{-1pt} [z] \hspace{-1pt} ]$ and $\mathscr{K}:=\CM( \hspace{-1pt} (z) \hspace{-1pt} )$. We consider the affine Grassmannian
\[
\Gr := \GD(\mathscr{K}) / \GD (\mathscr{O}),
\]
with its natural ind-variety structure. We denote by $\ID$ the Iwahori subgroup of $\GD(\mathscr{O})$ determined by $\BD$, i.e.~the inverse image of $\BD$ under the morphism $\GD(\mathscr{O}) \to \GD$ defined by the evaluation at $z=0$. Then $\ID$ acts naturally on $\Gr$ via left multiplication on $\GD(\mathscr{K})$. We also let the multiplicative group $\Gm$ act on $\Gr$ by loop rotation (i.e.~via $x \cdot g(z) = g(x^{-1} z)$), so that we obtain an action of the semi-direct product $\ID \rtimes \Gm$.

The main players of this section are the categories
\[
\Parity_{\ID \rtimes \Gm} (\Gr,\RG), \quad \Parity_{\ID} (\Gr,\RG) \quad \text{and} \quad \Parity_{(\ID)}(\Gr,\RG).
\]
Here, in the right-hand side, we use the notation $\Parity_{(\ID)}(\Gr,\RG)$ for the category $\Parity_{\SSC}(\Gr,\RG)$ where $\SSC$ is the stratification of $\Gr$ by orbits of $\ID$. If $\FM$ is a field (not necessarily algebraically closed) whose characteristic is invertible in $\RG$, we can consider the unique algebra morphism $\RG \to \FM$, and the categories
\begin{equation}
\label{eqn:Parity-FM}
\Parity_{\ID \rtimes \Gm} (\Gr,\FM), \quad \Parity_{\ID} (\Gr,\FM) \quad \text{and} \quad \Parity_{(\ID)}(\Gr,\FM).
\end{equation}

We let $\XB:=X_*(\TD)$ be the lattice of cocharacters of $\TD$, and $\Phi \subset \XB$ be the coroots of $\GD$ (with respect to $\TD$). The choice of $\BD$ determines a system of positive roots: more precisely we denote by ${\check \Phi}^+ \subset {\check \Phi}$ the roots which are \emph{opposite} to the $\TD$-weights in the Lie algebra of $\BD$. We denote by $\Phi^+ \subset \Phi$ the corresponding system of positive coroots.
To these data one can associate the affine Weyl group $\Waff$ and its length function $\ell$ as in~\S\ref{ss:Haff}.

Recall that the $\ID$-orbits on $\Gr$ are parametrized in a natural way by $\Waff/W \cong \XB$, and that each $\ID$-orbit is stable under the action of $\ID \rtimes \Gm$. More precisely, any $\lambda \in \XB$ defines a point $z^\lambda \in \TD(\mathscr{K}) \subset \GD(\mathscr{K})$. We set $L_\lambda := z^\lambda \GD(\mathscr{O}) / \GD(\mathscr{O}) \in \Gr$, and $\Gr_\lambda:=\ID \cdot L_\lambda$. Then we have
\[
\Gr = \bigsqcup_{\lambda \in \XB} \Gr_\lambda.
\]
Moreover, the dimension of $\Gr_\lambda$ is the length of the shortest representative in $t_\lambda W \subset \Waff$, i.e.~$\ell(w_{-\lambda})$.
%
%To facilitate the comparison with the constructions in the later sections, instead of working with $\Gr$ it will be more convenient to work with the ind-variety
%\[
%\Gr' :=  \GD(\mathscr{O}) \backslash \GD (\mathscr{K})
%\]
%and the $\ID$-action defined by $h \cdot (\GD(\mathscr{O}) g) = \GD(\mathscr{O}) gh^{-1}$ for $g \in \GD(\mathscr{K})$ and $h \in \ID$. We also have a natural action of $\Gm$ by loop rotations. For $\lambda \in \XB$ we set $L'_\lambda:=\GD(\mathscr{O}) \backslash\GD(\mathscr{O}) z^\lambda$ and $\Gr'_\lambda := \ID \cdot L'_\lambda = \GD(\mathscr{O}) \backslash \GD(\mathscr{O}) z^\lambda \ID$.
%Then we have
%\[
%\Gr' = \bigsqcup_{\lambda \in \XB} \Gr'_\lambda \qquad \text{and} \qquad
%\dim(\Gr'_\lambda) = \ell(w_\lambda).
%\]
For any $w \in t_\lambda W$ we also set $\Gr_w := \Gr_\lambda$.

%For $\EM=\RG$ or $\FM$, we consider the corresponding categories
%\[
%\Parity_{\ID \rtimes \Gm} (\Gr',\EM), \quad \Parity_{\ID} (\Gr',\EM), \quad \Parity_{(\ID)}(\Gr',\EM)
%\]
%of parity complexes. The morphism $g \GD(\mathscr{O}) \mapsto \GD(\mathscr{O}) g^{-1}$ induces an $\ID$-equivariant isomorphism $\Gr \simto \Gr'$; we deduce natural equivalences of additive categories
%\begin{equation}
%\label{eqn:equiv-Gr-Gr'}
%\begin{split}
%\Parity_{\ID \rtimes \Gm} (\Gr,\EM) &\simto \Parity_{\ID \rtimes \Gm} (\Gr',\EM), \\
%\Parity_{\ID} (\Gr,\EM) &\simto \Parity_{\ID} (\Gr',\EM), \\
%\Parity_{(\ID)} (\Gr,\EM) &\simto \Parity_{(\ID)} (\Gr',\EM).
%\end{split}
%\end{equation}

Let $\FM$ be as above, and consider the categories 
in~\eqref{eqn:Parity-FM}. By~\cite{jmw} these categories
are all Krull--Schmidt, and their indecomposable objects can be described as follows. For any $\lambda \in \XB$ there exists a unique indecomposable object $\EC_\lambda$ in $\Parity_{\ID \rtimes \Gm} (\Gr,\FM)$
%, resp.~$\EC'_\lambda$ in $\Parity_{\ID \rtimes \Gm} (\Gr',\FM)$ 
which is supported on $\overline{\Gr_\lambda}$
%, resp.~on $\overline{\Gr_\lambda'}$, 
and whose restriction to $\Gr_\lambda$
%, resp.~to $\Gr'_\lambda$, 
is 
$\underline{\FM}_{\Gr_\lambda}[\dim(\Gr_\lambda)]$.
%, resp.~$\underline{\FM}_{\Gr'_\lambda}[\dim(\Gr'_\lambda)]$.
Moreover any indecomposable object in $\Parity_{\ID \rtimes \Gm} (\Gr,\FM)$
%, resp.~in $\Parity_{\ID \rtimes \Gm} (\Gr',\FM)$, 
is isomorphic to $\EC_\lambda[n]$
%, resp.~$\EC_\lambda[n]$, 
for some unique $\lambda \in \XB$ and $n \in \ZM$. By Lemma~\ref{lem:parity-indec-For}, the images of $\EC_\lambda$
%, resp.~$\EC'_\lambda$, 
under the appropriate forgetful functors to $\Parity_{\ID} (\Gr,\FM)$ and $\Parity_{(\ID)} (\Gr,\FM)$
%, resp.~to $\Parity_{\ID} (\Gr',\FM)$ and $\Parity_{(\ID)} (\Gr',\FM)$, 
remain indecomposable; for simplicity these images will still be denoted by $\EC_\lambda$.
%, resp.~$\EC'_\lambda$.
The same description of indecomposable objects as above applies in the categories $\Parity_{\ID} (\Gr,\FM)$
%, $\Parity_{\ID} (\Gr',\FM)$, 
and $\Parity_{(\ID)} (\Gr,\FM)$.
% and $\Parity_{(\ID)} (\Gr',\FM)$. Under the equivalences~\eqref{eqn:equiv-Gr-Gr'}, $\EC_\lambda$ is sent to $\EC'_{-\lambda}$ in each case.

The connected components of $\Gr$ are parametrized by $\O$; for $\omega \in \O$ we denote by $\Gr_{(\omega)}$ the corresponding component. (In fact, if we identify $\O$ with $\XB/\ZM\Phi$ as in~\S\ref{ss:Haff}, then $\Gr_{(\omega)}$ is the union of the orbits $\Gr_\lambda$ where $\lambda$ has image $\omega$ in $\XB/\ZM\Phi$.)

Below we will also use the affine flag variety
\[
\Fl := \GD(\mathscr{K}) / \ID,
\]
with its natural ind-variety structure, and the natural $\ID$-action. The $\ID$-orbits on $\Fl$ are parametrized in a natural way by $\Waff$, and are stable under the loop rotation action. If $\Fl_w$ is the orbit associated with $w \in \Waff$, then we have
$\dim(\Fl_w)=\ell(w)$, and the image of $\Fl_w$ under the natural projection $\Fl \twoheadrightarrow \Gr$ is $\Gr_w$.
For each simple reflection $s$, the orbit $\Fl_s$ is isomorphic to $\AM^1_{\CM}$, and its closure $\overline{\Fl_s}$ is isomorphic to $\PM^1_{\CM}$.

We define
\[
\tg := \XBD \otimes_\ZM \RG, \qquad \tg^* := \Hom_\RG(\tg_\RG, \RG) = \XB \otimes_\ZM \RG.
\]
(In fact, $\tg$ is the Lie algebra of the $\RG$-torus which is Langlands dual to $\TD$.) Then there exists a canonical isomorphism of graded $\RG$-algebras 
\begin{equation}
\label{eqn:cohomology-TD}
\OC(\tg^*) = \mathrm{S}(\tg) \simto \HM^\bullet_{\TD}(\mathrm{pt}; \RG),
\end{equation}
where $\tg$ is in degree $2$, and $\mathrm{S}(\tg)$ is the symmetric algebra of $\tg$. Using~\cite[Theorem~1.3(2)]{totaro}, we deduce, under our assumptions on $\RG$,\footnote{Recall that the torsion primes of $\GD$ (in the sense of~\cite{totaro}) are the same as the torsion primes of the ``refined root system'' ${\check \Phi} \subset {\check \XB}$ (in the sense of~\cite{demazure}).} a canonical isomorphism
\begin{equation}
\label{eqn:cohomology-GD}
\OC(\tg^*/ W) \simto \HM^\bullet_{\GD}(\mathrm{pt}; \RG).
\end{equation}
We will also identify $\HM^\bullet_{\Gm}(\mathrm{pt}; \RG)$ with $\RG[\hbar]$ (where $\hbar$ is an indeterminate, in degree $2$) in the natural way.

\begin{lem}
\label{lem:t/W-affine-space}
The $\RG$-scheme $\tg^*/W$ is isomorphic to an affine space. Moreover, $\OC(\tg^*)$ is free over $\OC(\tg^*/W)$.
\end{lem}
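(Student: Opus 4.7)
The plan is to derive both assertions from Demazure's integrality theorem for Weyl-group invariants of the character lattice, which applies precisely under our torsion-prime hypothesis on $\RG$.

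For the first assertion, Demazure's theorem~\cite{demazure} guarantees that $\mathrm{S}_\RG(\tg)^W$ is a polynomial $\RG$-algebra on $n := \rk_\RG(\tg)$ homogeneous generators (of degrees equal to the fundamental degrees of $W$). Since by construction $\tg^*/W = \Spec \mathrm{S}_\RG(\tg)^W$, we immediately obtain an isomorphism $\tg^*/W \cong \AM^n_\RG$ of $\RG$-schemes.

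For the second assertion, I would set $A := \OC(\tg^*/W)$ and $M := \OC(\tg^*) = \mathrm{S}_\RG(\tg)$, so that by the first step $A$ is a graded polynomial $\RG$-algebra with generators in strictly positive degrees, and the inclusion $A \hookrightarrow M$ is a morphism of graded $\RG$-algebras. Apply the graded Nakayama argument of Lemma~\ref{lem:nakayama}\eqref{it:nakayama-Tor1} to $M$ over $A$: it suffices to verify that the coinvariant algebra $\RG \otimes_A M$ is graded free over $\RG$ and that $\Tor^A_1(\RG, M) = 0$. Both conditions can be checked after base change along $\RG \to \FM$ for each geometric point $\FM$ of $\RG$; the flatness of $\RG$ over $\ZM$ together with the $\RG$-freeness of $A$ coming from the first step ensures that $\Tor$ and formation of invariants are both compatible with these base changes, so that $A$ becomes $\mathrm{S}_\FM(\tg_\FM)^W$ and $M$ becomes $\mathrm{S}_\FM(\tg_\FM)$. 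At this level the required statements reduce to the classical fact that the polynomial ring $\mathrm{S}_\FM(\tg_\FM)$ is free of rank $|W|$ over $\mathrm{S}_\FM(\tg_\FM)^W$---the modular Chevalley--Shephard--Todd theorem---which applies because $\mathrm{char}(\FM)$ is not a torsion prime of ${\check \Phi} \subset {\check \XB}$ by our assumption on $\RG$.

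The main delicate point to verify is the base-change compatibility of the invariants along $\RG \to \FM$, but this is essentially automatic from the $\RG$-freeness of $\mathrm{S}_\RG(\tg)^W$ established in the first step: a free $\RG$-basis of $\mathrm{S}_\RG(\tg)^W$ provides by base change an $\FM$-basis of the right graded dimensions inside $\mathrm{S}_\FM(\tg_\FM)^W$, and the modular Chevalley--Shephard--Todd theorem then pins down these dimensions so that the natural map is an isomorphism.
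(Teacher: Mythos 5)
Your proof is correct in outline but takes a genuinely longer route than the paper, which disposes of \emph{both} assertions by citing Demazure: Th{\'e}or{\`e}me~2(c) of~\cite{demazure} for the polynomiality of $\mathrm{S}_\ZM({\check \XB})^W$ after inverting the torsion primes, and Th{\'e}or{\`e}me~3 for the freeness of $\mathrm{S}$ over $\mathrm{S}^W$ directly over the integral base. You instead take only the polynomiality from Demazure and rederive the freeness by the graded Nakayama mechanism of Lemma~\ref{lem:nakayama}\eqref{it:nakayama-Tor1} plus reduction to geometric points. That strategy works (note only that Lemma~\ref{lem:nakayama} is stated for a symmetric algebra with generators in degree $2$, whereas your $A$ has generators in the degrees $2d_i$; the proof, being just graded Nakayama, goes through verbatim). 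What your approach buys is independence from Demazure's Th{\'e}or{\`e}me~3; what it costs is the extra bookkeeping of base change, which is where your write-up is weakest.

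Concretely, the step ``the modular Chevalley--Shephard--Todd theorem then pins down these dimensions'' is under-justified: a priori the graded dimension of $\mathrm{S}_\FM(\tg_\FM)^W$ can only \emph{exceed} that of $\FM \otimes_\RG \mathrm{S}_\RG(\tg)^W$ (invariants jump under specialization), and knowing abstractly that $\mathrm{S}_\FM(\tg_\FM)^W$ is polynomial does not by itself fix its degrees. The clean statement that invariants commute with base change is itself one of the main points of Demazure's Th{\'e}or{\`e}me~2, so as written you are quietly reusing the reference you set out to avoid. The good news is that you do not need this identification at all: your Nakayama argument only requires that $\mathrm{S}_\FM(\tg_\FM)$ be free over $A_\FM := \FM \otimes_\RG \OC(\tg^*/W)$, not over the possibly larger $\mathrm{S}_\FM(\tg_\FM)^W$. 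Since $\mathrm{S}_\RG(\tg)$ is finite over $\OC(\tg^*/W)$ by the usual Noether argument (each generator of $\tg$ satisfies $\prod_{w \in W}(T - w \cdot x) = 0$), the images of the fundamental invariants form a homogeneous system of parameters in the Cohen--Macaulay ring $\mathrm{S}_\FM(\tg_\FM)$, which is therefore free over $A_\FM$ of rank $\prod_i d_i = |W|$ by Auslander--Buchsbaum. With that substitution your argument closes without any appeal to base change of invariants.
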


\begin{proof}
The claims follow from~\cite[Th{\'e}or{\`e}me~2(c) \& Th{\'e}or{\`e}me~3]{demazure} and our assumption on $\RG$, since $\OC(\tg^*/W)=\mathrm{S}_\ZM({\check \XB})^W \otimes_\ZM \RG$.
\end{proof}

\begin{remark}
The main result of this section will be proved in the $\ID$-equivariant setting; the $\ID \rtimes \Gm$-equivariant setting will be used only for technical purposes. However, similar results hold in the $\ID \rtimes \Gm$-equivariant case. On the ``coherent side'', replacing $\ID$ by $\ID \rtimes \Gm$ amounts to deforming coherent sheaves on $\tgg$ to asymptotic $\DC$-modules on $\Flag$; see~\cite{dodd} for details in the characteristic zero case.
\end{remark}

%------------------------------------------------------------------
\subsection{Some algebras}
\label{ss:deformations}
%------------------------------------------------------------------

We denote by $\Delta \subset \tg^* / W \times \tg^* / W$ the diagonal copy of $\tg^* / W$, and by $I_1 \subset \OC(\tg^* / W \times \tg^* / W)$ the associated ideal
%. Then one can consider the scheme 
%\[
%\TG_1:=N_{\tg^* / W \times \tg^* / W}(\Delta)
%\]
%obtained by ``deformation to the normal cone'', see~\S\ref{ss:def-normal-cone}. Here the ideal of $\Delta$ in $\OC(\tg^* / W \times \tg^* / W)$ is 
(i.e.~the ideal generated by elements of the form $f \otimes 1 - 1 \otimes f$ for $f \in \OC(\tg^* / W)$). We also denote by $I_2 \subset \OC(\tg^* / W \times \tg^*)$, resp.~$I_3 \subset \OC(\tg^* \times \tg^*)$, the ideal generated by the image of $I_1$ under the ring morphism associated with the quotient morphism $\tg^* / W \times \tg^* \to \tg^* / W \times \tg^* /W$, resp.~$\tg^* \times \tg^* \to \tg^* / W \times \tg^* /W$.

We will consider the $\ZM$-graded algebras\footnote{Here the letter ``$C$'' stands for ``coinvariants,'' since these algebras will play the role played by the coinvariant algebra in~\cite{soergel-kategorie, soergel, modrap1}.}
\begin{align*}
C_\hbar := \DNC_{I_2}(\OC(\tg^* / W \times \tg^*)),& \qquad \tC_\hbar := \DNC_{I_3}(\OC(\tg^* \times \tg^*)), \\
C := C_\hbar / \hbar \cdot C_\hbar,& \qquad \tC := \tC_\hbar / \hbar \cdot \tC_\hbar.
\end{align*}
Here the grading is induced by the grading on $\OC(\tg^*)$ and $\RG[\hbar]$ from~\S\ref{ss:notation-constructible}. We also set $C'_\hbar := \DNC_{I_1}(\OC(\tg^* / W \times \tg^*/W))$.

%We will also consider the scheme
%\[
%\TG_2 := \tg^* \times_{\tg^*/W} \TG_1,
%\]
%where the morphism $N_{\tg^* / W \times \tg^* / W}(\Delta) \to \tg^* / W$ considered here is the composition
%\[
%N_{\tg^* / W \times \tg^* / W}(\Delta) \to \tg^* / W \times \tg^* / W \times \AM^1_\RG \xrightarrow{p_2} \tg^* / W.
%\]
By Lemmas~\ref{lem:def-normal-cone-BC} and~\ref{lem:t/W-affine-space},
we have canonical isomorphisms
%\begin{gather}
\begin{equation}
\label{eqn:isom-Chbar}
C_\hbar \cong \OC(\tg^* / W \times \tg^*) \otimes_{\OC(\tg^* / W \times \tg^*/W)} C'_\hbar, \quad
%\label{eqn:isom-Chbar-2}
\tC_\hbar \cong \OC(\tg^* \times \tg^*) \otimes_{\OC(\tg^* / W \times \tg^*)} C_\hbar.
\end{equation}

%\[
%\TG_2 \cong N_{\tg^* / W \times \tg^*}(\Delta'),
%\]
%where $\Delta' \subset \tg^* / W \times \tg^*$ is the image of the graph of the projection $\tg^* \to \tg^*/W$ under swapping $\tg^*$ and $\tg^*/W$.
%
%Finally, let
%\[
%\TG_3:= \TG_2 \times_{\tg^*/W} \tg^*.
%\]
%Using again Lemmas~\ref{lem:def-normal-cone-BC} and~\ref{lem:t/W-affine-space},
%we obtain a canonical isomorphism
%\[
%\TG_3 \cong N_{\tg^* \times \tg^*}(\tg^* \times_{\tg^* / W} \tg^*).
%\]

\begin{lem}
\label{lem:deformation-flat}
\begin{enumerate}
\item
The two natural 
%projections $\TG_1 \to \tg^* / W \times \AM^1_\RG$ 
ring morphisms
$\OC(\tg^*/W)[\hbar] \to C'_\hbar$
are flat.
\label{it:def-flat-1}
\item
The natural 
%projections $\TG_2 \to \tg^*/W \times \AM^1$ and $\TG_2 \to \tg^* \times \AM^1_\RG$ 
ring morphisms $\OC(\tg^*/W)[\hbar] \to C_\hbar$ and $\OC(\tg^*)[\hbar] \to C_\hbar$
are flat.
\label{it:def-flat-2}
\item
The two natural ring morphisms
%projections $\TG_3 \to \tg^* \times \AM^1_\RG$ 
$\OC(\tg^*)[\hbar] \to \tC_\hbar$
are flat.
\label{it:def-flat-3}
\end{enumerate}
\end{lem}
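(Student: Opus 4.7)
The plan is to establish (1) directly, by identifying $C'_\hbar$ with an explicit polynomial ring over $\OC(\tg^*/W)[\hbar]$ via each of the two structural morphisms, and then to deduce (2) and (3) from (1) by iterated flat base change, using the isomorphisms in~\eqref{eqn:isom-Chbar} together with the freeness of $\OC(\tg^*)$ over $\OC(\tg^*/W)$ recorded in Lemma~\ref{lem:t/W-affine-space}.

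For (1), I will use Lemma~\ref{lem:t/W-affine-space} to fix a presentation $\OC(\tg^*/W) = \RG[x_1,\dots,x_n]$, so that $\OC(\tg^*/W \times \tg^*/W) = \RG[x_1,\dots,x_n,y_1,\dots,y_n]$ and the diagonal ideal $I_1$ is generated by the elements $x_i - y_i$. Setting $w_i := \hbar^{-1}(x_i - y_i) \in \RG[x,y,\hbar^{\pm 1}]$ and using the relation $y_i = x_i - \hbar w_i$, a direct verification from the definition of the deformation to the normal cone identifies $C'_\hbar$ with the polynomial ring $\RG[x_1,\dots,x_n,\hbar,w_1,\dots,w_n]$. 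Via the first structural morphism $\OC(\tg^*/W)[\hbar] \to C'_\hbar$, this presentation exhibits $C'_\hbar$ as a free module over $\RG[x_1,\dots,x_n,\hbar]$; trading the $x_i$ for the $y_i$, the same ring equals $\RG[y_1,\dots,y_n,\hbar,w_1,\dots,w_n]$, showing freeness over $\OC(\tg^*/W)[\hbar]$ via the second structural morphism as well.

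For (2), the first isomorphism in~\eqref{eqn:isom-Chbar} together with the standard identity $(A \otimes_\RG \tilde A) \otimes_{A \otimes_\RG A} M \cong \tilde A \otimes_A M$ (applied with $A = \OC(\tg^*/W)$, $\tilde A = \OC(\tg^*)$, and $A$ acting on $M = C'_\hbar$ through the second factor) yields $C_\hbar \cong \OC(\tg^*) \otimes_{\OC(\tg^*/W)} C'_\hbar$. Flatness of $C_\hbar$ over $\OC(\tg^*)[\hbar]$ then follows from flatness of $C'_\hbar$ over $\OC(\tg^*/W)[\hbar]$ via the second factor (part (1)) by base change. Flatness of $C_\hbar$ over $\OC(\tg^*/W)[\hbar]$ via the first structural morphism follows from flatness of $C'_\hbar$ over $\OC(\tg^*/W)[\hbar]$ via the first factor combined with flatness of $\OC(\tg^*)$ over $\OC(\tg^*/W)$: indeed, for any short exact sequence of $\OC(\tg^*/W)[\hbar]$-modules, applying $-\otimes_{\OC(\tg^*/W)[\hbar]} C'_\hbar$ and then $\OC(\tg^*) \otimes_{\OC(\tg^*/W)} -$ preserves exactness at each step. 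Assertion (3) is obtained from (2) by exactly the same argument, applied to the second isomorphism in~\eqref{eqn:isom-Chbar}, namely $\tC_\hbar \cong \OC(\tg^*) \otimes_{\OC(\tg^*/W)} C_\hbar$.

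No serious obstacle is anticipated: the explicit polynomial-ring description of $C'_\hbar$ is essentially immediate from the definition of the deformation to the normal cone, and the only real care needed in (2) and (3) is to keep track of which of the two structural morphisms is being used in each successive base change.
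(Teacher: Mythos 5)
Your proof is correct and follows essentially the same route as the paper's: an explicit polynomial-ring description of $C'_\hbar$ (via Lemma~\ref{lem:t/W-affine-space}) for part (1), followed by base change along the isomorphisms~\eqref{eqn:isom-Chbar} and the flatness of $\OC(\tg^*)$ over $\OC(\tg^*/W)$ for parts (2) and (3). Your choice of generators $w_i = \hbar^{-1}(x_i - y_i)$ in part (1) exhibits $C'_\hbar$ as a polynomial ring over both structural copies of $\OC(\tg^*/W)[\hbar]$ at once, which is marginally cleaner than the paper's diagonal/antidiagonal coordinates plus symmetry, but the substance is identical.
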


\begin{proof}
First we treat~\eqref{it:def-flat-1}.
By symmetry, it is enough to prove the claim in the case of the morphism induced by the first projection $\tg^* / W \times \tg^*/W \to \tg^* / W$.
By Lemma~\ref{lem:t/W-affine-space}, we can fix an isomorphism of $\RG$-schemes
$\tg^* / W \cong \mathbb{A}^n_\RG$ for some $n \in \ZM_{\geq 0}$. Then $\Delta$ identifies with the diagonal copy of $\mathbb{A}^n_\RG$ in $\mathbb{A}^{2n}_\RG$. Writing $\mathbb{A}^{2n}_\RG$ as the direct sum of the diagonal and antidiagonal copies of $\mathbb{A}^n_\RG$, we obtain ring isomorphisms
%is isomorphic to an affine space. Using this isomorphism we can write $\tg^* / W \times \tg^*/W$ as the product of $\Delta$ with the anti-diagonal copy of $\tg^* / W$, and then obtain an isomorphism
\[
C'_\hbar \cong \RG[x_1, \cdots, x_n] \otimes \DNC_{I_+}(\RG[y_1, \cdots, y_n]) \\
\cong \RG[x_1, \cdots, x_n, z_1, \cdots, z_n, \hbar],
%N_{\tg^* / W \times \tg^* / W}(\Delta) \cong N_{\tg^* / W \times \tg^* / W}(\tg^* / W \times \{0\}) \cong \tg^* / W \times N_{\tg^* / W}(\{0\})
\]
where $I_+ \subset \RG[y_1, \cdots, y_n]$ is the ideal of the subscheme $\{0\} \subset \mathbb{A}^n_{\RG}$, and $z_i := \hbar^{-1} y_i$.
%$I'_1 \subset \OC(\tg^*/W)$ is the ideal of $\{0\}$.
With these identifications, the morphism under consideration is defined by $\hbar \mapsto \hbar$, $x_i \mapsto x_i + \hbar z_i$. This morphism is clearly flat.

%compatible with the projections to $\tg^* / W \times \AM^1_\RG$. Then the lemma follows from the observation that, for $n \geq 1$, the scheme $N_{\AM^n_\RG}(\{0\})$ is flat over $\AM^1_\RG$. (In fact, it is isomorphic, as an $\AM^1_\RG$-scheme, to $\AM^n_\RG \times \AM^1_\RG$.)

The second claim in~\eqref{it:def-flat-2} is an immediate consequence of~\eqref{it:def-flat-1} and the first isomorphism in~\eqref{eqn:isom-Chbar}. To prove the first claim we decompose the morphism as the composition $\OC(\tg^*/W)[\hbar] \to C_\hbar' \to C_\hbar$. Now the first morphism is flat by~\eqref{it:def-flat-1}, and the second one is flat by the first isomorphism in~\eqref{eqn:isom-Chbar} since the projection $\tg^* \to \tg^*/W$ is flat (see Lemma~\ref{lem:t/W-affine-space}). This implies the desired claim.
%and  , which proves the claim.

Finally, in~\eqref{it:def-flat-3} the flatness of the morphism induced by the first projection $\tg^* \times \tg^* \to \tg^*$ follows from~\eqref{it:def-flat-2} and the second isomorphism in~\eqref{eqn:isom-Chbar}. Then the flatness of the other morphism follows by symmetry.
\end{proof}

%We will consider the $\ZM$-graded algebras\footnote{Here the letter ``$C$'' stands for ``coinvariants,'' since these algebras will play the role played by the coinvariant algebra in~\cite{soergel-kategorie, soergel, modrap1}.}
%\[
%C_\hbar := \OC(\TG_2), \qquad \tC_\hbar := \OC(\TG_3), \qquad C := C_\hbar / \hbar \cdot C_\hbar, \qquad \tC := \tC_\hbar / \hbar \cdot \tC_\hbar.
%\]
%Here the grading is induced by the grading on $\OC(\tg^*)$ and $\RG[\hbar]$ from~\S\ref{ss:notation-constructible}.

%The two projections $\TG_3 \to \tg^* \times \AM^1_\RG$ define two graded algebra morphisms 
Let us denote by
\[
f_1, f_2 \colon \OC(\tg^*)[\hbar] \to \tC_\hbar
\]
the morphisms considered in Lemma~\ref{lem:deformation-flat}\eqref{it:def-flat-3}.
These morphisms
endow $\tC_\hbar$ with the structure of a graded $\OC(\tg^*)[\hbar]$-bimodule.
In fact this algebra has a natural structure of bialgebra in the category of graded $\OC(\tg^*)[\hbar]$-bimodules, constructed as follows.
Consider the $\RG[\hbar]$-algebra morphism
\begin{equation}
\label{eqn:comultiplication}
\OC(\tg^* \times \tg^*)[\hbar] \to \tC_\hbar \otimes_{\OC(\tg^*)[\hbar]} \tC_{\hbar}
\end{equation}
sending any $x$ in the first copy of $\OC(\tg^*)$ to $f_1(x) \otimes 1$, and any $y$ in the second copy of $\OC(\tg^*)$ to $1 \otimes f_2(y)$. One can easily check that the image under this morphism of any element of the form $g \otimes 1 - 1 \otimes g$ with $g \in \OC(\tg^*/W)$ belongs to $\hbar \cdot (\tC_\hbar \otimes_{\OC(\tg^*)[\hbar]} \tC_{\hbar})$. It follows from Lemma~\ref{lem:deformation-flat}\eqref{it:def-flat-3} that $\tC_\hbar \otimes_{\OC(\tg^*)[\hbar]} \tC_{\hbar}$ is flat over $\RG[\hbar]$; in particular it has no $\hbar$-torsion. Hence~\eqref{eqn:comultiplication} factors in a unique way through a graded $\RG[\hbar]$-algebra morphism
\[
\tC_\hbar \to \tC_\hbar \otimes_{\OC(\tg^*)[\hbar]} \tC_{\hbar},
\]
which provides our comultiplication morphism.
Using this structure, if $M$ and $N$ are graded $\tC_\hbar$-modules, then we obtain that the tensor product $M \otimes_{\OC(\tg^*)[\hbar]} N$ has a natural structure of graded $\tC_\hbar$-module.

Similar constructions provide a structure of graded $(\OC(\tg^*/W)[\hbar], \OC(\tg^*)[\hbar])$-bimo\-dule on $C_\hbar$, and a graded $\RG[\hbar]$-algebra morphism
\[
C_\hbar \to C_\hbar \otimes_{\OC(\tg^*)[\hbar]} \tC_{\hbar}.
\]
Hence, if $M$ is a graded $C_\hbar$-module and $N$ is a graded $\tC_\hbar$-module, we obtain that the tensor product $M \otimes_{\OC(\tg^*)[\hbar]} N$ has a natural structure of graded $C_\hbar$-module.

Applying the functor $\RG \otimes_{\RG[\hbar]} (-)$, we also obtain graded algebra morphisms
\[
\tC \to \tC \otimes_{\OC(\tg^*)} \tC, \qquad C \to C \otimes_{\OC(\tg^*)} \tC,
\]
and the corresponding structures for tensor products of graded modules.

%------------------------------------------------------------------
\subsection{``Algebraic'' Bott--Samelson category}
\label{ss:algebraic-BS}
%------------------------------------------------------------------

The group $\Waff$ acts naturally on $\tg^* \times \AM^1_{\RG}$, via the formulas
\[
v \cdot (\xi,x)=(v \cdot \xi, x), \qquad t_\lambda \cdot (\xi,x) = (\xi + x \lambda, x)
\]
for $\xi \in \tg^*$, $x \in \AM^1_\RG$, $v \in W$ and $\lambda \in \XB$. For this action, the subspace $\tg^* = \tg^* \times \{0\} \subset \tg^* \times \AM^1_\RG$ is stable, and the action of $\Waff$ on this subspace factors through the natural action of $W=\Waff / \XB$.
The $\Waff$-action on $\tg^* \times \AM^1_\RG$ induces an action on the graded algebra $\OC(\tg^*)[\hbar] = \OC(\tg^* \times \AM^1_\RG)$. If $w \in \Waff$, we denote by $(\OC(\tg^*)[\hbar])^w$ the subalgebra of $w$-invariants.

Below we will need the following easy lemma.

\begin{lem}
\label{lem:invariants-hbar}
For any simple reflection $s$, the morphism
\[
\RG \otimes_{\RG[\hbar]} (\OC(\tg^*)[\hbar])^s \to \OC(\tg^*)^s
\]
induced by the restriction morphism $\OC(\tg^*)[\hbar] \to \OC(\tg^*)$
is an isomorphism.
\end{lem}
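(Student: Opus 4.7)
The plan is to split into two cases based on whether the simple reflection $s \in \Waff^{\Cox}$ is finite or affine. The finite case should be almost immediate, and the affine case should reduce to the finite case via conjugation in $\Waff$, so no serious technical obstacle is anticipated.

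First, if $s$ is a finite simple reflection, then by the formulas in \S\ref{ss:algebraic-BS} the action of $s$ on $\tg^* \times \AM^1_\RG$ fixes the $\AM^1_\RG$-factor pointwise. The induced action on $\OC(\tg^*)[\hbar] = \OC(\tg^*) \otimes_\RG \RG[\hbar]$ therefore acts only on the first tensor factor, and so $(\OC(\tg^*)[\hbar])^s = \OC(\tg^*)^s \otimes_\RG \RG[\hbar]$. Applying $\RG \otimes_{\RG[\hbar]} (-)$ immediately yields the desired isomorphism.

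For an affine simple reflection $s$, I would reduce to the finite case by conjugation. By Lemma~\ref{lem:affine-simple-conjugate}, combined with the natural surjection $\Baff \twoheadrightarrow \Waff$, there exist a finite simple reflection $t$ and an element $g \in \Waff$ such that $s = g t g^{-1}$. Since every element of $\Waff$ fixes the $\AM^1_\RG$-factor of $\tg^* \times \AM^1_\RG$, the action of $g$ on $\OC(\tg^*)[\hbar]$ is an $\RG[\hbar]$-algebra automorphism, and therefore $f \mapsto g \cdot f$ provides an $\RG[\hbar]$-linear bijection $(\OC(\tg^*)[\hbar])^t \simto (\OC(\tg^*)[\hbar])^s$. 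After applying $\RG \otimes_{\RG[\hbar]}(-)$, this bijection becomes multiplication by the image $\bar g \in W$ of $g$. Combined with the finite-case identification $\RG \otimes_{\RG[\hbar]} (\OC(\tg^*)[\hbar])^t \simto \OC(\tg^*)^{\bar t}$, the $\Waff$-equivariance of the restriction morphism $\OC(\tg^*)[\hbar] \to \OC(\tg^*)$ (for the $W$-action on the target), and the equality $\bar g \cdot \OC(\tg^*)^{\bar t} = \OC(\tg^*)^{\bar g \bar t \bar g^{-1}} = \OC(\tg^*)^{\bar s}$, this yields the desired isomorphism for $s$.
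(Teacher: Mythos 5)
Your proof is correct and follows exactly the route the paper intends: the finite case is immediate because such an $s$ acts trivially on the $\AM^1_\RG$-factor, and the affine case reduces to it by conjugating with an element of $\Waff$ obtained from Lemma~\ref{lem:affine-simple-conjugate} via the surjection $\Baff \twoheadrightarrow \Waff$. The paper's own proof is just a two-sentence version of this; your write-up supplies the details (the $\RG[\hbar]$-linearity of the conjugation and the commutativity of the resulting square) accurately.
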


\begin{proof}
If $s$ is finite, then the claim is obvious. The general case follows, using Lemma~\ref{lem:affine-simple-conjugate}.
\end{proof}

For any $w \in \Waff$, we define the graded $\OC(\tg^* \times \tg^*)[\hbar]$-module $E^{\hbar}_w$ as follows. As a graded $\RG$-module, we have $E^\hbar_w=\OC(\tg^*)[\hbar]$. The \emph{right} copy of $\OC(\tg^*)[\hbar]$ in $\OC(\tg^* \times \tg^*)[\hbar] = \OC(\tg^*)[\hbar] \otimes_{\RG[\hbar]} \OC(\tg^*)[\hbar]$ acts in the natural way, by multiplication. And any $f$ in the \emph{left} copy of $\OC(\tg^*)[\hbar]$ acts by multiplication by $w^{-1} \cdot f$.
Since the induced action of $\OC(\tg^* \times \tg^*)$ on $E^\hbar_w / \hbar \cdot E^\hbar_w$ factors through an action of $\OC(\tg^* \times_{\tg^*/W} \tg^*)$, there exists a unique extension of the action of $\OC(\tg^* \times \tg^*)[\hbar]$ to an action of $\tC_\hbar$ on $E^\hbar_w$. By restriction, one can also consider $E^\hbar_w$ as a graded $C_\hbar$-module.

If $s$ is a simple reflection, we also consider the graded $\OC(\tg^* \times \tg^*)[\hbar]$-module 
\[
D^\hbar_s:= \OC(\tg^*)[\hbar] \otimes_{(\OC(\tg^*)[\hbar])^s} \OC(\tg^*)[\hbar] \langle -1 \rangle.
\]
Using the same arguments as above and Lemma~\ref{lem:invariants-hbar}, one can check that the action of $\OC(\tg^* \times \tg^*)[\hbar]$ on $D^\hbar_s$ extends in a canonical way to an action of $\tC_\hbar$, so that $D^\hbar_s$ can be considered as a graded $\tC_\hbar$-module.

Now, using the constructions of~\S\ref{ss:deformations} one can define, for any $\omega \in \Omega$ and
any sequence $\underline{s}=(s_1, \cdots, s_n)$ of simple reflections, the graded $C_\hbar$-module
\[
D_{\hbar}(\omega, \underline{s}) := E^\hbar_\omega \otimes_{\OC(\tg^*)[\hbar]} D^\hbar_{s_1} \otimes_{\OC(\tg^*)[\hbar]} \cdots \otimes_{\OC(\tg^*)[\hbar]} D^\hbar_{s_n}.
\]

We will also consider the corresponding constructions for $C$-modules: we set
\[
E_w:=E^\hbar_w / \hbar \cdot E^\hbar_w, \qquad D_s := D_s^\hbar / \hbar \cdot D_s^\hbar,
\]
and then
\[
D(\omega, \underline{s}) := D_{\hbar}(\omega, \underline{s}) / \hbar \cdot D_{\hbar}(\omega, \underline{s}) \cong E_\omega \otimes_{\OC(\tg^*)} D_{s_1} \otimes_{\OC(\tg^*)} \cdots \otimes_{\OC(\tg^*)} D_{s_n}.
\]
Note for later use that, by Lemma~\ref{lem:invariants-hbar}, we have a canonical isomorphism
\begin{equation}
\label{eqn:Ds-tensor-product}
D_s \simto \OC(\tg^*) \otimes_{\OC(\tg^*)^s} \OC(\tg^*).
\end{equation}

With these definitions one can define the category $\BSalg$ with
\begin{itemize}
\item
\emph{objects}: triples $(\omega, \underline{s}, i)$ with $\omega \in \Omega$, $\underline{s}$ a sequence of simple reflections indexed by $(1, \cdots, n)$ for some $n \in \ZM$, and $i \in \ZM$;
\item 
\emph{morphisms}:
for $\omega, \omega' \in \Omega$, $\underline{s}$ and $\underline{t}$ sequences of simple reflections, and $i,j \in \ZM$,
\begin{multline*}
\Hom_{\BSalg} \bigl( (\omega, \underline{s}, i), (\omega', \underline{t}, j) \bigr) =
\\
\begin{cases}
\Hom_{\Modgr(C)} \bigl( D(\omega, \underline{s}) \langle -i \rangle, D(\omega, \underline{t}) \langle -j \rangle \bigr) & \text{if $\omega=\omega'$;} \\
0 & \text{if $\omega \neq \omega'$.}
\end{cases}
\end{multline*}
\end{itemize}

%------------------------------------------------------------------
\subsection{Equivariant cohomology of $\Gr$}
\label{ss:cohomology-Gr}
%------------------------------------------------------------------

The graded algebras $C_\hbar$ and $C$ defined in~\S\ref{ss:deformations} can be used to describe the algebras $\HM^\bullet_{\ID \rtimes \Gm} (\Gr;\RG)$ and $\HM^\bullet_{\ID} (\Gr;\RG)$ ``up to torsion,'' as follows.
There exists a natural graded algebra morphism 
\[
\OC(\tg^*)[\hbar] \xrightarrow[\sim]{\eqref{eqn:cohomology-TD}} \HM^\bullet_{\ID \rtimes \Gm}(\mathrm{pt};\RG) \to \HM^\bullet_{\ID \rtimes \Gm} (\Gr;\RG).
\]
On the other hand, we have a canonical isomorphism 
\[
\HM^\bullet_{\ID \rtimes \Gm} (\Gr;\RG) \cong \HM^\bullet_{\GD(\mathscr{O}) \rtimes \Gm} \bigl( (\ID \rtimes \Gm) \backslash (\GD(\mathscr{K}) \rtimes \Gm);\RG \bigr),
\]
so that there also exists a natural graded algebra morphism
\[
\OC(\tg^*/W)[\hbar] \xrightarrow[\sim]{\eqref{eqn:cohomology-GD}} \HM^\bullet_{\GD(\mathscr{O}) \rtimes \Gm}(\mathrm{pt};\RG) \to \HM^\bullet_{\ID \rtimes \Gm} (\Gr;\RG).
\]
induced by the multiplication of $\GD(\mathscr{O}) \rtimes \Gm$ on $\GD(\mathscr{K}) \rtimes \Gm$ on the \emph{right}.
Using the fact that the projection $\GD(\mathscr{K}) \rtimes \Gm \to \mathrm{pt}$ factors through $\Gm$, it is not difficult to check that the images of $\hbar$ under these two morphisms coincide (see the proof of Lemma~\ref{lem:morphism-cohomology-factorization} below for similar considerations). Hence
combining them we obtain an algebra morphism
\begin{equation}
\label{eqn:morp-cohomology-Gr}
\OC(\tg^*/W \times \tg^*)[\hbar] \to \HM^\bullet_{\ID \rtimes \Gm} (\Gr;\RG).
\end{equation}

\begin{remark}
Note that we have switched the order of the factors here: the left-hand factor of $\tg^*/W \times \tg^*$ is related to the multiplication of $\GD(\mathscr{O})$ on $\GD(\mathscr{K})$ on the right, while the right-hand factor is related to the multiplication of $\ID$ on $\GD(\mathscr{K})$ on the left. This choice of convention complicates some formulas in this section, but it will make the comparison with the constructions on the ``coherent'' side easier. Another option would have been to work with the (less customary) variety $\Gr':=\GD(\mathscr{O}) \backslash \GD(\mathscr{K})$ instead of $\Gr$. Here $\Gr'$ is isomorphic to $\Gr$ through $\GD(\mathscr{O}) g \mapsto g^{-1} \GD(\mathscr{O})$, but this isomorphism switches the role of left and right multiplications.
\end{remark}

\begin{lem}
\label{lem:morphism-cohomology-factorization}
Morphism~\eqref{eqn:morp-cohomology-Gr} factors in a unique way through a graded $\RG[\hbar]$-algebra morphism
\[
\gamma_\hbar \colon C_\hbar \to \HM^\bullet_{\ID \rtimes \Gm} (\Gr;\RG).
\]
\end{lem}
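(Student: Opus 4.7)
The plan is to invoke the universal property of the deformation to the normal cone. By the definition recalled in~\S\ref{ss:def-normal-cone}, $C_\hbar$ is the subring of $\OC(\tg^*/W \times \tg^*)[\hbar^{\pm 1}]$ generated by $\OC(\tg^*/W \times \tg^*)[\hbar]$ together with the elements $\hbar^{-1} g$ for $g \in I_2$. Setting $A := \HM^\bullet_{\ID \rtimes \Gm}(\Gr;\RG)$, an extension $\gamma_\hbar \colon C_\hbar \to A$ of~\eqref{eqn:morp-cohomology-Gr} exists, and is then automatically unique, provided that \emph{(i)} $\hbar$ is a non-zero-divisor in $A$ (so that $A \hookrightarrow A[\hbar^{-1}]$), and \emph{(ii)} the image in $A$ of every element of $I_2$ lies in $\hbar A$.

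For~\emph{(i)}, I would use that each $\ID$-orbit $\Gr_w$ is $(\ID \rtimes \Gm)$-equivariantly a homogeneous space of the form $(\ID \rtimes \Gm)/(\mathrm{Stab}_\ID(L_{wW}) \rtimes \Gm)$, so $\HM^\bullet_{\ID \rtimes \Gm}(\Gr_w;\RG)$ is a classifying-space cohomology, hence free over $\RG[\hbar]$ and concentrated in even degrees. The standard parity-vanishing argument based on the stratification of $\Gr$ by its $\ID$-orbits then yields that $A$ itself is a free graded $\RG[\hbar]$-module; in particular $\hbar$ is regular in $A$, and the quotient $A/\hbar A$ is canonically identified with $\HM^\bullet_{\ID}(\Gr;\RG)$.

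For~\emph{(ii)}, it suffices to treat the generators $f \otimes 1 - 1 \otimes \bar{f}$ of $I_2$, where $f \in \OC(\tg^*/W)$ and $\bar{f}$ denotes its image under the inclusion $\OC(\tg^*/W) \hookrightarrow \OC(\tg^*)$. The image of $f \otimes 1$ in $A$ is, by the very construction of~\eqref{eqn:morp-cohomology-Gr}, the image of $f \in \HM^\bullet_{\GD(\mathscr{O}) \rtimes \Gm}(\pt;\RG)$ under the structure morphism for the identification $A \cong \HM^\bullet_{\GD(\mathscr{O}) \rtimes \Gm}((\ID \rtimes \Gm) \backslash (\GD(\mathscr{K}) \rtimes \Gm);\RG)$, whereas the image of $1 \otimes \bar{f}$ is the image of $\bar{f} \in \HM^\bullet_{\ID \rtimes \Gm}(\pt;\RG)$ under the structure morphism for $A = \HM^\bullet_{\ID \rtimes \Gm}(\Gr;\RG)$. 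Reducing modulo $\hbar$ and using the identification $A/\hbar A \cong \HM^\bullet_\ID(\Gr;\RG)$ from~\emph{(i)}, the congruence $f \otimes 1 \equiv 1 \otimes \bar{f} \pmod{\hbar}$ reduces to the commutativity of the square with vertices $\HM^\bullet_{\GD(\mathscr{O})}(\pt;\RG)$, $\HM^\bullet_{\GD(\mathscr{O})}(\Gr;\RG)$, $\HM^\bullet_{\ID}(\pt;\RG)$ and $\HM^\bullet_{\ID}(\Gr;\RG)$, whose horizontal arrows are the structure maps and whose vertical arrows are induced by the inclusion $\ID \hookrightarrow \GD(\mathscr{O})$. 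That square commutes by the naturality of equivariant cohomology, and its left vertical arrow is exactly the inclusion $\OC(\tg^*/W) \hookrightarrow \OC(\tg^*)$, yielding~\emph{(ii)}.

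Neither step poses a genuine obstacle; the main subtlety lies in unwinding the various identifications and keeping track of which $\Gm$ plays the role of loop rotation at each stage.
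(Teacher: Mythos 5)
Your reduction is sound up to the last step: freeness of $A = \HM^\bullet_{\ID \rtimes \Gm}(\Gr;\RG)$ over $\RG[\hbar]$ (hence uniqueness and the legitimacy of checking the congruence modulo $\hbar$), and the identification of the generators $f \otimes 1 - 1 \otimes \bar{f}$ of $I_2$, all match the paper. But the final "naturality" step has a genuine gap. The square you draw commutes by naturality only if \emph{both} horizontal arrows are the structure maps for the \emph{left} actions, i.e.\ pullback along $\Gr \to \mathrm{pt}$ equivariantly for $\GD(\mathscr{O})$, resp.\ $\ID$, acting by left multiplication on $\GD(\mathscr{K})/\GD(\mathscr{O})$. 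The image of $f \otimes 1$, however, is by construction the image of $f$ under the \emph{other} map $\HM^\bullet_{\GD(\mathscr{O})}(\mathrm{pt};\RG) \to \HM^\bullet_{\GD(\mathscr{O})}(\Gr;\RG)$, the one induced by the multiplication of $\GD(\mathscr{O})$ on $\GD(\mathscr{K})$ on the \emph{right}. Naturality gives you $b \circ c = d \circ a_{\mathrm{left}}$, whereas you need $b \circ c = d \circ a_{\mathrm{right}}$; the missing statement is precisely that $a_{\mathrm{left}} = a_{\mathrm{right}}$ as maps $\OC(\tg^*/W) \to \HM^\bullet_{\GD(\mathscr{O})}(\Gr;\RG)$. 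This coincidence of the two structure maps is the entire non-formal content of the lemma --- the paper points out that it is asserted without details in~\cite{bf} --- and it is not a formal consequence of functoriality; for a general homogeneous space the analogous two maps need not agree.

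The paper supplies this missing input by a topological argument specific to loop groups: writing $\Gr \cong \Omega H / H$ for $H$ a maximal compact subgroup of $\GD$ and using the decomposition $\Omega H = H \times \Omega^0 H$ into the subgroup of constant loops and the space of based loops, one gets $\HM^\bullet_{\GD(\mathscr{O})}(\Gr;\RG) \cong \HM^\bullet_{H \times H}(H \times \Omega^0 H;\RG)$. For a space of the form $Y = H \times X$ with $(h_1,h_2)\cdot(k,x) = (h_1 k h_2^{-1}, h_2 \cdot x)$ one has $Y \cong (H \times H) \times^H X$, so $\HM^\bullet_{H \times H}(Y;\RG) \cong \HM^\bullet_{H}(X;\RG)$ and the map from $\HM^\bullet_{H \times H}(\mathrm{pt};\RG)$ factors through restriction to the diagonal copy of $H$ --- which is exactly the assertion that the left and right structure maps agree. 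Some such argument (this product decomposition, or an equivalent substitute) is indispensable; without it your proof does not close.
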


\begin{proof}
There exist natural converging spectral sequences
\begin{align*}
E_2^{pq} = \HM^{p}_{\GD(\mathscr{O}) \rtimes \Gm}(\mathrm{pt}; \RG) \otimes_\RG \HM^{q}(\Gr; \RG) &\Rightarrow \HM^{p+q}_{\GD(\mathscr{O}) \rtimes \Gm} (\Gr;\RG), \\
E_2^{pq} = \HM^{p}_{\ID \rtimes \Gm}(\mathrm{pt}; \RG) \otimes_\RG \HM^{q}(\Gr; \RG) &\Rightarrow \HM^{p+q}_{\ID \rtimes \Gm} (\Gr;\RG).
\end{align*}
In both cases, the spectral sequence degenerates since the left-hand side vanishes unless $p$ and $q$ are even. This implies in particular that $\HM^\bullet_{\ID \rtimes \Gm} (\Gr;\RG)$ is $\RG[\hbar]$-free, proving the unicity of the factorization. It also follows that the natural morphism
\[
\HM^\bullet_{\ID \rtimes \Gm}(\mathrm{pt}; \RG) \otimes_{\HM^\bullet_{\GD(\mathscr{O}) \rtimes \Gm}(\mathrm{pt}; \RG)} \HM^\bullet_{\GD(\mathscr{O}) \rtimes \Gm} (\Gr;\RG) \to \HM^\bullet_{\ID \rtimes \Gm} (\Gr;\RG)
\]
is an isomorphism. Using this, we see that to prove the existence of the factorization it suffices to prove that the natural algebra morphism
\begin{equation}
\label{eqn:morp-cohomology-Gr-2}
\OC(\tg^* / W \times \tg^*/W)[\hbar] \to \HM^\bullet_{\GD(\mathscr{O}) \rtimes \Gm} (\Gr;\RG)
\end{equation}
defined in a way similar to~\eqref{eqn:morp-cohomology-Gr} factors through $C'_\hbar$.
%$\OC \bigl( \TG_1 \bigr)$.

The latter property can be proved as follows.\footnote{A similar claim is asserted without details in~\cite{bf}. We thank V.~Ginzburg for explaining this proof to one of us.} Using the same spectral sequence argument as above, one can check that $\HM^\bullet_{\GD(\mathscr{O}) \rtimes \Gm} (\Gr;\RG)$ is $\RG[\hbar]$-free, and that the natural morphism
\[
\RG \otimes_{\RG[\hbar]} \HM^\bullet_{\GD(\mathscr{O}) \rtimes \Gm} (\Gr;\RG) \to \HM^\bullet_{\GD(\mathscr{O})} (\Gr;\RG)
\]
is an isomorphism. From these facts we see that it suffices to prove that the morphism
\[
\OC(\tg^*/W \times \tg^*/W) \to \HM^\bullet_{\GD(\mathscr{O})} (\Gr;\RG)
\]
defined as for~\eqref{eqn:morp-cohomology-Gr-2} (but with the $\Gm$-equivariance omitted) factors through $\OC(\Delta)$.

Now we make the following observation. Let $H$ be a topological group, acting on a topological space $X$, and let $Y:=H \times X$. We endow $Y$ with an action of $H \times H$ via $(h_1,h_2) \cdot (k,x) = (h_1 k h_2^{-1}, h_2 \cdot x)$. Then there exists a natural morphism 
\begin{equation}
\label{eqn:morphism-equiv-coh}
\HM^\bullet_{H \times H}(\mathrm{pt}; \RG) \to \HM^\bullet_{H \times H}(Y; \RG).
\end{equation}
We claim that~\eqref{eqn:morphism-equiv-coh} factors through the morphism $\HM^\bullet_{H \times H}(\mathrm{pt}; \RG) \to \HM^\bullet_{H}(\mathrm{pt}; \RG)$ induced by restriction to the diagonal copy of $H$. Indeed one can consider the composition
\begin{equation}
\label{eqn:morphism-equiv-coh-2}
\HM^\bullet_{H \times H}(Y; \RG) \to \HM^\bullet_{H}(Y; \RG) \to \HM^\bullet_{H}(X; \RG),
\end{equation}
where the first morphism is induced by restriction to the diagonal copy, and the second morphism by restriction to the $H$-stable subspace $X= \{1\} \times X \subset Y$. Since $Y$ identifies with the induced variety $(H \times H) \times^H X$ (via the morphism $[(h_1,h_2) : x] \mapsto (h_1 h_2^{-1}, h_2 \cdot x)$), \eqref{eqn:morphism-equiv-coh-2} is an isomorphism. Since the composition of~\eqref{eqn:morphism-equiv-coh} and~\eqref{eqn:morphism-equiv-coh-2} clearly factors through $\HM^\bullet_H(\mathrm{pt}; \RG)$, the same holds for~\eqref{eqn:morphism-equiv-coh}.

We take for $H$ a maximal compact subgroup of $\GD$, so that we have isomorphisms $\HM^\bullet_{\GD(\mathscr{O})}(\mathrm{pt}; \RG) \cong \HM^\bullet_{\GD}(\mathrm{pt}; \RG) \cong \HM^\bullet_{H}(\mathrm{pt}; \RG)$. Similarly, using the K{\"u}nneth formula (which is applicable here since our cohomology spaces are free over $\RG$) we obtain isomorphisms
\[
\OC(\tg^*/W \times \tg^*/W) \cong \HM^\bullet_{\GD(\mathscr{O}) \times \GD(\mathscr{O})}(\mathrm{pt}; \RG) \cong \HM^\bullet_{H \times H}(\mathrm{pt}; \RG),
\]
and one can identify the morphism $\OC(\tg^*/W \times \tg^*/W) \to \OC(\Delta)$ with the morphism $\HM^\bullet_{H \times H}(\mathrm{pt}; \RG) \to \HM^\bullet_{H}(\mathrm{pt}; \RG)$ considered above.

If $\Omega H$ denotes the group of polynomial loops from the unit circle to $H$, then as in~\cite[\S 1.2]{ginzburg-2} we have a natural homeomorphism $\Omega H / H \simto \Gr$. Writing $\Omega H = H \times \Omega^0 H$ (where $\Omega^0 H$ is the space of \emph{based} loops, i.e.~those sending the base point of the circle to the identity) we obtain isomorphisms
\[
\HM^\bullet_{\GD(\mathscr{O})} (\Gr;\RG) \cong \HM^\bullet_H(\Omega H / H; \RG) \cong \HM^\bullet_{H \times H}(H \times \Omega^0 H; \RG).
\]
Hence we are in the setting considered above, with $X=\Omega^0 H$, and the desired claim follows from our general observation.
\end{proof}

Using a spectral sequence argument as in the proof of Lemma~\ref{lem:morphism-cohomology-factorization}, one can check that
the natural morphism
\[
\HM^\bullet_{\ID}(\mathrm{pt};\RG) \otimes_{\HM^\bullet_{\ID \rtimes \Gm} (\mathrm{pt};\RG)} \HM^\bullet_{\ID \rtimes \Gm} (\Gr;\RG) \to \HM^\bullet_{\ID} (\Gr;\RG)
\]
is an isomorphism.
We denote by
\[
\gamma \colon C \to \HM^\bullet_{\ID} (\Gr;\RG)
\]
the composition of $\RG \otimes_{\RG[\hbar]} \gamma_\hbar$ with this isomorphism. Then $\gamma$ is a graded algebra morphism.

Since $\Gr$ is the disjoint union of its connected components $\Gr_{(\omega)}$ ($\omega \in \Omega$) which are $\ID \rtimes \Gm$-stable, there exist natural isomorphisms of graded algebras
\[
\HM^\bullet_{\ID \rtimes \Gm} (\Gr;\RG) \cong \prod_{\omega \in \Omega} \HM^\bullet_{\ID \rtimes \Gm} (\Gr_{(\omega)};\RG), \qquad \HM^\bullet_{\ID} (\Gr;\RG) \cong \prod_{\omega \in \Omega} \HM^\bullet_{\ID} (\Gr_{(\omega)};\RG).
\]
For any $\omega \in \Omega$, we denote by
\[
\gamma^{(\omega)}_\hbar \colon C_\hbar \to \HM^\bullet_{\ID \rtimes \Gm} (\Gr_{(\omega)};\RG), \qquad \text{resp.} \qquad \gamma^{(\omega)} \colon C \to \HM^\bullet_{\ID} (\Gr_{(\omega)};\RG)
\]
the composition of $\gamma_\hbar$, resp.~$\gamma$, with the projection on the factor parametrized by $\omega$.

\begin{prop}
\label{prop:cohomology-Gr}
For all $\omega \in \Omega$, the morphisms $\QM \otimes_\RG \gamma^{(\omega)}_\hbar$ and $\QM \otimes_\RG \gamma^{(\omega)}$ are isomorphisms.
\end{prop}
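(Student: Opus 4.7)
The plan is to reduce the two statements to Bezrukavnikov--Finkelberg's description of $\GD(\mathscr{O}) \rtimes \Gm$-equivariant cohomology in characteristic zero (the main theorem of~\cite{bf}), which is in fact the motivating model for the definition of $C_\hbar$ in~\S\ref{ss:deformations}. First I observe that it is enough to prove the claim for $\gamma^{(\omega)}_\hbar$: by the isomorphism
\[
\RG \otimes_{\RG[\hbar]} \HM^\bullet_{\ID \rtimes \Gm}(\Gr;\RG) \simto \HM^\bullet_{\ID}(\Gr;\RG)
\]
established just before the proposition, together with the definition $C = C_\hbar / \hbar C_\hbar$ and the compatibility of $\QM \otimes_\RG (-)$ with reduction modulo $\hbar$, the statement for $\gamma^{(\omega)}$ is obtained from the one for $\gamma^{(\omega)}_\hbar$ by applying $\RG \otimes_{\RG[\hbar]} (-)$.

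Next I factor $\gamma_\hbar$ through the spherical cohomology. From the proof of Lemma~\ref{lem:morphism-cohomology-factorization}, the natural morphism
\[
\HM^\bullet_{\ID \rtimes \Gm}(\mathrm{pt};\RG) \otimes_{\HM^\bullet_{\GD(\mathscr{O}) \rtimes \Gm}(\mathrm{pt};\RG)} \HM^\bullet_{\GD(\mathscr{O}) \rtimes \Gm}(\Gr_{(\omega)};\RG) \simto \HM^\bullet_{\ID \rtimes \Gm}(\Gr_{(\omega)};\RG)
\]
is an isomorphism, and by the first isomorphism in~\eqref{eqn:isom-Chbar} we have
\[
C_\hbar \cong \OC(\tg^*/W \times \tg^*) \otimes_{\OC(\tg^*/W \times \tg^*/W)} C'_\hbar.
\]
Under these identifications $\gamma^{(\omega)}_\hbar$ is obtained by base change (along the flat morphism $\OC(\tg^*/W)[\hbar] \to \OC(\tg^*)[\hbar]$) from a graded $\RG[\hbar]$-algebra morphism
\[
\delta^{(\omega)}_\hbar \colon C'_\hbar \to \HM^\bullet_{\GD(\mathscr{O}) \rtimes \Gm}(\Gr_{(\omega)};\RG)
\]
(the factorization constructed in the proof of Lemma~\ref{lem:morphism-cohomology-factorization}). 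Hence it suffices to show that $\QM \otimes_\RG \delta^{(\omega)}_\hbar$ is an isomorphism for each $\omega \in \Omega$.

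For this final step I invoke the main result of Bezrukavnikov--Finkelberg~\cite{bf}, which precisely identifies, after base change to a field of characteristic zero, the $\GD(\mathscr{O}) \rtimes \Gm$-equivariant cohomology of each connected component $\Gr_{(\omega)}$ with the corresponding deformation $(C'_\hbar)_\QM$, via the same construction used here to build $\delta^{(\omega)}_\hbar$. The main obstacle is really bookkeeping: one must verify that the map constructed in~\S\ref{ss:cohomology-Gr} out of the two natural morphisms $\OC(\tg^*/W)[\hbar] \to \HM^\bullet_{\ID \rtimes \Gm}(\Gr;\RG)$ (coming from the left and right actions of the spherical subgroup) agrees with the map appearing in~\cite{bf}. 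Both are induced by exactly the same left/right translation structure on $\GD(\mathscr{K}) \rtimes \Gm$, so after unwinding definitions this is direct; none of the actual computation of cohomology is repeated.
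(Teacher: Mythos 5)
Your proposal is correct and follows essentially the same route as the paper: reduce to $\gamma^{(\omega)}_\hbar$, factor through the morphism $C'_\hbar \to \HM^\bullet_{\GD(\mathscr{O}) \rtimes \Gm}(\Gr_{(\omega)};\RG)$ using the spectral-sequence isomorphism and the first isomorphism in~\eqref{eqn:isom-Chbar}, and conclude by~\cite[Theorem~1]{bf}. The only detail the paper makes explicit that you leave implicit is the identification $\QM \otimes_\RG C'_\hbar \cong \DNC_{I_1^{\QM}}(\OC(\tg^*_\QM/W \times \tg^*_\QM/W))$ via Lemma~\ref{lem:def-normal-cone-BC} (flatness of $\QM$ over $\RG$), which is needed to match your $(C'_\hbar)_\QM$ with the object appearing in~\cite{bf}.
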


\begin{proof}
It is sufficient to prove the claim for $\gamma^{(\omega)}_\hbar$. Then, by construction of this morphism, it is sufficient to prove that the similar morphism
\[
C'_\hbar \to \HM^\bullet_{\GD(\mathscr{O}) \rtimes \Gm} (\Gr_{(\omega)};\RG)
\]
becomes an isomorphism after applying $\QM \otimes_\RG (-)$. However,
since $\QM$ is flat over $\RG$, by Lemma~\ref{lem:def-normal-cone-BC} we have a natural isomorphism
\[
\QM \otimes_{\RG} C'_\hbar \cong \DNC_{I_1^{\QM}}(\OC( \tg^*_\QM / W \times \tg^*_\QM / W )),
\]
where $\tg_\QM^*:=\QM \otimes_\RG \tg^*$ and $I_1^{\QM}$ is the ideal of the diagonal copy of $\tg_\QM^*/W$. Similarly, we have
\[
\QM \otimes_{\RG} \HM^\bullet_{\GD(\mathscr{O}) \rtimes \Gm} (\Gr_{(\omega)};\RG) \cong \HM^\bullet_{\GD(\mathscr{O}) \rtimes \Gm} (\Gr_{(\omega)};\QM).
\]
Hence our claim follows from~\cite[Theorem 1]{bf}.
\end{proof}

\begin{remark}
Unlike for the case of $\QM$, the morphisms $\gamma^{(\omega)}_\hbar$ and $\gamma^{(\omega)}$ are \emph{not} isomorphisms. In fact, $C_{\hbar}$ is a finitely generated $\RG$-algebra, whereas $\HM^\bullet_{\ID \rtimes \Gm}(\Gr_{(\omega)}; \RG)$ is not finitely generated in general, see~\cite{yz}.
\end{remark}

%------------------------------------------------------------------
\subsection{``Topological'' Bott--Samelson category}
\label{ss:topological-BS}
%------------------------------------------------------------------

Let $\EM$ be either $\RG$ or $\FM$. Recall the standard convolution product on the category $\Db_{\ID \rtimes \Gm}(\Fl, \EM)$, defined by
\[
\FC \star \GC := \mu_*(\FC \, \widetilde{\boxtimes} \, \GC),
\]
where $\Fl \, \widetilde{\times} \, \Fl$ is the quotient of $(\GD(\mathscr{K}) \rtimes \Gm) \times \Fl$ by the natural diagonal action of $\ID \rtimes \Gm$, $\mu \colon \Fl \, \widetilde{\times} \, \Fl \to \Fl$ is defined by $\mu([g : h \ID]) = gh \ID$, and $\FC \, \widetilde{\boxtimes} \, \GC$ is the ``twisted external product'' of $\FC$ and $\GC$, i.e.~the unique object whose pullback to $(\GD(\mathscr{K}) \rtimes \Gm) \times \Fl$ is the external product of the pullback of $\FC$ with $\GC$.
A similar construction provides a bifunctor
\[
(-) \star (-) \colon \Db_{\ID \rtimes \Gm}(\Fl, \EM) \times \Db_{\ID \rtimes \Gm}(\Gr, \EM) \to \Db_{\ID \rtimes \Gm}(\Gr, \EM).
\]

We now introduce some ``Bott--Samel\-son objects'' in $\Parity_{\ID \rtimes \Gm}(\Gr,\EM)$, as follows. Each connected component $\Gr_{(\omega)}$ contains a unique $0$-dimensional $\ID$-orbit; we denote by $\delta_\omega^{\EM}$ the constant (skyscraper) sheaf on this orbit (with coefficients $\EM$). On the other hand, for any simple reflection $s$, we have the $\ID \rtimes \Gm$-equivariant parity complex $\EC_{s,\EM}:=\underline{\EM}_{\overline{\Fl_s}}[1]$ on $\Fl$. Then, if $\omega \in \Omega$ and if $\us=(s_1, \cdots, s_r)$ is a sequence of simple reflections, we can consider the object
\[
\EC_\EM(\omega, \us) := \EC_{s_r,\EM} \star \cdots \star \EC_{s_1,\EM} \star \delta^\EM_{\omega^{-1}}
\]
in $\Db_{\ID \rtimes \Gm}(\Gr,\EM)$. The arguments in~\cite[\S 4.1]{jmw} or in~\cite[\S 5.5]{FieWil} show that $\EC_\EM(\omega, \us)$ belongs to the subcategory $\Parity_{\ID \rtimes \Gm}(\Gr,\EM)$.
We will denote similarly the images of this object in $\Parity_{\ID}(\Gr,\EM)$ and in $\Parity_{(\ID)}(\Gr,\EM)$. 

We define the category $\BStop$ with
\begin{itemize}
\item \emph{objects}: triples $(\omega, \underline{s}, i)$ with $\omega \in \Omega$, $\underline{s}$ a sequence of simple reflections indexed by $(1, \cdots, n)$ for some $n \in \ZM$, and $i \in \ZM$;
\item \emph{morphisms}: 
\[
\Hom_{\BStop} \bigl( (\omega, \underline{s}, i), (\omega', \underline{t}, j) \bigr) :=
\Hom_{\Parity_{\ID}(\Gr, \RG)} (\EC_\RG(\omega, \us) [i], \EC_\RG(\omega', \ut)[j]).
\]
\end{itemize}

\begin{prop}
\label{prop:parity-Karoubi}
The category $\Parity_{(\ID)}(\Gr, \FM)$ can be recovered from the category $\BStop$, in the sense that it is equivalent to the Karoubian closure of the additive envelope of the category 
which has the same objects as $\BStop$, and morphisms from $(\omega, \underline{s}, i)$ to $(\omega', \underline{t}, j)$ which are given by the $(j-i)$-th piece of the graded vector space
\[
\FM \otimes_{\HM^\bullet_{\ID}(\mathrm{pt}; \RG)} \left( \bigoplus_{n \in \ZM} \Hom_{\BStop} \bigl( (\omega, \underline{s}, 0), (\omega', \underline{t}, n) \bigr) \right).
\]
\end{prop}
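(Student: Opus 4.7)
The plan is to construct an equivalence by comparing Hom spaces. Specifically, I would define a functor $F$ from the Karoubian closure of the additive envelope described in the statement to $\Parity_{(\ID)}(\Gr,\FM)$ on objects by $(\omega,\us,i) \mapsto \EC_\FM(\omega,\us)[i]$, and then verify that it is fully faithful on the generating (non-Karoubian) subcategory before showing essential surjectivity onto a Karoubian/additive closure.

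For the Hom-space identification, the key tool is Lemma~\ref{lem:properties-parity}\eqref{it:morph-parity-For}, applied twice. First, extension of scalars from $\RG$ to $\FM$ (together with the $\RG$-freeness of $\HM^\bullet_\ID(\mathrm{pt};\RG)$, which follows from the spectral sequence argument used in the proof of Lemma~\ref{lem:morphism-cohomology-factorization}) yields
\[
\HM^\bullet_\ID(\mathrm{pt};\FM) \otimes_{\HM^\bullet_\ID(\mathrm{pt};\RG)} \Hom^\bullet_{\Parity_\ID(\Gr,\RG)}(\EC_\RG(\omega,\us),\EC_\RG(\omega',\ut)) \simto \Hom^\bullet_{\Parity_\ID(\Gr,\FM)}(\EC_\FM(\omega,\us),\EC_\FM(\omega',\ut)).
\]
Second, change of equivariance (from $\ID$ to the trivial group) over the field $\FM$ yields
\[
\FM \otimes_{\HM^\bullet_\ID(\mathrm{pt};\FM)} \Hom^\bullet_{\Parity_\ID(\Gr,\FM)}(\EC_\FM(\omega,\us),\EC_\FM(\omega',\ut)) \simto \Hom^\bullet_{\Parity_{(\ID)}(\Gr,\FM)}(\EC_\FM(\omega,\us),\EC_\FM(\omega',\ut)).
\]
Composing these two isomorphisms and reading off the degree $(j-i)$ piece gives exactly the Hom space prescribed in the statement. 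By construction (and the compatibility of the isomorphisms in Lemma~\ref{lem:properties-parity} with composition, which is standard), this gives a well-defined, fully faithful functor $F$.

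For essential surjectivity, I would invoke the standard theory of parity sheaves on Kac--Moody flag varieties (see \cite[\S 4.1]{jmw} or the arguments in \cite{FieWil}): for each $\lambda \in \XB$ one chooses a reduced expression for a minimal-length representative of $t_\lambda W$ in $\Waff$, writes it as $\omega \cdot s_{i_1} \cdots s_{i_r}$ with $\omega \in \Omega$, and then $\EC_\FM(\omega,\us)$ decomposes as $\EC_\lambda$ plus a sum of indecomposable summands of the form $\EC_\mu[n]$ with $\mu$ ``lower'' than $\lambda$. Since $\Parity_{(\ID)}(\Gr,\FM)$ is Krull--Schmidt and generated as such by the $\EC_\lambda[i]$, the functor $F$ extends to an equivalence on the Karoubian closure.

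The only technical step that requires care is the compatibility of the two isomorphisms of Lemma~\ref{lem:properties-parity} with the composition of morphisms, which is needed to ensure that $F$ is actually a functor (not merely a bijection on Hom spaces); however, this follows from the naturality of the base change and forgetful maps and does not require any genuinely new input. I do not expect any substantial obstacle beyond this bookkeeping.
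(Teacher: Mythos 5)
Your proposal is correct and follows essentially the same route as the paper: identify the prescribed $\Hom$-spaces with those between the objects $\EC_\FM(\omega,\us)[i]$ in $\Parity_{(\ID)}(\Gr,\FM)$ via Lemma~\ref{lem:properties-parity}\eqref{it:morph-parity-For} (change of coefficients $\RG \to \FM$ followed by forgetting $\ID$-equivariance, which together give the single tensor product $\FM \otimes_{\HM^\bullet_{\ID}(\mathrm{pt};\RG)}(-)$), and then conclude by Krull--Schmidtness and the fact that every indecomposable parity sheaf is a summand of a Bott--Samelson object. The only ingredient you leave implicit that the paper makes explicit is the isomorphism $\FM(\EC_\RG(\omega,\us)) \cong \EC_\FM(\omega,\us)$, justified by the commutation of modular reduction with $*$-pullback and $!$-pushforward, which is needed before your first base-change isomorphism can be read as a statement about $\EC_\FM(\omega,\us)$.
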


\begin{proof}
Denote (for the duration of the proof) by $\mathsf{A}$ the category which has the same objects as $\BStop$, and whose morphisms are defined as in the statement of the proposition. We observe that, if $\FM(-)$ is the ``modular reduction functor'' defined as in~\S\ref{ss:reminder-parity}, then we have we have a canonical isomorphism
\[
\FM \bigl( \EC_\RG(\omega, \us) \bigr) \cong \EC_\FM(\omega, \us)
\]
for any $(\omega, \us)$ as above. (In fact, this follows from the commutation of the functor $\FM(-)$ with $*$-pullback, see~\cite[Proposition~2.6.5]{ks}, and with $!$-pushforward, see~\cite[Proposition~2.6.6]{ks}.) Using these isomorphisms and
%Then, using 
Lemma~\ref{lem:properties-parity}\eqref{it:morph-parity-For},
% and the canonical isomorphisms
%\[
%\FM \bigl( \EC'_\RG(\omega, \us) \bigr) \cong \EC'_\FM(\omega, \us)
%\]
%(where the ``modular reduction functor'' $\FM(-)$ is defined as in~\S\ref{ss:reminder-parity}), 
we see that the assignment $(\omega, \us, i) \mapsto \EC_\FM(\omega,\us)[i]$ defines an equivalence of categories from $\mathsf{A}$ to the full subcategory $\mathsf{A}'$ of $\Parity_{(\ID)}(\Gr, \FM)$ whose objects are of the form $\EC_\FM(\omega,\us)[i]$.

Now we observe that, by the results of~\cite{jmw}, the category $\Parity_{(\ID)}(\Gr, \FM)$ is a Krull--Schmidt, Karoubian, additive category, and moreover that any indecomposable object in this category is isomorphic to a direct summand of an object of the form $\EC_\FM(\omega,\us)[i]$. It follows that $\Parity_{(\ID)}(\Gr, \FM)$ is equivalent to the Karoubian closure of the additive envelope of $\mathsf{A}'$, which finishes the proof.
\end{proof}

\begin{remark}
\label{rk:indec-parity}
More precisely, the results of~\cite[\S 4.1]{jmw}
imply that if $\lambda \in \XB$ and if $w_{-\lambda} = \omega s_1 \cdots s_r$ is a reduced expression for $w_\lambda$, then $\EC_\lambda$ can be characterized (up to isomorphism) as the unique direct summand of $\EC_\FM(\omega, (s_1, \cdots, s_r))$ which is not a direct summand of any object of the form $\EC_\FM(\omega', \ut)[i]$ where $\omega' \in \Omega$, $i \in \ZM$, and $\ut$ is a sequence of simple reflections of length at most $r-1$.
\end{remark}

%------------------------------------------------------------------
\subsection{Equivariant cohomology functors}
\label{ss:cohomology-functors}
%------------------------------------------------------------------

For any $\omega \in \Omega$, we define the functor
\[
\co_{\ID \rtimes \Gm} \colon \Parity_{\ID \rtimes \Gm}(\Gr, \RG) \to \Modgr(C_\hbar)
\]
as the composition
\begin{equation}
\label{eqn:functor-H}
\Parity_{\ID \rtimes \Gm}(\Gr, \RG) \xrightarrow{\HM^\bullet_{\ID \rtimes \Gm}(\Gr,-)}
\Modgr(\HM^\bullet_{\ID \rtimes \Gm}(\Gr;\RG)) \to \Modgr(C_\hbar),
\end{equation}
where the second functor is the ``restriction of scalars'' functor associated with the morphism $\gamma_\hbar$. We define similarly a functor
\[
\co_{\ID} \colon \Parity_{\ID}(\Gr, \RG) \to \Modgr(C).
\]
The goal of this subsection is to prove the following.

\begin{prop}
\label{prop:H-BS}
For any $\omega \in \Omega$ and any sequence $\us=(s_1, \cdots, s_n)$ of simple reflections, there exists a canonical isomorphism of graded $C$-modules
\[
\co_{\ID} (\EC_\RG(\omega,\us)) \cong D(\omega,\us).
\]
\end{prop}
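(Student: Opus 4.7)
The plan is to proceed by induction on the length $n$ of the sequence $\us = (s_1, \ldots, s_n)$, using the factorisation $\EC_\RG(\omega, (s_1, \ldots, s_n)) = \EC_{s_n, \RG} \star \EC_\RG(\omega, (s_1, \ldots, s_{n-1}))$ on the topological side, which mirrors the decomposition $D(\omega, (s_1, \ldots, s_n)) = D(\omega, (s_1, \ldots, s_{n-1})) \otimes_{\OC(\tg^*)} D_{s_n}$ on the algebraic side.

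In the base case $n = 0$, the object $\EC_\RG(\omega, \emptyset) = \delta^\RG_{\omega^{-1}}$ is supported on a single $\ID$-fixed point, so $\co_\ID(\delta^\RG_{\omega^{-1}}) \cong \HM^\bullet_\ID(\mathrm{pt}; \RG) \cong \OC(\tg^*)$ as a graded $\RG$-module. The right $\OC(\tg^*)$-action (from the $\HM^\bullet_\ID(\mathrm{pt}; \RG)$-module structure) is visibly by multiplication; to identify this with $E_\omega$ I would verify that the left $\OC(\tg^*/W)$-action induced via the morphism~\eqref{eqn:morp-cohomology-Gr} factors through the composition $\OC(\tg^*/W) \hookrightarrow \OC(\tg^*) \xrightarrow{\omega^{-1} \cdot} \OC(\tg^*)$. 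This should follow from the geometric observation that right multiplication by $\GD(\mathscr{O})$ on the coset $L_{\omega^{-1}} = z^{\omega^{-1}} \GD(\mathscr{O})$ is absorbed after conjugation by $z^{\omega^{-1}}$, twisting characters of the maximal torus by $\omega^{-1}$.

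For the inductive step, I would establish the general convolution formula
\[
\co_\ID(\EC_{s, \RG} \star \GC) \cong \co_\ID(\GC) \otimes_{\OC(\tg^*)} D_s
\]
as graded $C$-modules, for any simple reflection $s$ and any $\GC \in \Parity_\ID(\Gr, \RG)$. This in turn rests on two inputs. First, the identification $\HM^\bullet_\ID(\overline{\Fl_s}, \EC_{s, \RG}) \cong D_s$ as graded $\tC$-modules, which reduces (using Lemma~\ref{lem:affine-simple-conjugate} to handle affine simple reflections) to the classical description of the $\ID$-equivariant cohomology of $\overline{\Fl_s} \cong \PM^1$ as $\OC(\tg^*) \otimes_{\OC(\tg^*)^s} \OC(\tg^*)$, combined with the grading shift built into $\EC_{s, \RG} = \underline{\RG}_{\overline{\Fl_s}}[1]$. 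Second, a K{\"u}nneth-type formula
\[
\HM^\bullet_\ID(\overline{\Fl_s} \, \widetilde{\times} \, \Gr, \EC_{s, \RG} \, \widetilde{\boxtimes} \, \GC) \cong \HM^\bullet_\ID(\overline{\Fl_s}, \EC_{s, \RG}) \otimes_{\HM^\bullet_\ID(\mathrm{pt}; \RG)} \HM^\bullet_\ID(\Gr, \GC),
\]
which I would derive from the Leray spectral sequence for the fibration $\overline{\Fl_s} \, \widetilde{\times} \, \Gr \to \overline{\Fl_s}$ (whose fibres are $\Gr$), its degeneration following from the freeness of $\HM^\bullet_\ID(\overline{\Fl_s}, \EC_{s, \RG})$ over $\HM^\bullet_\ID(\mathrm{pt}; \RG)$ ensured by Lemma~\ref{lem:properties-parity}\eqref{it:parity-cohomology}.

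The main obstacle will be to verify that the convolution formula respects the full $C$-module structure and not merely the evident $(\OC(\tg^*) \otimes \OC(\tg^*))$-module structure. Concretely, I would need to track the left $\OC(\tg^*/W)$-action of $C$, which arises geometrically from right multiplication by $\GD(\mathscr{O})$: on the twisted product $\overline{\Fl_s} \, \widetilde{\times} \, \Gr$ this action affects only the rightmost factor, so it should correspond on the right-hand side of the convolution formula to the $\OC(\tg^*/W)$-action on $\co_\ID(\GC)$ alone, exactly as needed for the $C$-module structure on $\co_\ID(\GC) \otimes_{\OC(\tg^*)} D_s$ specified in~\S\ref{ss:deformations}. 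Once this compatibility is secured, iterating the convolution formula against the inductive hypothesis immediately yields the desired identification $\co_\ID(\EC_\RG(\omega, \us)) \cong D(\omega, \us)$.
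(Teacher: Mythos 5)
Your overall architecture --- skyscraper base case plus an iterated convolution/K{\"u}nneth formula (essentially \cite[Proposition~3.2.1]{by}, which is what the paper invokes) --- is the same as the paper's, and those parts are sound. But there is a genuine gap in the step you yourself flag as the main obstacle, and your proposed fix does not repair it. You plan to verify $C$-linearity by tracking, purely in the $\ID$-equivariant setting, the left $\OC(\tg^*/W)$-action coming from right multiplication by $\GD(\mathscr{O})$. The problem is that in $C = C_\hbar/\hbar C_\hbar$ the images of $f \otimes 1$ and $1 \otimes f$ coincide for $f \in \OC(\tg^*/W)$ (their difference lies in $\hbar \cdot C_\hbar$), and correspondingly the two resulting morphisms $\OC(\tg^*/W) \to \HM^\bullet_{\ID}(\Gr;\RG)$ agree --- this is exactly the diagonal factorization established in the proof of Lemma~\ref{lem:morphism-cohomology-factorization}, specialized at $\hbar = 0$. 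So the $\OC(\tg^*/W \times \tg^*)$-module structure you propose to match carries no information beyond the $\OC(\tg^*)$-module structure. The actual content of the $C$-action is carried by the classes $\hbar^{-1}(f \otimes 1 - 1 \otimes f)$, i.e.\ the differentials $d(f)$ under the identification of Lemma~\ref{lem:isom-C-tangent}, and these act on $\co_{\ID}(\EC)$ only through $\gamma$, which is itself defined by constructing $\gamma_\hbar$ in the $\ID \rtimes \Gm$-equivariant setting and reducing mod $\hbar$. Your base case already illustrates the failure: at $\hbar = 0$ one has $\omega^{-1} \cdot f = f$ for $f \in \OC(\tg^*/W)$, so the verification you propose is vacuous and cannot distinguish $E_\omega$ from $E_{\omega'}$ as $C$-modules --- the modules $E_{t_\lambda}$ differ only through the action of $d(f)$ by the directional derivative of $f$ along $\lambda$, a first-order datum invisible without the loop-rotation deformation. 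The same issue affects your identification $\tco_{\ID}(\EC_{s,\RG}) \cong D_s$ as $\tC$-modules.

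The paper's proof circumvents this precisely by never specializing $\hbar$ until the very end: the skyscraper computation, the $\PM^1$ computation and the convolution formula are all carried out in $\Parity_{\ID \rtimes \Gm}(\Gr,\RG)$ and yield isomorphisms of $\OC(\tg^*/W \times \tg^*)[\hbar]$- (resp.\ $\OC(\tg^* \times \tg^*)[\hbar]$-) modules; since all modules in sight are $\hbar$-torsion-free, compatibility with the elements $\hbar^{-1}(f \otimes 1 - 1 \otimes f)$ is then automatic (their action is uniquely determined by that of $f \otimes 1 - 1 \otimes f$ on a torsion-free module), so the isomorphisms upgrade to $C_\hbar$- and $\tC_\hbar$-linear ones for free, and one concludes by applying $\RG \otimes_{\RG[\hbar]} (-)$. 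To repair your argument you should run your induction with $\co_{\ID \rtimes \Gm}$, $E^\hbar_\omega$ and $D^\hbar_s$, invoke this torsion-freeness trick at each stage, and only then set $\hbar = 0$.
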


Before proving the proposition, we remark that the same constructions as in~\S\ref{ss:cohomology-Gr} allow to define graded algebra morphisms
\[
\tC_\hbar \to \HM^\bullet_{\ID \rtimes \Gm}(\Fl; \RG) \qquad \text{and} \qquad \tC \to \HM^\bullet_{\ID}(\Fl; \RG),
\]
and then functors
\[
\tco_{\ID \rtimes \Gm} \colon \Parity_{\ID \rtimes \Gm}(\Fl, \RG) \to \Modgr(\tC_\hbar), \qquad \tco_{\ID} \colon \Parity_{\ID}(\Fl, \RG) \to \Modgr(\tC).
\]

\begin{lem}
\label{lem:cohomology-basic}
\begin{enumerate}
\item
\label{it:cohomology-delta}
For any $\omega \in \Omega$, there exists a canonical isomorphism of graded $C_\hbar$-modules
\[
\co_{\ID \rtimes \Gm}(\delta_\omega^\RG) \cong E^\hbar_{\omega^{-1}}.
\]
\item
\label{it:cohomology-Es}
For any simple reflection $s$, there exists a canonical isomorphism of $\tC_\hbar$-modules
\[
\tco_{\ID \rtimes \Gm}(\EC_{s,\RG}) \cong D^\hbar_s.
\]
\end{enumerate}
\end{lem}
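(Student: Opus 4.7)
Both parts are established by direct computation, matching the geometric side with the modules $E^\hbar_{\omega^{-1}}$ and $D^\hbar_s$ after analyzing the two $\OC(\tg^*)[\hbar]$-actions implicit in the $C_\hbar$- or $\tC_\hbar$-module structure.

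For part~\eqref{it:cohomology-delta}, the unique $0$-dimensional $\ID$-orbit in $\Gr_{(\omega)}$ is a single point $\{x_\omega\}$, so $\co_{\ID \rtimes \Gm}(\delta_\omega^\RG) = \HM^\bullet_{\ID \rtimes \Gm}(\{x_\omega\}; \RG) \cong \OC(\tg^*)[\hbar]$ as a graded $\RG$-module, matching the underlying module of $E^\hbar_{\omega^{-1}}$. The right $\OC(\tg^*)[\hbar]$-action, induced by the projection $\{x_\omega\} \to \mathrm{pt}$, is multiplication, as for $E^\hbar_{\omega^{-1}}$. To identify the left $\OC(\tg^*/W)[\hbar]$-action, write $\omega = v \cdot t_\lambda \in \Waff$ and choose the lift $p_\omega := \dot{v}\, z^\lambda \in \GD(\mathscr{K})$ of $x_\omega$. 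The stabilizer of $p_\omega$ in $(\ID \rtimes \Gm) \times (\GD(\mathscr{O}) \rtimes \Gm)$ is the graph of the conjugation homomorphism $h \mapsto p_\omega^{-1} h\, p_\omega$, and a direct computation in the semi-direct product $\GD(\mathscr{K}) \rtimes \Gm$ shows that this homomorphism sends $(t, x) \in \TD \rtimes \Gm$ to $(v^{-1}(t) \cdot x^{-\lambda}, x)$. On the cohomology of the point, this translates the natural morphism $\OC(\tg^*/W)[\hbar] \to \OC(\tg^*)[\hbar]$ into multiplication by $\omega \cdot f$ for $f \in \OC(\tg^*/W)[\hbar]$---precisely the left action defining $E^\hbar_{\omega^{-1}}$.

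For part~\eqref{it:cohomology-Es}, the sheaf $\EC_{s,\RG}$ is supported on $\overline{\Fl_s} \cong \mathbb{P}^1$, the quotient of the minimal parahoric of type $s$ by $\ID$. A standard computation, for instance via equivariant localization at the two $\TD \rtimes \Gm$-fixed points of $\overline{\Fl_s}$, yields a canonical isomorphism of bimodules
\[
\HM^\bullet_{\ID \rtimes \Gm}(\overline{\Fl_s}; \RG) \simto \OC(\tg^*)[\hbar] \otimes_{(\OC(\tg^*)[\hbar])^s} \OC(\tg^*)[\hbar],
\]
and the shift $[1]$ in $\EC_{s, \RG}$ accounts for the grading shift $\langle -1 \rangle$ appearing in $D^\hbar_s$. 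To promote this to an isomorphism of $\tC_\hbar$-modules, one checks that specialization at $\hbar = 0$ recovers the natural $\tC$-module $\OC(\tg^*) \otimes_{\OC(\tg^*)^s} \OC(\tg^*)$ of~\eqref{eqn:Ds-tensor-product} (using Lemma~\ref{lem:invariants-hbar}); the full $\tC_\hbar$-structure then lifts uniquely by the $\hbar$-flatness established in Lemma~\ref{lem:deformation-flat}\eqref{it:def-flat-3}.

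The main obstacle will be the identification of the left action in part~\eqref{it:cohomology-delta}: the element $\omega \in \Omega \subset \Waff$ acts nontrivially on $\tg^* \times \AM^1_\RG$ even after restriction to $\OC(\tg^*/W)[\hbar]$, so one must track carefully how the ``loop rotation'' part of the conjugation by $p_\omega = \dot{v}\, z^\lambda$ produces the $\lambda$-translation in the $\hbar$-direction, rather than merely the Weyl-group action $v$ on $\tg^*$. Once this is done, the remaining compatibilities in both parts are formal consequences of the K\"unneth/spectral-sequence arguments already used in the proof of Lemma~\ref{lem:morphism-cohomology-factorization}.
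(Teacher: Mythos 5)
Your treatment of part~\eqref{it:cohomology-delta} is essentially the paper's own argument: identify $\co_{\ID \rtimes \Gm}(\delta_\omega^\RG)$ with $\HM^\bullet_{\ID \rtimes \Gm}(\mathrm{pt};\RG) \cong \OC(\tg^*)[\hbar]$ via the orbit of a lift of the point, and read off the left $\OC(\tg^*/W)[\hbar]$-action from the conjugation homomorphism cutting out the stabilizer in $(\ID \rtimes \Gm) \times (\GD(\mathscr{O}) \rtimes \Gm)$; the paper does exactly this with the lift $z^\lambda \dot v$ for $\omega = t_\lambda v$, and then upgrades the $\OC(\tg^*/W \times \tg^*)[\hbar]$-isomorphism to a $C_\hbar$-isomorphism using $\hbar$-torsion-freeness.

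Part~\eqref{it:cohomology-Es} has a genuine gap. Localization at the two fixed points of $\overline{\Fl_s} \cong \PM^1_\CM$ does \emph{not} produce the tensor product $\OC(\tg^*)[\hbar] \otimes_{(\OC(\tg^*)[\hbar])^s} \OC(\tg^*)[\hbar]$ directly; what it produces is the ``congruence'' module ${}'D^\hbar_s = \{(a,b) \in \OC(\tg^*)[\hbar] \oplus \OC(\tg^*)[\hbar] \mid a - b \in \alpha^\vee \cdot \OC(\tg^*)[\hbar]\}$. The remaining step --- that $f \otimes g \mapsto (fg, s(f)g)$ is an isomorphism $D^\hbar_s \simto {}'D^\hbar_s$ --- is precisely where the standing hypotheses on $\RG$ enter: one needs $\check\mu \in \check\XB \otimes_\ZM \RG$ with $\langle \check\mu, \alpha \rangle = 1$ (available because $\check\XB/\ZM\check\Phi$ is assumed torsion-free), so that the basis $\{1 \otimes 1, \check\mu \otimes 1\}$ of $D^\hbar_s$ over the right copy of $\OC(\tg^*)[\hbar]$ is carried to the basis $\{(1,1), (\check\mu, \check\mu - \alpha^\vee)\}$ of ${}'D^\hbar_s$. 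Without such a $\check\mu$ the map need not be surjective (this already fails over $\ZM$ in type $A_1$ for the adjoint dual datum without inverting $2$), so calling the identification ``standard'' skips the one nontrivial point of this part. Note also that the paper first reduces to \emph{finite} $s$ via Lemma~\ref{lem:affine-simple-conjugate}; for an affine simple reflection the fixed-point description would involve the affine root (an $\hbar$-shifted element), and your argument does not address this. Finally, your ``lift the $\tC_\hbar$-structure from $\hbar = 0$'' step is unnecessary: both sides already carry $\tC_\hbar$-actions and are $\hbar$-torsion-free, so any $\OC(\tg^* \times \tg^*)[\hbar]$-linear isomorphism between them is automatically $\tC_\hbar$-linear.
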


\begin{proof}
\eqref{it:cohomology-delta}
By definition, if $\omega = t_\lambda v$ (with $v \in W$ and $\lambda \in \XB$) we have
\[
\co_{\ID \rtimes \Gm}(\delta_\omega^\RG) = \HM^\bullet_{\ID \rtimes \Gm}(\Gr, \delta_\omega^\RG) = \HM^\bullet_{\ID \rtimes \Gm} \bigl( z^\lambda \cdot (\GD(\mathscr{O}) \rtimes \Gm) / \GD(\mathscr{O}) \rtimes \Gm; \RG \bigr).
\]
We deduce a canonical isomorphism
\[
\co_{\ID \rtimes \Gm}(\delta_\omega^\RG) \cong \HM^\bullet_{(\ID \rtimes \Gm) \times (\GD(\mathscr{O}) \rtimes \Gm)} \bigl( z^\lambda \cdot (\GD(\mathscr{O}) \rtimes \Gm) ; \RG \bigr).
\]
Now if $\dot{v}$ is a lift of $v$ in $\GD$, the assignment $(a,b) \mapsto a \cdot z^\lambda \dot{v} \cdot b^{-1}$ induces an isomorphism
\[
(\ID \rtimes \Gm) \times (\GD(\mathscr{O}) \rtimes \Gm) / K \simto z^\lambda \cdot (\GD(\mathscr{O}) \rtimes \Gm),
\]
where $K=\{(a,b) \in (\ID \rtimes \Gm) \times (\GD(\mathscr{O}) \rtimes \Gm) \mid b=(z^\lambda \dot{v})^{-1} a (z^\lambda \dot{v})\}$. This group is isomorphic to $(\ID \rtimes \Gm)$ (through $a \mapsto \bigl(a, (z^\lambda \dot{v})^{-1} a (z^\lambda \dot{v}) \bigr)$); hence we obtain canonical isomorphisms
\[
\co_{\ID \rtimes \Gm}(\delta_\omega^\RG) \cong \HM^\bullet_K(\mathrm{pt}; \RG) \cong \HM^\bullet_{\ID \rtimes \Gm}(\mathrm{pt}; \RG).
\]
The right-hand side is isomorphic to $\OC(\tg^*)[\hbar]$ (see~\eqref{eqn:cohomology-TD}), with the natural action of the subalgebra $\OC(\tg^*)[\hbar] \subset C_\hbar$. And, via this isomorphism, the subalgebra $\OC(\tg^*/W)[\hbar] \subset C_\hbar$ acts via the composition of multiplication and the morphism
\[
\OC(\tg^*/W)[\hbar] \hookrightarrow \OC(\tg^*)[\hbar] \xrightarrow{f \mapsto \omega \cdot f} \OC(\tg^*)[\hbar].
\]
Hence we have constructed a canonical isomorphism of $\OC(\tg^*/W \times \tg^*)[\hbar]$-modules
\[
\co_{\ID \rtimes \Gm}(\delta_\omega^\RG) \cong E^\hbar_{\omega^{-1}}.
\]
Since these spaces have no $\hbar$-torsion, this isomorphism is automatically an isomorphism of $C_\hbar$-modules.

\eqref{it:cohomology-Es}
Using Lemma~\ref{lem:affine-simple-conjugate}, one can assume that $s$ is finite, with associated simple root $\alpha^\vee \in {\check \XB}$ and associated simple coroot $\alpha \in \XB$. 
By definition we have
\[
\tco_{\ID \rtimes \Gm}(\EC_{s,\RG}) \cong \HM^{\bullet}_{\TD \times \Gm}(\overline{\Fl_s}; \RG) \langle -1 \rangle.
\]
Since $\overline{\Fl_s}$ is isomorphic to $\PM^1_\CM$,
it is well known (see e.g.~\cite{FieWil}) that the morphism
\[
\HM^{\bullet}_{\TD \times \Gm}(\overline{\Fl_s}; \RG) \to \HM^{\bullet}_{\TD \times \Gm}(\mathrm{pt}; \RG) \oplus \HM^{\bullet}_{\TD \times \Gm}(\mathrm{pt}; \RG)
\]
induced by restriction to the fixed points $\ID / \ID$ and $s \cdot \ID / \ID$ induces an isomorphism between $\HM^{\bullet}_{\TD \times \Gm}(\overline{\Fl_s}; \RG)$ and
\[
{}' \hspace{-1pt} D^{\hbar}_s:=\{(a,b) \in \OC(\tg^*)[\hbar] \oplus \OC(\tg^*)[\hbar] \mid a=b \ \mathrm{mod} \ \alpha^\vee\}.
\]
(Here, the right copy of $\OC(\tg^*)[\hbar]$ in $\tC_\hbar$ acts diagonally, while any $f$ in the left copy of $\OC(\tg^*)[\hbar]$ in $\tC_\hbar$ acts by multiplication by $(f,s(f))$.)
Now we consider the morphism $D^\hbar_s \to {}' \hspace{-1pt} D^\hbar_s$ defined by $f \otimes g \mapsto (fg, s(f)g)$. By our assumptions on $\RG$, there exists ${\check \mu} \in {\check \XB} \otimes_\ZM \RG$ such that $\langle {\check \mu}, \alpha \rangle = 1$.
% (where $\alpha \in \XB$ is the coroot associated with ${\check \alpha}$). 
Then our morphism sends the basis of $D^\hbar_s$ as an $\OC(\tg^*)[\hbar]$-module (via the action of the right copy of $\OC(\tg^*)[\hbar]$ in $\tC_\hbar$) consisting of $1 \otimes 1$ and ${\check \mu} \otimes 1$ to the basis of ${}' \hspace{-1pt} D^{\hbar}_s$ consisting of $(1,1)$ and $({\check \mu}, {\check \mu}-\alpha^\vee)$; hence it is an isomorphism.

We have constructed an isomorphism of $\OC(\tg^* \times \tg^*)[\hbar]$-modules
\[
\tco_{\ID \rtimes \Gm}(\EC_{s,\RG}) \cong D_s^\hbar.
\]
Since both sides have no $\hbar$-torsion, this isomorphism is automatically an isomorphism of $\tC_\hbar$-modules.
\end{proof}

\begin{proof}[Proof of Proposition~{\rm \ref{prop:H-BS}}]
The arguments in~\cite[\S 4.1]{jmw} imply that for any $\FC$ in $\Parity_{\ID \rtimes \Gm}(\Gr, \RG)$ and any sequence $\ut=(t_1, \cdots, t_r)$ of simple reflections, the convolution 
\[
\EC_{t_1, \RG} \star \cdots \star \EC_{t_r, \RG} \star \FC
\]
belongs to $\Parity_{\ID \rtimes \Gm}(\Gr, \RG)$. Then the same arguments as in the proof of~\cite[Proposition~3.2.1]{by} (using Lemma~\ref{lem:properties-parity}\eqref{it:parity-cohomology} instead of~\cite[Corollary~B.4.2]{by}) imply that there exists a canonical isomorphism of graded modules over $\HM^\bullet_{\ID \rtimes \Gm}(\pt; \RG) \otimes_{\RG[\hbar]} \HM^\bullet_{\GD(\mathscr{O}) \rtimes \Gm}(\pt; \RG)$:
\begin{multline*}
\co_{\ID \rtimes \Gm}(\EC_{t_1, \RG} \star \cdots \star \EC_{t_r, \RG} \star \FC) \cong \\
\tco_{\ID \rtimes \Gm}(\EC_{t_1,\RG}) \otimes_{\HM^\bullet_{\ID \rtimes \Gm}(\pt; \RG)} \cdots \otimes_{\HM^\bullet_{\ID \rtimes \Gm}(\pt; \RG)} \tco_{\ID \rtimes \Gm}(\EC_{t_r,\RG}) \otimes_{\HM^\bullet_{\ID \rtimes \Gm}(\pt; \RG)} \co_{\ID \rtimes \Gm}(\FC).
\end{multline*}
Applying this remark to $\FC=\delta_{\omega^{-1}}^\RG$ and $\ut=(s_n, \cdots, s_1)$, and using Lemma~\ref{lem:cohomology-basic}, we obtain a canonical isomorphism of $\OC(\tg^*/W \times \tg^*)[\hbar]$-modules
\[
\co_{\ID \rtimes \Gm} (\EC_\RG(\omega,\us)) \cong D_\hbar(\omega,\us).
\]
Since both sides have no $\hbar$-torsion, this isomorphism is automatically an isomorphism of $C_\hbar$-modules. Specializing $\hbar$ to $0$, we deduce the isomorphism of the proposition.
\end{proof}

%------------------------------------------------------------------
\subsection{Equivalence}
\label{ss:equivalence}
%------------------------------------------------------------------

The proof of the following proposition (which is independent of the rest of the section) is postponed to \S\ref{ss:proof-H-ff}.

\begin{prop}
\label{prop:global-coh-ff}
For any $\omega, \omega' \in \Omega$, any sequences $\us,\ut$ of simple reflections, and any $n \in \ZM$, the morphism
\begin{multline*}
\Hom_{\Parity_{\ID}(\Gr, \RG)}(\EC_\RG(\omega, \us), \EC_\RG(\omega',\ut)[n]) \\
\to \Hom_{\Modgr ( \HM^\bullet_{\ID}(\Gr;\RG))}(\HM^\bullet_{\ID}(\Gr, \EC_\RG(\omega, \us)), \HM^\bullet_{\ID}(\Gr, \EC_\RG(\omega',\ut)[n]))
\end{multline*}
induced by the functor $\HM^\bullet_{\ID}(\Gr,-)$ is an isomorphism.
\end{prop}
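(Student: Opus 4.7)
The plan is to reduce the statement via Lemma~\ref{lem:key} to a comparison of Hom spaces for the ``algebraic'' Soergel bimodules $D(\omega, \us)$, which can then be verified by an induction on the length of $\ut$. First I would observe that $\EC_\RG(\omega, \us)$ is supported on the connected component $\Gr_{(\omega^{-1})}$, so the Hom spaces on both sides vanish when $\omega \neq \omega'$; I may therefore assume $\omega = \omega'$ and work component by component. I would then apply Lemma~\ref{lem:key} to the graded ring morphism $\gamma^{(\omega)} \colon C \to \HM^\bullet_{\ID}(\Gr_{(\omega)}; \RG)$: by Proposition~\ref{prop:cohomology-Gr} this morphism becomes an isomorphism after applying $\QM \otimes_\RG(-)$, and by Lemma~\ref{lem:properties-parity}\eqref{it:parity-cohomology} the cohomology of a parity sheaf is $\HM^\bullet_\ID(\pt; \RG)$-free, hence in particular $\RG$-free. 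Combined with the isomorphism $\co_\ID(\EC_\RG(\omega, \us)) \cong D(\omega, \us)$ from Proposition~\ref{prop:H-BS}, this reduces the statement to showing that the natural map
\[
\Hom_{\Parity_\ID(\Gr, \RG)}(\EC_\RG(\omega, \us), \EC_\RG(\omega, \ut)[n]) \to \Hom_{\Modgr(C)}(D(\omega, \us), D(\omega, \ut)\langle -n \rangle)
\]
is an isomorphism for every $n \in \ZM$.

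I would prove the latter by induction on the length $m$ of $\ut$. In the base case $m = 0$, the object $\EC_\RG(\omega, \ut) = \delta_{\omega^{-1}}^\RG$ is a skyscraper at the $\ID$-fixed point $L_{\omega^{-1}}$, so the LHS is the shifted cohomology of the $!$-restriction of $\EC_\RG(\omega, \us)$ at $L_{\omega^{-1}}$; this matches the RHS, which is Hom from $D(\omega, \us)$ into the rank-one $C$-module $E_\omega$. For the inductive step, decompose $\EC_\RG(\omega, \ut) \cong \EC_{t_m, \RG} \star \EC_\RG(\omega, \ut')$ with $\ut' = (t_1, \ldots, t_{m-1})$. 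The Verdier self-duality (up to shift) of $\EC_{t_m, \RG}$ makes convolution with it self-biadjoint on $\Db_{\ID}(\Gr, \RG)$; likewise $D_{t_m}$ is $\OC(\tg^*)$-free of rank $2$ on each side and self-biadjoint up to shift, so tensor product with $D_{t_m}$ is self-biadjoint on $\Modgr(C)$. Combining these adjunctions with the monoidality of $\co_\ID$ (implicit in the proof of Proposition~\ref{prop:H-BS}) moves $\EC_{t_m, \RG}$, respectively $D_{t_m}$, from target to source on both sides, reducing the length-$m$ case to a length-$(m-1)$ case with a lengthened source sequence.

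The main obstacle will be verifying that the comparison map commutes with these two adjunctions: concretely, that the topological unit and counit morphisms for convolution with $\EC_{t_m, \RG}$ correspond, under the isomorphism $\tco_\ID(\EC_{t_m, \RG}) \cong D_{t_m}$ of Lemma~\ref{lem:cohomology-basic}\eqref{it:cohomology-Es}, to the algebraic unit and counit for tensor product with $D_{t_m}$. This should follow formally from making the self-duality of both objects explicit, but care is required in passing between the $\ID$- and $\ID \rtimes \Gm$-equivariant settings (the structural results of \S\ref{ss:cohomology-functors} being most naturally formulated in the latter) and in the bookkeeping of cohomological and grading shifts introduced by the self-duality isomorphisms.
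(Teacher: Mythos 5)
Your overall architecture (reduce to $\Hom$ over $C$ via Lemma~\ref{lem:key} and Proposition~\ref{prop:cohomology-Gr}, then compare Bott--Samelson objects) is consistent with how the paper assembles Theorem~\ref{thm:main-top}, but your proof of the core comparison is genuinely different from the paper's and, as written, has two unclosed gaps. The paper proves Proposition~\ref{prop:global-coh-ff} by a Ginzburg-style induction over the strata $\Gr_{\sqsubseteq\lambda}$ (following \cite{arider} and \cite[Lemma~3.3.1]{by}): it builds short exact sequences from the adjunction triangles for the open/closed decomposition, uses the class $x_\lambda \in \HM^{2\dim\Gr_\lambda}_{c,\ID}(\Gr_\lambda;\RG)$ acting on $A_{\sqsubseteq\lambda}$-modules to identify the ``new'' part of the Hom space, and concludes with the five-lemma. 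That argument works directly with the honest ring $\HM^\bullet_{\ID}(\Gr;\RG)$ and never needs a monoidal compatibility. Your route --- induction on the length of $\ut$ via the self-biadjointness of $\EC_{t_m,\RG}\star(-)$ and of $(-)\otimes_{\OC(\tg^*)}D_{t_m}$ --- is a recognized alternative (it is essentially the Frobenius-extension/light-leaves strategy from Soergel calculus), but the two steps that carry all the content are exactly the ones you leave open.

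First, the base case is not established. You assert that $\Hom_{\Db_{\ID}}(\EC_\RG(\omega,\us),\delta^\RG_{\omega^{-1}}[n])$ ``matches'' $\Hom_{\Modgr(C)}(D(\omega,\us),E_\omega\langle -n\rangle)$, but the right-hand side is precisely Soergel's hom formula for maps from a Bott--Samelson module into a standard module $E_\omega$, and nothing in Sections~\ref{ss:deformations}--\ref{ss:algebraic-BS} gives you its graded rank. To compute it you would need either a $\nabla$-type filtration of $D(\omega,\us)$ by the modules $E_w$ as \emph{$C$-modules} (together with the relevant $\Hom$/$\Ext$ vanishing between the $E_w$, all over $\RG$ and with the asymmetric $\tg^*/W\times\tg^*$ structure of $C$), or a localization argument over the regular locus of $\tg^*$; neither is sketched, and the only source of such graded-rank information in the paper (Corollary~\ref{cor:grk-Hom-parity}) is itself a consequence of the proposition you are proving. (Also, with the skyscraper in the target the adjunction computes the dual of the $*$-stalk, not the $!$-costalk.) Second, the compatibility of the comparison map with the units and counits of the two biadjunctions is the crux, and ``should follow formally from making the self-duality explicit'' does not close it: the isomorphisms of Lemma~\ref{lem:cohomology-basic}\eqref{it:cohomology-Es} and of the K{\"u}nneth-type formula in the proof of Proposition~\ref{prop:H-BS} are constructed only as isomorphisms of graded modules, with no naturality asserted with respect to the monoidal structure, and the algebraic counit for $D_{t_m}$ depends on a choice of Frobenius trace (Demazure operator) on $\OC(\tg^*)$ over $\OC(\tg^*)^{t_m}$ that must be matched against integration over $\overline{\Fl_{t_m}}\cong\PM^1$, over $\RG$ and for affine as well as finite reflections (where $D_{t_m}$ is only defined via Lemma~\ref{lem:affine-simple-conjugate}). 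Until both points are supplied, the induction does not get off the ground.
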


Now we define a functor
\[
\co_{\mathsf{BS}} \colon \BStop \to \BSalg
\]
as follows. This functor sends an object $(\omega, \us, i)$ of $\BStop$ to the corresponding object $(\omega, \us, i)$ of $\BSalg$. Then if $(\omega, \us, i)$ and $(\omega', \ut, j)$ are objects of $\BStop$, the morphism
\[
\Hom_{\BStop} \bigl((\omega, \us, i), (\omega', \ut, j) \bigr) \to \Hom_{\BSalg} \bigl((\omega, \us, i), (\omega', \ut, j) \bigr)
\]
is defined as the trivial morphism if $\omega \neq \omega'$ (in which case both $\Hom$-spaces are $0$), and as the morphism induced by $\co_{\ID}$ if $\omega=\omega'$, using the canonical isomorphisms
\[
\co_{\ID}(\EC_\RG(\omega,\us)[i]) \cong D(\omega,\us)\langle -i \rangle, \qquad \co_{\ID}(\EC_\RG(\omega',\ut)[j]) \cong D(\omega',\ut)\langle -j \rangle
\]
deduced from Proposition~\ref{prop:H-BS}.

The main result of this section is the following.

\begin{thm}
\label{thm:main-top}
The functor $\co_{\mathsf{BS}}$ is an equivalence of categories.
\end{thm}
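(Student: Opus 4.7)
The plan is that essential surjectivity is automatic since $\co_{\mathsf{BS}}$ acts as the identity on the labelling of objects; the entire content of the theorem is thus full faithfulness. First I would dispose of the case $\omega \neq \omega'$: on the algebraic side the Hom-space is defined to be zero, while on the topological side $\EC_\RG(\omega, \us)$ and $\EC_\RG(\omega', \ut)$ are supported on distinct connected components of $\Gr$, so the topological Hom-space also vanishes.

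For the case $\omega = \omega'$, the strategy is to factor the map induced by $\co_{\mathsf{BS}}$ as
\begin{align*}
&\Hom_{\Parity_\ID(\Gr,\RG)}\bigl(\EC_\RG(\omega,\us)[i],\, \EC_\RG(\omega,\ut)[j] \bigr) \\
&\quad \xrightarrow{(a)} \Hom_{\Modgr(\HM^\bullet_\ID(\Gr;\RG))}\bigl(\co_\ID \EC_\RG(\omega,\us)\langle-i\rangle,\, \co_\ID \EC_\RG(\omega,\ut)\langle-j\rangle \bigr) \\
&\quad \xrightarrow{(b)} \Hom_{\Modgr(C)}\bigl(D(\omega,\us)\langle-i\rangle,\, D(\omega,\ut)\langle-j\rangle \bigr),
\end{align*}
where (a) is the isomorphism provided by Proposition~\ref{prop:global-coh-ff}, and (b) is the composition of restriction of scalars along $\gamma \colon C \to \HM^\bullet_\ID(\Gr;\RG)$ with the canonical $C$-module identification $\co_\ID(\EC_\RG(\omega,\us)) \cong D(\omega,\us)$ provided by Proposition~\ref{prop:H-BS}.

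Everything then reduces to showing that (b) is an isomorphism. Since both parity sheaves are supported on the single component $\Gr_{(\omega)}$, both source and target modules factor through $\HM^\bullet_\ID(\Gr_{(\omega)};\RG)$ under the decomposition $\HM^\bullet_\ID(\Gr;\RG) \cong \prod_{\nu} \HM^\bullet_\ID(\Gr_{(\nu)};\RG)$, so (b) becomes restriction of scalars along $\gamma^{(\omega)} \colon C \to \HM^\bullet_\ID(\Gr_{(\omega)};\RG)$. By Lemma~\ref{lem:properties-parity}\eqref{it:parity-cohomology} both modules are free over $\HM^\bullet_\ID(\pt;\RG)$, and in particular over $\RG$. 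By Proposition~\ref{prop:cohomology-Gr}, $\QM \otimes_\RG \gamma^{(\omega)}$ is an isomorphism. The graded version of Lemma~\ref{lem:key}---obtained from the ungraded statement by applying it on each graded piece---then implies that (b) is an isomorphism, finishing the proof.

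The substantive difficulties are packaged in the two cited propositions. Proposition~\ref{prop:global-coh-ff}, whose proof is deferred to \S\ref{ss:proof-H-ff}, is the nontrivial input on the topological side and is the main obstacle of the section. Proposition~\ref{prop:cohomology-Gr} rests on the Bezrukavnikov--Finkelberg description of $\HM^\bullet_\ID(\Gr_{(\omega)};\QM)$. Granting these, the argument above is a clean application of the ``simplified model'' philosophy described in \S\ref{ss:key-lemma}: the algebra $\HM^\bullet_\ID(\Gr_{(\omega)};\RG)$ (which is not even finitely generated over $\RG$) is replaced by the finitely generated $C$ without losing any information on morphisms between $\RG$-free modules.
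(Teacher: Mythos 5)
Your proof is correct and follows essentially the same route as the paper: bijection on objects is immediate, the case $\omega \neq \omega'$ is disposed of by support/connected-component considerations, and the case $\omega = \omega'$ is handled by factoring the map as the cohomology functor (fully faithful on Bott--Samelson objects by Proposition~\ref{prop:global-coh-ff}) followed by restriction of scalars along $\gamma^{(\omega)}$ (fully faithful on $\RG$-free modules by Lemma~\ref{lem:key} and Proposition~\ref{prop:cohomology-Gr}, with freeness supplied by Lemma~\ref{lem:properties-parity}\eqref{it:parity-cohomology}). No gaps.
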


\begin{proof}
The functor $\co_{\mathsf{BS}}$ clearly induces a bijection on objects. So, what we have to prove is that if $(\omega, \us, i)$ and $(\omega', \ut, j)$ are objects of $\BStop$, then the corresponding morphism
\[
\Hom_{\BStop} \bigl((\omega, \us, i), (\omega', \ut, j) \bigr) \to \Hom_{\BSalg} \bigl((\omega, \us, i), (\omega', \ut, j) \bigr)
\]
is an isomorphism. This is obvious if $\omega \neq \omega'$.

Now, assume that $\omega=\omega'$. Then both parity complexes are supported on $\Gr_{(\omega)}$, so that our morphism is induced by the composition
\[
\Parity_{\ID}(\Gr_{(\omega)}, \RG) \xrightarrow{\HM^\bullet_{\ID}(\Gr_{(\omega)},-)} \Modgr \bigl( \HM^\bullet_{\ID}(\Gr_{(\omega)}; \RG) \bigr) \to \Modgr(C),
\]
where the second arrow is the ``restriction of scalars'' functor associated with $\gamma^{(\omega)}$. Now the first functor is fully faithful on objects of the form $\EC_\RG(\omega, \us)$ by Proposition~\ref{prop:global-coh-ff}, and the second functor is fully faithful on objects which are $\RG$-free by Lemma~\ref{lem:key} and Proposition~\ref{prop:cohomology-Gr}. Since the $\RG$-modules $\HM^\bullet_{\ID}(\Gr_{(\omega)},\EC_\RG(\omega, \us))$ are free by Lemma~\ref{lem:properties-parity}\eqref{it:parity-cohomology}, this finishes the proof of the theorem.
\end{proof}

%------------------------------------------------------------------
\subsection{Proof of Proposition~\ref{prop:global-coh-ff}}
\label{ss:proof-H-ff}
%------------------------------------------------------------------

The proof below is a simple variant of the main result of~\cite{ginzburg}. The key observation is from~\cite[Theorem~4.1]{arider}, where it is shown that the arguments from~\cite{ginzburg} involving weights can be replaced in a parity sheaf setting by parity arguments.  Another exposition of Ginzburg's proof in an equivariant setting appears as~\cite[Lemma~3.3.1]{by} (for coefficients in characteristic zero).

To avoid unnecessary notational complications, in this proof we will say that an object $\EC$ in $\Parity_{\ID}(\Gr, \RG)$ is a \emph{Bott--Samelson parity complex} if it is isomorphic to $\EC_\RG(\omega, \us)[i]$ for some $\omega \in \Omega$, $\us$ a sequence of simple reflections, and $i \in \ZM$.

Let us fix an extension of the partial order on $\XB$ corresponding to the closure relations among the orbits $\Gr_\lambda$ to a total ordering $\sqsubseteq$ on $\XB$, such that $(\XB, \sqsubseteq)$ is isomorphic (as an ordered set) to $\ZM_{\geq 0}$ with its standard order.  Let $\Gr_{\sqsubseteq \lambda}$ denote the union of all $\Gr_\mu$ where $\mu \sqsubseteq \lambda$, and $i_{\sqsubseteq \lambda}:\Gr_{\sqsubseteq \lambda} \to \Gr$ the closed embedding. Let also $A_{\sqsubseteq \lambda} := \HM_{\ID}^\bullet(\Gr_{\sqsubseteq \lambda};\RG)$.  We define analogously $\Gr_{\sqsubset \lambda}$, $i_{\sqsubset \lambda}$, and $A_{\sqsubset \lambda}$.

We begin with a number of preliminary lemmas.

\begin{lem}
\label{lem:cohom-lambda}
For any $\lambda \in \XB$, there exists a canonical exact sequence of $\ZM$-graded $\RG$-modules
\[
0 \to \HM^\bullet_{c,\ID}(\Gr_\lambda; \RG) \to A_{\sqsubseteq \lambda} \to A_{\sqsubset \lambda} \to 0.
\]
\end{lem}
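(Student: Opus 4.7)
The plan is to split the canonical long exact sequence in $\ID$-equivariant cohomology associated to the closed-open decomposition of $\Gr_{\sqsubseteq\lambda}$. Because the total order $\sqsubseteq$ refines the closure partial order, $\Gr_{\sqsubset\lambda}$ is closed in $\Gr_{\sqsubseteq\lambda}$ with open complement equal to the single orbit $\Gr_\lambda$. Denoting the corresponding open and closed inclusions by $j$ and $i$, the standard distinguished triangle
\[
j_! j^* \underline{\RG} \to \underline{\RG} \to i_* i^* \underline{\RG} \xrightarrow{[1]}
\]
on $\Gr_{\sqsubseteq\lambda}$ yields, upon taking $\ID$-equivariant hypercohomology, a canonical long exact sequence
\[
\cdots \to \HM^n_{c,\ID}(\Gr_\lambda;\RG) \to A^n_{\sqsubseteq\lambda} \to A^n_{\sqsubset\lambda} \to \HM^{n+1}_{c,\ID}(\Gr_\lambda;\RG) \to \cdots.
\]

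To recover the claimed short exact sequence of graded $\RG$-modules, it will suffice to show that each of the three graded modules appearing above is concentrated in even degrees: the connecting homomorphisms shift parity and must then all vanish. I would establish this even-concentration by induction on $\lambda$ along the well-ordering $\sqsubseteq$. In the base case, $\Gr_{\sqsubseteq\lambda} = \Gr_\lambda$ is a single point and the isomorphism $\HM^\bullet_{\ID}(\pt;\RG) \cong \OC(\tg^*)$ from~\eqref{eqn:cohomology-TD} makes the claim manifest. For the inductive step, assuming the vanishing for $A^\bullet_{\sqsubset\lambda}$, the long exact sequence will automatically propagate it to $A^\bullet_{\sqsubseteq\lambda}$ once the same vanishing is known for $\HM^\bullet_{c,\ID}(\Gr_\lambda;\RG)$.

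The heart of the argument is therefore the claim that $\HM^\bullet_{c,\ID}(\Gr_\lambda;\RG)$ is concentrated in even degrees. Since $\Gr_\lambda \cong \ID\cdot L_\lambda$ is isomorphic to an affine space $\AM^d_\CM$ (with $d = \dim\Gr_\lambda$), and the $\ID$-action factors through a finite-dimensional quotient whose reductive part is contained in $\TD$, a Thom-type isomorphism (produced by a suitable contracting $\Gm$-action coming from a cocharacter of $\TD$) will yield
\[
\HM^\bullet_{c,\ID}(\Gr_\lambda;\RG) \cong \HM^{\bullet-2d}_{\TD}(\pt;\RG),
\]
whose right-hand side is a degree-shift of $\OC(\tg^*)$ and is therefore concentrated in even degrees.

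The main obstacle I anticipate is justifying the Thom-style identification cleanly in this pro-algebraic equivariant setting (morally, realizing $\Gr_\lambda$ as an $\ID$-equivariant vector bundle over a point); this is standard but does require some care. The canonicity of the resulting short exact sequence will follow automatically from the functoriality of the open-closed distinguished triangle we start from.
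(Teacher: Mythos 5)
Your proposal is correct and follows essentially the same route as the paper: the paper's proof is precisely the open--closed adjunction triangle for $\Gr_\lambda \hookrightarrow \Gr_{\sqsubseteq\lambda} \hookleftarrow \Gr_{\sqsubset\lambda}$, combined with induction along $\sqsubseteq$ to show all terms are concentrated in even degrees (using that $\Gr_\lambda$ is an affine space, so $\HM^\bullet_{c,\ID}(\Gr_\lambda;\RG)$ is a shift of $\HM^\bullet_{\ID}(\mathrm{pt};\RG)$), which forces the connecting maps to vanish. The Thom-type identification you flag as the delicate point is indeed the same ingredient the paper uses implicitly (and spells out later in \S\ref{ss:proof-H-ff}).
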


\begin{proof}
The lemma can be proved by induction, using the fact that the existence of the exact sequences for smaller $\lambda$'s implies that $A_{\sqsubset \lambda}$ is concentrated in even degrees, and the adjunction triangle
\[
(i_{\lambda})_! (i_\lambda)^* \underline{\RG}_{\Gr_{\sqsubseteq \lambda}} \to \underline{\RG}_{\Gr_{\sqsubseteq \lambda}} \to i_{\sqsubset \lambda *} i_{\sqsubset \lambda}^* \underline{\RG}_{\Gr_{\sqsubseteq \lambda}} \xrightarrow{[1]},
\]
where $i_\lambda \colon \Gr_\lambda \hookrightarrow \Gr$ is the inclusion.
\end{proof}

Let again $\lambda \in \XB$.  To simplify notation, we set $Z= \Gr_{\sqsubset \lambda}$, $X = \Gr_{\sqsubseteq \lambda}$, $U= \Gr_\lambda$, and denote by $i:Z\to X$, resp.~$j:U \to X$, the closed, resp.~open, embedding.
A key ingredient we will need is the following lemma.

\begin{lem}
\label{lem:exact-sequences-adj}
Let $\EC$ be a Bott--Samelson parity complex, and let $\FC$ be either $i^*_{\sqsubseteq \lambda} \EC$ or $i^!_{\sqsubseteq \lambda} \EC$.  Then the adjunction triangles induce short exact sequence of $A_{\sqsubseteq \lambda}$-modules
\begin{gather}
0 \to \HM_{\ID}^\bullet(X, j_!j^* \FC) \to \HM_{\ID}^\bullet(X,\FC) \to \HM_{\ID}^\bullet(X, i_*i^*\FC) \to 0, \label{eqn:adjses1}\\
0 \to \HM_{\ID}^\bullet(X, i_*i^! \FC) \to \HM_{\ID}^\bullet(X,\FC) \to \HM_{\ID}^\bullet(X, j_*j^*\FC) \to 0 \label{eqn:adjses2}.
\end{gather}
\end{lem}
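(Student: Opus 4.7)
The plan is to deduce both short exact sequences from a parity vanishing argument, following Ginzburg~\cite{ginzburg} with weights replaced by parities in the manner of Achar--Rider~\cite{arider} (the equivariant variant appears in~\cite[Lemma~3.3.1]{by}). By additivity I may assume $\EC$ is even. The key technical claim, somewhat stronger than the lemma itself, is the following: for every $\mu\in\XB$, both $\HM^\bullet_{\ID}(\Gr_{\sqsubseteq\mu}, i_{\sqsubseteq\mu}^*\EC)$ and $\HM^\bullet_{\ID}(\Gr_{\sqsubseteq\mu}, i_{\sqsubseteq\mu}^!\EC)$ are concentrated in even degrees. I would prove this claim together with the lemma, by induction on the position of $\mu$ in the well-ordering $\sqsubseteq$.

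The base case is immediate. For $\mu$ minimal, since $\sqsubseteq$ extends the closure partial order, $\Gr_{\sqsubseteq\mu}=\Gr_\mu$ is closed in $\Gr$ and is a single affine cell; evenness of $\EC$ makes both $i_{\sqsubseteq\mu}^*\EC$ and $i_{\sqsubseteq\mu}^!\EC$ direct sums of shifts of $\underline{\RG}_{\Gr_\mu}$ by even amounts, and $\HM^\bullet_\ID(\Gr_\mu;\RG)$ is itself in even degrees (as $\Gr_\mu$ is an $\ID$-orbit whose stabilizer has $\TD$ as its maximal reductive quotient). For the inductive step, let $\mu$ be the predecessor of $\lambda$, so $Z=\Gr_{\sqsubseteq\mu}$. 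Applying $\HM_\ID^\bullet(X,-)$ to the first adjunction triangle with $\FC = i_{\sqsubseteq\lambda}^*\EC$, the identities $i^*\FC = i_{\sqsubseteq\mu}^*\EC$ (from $i_{\sqsubseteq\lambda}\circ i = i_{\sqsubseteq\mu}$) and $j^*\FC = i_\lambda^*\EC$ place the outer terms $\HM^\bullet_\ID(Z, i_{\sqsubseteq\mu}^*\EC)$ (even by induction) and $\HM^\bullet_{c,\ID}(\Gr_\lambda, i_\lambda^*\EC)$ (even by direct computation) in even degrees, forcing every connecting map in the long exact sequence to vanish on parity grounds. This yields the first SES and simultaneously places $\HM^\bullet_\ID(X,\FC)$ in even degrees, advancing the induction. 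The second triangle with $\FC = i_{\sqsubseteq\lambda}^!\EC$ is handled symmetrically, using $i^!\FC = i_{\sqsubseteq\mu}^!\EC$ and $j^*\FC = j^!\FC = i_\lambda^!\EC$ (as $j$ is open).

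I do not expect any serious obstacle; the argument is essentially formal once one has the base-change identifications above and the parity concentration of $\HM^\bullet_\ID$ of a single affine $\ID$-orbit, and the $A_{\sqsubseteq\lambda}$-linearity of every map is automatic, as all maps come from morphisms in $\Db_\ID(X,\RG)$. One subtlety worth flagging is that the outline above naturally pairs $\FC = i_{\sqsubseteq\lambda}^*\EC$ with the first SES and $\FC = i_{\sqsubseteq\lambda}^!\EC$ with the second; should the lemma require both SESs for both choices of $\FC$, the remaining two cases would follow by Verdier duality on the projective variety $X$, which exchanges the two adjunction triangles and sends the even Bott--Samelson parity complex $\EC$ to the even Bott--Samelson parity complex $\DM_\Gr(\EC)$.
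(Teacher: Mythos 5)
Your inductive parity argument correctly establishes two of the four cases --- \eqref{eqn:adjses1} for $\FC = i^*_{\sqsubseteq\lambda}\EC$ and \eqref{eqn:adjses2} for $\FC = i^!_{\sqsubseteq\lambda}\EC$ --- and there it coincides with the paper's proof: in those cases all three terms of the relevant triangle are $*$-even (resp.\ $!$-even) up to a global shift, so the connecting maps vanish for degree reasons. But the lemma asserts \emph{both} sequences for \emph{both} choices of $\FC$, and the two ``mixed'' cases (\eqref{eqn:adjses2} for $i^*_{\sqsubseteq\lambda}\EC$, and \eqref{eqn:adjses1} for $i^!_{\sqsubseteq\lambda}\EC$) are genuinely needed later: for instance Lemma~\ref{lem:action-xlambda} factors the action of $x_\lambda$ on $\HM^\bullet_\ID(X,\FC)$ through the surjection of \eqref{eqn:adjses2} followed by the injection of \eqref{eqn:adjses1} for the \emph{same} $\FC$. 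These cases are not covered by parity alone: for $\FC = i^*_{\sqsubseteq\lambda}\EC$ the triangle $i_*i^!\FC\to\FC\to j_*j^*\FC$ has controlled middle and third terms, but its first term involves $i^!\, i^*_{\sqsubseteq\lambda}\EC$, whose cohomology is not constrained by the parity of $\EC$, so the connecting map need not vanish on degree grounds.

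Your proposed fallback --- Verdier duality --- does not close this gap. Duality sends the triangle $i_*i^!\FC\to\FC\to j_*j^*\FC$ for $\FC = i^*_{\sqsubseteq\lambda}\EC$ to the triangle $j_!j^*\GC\to\GC\to i_*i^*\GC$ for $\GC = \DM_X(\FC) = i^!_{\sqsubseteq\lambda}\DM_{\Gr}(\EC)$; that is, it exchanges the two mixed cases with each other (and the two pure cases with each other), so it never reduces a mixed case to a pure one. Some independent input is required, and this is exactly where the paper appeals to Fiebig--Williamson: for a parity complex the composite restriction $\HM^\bullet_\ID(\Gr,\EC)\to\HM^\bullet_\ID(\Gr_\lambda,i_\lambda^*\EC)\to\HM^\bullet_\ID(\{L_\lambda\},\imath_\lambda^*\EC)$ is surjective by \cite[Proposition~5.9]{FieWil}, and the second arrow is an isomorphism by \cite[Proposition~2.3]{FieWil}, which forces surjectivity of $\HM^\bullet_\ID(X,\FC)\to\HM^\bullet_\ID(X,j_*j^*\FC)$ in \eqref{eqn:adjses2} for $\FC = i^*_{\sqsubseteq\lambda}\EC$; the dual injectivity statement from the proof of \cite[Proposition~5.8(2)]{FieWil} handles \eqref{eqn:adjses1} for $\FC = i^!_{\sqsubseteq\lambda}\EC$. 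This localization-at-the-fixed-point argument is the substantive ingredient missing from your outline.
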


\begin{proof}
For $\FC = i^*_{\sqsubseteq \lambda} \EC$, resp.~$i^!_{\sqsubseteq \lambda} \EC$, $\FC$ is either $*$-even or $*$-odd, resp.~either $!$-even or $!$-odd, and~\eqref{eqn:adjses1}, resp.~\eqref{eqn:adjses2}, is obtained using the long exact sequence of equivariant cohomology associated with the adjunction triangle $j_! j^* \FC \to \FC \to i_*i^* \FC \xrightarrow{[1]}$, resp.~$i_* i^! \FC \to \FC \to j_*j^* \FC \xrightarrow{[1]}$, because by induction the various terms are concentrated either all in even degrees, or all in odd degrees.

Consider now the sequence~\eqref{eqn:adjses2} for $\FC = i^*_{\sqsubseteq \lambda} \EC$. In this case it is shown in~\cite[Proposition~5.9]{FieWil}\footnote{In~\cite{FieWil}, this result (as well as the other results used in this proof) is stated for a coefficient ring that is a complete local principal ideal domain, but the same proof applies verbatim for general coefficients.} that the composition of restriction morphisms
\[
\HM^\bullet_{\ID}(\Gr, \EC) \to \HM^\bullet_{\ID}(\Gr_\lambda, i_\lambda^* \EC) \to \HM^\bullet_{\ID}(\{L_\lambda\}, \imath_\lambda^* \EC)
\]
is surjective, where (as in~\S\ref{ss:intro-other}) $\imath_\lambda \colon \{L_\lambda\} \hookrightarrow \Gr$ is the inclusion. Since the second morphism is an isomorphism by~\cite[Proposition~2.3]{FieWil}, the first one is also surjective. And since this first morphism factors through the morphism
\[
\HM^\bullet_{\ID}(\Gr_{\sqsubseteq \lambda}, i^*_{\sqsubseteq \lambda} \EC) \to \HM^\bullet_{\ID}(\Gr_\lambda, i_\lambda^* \EC)
\]
considered in~\eqref{eqn:adjses2}, the latter morphism is also surjective, which finishes the proof in this case.

Finally, let us consider the sequence~\eqref{eqn:adjses1} for $\FC=i^!_{\sqsubseteq \lambda} \EC$. In this case, the arguments are similar to the ones used in the preceding case, using the fact that the natural morphism
\[
\HM^\bullet_{\ID}(\{L_\lambda\}, \imath_\lambda^! \EC) \to \HM^\bullet_{\ID}(\Gr, \EC)
\]
is injective. (This fact is shown in the proof of~\cite[Proposition 5.9]{FieWil}: in fact this morphism identifies with the morphism considered in~\cite[Proposition 5.8(2)]{FieWil}.)
\end{proof}

Since $U=\Gr_\lambda$ is isomorphic to an affine space, the graded $\HM^\bullet_{\ID}(\mathrm{pt}; \RG)$-module $\HM^\bullet_{c,\ID}(U; \RG)$ is free of rank one. Moreover, the natural morphism 
\[
\HM^{2 \dim(\Gr_\lambda)}_{c,\ID}(U; \RG) \to \HM^{2 \dim(\Gr_\lambda)}_{c}(U; \RG)
\]
is an isomorphism, and if $x_\lambda \in \HM^{2 \dim(\Gr_\lambda)}_{c,\ID}(U; \RG)$ is the inverse image of a generator of $\HM^{2 \dim(\Gr_\lambda)}_{c}(U; \RG) \cong \RG$ (as an $\RG$-module), then $x_\lambda$ gives a basis of $\HM^\bullet_{c,\ID}(U; \RG)$ over $\HM^\bullet_{\ID}(\mathrm{pt}; \RG)$. We still denote by $x_\lambda$ the image of this element in $A_{\sqsubseteq \lambda}$, under the injection of Lemma~\ref{lem:cohom-lambda}.

\begin{lem}
\label{lem:action-xlambda}
Let $\EC$ be a Bott--Samelson parity complex, and let $\FC$ be either $i^*_{\sqsubseteq \lambda} \EC$ or $i^!_{\sqsubseteq \lambda} \EC$. Then the morphism
\[
\HM^\bullet_{\ID}(X, \FC) \to \HM^{\bullet+2 \dim(\Gr_\lambda)}_{\ID}(X,\FC)
\]
given by the action of $x_\lambda \in A_{\sqsubseteq \lambda}$ factors as a composition
\[
\HM_{\ID}^\bullet(X, \FC) \onto \HM^\bullet_{\ID}(X, j_*j^* \FC) \xrightarrow{\sim} \HM_{\ID}^{\bullet + 2\dim(\Gr_\lambda)}(X, j_! j^* \FC) \into \HM^{\bullet + 2\dim(\Gr_\lambda)}_{\ID}(X, \FC),
\]
where the first, resp.~third, morphism is the morphism appearing in~\eqref{eqn:adjses2}, resp. \eqref{eqn:adjses1}, and the second one is an isomorphism.
\end{lem}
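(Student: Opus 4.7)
The plan is to show that the action of $x_\lambda$ on $\HM^\bullet_{\ID}(X,\FC)$ is implemented by cup product with a class in equivariant cohomology with support in $U$, and to analyze the resulting factorization directly from the six-functor formalism.

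First I would use the projection formula $j_!\underline{\RG}_U \otimes^L \FC \simto j_!(j^*\FC)$ together with the cup product pairing
\[
\HM^\bullet_{\ID}(X, j_!\underline{\RG}_U) \otimes \HM^\bullet_{\ID}(X,\FC) \to \HM^\bullet_{\ID}(X, j_! j^*\FC)
\]
to lift multiplication by $x_\lambda$ to a map $\HM^\bullet_{\ID}(X,\FC) \to \HM^{\bullet+2\dim(\Gr_\lambda)}_{\ID}(X, j_! j^*\FC)$, whose composition with the counit $j_!j^*\FC \to \FC$ recovers the action of $x_\lambda$. Dually, using $\FC \otimes^L Rj_*\underline{\RG}_U \simto Rj_* j^*\FC$, and the fact that $x_\lambda$ maps to the canonical generator of $\HM^\bullet_{\ID}(X, Rj_*\underline{\RG}_U) \cong \HM^\bullet_{\ID}(U;\RG)$ (concentrated in degree $0$), the same element provides a lift of the action of $x_\lambda$ through the unit $\FC \to Rj_*j^*\FC$. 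Together this produces the first and third arrows in the claimed factorization.

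Next I would construct the middle map. Applying the base change isomorphisms
\[
\HM^\bullet_{\ID}(X, j_!j^*\FC) \cong \HM^\bullet_{c,\ID}(U, j^*\FC), \qquad \HM^\bullet_{\ID}(X, Rj_*j^*\FC) \cong \HM^\bullet_{\ID}(U, j^*\FC),
\]
the middle arrow is identified with cup product by (the restriction of) $x_\lambda$, i.e.\ by the generator of $\HM^{2\dim(\Gr_\lambda)}_{c,\ID}(U;\RG)$, yielding a map
\[
\HM^\bullet_{\ID}(U, j^*\FC) \to \HM^{\bullet+2\dim(\Gr_\lambda)}_{c,\ID}(U, j^*\FC).
\]
Since $U = \Gr_\lambda$ is $\ID$-equivariantly isomorphic to an affine space and $j^*\FC$ is a direct sum of shifts of the constant sheaf with free coefficients (because $\EC$ is parity and $\FC$ is its $*$- or $!$-restriction), this identifies, by the K{\"u}nneth formula and the computation $\HM^\bullet_{c,\ID}(\mathbb{A}^{d_\lambda}; \RG) \cong \HM^\bullet_{\ID}(\pt; \RG)\langle 2d_\lambda\rangle$, with the equivariant Thom isomorphism for the affine bundle $U \to \pt$; hence the middle arrow is an isomorphism.

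Finally, I would check that the overall composition equals the action of $x_\lambda$. This follows from the naturality of cup product: both the map $\HM^\bullet(X,\FC) \to \HM^{\bullet+2d_\lambda}(X, j_!j^*\FC)$ and the map $\HM^\bullet(X, Rj_*j^*\FC) \to \HM^{\bullet+2d_\lambda}(X, j_!j^*\FC)$ constructed above are induced by cup product with the same class $x_\lambda \in \HM^{2d_\lambda}_{\ID}(X, j_!\underline{\RG}_U)$, and the two lifts agree after composing with $\FC \to Rj_*j^*\FC$ by the associativity of cup product together with the fact that $x_\lambda$ restricts to the canonical generator of $\HM^\bullet_{\ID}(X, Rj_*\underline{\RG}_U)$ in degree $0$. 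The main subtle point is this last compatibility check between cup product and the adjunction morphisms; everything else is standard six-functor formalism applied to the affine stratum $U$.
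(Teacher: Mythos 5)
Your argument is, in substance, the same as the paper's, just phrased in cup-product language. The paper observes that $x_\lambda$ acts trivially on $\HM^\bullet_{\ID}(X,i_*i^*\FC)$ and $\HM^\bullet_{\ID}(X,i_*i^!\FC)$ because these are $A_{\sqsubset\lambda}$-modules and $x_\lambda$ dies under $A_{\sqsubseteq\lambda}\to A_{\sqsubset\lambda}$ (Lemma~\ref{lem:cohom-lambda}); the two short exact sequences of Lemma~\ref{lem:exact-sequences-adj} then force the factorization through the adjunction maps. Your lift of $x_\lambda$ to a class $\tilde{x}_\lambda \in \HM^{2\dim(\Gr_\lambda)}_{\ID}(X,j_!\underline{\RG}_U)$ together with the projection formula is exactly the same mechanism, and your identification of the middle map with a direct sum of shifts of $\HM^\bullet_{\ID}(U;\RG) \simto \HM^{\bullet+2\dim(\Gr_\lambda)}_{c,\ID}(U;\RG)$ (using that $j^*\FC$ is a sum of shifted constant sheaves) is also the paper's argument verbatim.

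One assertion in your sketch is false and should be excised: $x_\lambda$ does \emph{not} map to ``the canonical generator of $\HM^\bullet_{\ID}(X, Rj_*\underline{\RG}_U)\cong\HM^\bullet_{\ID}(U;\RG)$, concentrated in degree $0$.'' That ring is $\HM^\bullet_{\ID}(\pt;\RG)$, which is not concentrated in degree $0$, and the image of $x_\lambda$ under the forget-supports map $\HM^{2\dim(\Gr_\lambda)}_{c,\ID}(U;\RG)\to\HM^{2\dim(\Gr_\lambda)}_{\ID}(U;\RG)$ is an Euler-class-type element, not a unit or a generator; so as written your justification for the factorization through the unit $\FC\to Rj_*j^*\FC$ (and your final compatibility check) rests on a wrong statement. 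Fortunately you never need it: the vanishing of $x_\lambda\cup(-)$ on the image of $\HM^\bullet_{\ID}(X,i_*i^!\FC)$ follows from the fact that the cup product of $\tilde{x}_\lambda$ with any class in $\HM^\bullet_{\ID}(X,i_*i^!\FC)$ lands in $\HM^\bullet_{\ID}\bigl(X,j_!\underline{\RG}_U\otimes i_*i^!\FC\bigr)=0$, since $i^*j_!\underline{\RG}_U=0$; the same vanishing, applied on the other side, yields the compatibility of the two lifts. With that substitution your argument is complete and agrees with the paper's.
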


\begin{proof}
The element
$x_\lambda \in A_{\sqsubseteq \lambda}$ acts trivially on $\HM_{\ID}^\bullet(X, i_*i^*\FC)$ and $\HM_{\ID}^\bullet(X, i_*i^!\FC)$. Hence its action on the module $\HM^\bullet_{\ID}(X, \FC)$ factors through a morphism
\[
\HM^\bullet_{\ID}(X, j_*j^* \FC) \to \HM_{\ID}^{\bullet + 2\dim(\Gr_\lambda)}(X, j_! j^* \FC).
\]
To show that the latter morphism is an isomorphism, we observe that $j^* \FC$ is a direct sum of shifts of constant sheaves $\underline{\RG}_U$ (since $\EC$ is a parity complex), and that our morphism is the corresponding direct sum of shifts of the isomorphism
\[
\HM^\bullet_{\ID}(U; \RG) \simto \HM^\bullet_{\ID}(\mathrm{pt}; \RG) \simto \HM^{\bullet+2\dim(\Gr_\lambda)}_{c,\ID}(U;\RG)
\]
determined by our choice of $x_\lambda \in \HM^{2 \dim(\Gr_\lambda)}_{c,\ID}(U; \RG)$.
\end{proof}

\begin{lem}
\label{lem:morph-exact-sequence}
Let $\EC_1, \EC_2$ be Bott--Samelson parity complexes. If $\phi \colon \HM^\bullet_{\ID}(X, i_{\sqsubseteq \lambda}^* \EC_1) \to \HM^\bullet_{\ID}(X, i_{\sqsubseteq \lambda}^! \EC_2)$ is a morphism of $A_{\sqsubseteq \lambda}$-modules, then the composition
\begin{equation}
\label{eqn:composition-phi}
\HM^\bullet_{\ID}(X, i_{\sqsubseteq \lambda}^* \EC_1) \xrightarrow{\phi} \HM^\bullet_{\ID}(X, i_{\sqsubseteq \lambda}^! \EC_2) \twoheadrightarrow \HM^\bullet_{\ID}(X, j_* j^* i_{\sqsubseteq \lambda}^! \EC_2)
\end{equation}
(where the second morphism is the surjection appearing in~\eqref{eqn:adjses2}) factors uniquely through a morphism of $A_{\sqsubseteq \lambda}$-modules
\[
\phi' \colon \HM^\bullet_{\ID}(X, j_* j^* i_{\sqsubseteq \lambda}^* \EC_1) \to \HM^\bullet_{\ID}(X, j_* j^* i_{\sqsubseteq \lambda}^! \EC_2).
\]
Moreover, we have $\phi'=0$ iff $\phi$ factors as a composition
\[
\HM^\bullet_{\ID}(X, i_{\sqsubseteq \lambda}^* \EC_1) \twoheadrightarrow \HM^\bullet_{\ID}(Z, i^*_{\sqsubset \lambda} \EC_1) \xrightarrow{\phi''} \HM^\bullet_{\ID}(Z, i^!_{\sqsubset\lambda}\EC_2) \hookrightarrow \HM^\bullet_{\ID}(X, i_{\sqsubseteq \lambda}^! \EC_2)
\]
where $\phi''$ is a morphism of $A_{\sqsubset \lambda}$-modules and the other morphisms are the ones appearing in~\eqref{eqn:adjses1} and~\eqref{eqn:adjses2}.
\end{lem}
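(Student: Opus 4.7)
My strategy is to read everything off the $x_\lambda$-action, exploiting Lemma~\ref{lem:action-xlambda}: for $\FC$ of the form $i^*_{\sqsubseteq\lambda}\EC$ or $i^!_{\sqsubseteq\lambda}\EC$ with $\EC$ a Bott--Samelson parity complex, multiplication by $x_\lambda$ on $\HM^\bullet_{\ID}(X,\FC)$ has kernel equal to the kernel of the surjection in~\eqref{eqn:adjses2} and image equal to the image of the injection in~\eqref{eqn:adjses1}. Since $\phi$ is $A_{\sqsubseteq\lambda}$-linear, it commutes with multiplication by $x_\lambda$, and this single observation will produce both $\phi'$ and the stated factorization criterion.

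For the existence (and uniqueness) of $\phi'$, I would apply Lemma~\ref{lem:action-xlambda} to $\EC_1$ with $\FC=i^*_{\sqsubseteq\lambda}\EC_1$ and to $\EC_2$ with $\FC=i^!_{\sqsubseteq\lambda}\EC_2$: the kernels of the two vertical surjections from $\HM^\bullet_{\ID}(X,\FC)$ to $\HM^\bullet_{\ID}(X, j_*j^*\FC)$ are identified with the respective $x_\lambda$-annihilators. As $\phi$ sends $x_\lambda$-annihilator into $x_\lambda$-annihilator by $A_{\sqsubseteq\lambda}$-linearity, it descends uniquely to the required $\phi'$ between the quotients, uniqueness being automatic from the surjectivity of the left-hand vertical map.

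For the criterion $\phi'=0$, the ``if'' direction is immediate, since any such factorization has image in $\HM^\bullet_{\ID}(Z, i^!_{\sqsubset\lambda}\EC_2)$, which by~\eqref{eqn:adjses2} is the kernel of the surjection that is post-composed with $\phi$ in~\eqref{eqn:composition-phi}. Conversely, assume $\phi'=0$, so that the image of $\phi$ is contained in the submodule $\HM^\bullet_{\ID}(Z, i^!_{\sqsubset\lambda}\EC_2)$. Since $x_\lambda$ lies in the kernel of the restriction $A_{\sqsubseteq\lambda}\twoheadrightarrow A_{\sqsubset\lambda}$ (it comes from $\HM^\bullet_{c,\ID}(\Gr_\lambda;\RG)$ via the sequence of Lemma~\ref{lem:cohom-lambda}), it acts as zero on any $A_{\sqsubset\lambda}$-module, in particular on the image of $\phi$. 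By $A_{\sqsubseteq\lambda}$-linearity $\phi$ must then annihilate $x_\lambda \cdot \HM^\bullet_{\ID}(X, i^*_{\sqsubseteq\lambda}\EC_1)$, which by Lemma~\ref{lem:action-xlambda} coincides with the image of the injection in~\eqref{eqn:adjses1}, i.e.~with the kernel of the surjection $\HM^\bullet_{\ID}(X, i^*_{\sqsubseteq\lambda}\EC_1) \twoheadrightarrow \HM^\bullet_{\ID}(Z, i^*_{\sqsubset\lambda}\EC_1)$. This gives the desired $\phi''$, whose $A_{\sqsubset\lambda}$-linearity is automatic from its $A_{\sqsubseteq\lambda}$-linearity together with the surjectivity of $A_{\sqsubseteq\lambda}\twoheadrightarrow A_{\sqsubset\lambda}$ from Lemma~\ref{lem:cohom-lambda}. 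I do not expect any genuine obstacle, as all the geometric input has been absorbed into Lemmas~\ref{lem:exact-sequences-adj} and~\ref{lem:action-xlambda}; the present statement amounts to the purely module-theoretic fact that an $A_{\sqsubseteq\lambda}$-linear morphism commutes with the distinguished element $x_\lambda$.
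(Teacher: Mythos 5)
Your proof is correct and follows essentially the same route as the paper's: both claims are reduced, via Lemma~\ref{lem:action-xlambda}, to the facts that the kernel (resp.\ image) of the maps in~\eqref{eqn:adjses2} (resp.~\eqref{eqn:adjses1}) coincide with the annihilator (resp.\ image) of $x_\lambda$, and that $\phi$ commutes with $x_\lambda$ by $A_{\sqsubseteq\lambda}$-linearity. The only cosmetic difference is that in the converse direction you deduce $x_\lambda\cdot\mathrm{im}(\phi)=0$ from the fact that the $A_{\sqsubseteq\lambda}$-action on the $Z$-supported submodule factors through $A_{\sqsubset\lambda}$, whereas the paper reads it off the factorization of $x_\lambda$ through the surjection in~\eqref{eqn:adjses2}; both are valid.
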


\begin{proof}
Using Lemma~\ref{lem:exact-sequences-adj}, we see that the first claim is equivalent to the statement that the composition
\[
\HM^\bullet_{\ID}(X, i_* i^! i_{\sqsubseteq \lambda}^* \EC_1) \hookrightarrow
\HM^\bullet_{\ID}(X, i_{\sqsubseteq \lambda}^* \EC_1) \xrightarrow{\phi} \HM^\bullet_{\ID}(X, i_{\sqsubseteq \lambda}^! \EC_2) \twoheadrightarrow \HM^\bullet_{\ID}(X, j_* j^* i_{\sqsubseteq \lambda}^! \EC_2)
\]
vanishes. And using Lemma~\ref{lem:action-xlambda}, to prove this it suffices to prove that the composition
\[
\HM^\bullet_{\ID}(X, i_* i^! i_{\sqsubseteq \lambda}^* \EC_1) \hookrightarrow
\HM^\bullet_{\ID}(X, i_{\sqsubseteq \lambda}^* \EC_1) \xrightarrow{\phi} \HM^\bullet_{\ID}(X, i_{\sqsubseteq \lambda}^! \EC_2) \xrightarrow{x_\lambda} \HM^\bullet_{\ID}(X, i_{\sqsubseteq \lambda}^! \EC_2)
\]
vanishes. This follows from the fact that $\phi$ commutes with the action of $x_\lambda$, and that $x_\lambda$ acts trivially on $\HM^\bullet_{\ID}(X, i_* i^! i_{\sqsubseteq \lambda}^* \EC_1)$.

Now we consider the second claim. The ``if'' direction is easy. Conversely, assume that $\phi'=0$. Then the composition~\eqref{eqn:composition-phi} vanishes, hence the image of $\phi$ is included in the image of the embedding $\HM^\bullet_{\ID}(Z, i^!_{\sqsubset\lambda}\EC_2) \hookrightarrow \HM^\bullet_{\ID}(X, i_{\sqsubseteq \lambda}^! \EC_2)$ of Lemma~\ref{lem:exact-sequences-adj}. On the other hand, using Lemma~\ref{lem:action-xlambda}, this also implies that the composition of $\phi$ with the action of $x_\lambda$ on $\HM^\bullet_{\ID}(X, i_{\sqsubseteq \lambda}^! \EC_2)$ vanishes hence, since $\phi$ is a morphism of $A_{\sqsubseteq \lambda}$-modules, that $\phi$ vanishes on the image of the action of $x_\lambda$ on $\HM^\bullet_{\ID}(X, i_{\sqsubseteq \lambda}^* \EC_1)$. By Lemma~\ref{lem:action-xlambda}, we deduce that the composition
\[
\HM^\bullet(X, j_! j^* i_{\sqsubseteq \lambda}^* \EC_1) \hookrightarrow \HM^\bullet_{\ID}(X, i_{\sqsubseteq \lambda}^* \EC_1) \xrightarrow{\phi} \HM^\bullet_{\ID}(X, i_{\sqsubseteq \lambda}^! \EC_2)
\]
vanishes which, in view of Lemma~\ref{lem:exact-sequences-adj}, proves the existence of a morphism of $A_{\sqsubseteq \lambda}$-modules $\phi'' \colon \HM^\bullet_{\ID}(X, i_* i^*_{\sqsubset \lambda} \EC_1) \to \HM^\bullet_{\ID}(X, i_* i^!_{\sqsubset\lambda}\EC_2)$ as in the lemma. Finally, the fact that $\phi''$ is a morphism of $A_{\sqsubset \lambda}$-modules follows from Lemma~\ref{lem:cohom-lambda}.
\end{proof}

Now we are ready to prove Proposition~\ref{prop:global-coh-ff}. We will proceed by induction
on $\lambda \in \XB$, showing that for any Bott--Samelson parity complexes $\EC_1,\EC_2$, the cohomology functor induces an isomorphism of graded vector spaces
\begin{equation}
\Hom^\bullet_{\Gr_{\sqsubseteq \lambda}} \bigl( i^*_{\sqsubseteq \lambda}\EC_1,i^!_{\sqsubseteq \lambda}\EC_2 \bigr) \simto \Hom_{A_{\sqsubseteq \lambda}} \bigl( \HM^\bullet_{\ID}(\Gr_{\sqsubseteq \lambda}, i^*_{\sqsubseteq \lambda}\EC_1),\HM_{\ID}^\bullet(\Gr_{\sqsubseteq \lambda}, i^!_{\sqsubseteq \lambda}\EC_2) \bigr). \label{eqn:ff}
\end{equation}

If $\lambda$ is minimal, then $\Gr_\lambda$ is a point and $i^*_{\sqsubseteq \lambda}\EC_1$ and $i^!_{\sqsubseteq \lambda}\EC_2$ are also parity.  On the other hand, the cohomology functor induces an equivalence of categories between $\Parity_{\ID}(\Gr_\lambda, \RG)$ and the full subcategory of the category of finitely generated graded $A_{\sqsubseteq \lambda}$-modules consisting of free modules. Thus \eqref{eqn:ff} is indeed an isomorphism in this case.

Now fix $\lambda \in \XB$, and suppose that~\eqref{eqn:ff} is an isomorphism for all $\lambda' \sqsubset \lambda$.  
We use the same notation as above for $X$, $Z$, $U$, $i$ and $j$.
Note that Lemma~\ref{lem:morph-exact-sequence} is equivalent to the existence of a natural sequence
\begin{equation}
\label{eqn:exact-sequence-Hom}
\begin{split}
& \Hom_{A_{\sqsubset \lambda}}(\HM^\bullet_{\ID}(Z, i^*_{\sqsubset\lambda} \EC_1),\HM^\bullet_{\ID}(Z, i^!_{\sqsubset\lambda}\EC_2)) \to \\
& \qquad \qquad \Hom_{A_{\sqsubseteq \lambda}}(\HM_{\ID}^\bullet(X, i^*_{\sqsubseteq\lambda} \EC_1),\HM^\bullet_{\ID}(X, i^!_{\sqsubseteq\lambda}\EC_2)) \to \\
& \qquad \qquad \qquad \qquad \Hom_{\HM_{\ID}^\bullet(U; \RG)}(\HM^\bullet_{\ID}(U, j^*i^*_{\sqsubseteq\lambda} \EC_1),\HM^\bullet_{\ID}(U,j^*i^!_{\sqsubseteq\lambda}\EC_2))
\end{split}
\end{equation}
which is exact at the middle term. It is also easy to check (using Lemma~\ref{lem:exact-sequences-adj}) that the first morphism is injective.

Consider the adjunction triangle 
\[
i_{\sqsubset \lambda *} i_{\sqsubset \lambda}^! \EC_2 \to i_{\sqsubseteq \lambda}^! \EC_2 \to j_* j^* i_{\sqsubseteq \lambda}^! \EC_2 \xrightarrow{[1]}
\]
and the long exact sequence
\begin{multline*}
\cdots \to \Hom^n_Z(i^*_{\sqsubset\lambda} \EC_1,i^!_{\sqsubset\lambda}\EC_2) \to \Hom^n_X(i^*_{\sqsubseteq\lambda} \EC_1,i^!_{\sqsubseteq\lambda}\EC_2) \\
\to \Hom^n_U(j^*i^*_{\sqsubseteq\lambda} \EC_1,j^*i^!_{\sqsubseteq\lambda}\EC_2) \to \cdots
\end{multline*}
obtained by appying the functor $\Hom_X(i^*_{\sqsubseteq\lambda} \EC_1,-)$. Parity considerations and~\cite[Corollary~2.8]{jmw} imply that the connecting morphisms in this long exact sequence are trivial, so that
the maps form short exact sequences in each degree.
Then one can consider the commutative diagram
\begin{equation*}
\xymatrix@R=0.5cm@C=1.2cm{
\Hom_Z^\bullet(i^*_{\sqsubset\lambda} \EC_1,i^!_{\sqsubset\lambda}\EC_2) \ar[r]^-{\HM^\bullet_{\ID}(Z,-)} \ar@{^{(}->}[d] & \Hom_{A_{\sqsubset \lambda}}(\HM^\bullet_{\ID}(Z, i^*_{\sqsubset\lambda} \EC_1),\HM^\bullet_{\ID}(Z, i^!_{\sqsubset\lambda}\EC_2)) \ar@{^{(}->}[d]  \\
\Ext_X^\bullet(i^*_{\sqsubseteq\lambda} \EC_1,i^!_{\sqsubseteq\lambda}\EC_2) \ar[r]^-{\HM^\bullet_{\ID}(X,-)} \ar@{->>}[d] & \Hom_{A_{\sqsubseteq \lambda}}(\HM_{\ID}^\bullet(X, i^*_{\sqsubseteq\lambda} \EC_1),\HM^\bullet_{\ID}(X, i^!_{\sqsubseteq\lambda}\EC_2)) \ar[d]\\
\Ext_U^\bullet(j^*i^*_{\sqsubseteq\lambda} \EC_1,j^*i^!_{\sqsubseteq\lambda}\EC_2) \ar[r]^-{\HM^\bullet_{\ID}(U,-)} & \Hom_{\HM_{\ID}^\bullet(U; \RG)}(\HM^\bullet_{\ID}(U, j^*i^*_{\sqsubseteq\lambda} \EC_1),\HM^\bullet_{\ID}(U,j^*i^!_{\sqsubseteq\lambda}\EC_2)),
}
\end{equation*}
where the right-hand column is the sequence~\eqref{eqn:exact-sequence-Hom}, and both column are exact at the middle term. By induction, the upper horizontal arrow is an isomorphism.
The lower one is an isomorphism because the cohomology functor induces an equivalence of categories between $\Parity_{\ID}(\Gr_\lambda,\RG)$ and the full subcategory of free modules in the category of finitely generated graded $\HM^\bullet_{\ID}(\Gr_\lambda; \RG)$-modules. By the five-lemma, this implies that the middle horizontal arrow is also an isomorphism, which completes the induction step.

\begin{remark}
It can be easily checked that the proof of Proposition~\ref{prop:global-coh-ff} applies to any ring $k$ of coefficients which is Noetherian and of finite global dimension, and also in the $\ID \rtimes \Gm$-equivariant setting, or for other partial flag varieties of Kac--Moody groups. If $k$ is a complete local principal ideal domain, one can also work directly with the categories $\Parity_{\ID \rtimes \Gm}(\Gr, k)$ and $\Parity_{\ID}(\Gr, k)$ instead of restricting to ``Bott--Samelson objects.''
\end{remark}

%------------------------------------------------------------------
\subsection{Graded ranks of $\mathrm{Hom}$ spaces}
\label{ss:graded-ranks-parity}
%------------------------------------------------------------------

We identify the $\ZM[\vv,\vv^{-1}]$-module $\MM_{\Fl}$ associated with $\Fl$ and its stratification by $\ID$-orbits as in~\S\ref{ss:reminder-parity} with $\Haff$, where $\mathbf{e}_w$ corresponds to $T_{w^{-1}}$ (for $w \in \Waff$). We also identify $\MM_{\Gr}$ (where $\Gr$ is stratified by $\ID$-orbits) with $\Msph$, where $\mathbf{e}_\lambda$
% of~\S\ref{ss:reminder-parity} 
corresponds to $\mm_{-\lambda}$ (for $\lambda \in \XB$).
% of~\S\ref{ss:Haff}.

\begin{lem}
\label{lem:ch-parity-Gr}
Let $\EM$ be either $\FM$ or $\RG$.

\begin{enumerate}
\item
\label{it:ch-parity-Fl}
For any $\FC$, $\GC$ in $\Parity_{\ID}(\Fl,\EM)$, we have
\[
\ch_{\Fl}^!(\FC \star \GC) = \ch_{\Fl}^!(\GC) \cdot \ch_{\Fl}^!(\FC) \quad \text{and} \quad \ch_{\Fl}^*(\FC \star \GC) = \ch_{\Fl}^*(\GC) \cdot \ch_{\Fl}^*(\FC)
\]
in $\Haff$.
\item
\label{it:ch-parity-Gr}
For any 
$\GC$ in $\Parity_{\ID}(\Fl,\EM)$, we have
\[
\ch_{\Gr}^!(\GC \star \delta^\EM_1) = \mm_0 \cdot \ch_{\Fl}^!(\GC) \quad \text{and} \quad \ch_{\Gr}^*(\GC \star \delta^\EM_1) = \mm_0 \cdot \ch_{\Fl}^*(\GC)
\]
in $\Msph$.
\end{enumerate}
\end{lem}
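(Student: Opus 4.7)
The plan is to reduce both assertions to computations on convolution generators. The characters $\ch^{*/!}_\Fl$ and $\ch^{*/!}_\Gr$ are additive on direct sums and scale by $\vv^{\pm 1}$ under shifts, while convolution is bi-additive and compatible with shifts in each variable. Since every object of $\Parity_\ID(\Fl, \EM)$ is a direct summand of a direct sum of shifts of Bott--Samelson complexes, and Bott--Samelson complexes are iterated convolutions of the ``atoms'' $\EC_{s, \EM}$ and $\delta^\EM_\omega$, associativity of convolution reduces part (1) to verifying the multiplicativity formulas when $\FC$ is one of these atoms, for arbitrary $\GC$.

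For $\FC = \delta^\EM_\omega$ with $\omega \in \Omega$, the condition $\ell(\omega) = 0$ forces $\omega$ to normalize $\ID$, so left multiplication by a representative $\dot\omega$ gives an automorphism $L_\omega \colon \Fl \xrightarrow{\sim} \Fl$ sending $\Fl_v$ isomorphically onto $\Fl_{\omega v}$. A direct inspection of the convolution diagram then yields $\delta^\EM_\omega \star \GC \cong (L_\omega)_* \GC$, and the desired identity amounts to the Hecke-algebra formula $T_{(\omega v)^{-1}} = T_{v^{-1}} T_{\omega^{-1}}$, which holds because $\ell(\omega v) = \ell(v) + \ell(\omega)$. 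For $\FC = \EC_{s, \EM}$, we use the standard geometric description of convolution with $\EC_{s, \EM}$ via the $\PM^1$-fibration $q_s \colon \Fl \to \GD(\mathscr{K})/P_s$ determined by the parahoric $P_s$. Proper base change in the convolution diagram, combined with the parity of $\GC$ (which forces the relevant Leray spectral sequences to degenerate, as in \cite{FieWil}), expresses the stalks and costalks of $\EC_{s, \EM} \star \GC$ at $\Fl_v$ as a combination of those of $\GC$ at $\Fl_v$ and $\Fl_{vs}$ with the appropriate length shifts; a case analysis on $\ell(vs) \gtrless \ell(v)$ recovers exactly the expansion of $T_{v^{-1}} \cdot (T_s + \vv^{-1}) = T_{v^{-1}} \cdot \ch^*_\Fl(\EC_{s, \EM})$ in $\Haff$. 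The $\ch^!$ version follows by an analogous computation, or alternatively via Verdier duality using Lemma~\ref{lem:grk-Hom-parity}\eqref{it:ch-parity-D} together with the self-duality of $\EC_{s, \EM}$.

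For part (2), the key observation is that $\GC \star \delta^\EM_1 \cong p_* \GC$, where $p \colon \Fl \to \Gr$ is the natural projection: indeed the support of $\delta^\EM_1$ is a single point (the base point of $\Gr$), and the restriction of the convolution map $\Fl \, \widetilde{\times} \, \Gr \to \Gr$ to the preimage of this support is precisely $p$. Proper base change then gives $i_\lambda^* p_* \GC \cong (p_\lambda)_*(\GC|_{p^{-1}(\Gr_\lambda)})$ with $p^{-1}(\Gr_\lambda) = \bigsqcup_{w \in t_\lambda W} \Fl_w$ and $p_\lambda$ a Zariski-trivial $\GD/\BD$-fibration. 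Using parity of $\GC$ together with the Bruhat decomposition of the fiber $\GD/\BD$ into even-dimensional cells, one extracts from each $\Fl_w$ (with $w \in t_\lambda W$) its stalk/costalk contribution to $p_* \GC$ on $\Gr_\lambda$. Translating through the identifications $\mathbf{e}_w \leftrightarrow T_{w^{-1}}$ and $\mathbf{e}_\lambda \leftrightarrow \mm_{-\lambda} = \mm_0 \cdot T_{w_{-\lambda}}$, together with the basic identity $\mm_0 \cdot T_v = \vv^{\ell(v)} \mm_0$ for $v \in W$ (which reflects the $\HW$-action on $\MM_{\mathrm{triv}}$), yields the desired equalities $\ch^{*/!}_\Gr(\GC \star \delta^\EM_1) = \mm_0 \cdot \ch^{*/!}_\Fl(\GC)$.

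The main technical obstacle is the length-function bookkeeping, both in the atomic case $\FC = \EC_{s, \EM}$ of part (1) and throughout part (2): the dimension shifts produced by base change (governed by relative dimensions of fibers) must be reconciled with the Hecke-algebra multiplication rules (governed by $\ell$) through the chosen bijection between $\XB$- or $\Waff$-indexed bases and Hecke or spherical module elements. Once this matching is performed carefully in the atomic cases, the reduction via additivity and associativity is formal.
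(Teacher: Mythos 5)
Your overall strategy coincides with the paper's: reduce to the ``atomic'' cases via additivity, shift-compatibility and associativity of convolution, treat the atoms by an explicit base-change computation, and handle part (2) via $\GC \star \delta^\EM_1 \cong p_*\GC$ and the decomposition $p^{-1}(\Gr_\lambda) = \bigsqcup_{w \in t_\lambda W} \Fl_w$. Part (2) of your argument is essentially identical to the paper's. The one structural difference is that you place the atom in the \emph{left} slot $\FC$ of $\FC \star \GC$, whereas the paper fixes $\GC = \EC_{s,\FM}$ (the right slot) and lets $\FC$ vary; either reduction can be made to work, but they require \emph{different} geometric descriptions of the convolution, and this is where your write-up goes wrong.

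Concretely: the push-pull description along the parahoric quotient $p_s \colon \Fl \to \GD(\mathscr{K})/\JD_s$, namely $(p_s)^*(p_s)_*(-)[1]$, computes convolution with $\EC_{s,\EM}$ \emph{on the right}, i.e.\ $\GC \star \EC_{s,\EM}$; it groups the strata $\Fl_v$ and $\Fl_{vs}$ (fibers of $p_s$ are right $\JD_s$-cosets). The left convolution $\EC_{s,\EM} \star \GC$ that your reduction requires is instead computed by pushing forward along the multiplication map $\JD_s \times^{\ID} \Fl \to \Fl$, whose fibers over a point of $\Fl_v$ meet the strata $\Fl_v$ and $\Fl_{sv}$. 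This is exactly what must happen for consistency with the Hecke identity you state: under $\mathbf{e}_w \leftrightarrow T_{w^{-1}}$, the term $T_{v^{-1}} \cdot T_s = T_{(sv)^{-1}}$ corresponds to $\mathbf{e}_{sv}$, not $\mathbf{e}_{vs}$. (A quick sanity check: $\EC_{s,\EM} \star \delta^\EM_w$ is supported on $\JD_s w\ID/\ID \subset \Fl_w \cup \Fl_{sw}$, whereas $(p_s)^*(p_s)_*\delta^\EM_w[1]$ is the constant sheaf on $w\JD_s/\ID = \Fl_w \cup \Fl_{ws}$; these differ.) So as written, your key computational step invokes the wrong correspondence and produces the wrong strata pairing, and the case analysis should be on $\ell(sv) \gtrless \ell(v)$ rather than $\ell(vs) \gtrless \ell(v)$. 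The fix is easy --- either use the multiplication map $\JD_s \times^{\ID} \Fl \to \Fl$ for the left-slot atom (your $\delta^\EM_\omega$ case is already, correctly, a left-translation computation), or swap the roles and put the atom in the right slot as the paper does. One further small point: passing from Bott--Samelson objects to arbitrary parity complexes is not purely ``formal additivity'' --- to isolate an indecomposable direct summand of a Bott--Samelson object you need the induction on the dimension of the support that the paper carries out, since the complementary summands must already be known to satisfy the formula.
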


\begin{proof}[Sketch of proof]
The case $\EM=\RG$ follows from the case $\EM=\FM$, so we concentrate on the latter case. Also, in each case, the formula for $\ch^*$ follows from the formula for $\ch^!$ using Lemma~\ref{lem:grk-Hom-parity}\eqref{it:ch-parity-D}, so we only consider the latter case.

\eqref{it:ch-parity-Fl}
First we consider the case $\GC=\EC_{s,\FM}$ for some simple reflection $s$. We let $\JD_s$ be the minimal standard parahoric subgroup of $\GD(\mathscr{K})$ associated with $s$, and define $\Fl^s:= \GD(\mathscr{K}) / \JD_s$. We let $p_s \colon \Fl \to \Fl^s$ be the projection, and set $\Fl^s_w:=p_s(\Fl_w)$ for $w \in \Waff$. Then by base change we have 
\[
\FC \star \GC = \FC \star \EC_{s,\FM} \cong (p_s)^* (p_{s})_* \FC[1],
\]
and $\ch_{\Fl}^!(\EC_{s,\FM})=T_s + \vv^{-1}$. The formula in this case can be checked by a direct computation, using the fact that
\[
\HM^\bullet(\Fl^s_w, (i_w^{\Fl^s})^! (p_{s})_* \FC) = \HM^\bullet(\Fl_w, (i_w^{\Fl})^! \FC) \oplus \HM^\bullet(\Fl_{ws}, (i_{ws}^{\Fl})^! \FC)
\]
for $w \in \Waff$, which can be derived from the base change theorem and~\cite[Proposition~2.6]{jmw}. (Here $i_w^{\Fl^s}$, $i_w^{\Fl}$ and $i_{ws}^{\Fl}$ are the obvious inclusions.)

The case where $\GC$ is the sky-scraper sheaf $\EC_{\omega,\FM}$ at the $\ID$-fixed point given by the unique point in $\Fl_\omega$ (for $\omega \in \Omega$) is easy.

Using these special cases one deduces that the formula holds when $\GC$ is of the form
\begin{equation}
\label{eqn:parity-Fl}
\EC_{s_1,\FM} \star \cdots \star \EC_{s_r,\FM} \star \EC_{\omega,\FM}[n]
\end{equation}
where $\omega \in \Omega$, $n \in \ZM$, and $s_1, \cdots, s_r$ are simple reflections. Then one can prove the formula when $\GC$ is indecomposable by induction on the dimension of its support, using the fact that any indecomposable parity complex on $\Fl$ appears as a direct summand of an object of the form~\eqref{eqn:parity-Fl} with $r$ the dimension of the support, see~\cite[\S 4.1]{jmw}. The general case follows.

\eqref{it:ch-parity-Gr}
We have $\GC \star \delta^\EM_1=p_* \GC$, where $p \colon \Fl \to \Gr$ is the natural projection. Then the lemma can be checked by a direct computation, using the formula
\[
\HM^\bullet(\Gr_\lambda, (i_\lambda)^! p_* \GC) \cong \bigoplus_{w \in t_\lambda W} \HM^\bullet(\Fl_w, (i_w^{\Fl})^! \GC)
\]
for all $\lambda \in \XB$, which can be derived from the base change theorem and~\cite[Proposition~2.6]{jmw}. 
%(Here $i_\lambda : \Gr_\lambda \hookrightarrow \Gr$ is the inclusion.)
\end{proof}

\begin{remark}
Using similar arguments one can show that $\ch_{\Gr}^?(\FC \star \GC)=\ch_{\Gr}^?(\GC) \cdot \ch_{\Fl}^?(\FC)$ for any $\GC$ in $\Parity_{\ID}(\Gr,\EM)$,
$\FC$ in $\Parity_{\ID}(\Fl,\EM)$ and $?=!$ or $*$.
\end{remark}

\begin{prop}
\label{prop:ch-BS-parity}
For $\EM=\RG$ or $\FM$, for any sequence $\underline{s}$ of simple reflections, and for any $\omega \in \Omega$ we have
\[
\ch_{\Gr}^!(\EC_{\EM}(\omega,\us)) =\ch_{\Gr}^*(\EC_{\EM}(\omega,\us)) = \mm(\omega,\us).
\]
\end{prop}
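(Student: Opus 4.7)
The plan is to reduce directly to the two parts of Lemma~\ref{lem:ch-parity-Gr}, by decomposing $\EC_{\EM}(\omega,\us)$ as an iterated convolution whose leftmost factors live on $\Fl$ and whose rightmost factor is $\delta^\EM_1$. Concretely, for each $\omega \in \Omega$ let $\widetilde{\delta}^\EM_{\omega^{-1}}$ denote the skyscraper sheaf on the (unique, $0$-dimensional) orbit $\Fl_{\omega^{-1}}$. Since $p \colon \Fl \to \Gr$ restricts to an isomorphism from $\Fl_{\omega^{-1}}$ onto the $0$-dimensional orbit in $\Gr_{(\omega^{-1})}$, the argument used in the proof of Lemma~\ref{lem:ch-parity-Gr}\eqref{it:ch-parity-Gr} gives
\[
\widetilde{\delta}^\EM_{\omega^{-1}} \star \delta^\EM_1 \;\cong\; p_* \widetilde{\delta}^\EM_{\omega^{-1}} \;\cong\; \delta^\EM_{\omega^{-1}}.
\]
Using associativity of convolution, we therefore have
\[
\EC_{\EM}(\omega,\us) \;\cong\; \bigl(\EC_{s_r,\EM} \star \cdots \star \EC_{s_1,\EM} \star \widetilde{\delta}^\EM_{\omega^{-1}}\bigr) \star \delta^\EM_1.
\]

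With this rewriting, the two parts of Lemma~\ref{lem:ch-parity-Gr} immediately apply. For $?\in \{!,*\}$, Lemma~\ref{lem:ch-parity-Gr}\eqref{it:ch-parity-Gr} yields
\[
\ch^?_{\Gr}(\EC_{\EM}(\omega,\us)) \;=\; \mm_0 \cdot \ch^?_{\Fl}\bigl(\EC_{s_r,\EM} \star \cdots \star \EC_{s_1,\EM} \star \widetilde{\delta}^\EM_{\omega^{-1}}\bigr),
\]
and then an iterated application of Lemma~\ref{lem:ch-parity-Gr}\eqref{it:ch-parity-Fl} gives
\[
\ch^?_{\Fl}\bigl(\EC_{s_r,\EM} \star \cdots \star \EC_{s_1,\EM} \star \widetilde{\delta}^\EM_{\omega^{-1}}\bigr)
\;=\; \ch^?_{\Fl}(\widetilde{\delta}^\EM_{\omega^{-1}}) \cdot \ch^?_{\Fl}(\EC_{s_1,\EM}) \cdots \ch^?_{\Fl}(\EC_{s_r,\EM}).
\]

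It remains to identify the factors. The sheaf $\widetilde{\delta}^\EM_{\omega^{-1}}$ is a rank-one skyscraper on a point, so its only contribution (in either case $?=!$ or $?=*$) is $\mathbf{e}_{\omega^{-1}}$, which corresponds to $T_{\omega}$ under our identification $\MM_{\Fl} \cong \Haff$. For each simple reflection $s$, a short direct computation using the $\PM^1$-structure of $\overline{\Fl_s}$ (the same one that appears in the proof of Lemma~\ref{lem:ch-parity-Gr}\eqref{it:ch-parity-Fl}) gives
\[
\ch^!_{\Fl}(\EC_{s,\EM}) \;=\; T_s + \vv^{-1};
\]
the same formula for $\ch^*_{\Fl}(\EC_{s,\EM})$ follows either by a symmetric direct computation or from Lemma~\ref{lem:grk-Hom-parity}\eqref{it:ch-parity-D} together with the self-duality $\DM_{\Fl}(\EC_{s,\EM}) \cong \EC_{s,\EM}$ (coming from the smoothness of $\overline{\Fl_s}$ and the normalization by $[1]$).

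Assembling these computations gives
\[
\ch^?_{\Gr}(\EC_{\EM}(\omega,\us)) \;=\; \mm_0 \cdot T_{\omega} \cdot (T_{s_1}+\vv^{-1}) \cdots (T_{s_r}+\vv^{-1}) \;=\; \mm(\omega,\us),
\]
which is the desired equality. There is no substantive obstacle here: every step is an application of Lemma~\ref{lem:ch-parity-Gr} or a standard $\PM^1$-calculation, and the role of the proof is essentially bookkeeping to match the combinatorics of the Bernstein basis with the convolution structure.
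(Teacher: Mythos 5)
Your proof is correct and follows exactly the route the paper takes: factor $\delta^\EM_{\omega^{-1}}$ as (skyscraper on $\Fl_{\omega^{-1}}$) $\star\, \delta^\EM_1$, apply both parts of Lemma~\ref{lem:ch-parity-Gr}, and plug in $\ch^?_{\Fl}$ of the elementary factors (your $\widetilde{\delta}^\EM_{\omega^{-1}}$ is the paper's $\EC_{\omega^{-1},\EM}$, with $\ch^?_{\Fl}=\mathbf{e}_{\omega^{-1}}=T_\omega$). The paper states this in one line; your write-up just makes the bookkeeping explicit, including the correct order of factors coming from the convention $\ch^?_{\Fl}(\FC\star\GC)=\ch^?_{\Fl}(\GC)\cdot\ch^?_{\Fl}(\FC)$.
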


\begin{proof}
This follows immediately from Lemma~\ref{lem:ch-parity-Gr}, using the facts that $\ch_{\Fl}^!(\EC_{s,\EM})=\ch_{\Fl}^*(\EC_{s,\EM})=(T_s + \vv^{-1})$ and $\ch_{\Fl}^*(\EC_{\omega,\EM}) = \ch_{\Fl}^*(\EC_{\omega,\EM}) = T_{\omega^{-1}}$ (where $\EC_{\omega,\EM}$ is defined as in the proof of Lemma~\ref{lem:ch-parity-Gr}\eqref{it:ch-parity-Fl}).
\end{proof}

The following consequence of Theorem~\ref{thm:main-top} will play a crucial role in Section~\ref{sec:tilting-KW}.

\begin{cor}
\label{cor:grk-Hom-parity}
For any any $\omega, \omega' \in \Omega$ and any sequences $\underline{s}$, $\underline{t}$ of simple reflections, the graded $\OC(\tg^*)$-module
\[
\bigoplus_{n \in \ZM} \Hom_{\BSalg} \bigl( (\omega, \us, 0), (\omega', \ut, n) \bigr)
\]
is free. Its graded rank is equal to
$\langle \mm(\omega, \us), \mm(\omega', \ut) \rangle$.
\end{cor}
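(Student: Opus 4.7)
The plan is to transfer the problem to the parity sheaf side via Theorem~\ref{thm:main-top}, where both the freeness and the rank formula follow from general properties of parity complexes.

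If $\omega \neq \omega'$ then both sides vanish: the left-hand side by definition of $\BSalg$, and $\langle \mm(\omega, \us), \mm(\omega', \ut) \rangle = 0$ since these elements are supported on disjoint subsets of the basis $\{\mm_\lambda\}$ of $\Msph$ indexed by the $\Omega$-components. So I may assume $\omega = \omega'$, in which case Theorem~\ref{thm:main-top} provides a canonical isomorphism of graded $\OC(\tg^*)$-modules
\[
M := \bigoplus_{n \in \ZM} \Hom_{\BSalg}\bigl((\omega, \us, 0), (\omega, \ut, n)\bigr) \cong \Hom^\bullet_{\Parity_{\ID}(\Gr, \RG)}\bigl(\EC_\RG(\omega, \us), \EC_\RG(\omega, \ut)\bigr).
\]
Compatibility of the $\OC(\tg^*)$-module structures follows from the construction of $\gamma$ in \S\ref{ss:cohomology-Gr}: the composition of the right-factor inclusion $\OC(\tg^*) \hookrightarrow C$ with $\gamma$ agrees with the structural morphism $\OC(\tg^*) \cong \HM^\bullet_\ID(\mathrm{pt}; \RG) \to \HM^\bullet_\ID(\Gr; \RG)$ defining the module structure on the cohomology side.

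Next, I would deduce graded freeness of $M$ over $\OC(\tg^*) = \mathrm{S}(\tg)$ by applying Lemma~\ref{lem:nakayama}\eqref{it:nakayama-Tor1}. By Lemma~\ref{lem:properties-parity}\eqref{it:morph-parity-For}, $M$ is a finitely generated projective graded $\OC(\tg^*)$-module, so $\Tor_1^{\OC(\tg^*)}(\RG, M) = 0$ is automatic. Since $D(\omega, \us)$ is a cyclic graded $C$-module bounded below in its internal grading, $M$ is itself bounded below. It remains to check that $\RG \otimes_{\OC(\tg^*)} M$ is graded free over $\RG$: writing $M$ as a direct summand of a finite free graded $\OC(\tg^*)$-module and applying $\RG \otimes_{\OC(\tg^*)} -$ presents it as a direct summand of a finite free graded $\RG$-module, and since $\RG$ is a finite localization of $\ZM$ and hence a PID, every finitely generated projective $\RG$-module is free. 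Lemma~\ref{lem:nakayama}\eqref{it:nakayama-Tor1} then yields that $M$ is graded free.

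The graded rank computation is immediate: combining Lemma~\ref{lem:grk-Hom-parity}\eqref{it:dim-Hom-parity-ch} (applicable since $\RG$ is an integral domain) with Proposition~\ref{prop:ch-BS-parity} gives
\[
\grk_{\OC(\tg^*)}(M) = \langle \ch_\Gr^*(\EC_\RG(\omega, \us)), \ch_\Gr^!(\EC_\RG(\omega, \ut)) \rangle = \langle \mm(\omega, \us), \mm(\omega, \ut) \rangle.
\]
The main subtle point is the upgrade from projectivity to graded freeness, for which the hypothesis that $\RG$ is a PID is essential; over a more general coefficient ring one could only hope to conclude projectivity.
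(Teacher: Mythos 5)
Your proof is correct and follows essentially the same route as the paper's: identify the $\Hom$-space with $\Hom^\bullet_{\Parity_{\ID}(\Gr,\RG)}(\EC_\RG(\omega,\us),\EC_\RG(\omega,\ut))$ via Theorem~\ref{thm:main-top}, obtain finite generation and projectivity over $\OC(\tg^*)\cong\HM^\bullet_{\ID}(\mathrm{pt};\RG)$ from Lemma~\ref{lem:properties-parity}\eqref{it:morph-parity-For}, and compute the graded rank from Lemma~\ref{lem:grk-Hom-parity}\eqref{it:dim-Hom-parity-ch} together with Proposition~\ref{prop:ch-BS-parity}; your explicit upgrade from projective to graded free (via graded Nakayama and the fact that $\RG$ is a PID), which the paper leaves implicit, is exactly the right argument. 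One harmless slip: $D(\omega,\us)$ need not be cyclic over $C$, but the bounded-belowness you need already follows from finite generation of the $\Hom$-module over the non-negatively graded ring $\OC(\tg^*)$.
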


\begin{proof}
The first assertion follows from Theorem~\ref{thm:main-top} and Lemma~\ref{lem:properties-parity}\eqref{it:morph-parity-For}.
The second one follows from Lemma~\ref{lem:grk-Hom-parity}\eqref{it:dim-Hom-parity-ch} and Proposition~\ref{prop:ch-BS-parity}.
\end{proof}

%%%%%%%%%%%%%%%%
%%%%%%%%%%%%%%%%
\section{Kostant--Whittaker reduction}
\label{sec:KW-reduction}
%%%%%%%%%%%%%%%%
%%%%%%%%%%%%%%%%

In this section and the next one we work with derived categories of equivariant coherent sheaves, and usual derived functors between them. See~\cite[Appendix~A]{mr} for a brief reminder of the main definitions and properties of these objects.

%------------------------------------------------------------------
\subsection{Overview}
\label{ss:overview-KW}
%------------------------------------------------------------------

The goal of this section is to introduce and study the ``Kostant--Whittaker reduction'' functor for equivariant coherent sheaves on the Grothendieck resolution of $\GB$. This construction is a mild adaptation of a construction in~\cite{bf}; it relies in a crucial way on geometric results proved in~\cite{riche3}. A related construction also appears in~\cite{dodd}. This functor is used in Section~\ref{sec:tilting-KW} to obtain a description of the category $\Tilt(\ES^{\GB \times \Gm}(\tNC))$ in terms of ``Soergel bimodules.''

After introducing our notation and assumptions in~\S\ref{ss:notation-KW}, we recall the definition of the ``geometric braid group action'' of~\cite{riche, br} in~\S\ref{ss:braid-groth}, and the main results of~\cite{riche3} in~\S\ref{ss:reminder}. In~\S\ref{ss:KW-functors} we define our functors. Then, after some preparation in \S\ref{ss:KW-line-bundle}, in~\S\ref{ss:KW-braid-group} we prove the main result of the section, a certain compatibility property between Kostant--Whittaker reduction and the geometric braid group action.

%------------------------------------------------------------------
\subsection{Notation}
\label{ss:notation-KW}
%------------------------------------------------------------------

Let $\GB_\ZM$ be a group scheme over $\ZM$ which is a product of split simply connected quasi-simple groups, and general linear groups $\mathrm{GL}_{n,\ZM}$. In particular, $\GB_\ZM$ is a split connected reductive group over $\ZM$.
We let $\BB_\ZM \subset \GB_\ZM$ be a Borel subgroup, $\TB_\ZM \subset \BB_\ZM$ be a (split) maximal torus, and $\BB^+_\ZM \subset \GB_\ZM$ be the Borel subgroup which is opposite to $\BB_\ZM$ (with respect to $\TB_\ZM$). 
We denote by $r$ the rank of $\GB_\ZM$.

We let $N$ be the product of all the prime numbers which are not very good for some quasi-simple factor of $\GB_\ZM$, and set $\RG:=\ZM[1/N]$. Throughout the section, we use the letter $\FM$ to denote an arbitrary geometric point of $\RG$. We use the letter $\EM$ to denote either $\RG$ or $\FM$.

We let $\GB_\RG$, $\BB_\RG$, $\TB_\RG$, $\BB^+_\RG$ be the groups obtained from $\GB_\ZM$, $\BB_\ZM$, $\TB_\ZM$, $\BB^+_\ZM$ by base change to $\RG$, and $\gg_\RG$, $\bg_\RG$, $\tg_\RG$, $\bg^+_\RG$ be their respective Lie algebras. 
We also denote by $\UB_\RG$, resp.~$\UB^+_\RG$, the unipotent radical of $\BB_\RG$, resp.~$\BB^+_\RG$, by $\ng_\RG$, resp.~$\ng^+_\RG$, its Lie algebra, by $W$ the Weyl group of $(\GB_\RG, \TB_\RG)$, and by $\XB:=X^*(\TB_\RG)$ the weight lattice.

We also set
\[
\GB_\FM := \Spec(\FM) \times_{\Spec(\RG)} \GB_\RG,
\]
denote by $\gg_\FM$ the Lie algebra of $\GB_\FM$, by $\BB_\FM$, $\TB_\FM$, $\BB^+_\FM$, $\UB_\FM$, $\UB_\FM^+$ the base change of $\BB_\RG$, $\TB_\RG$, $\BB^+_\RG$, $\UB_\RG$, $\UB_\RG^+$, and by $\bg_\FM$, $\tg_\FM$, $\bg^+_\FM$, $\ng_\FM$, $\ng_\FM^+$ their respective Lie algebras. Note that we also have $\XB=X^*(\TB_\FM)$.

If $V$ is an $\EM$-module, we set $V^*:=\Hom_\EM(V,\EM)$.
In this section we will consider the Grothendieck resolution
\[
\tgg_\EM:= (\GB \times^{\BB} (\gg/\ng)^*)_\EM \hookrightarrow (\GB/\BB \times \gg^*)_\EM.
\]
The scheme $\tgg_\EM$ is a vector bundle over the flag variety $\Flag_\EM:=(\GB/\BB)_\EM$. It is endowed with an action of $\GB_\EM \times \GmE$, where $\GB_\EM$ acts naturally, and the action of $\GmE$ is induced by the action on $(\gg/\ng)_\EM^*$ where $x \in \GmE$ acts by multiplication by $x^{-2}$. 
We will consider the derived categories of equivariant coherent sheaves
$D^{\GB \times \Gm}(\tgg)_\EM$
and $D^{\GB}(\tgg)_\EM$.
We denote by $\langle 1 \rangle$ the functor of tensoring with the free rank one tautological $\GmE$-module, and by
\[
\FM(-) \colon D^{\GB \times \Gm}(\tgg)_\RG \to D^{\GB \times \Gm}(\tgg)_\FM, \quad \FM(-) \colon D^{\GB}(\tgg)_\RG \to D^{\GB}(\tgg)_\FM
\]
the ``modular reduction functors'' induced by the functor $\FM \lotimes_\RG (-)$.

For any $\lambda \in \XB$ we denote by $\OC_{\Flag_\EM}(\lambda)$ the line bundle on $\Flag_\EM$ associated naturally with $\lambda$, and by $\OC_{\tgg_\EM}(\lambda)$ the pullback of $\OC_{\Flag_\EM}(\lambda)$ to $\tgg_\EM$.

We will also consider the morphism
\[
\nu \colon \tgg_\EM \to \tg^*_\EM
\]
which is defined as follows. Consider the restriction morphism $(\gg / \ng)_\EM^* \to \tg^*_\EM$. It is easily checked that this morphism is $\BB_\EM$-equivariant, where $\BB_\EM$ acts trivially on $\tg^*_\EM$. Therefore, our morphism defines a morphism $(\GB \times^{\BB} (\gg/\ng)^*)_\EM \to \tg^*_\EM$, which is our morphism $\nu$. There is also a natural morphism
\[
\pi \colon \tgg_\EM \to \gg^*_\EM
\]
induced by the coadjoint action.

By~\cite[Lemma~4.2.3]{riche3} there exists a $\GB_\RG$-invariant symmetric bilinear form on $\gg_\RG$ which is a perfect pairing. We fix once and for all such a bilinear form, and we denote by $\varkappa \colon \gg_\RG \simto \gg_\RG^*$ the induced ($\GB_\RG$-equivariant) isomorphism. We denote similarly the induced isomorphism $\gg_\FM \simto \gg^*_\FM$.

In the case $\EM=\FM$, we
let $\gg^{\rs}_\FM \subset \gg^{\reg}_\FM \subset \gg_\FM$ be the open subsets consisting of regular semisimple elements and regular elements respectively (see~\cite[\S 2.3]{riche3}), let $\gg_\FM^{*,\rs} \subset \gg_\FM^{*,\reg} \subset \gg^*_\FM$ be their image under $\varkappa$, and let $\tgg_\FM^{\rs} \subset \tgg_\FM^{\reg}$ be the inverse images in $\tgg_\FM$. Then there exists a natural action of $W$ on $\tgg^{\reg}_\FM$ stabilizing $\tgg_\FM^{\rs}$ and commuting with the $\GB_\FM \times \GmF$-action, see~\cite[\S 1.9]{br}. Moreover, the restriction $\nu_\reg \colon \tgg_\FM^\reg \to \tg^*_\FM$ is $W$-equivariant, where $W$ acts naturally on $\tg_\FM^*$. (Indeed, this property is easily checked for the restriction of $\nu_\reg$ to $\tgg^\rs$; then the claim follows by a density argument.)

%------------------------------------------------------------------
\subsection{Geometric braid group action}
\label{ss:braid-groth}
%------------------------------------------------------------------

We let ${\check \XB}$ be the lattice of cocharacters of $\TB_\RG$, and denote by $\Phi \subset \XB$, resp.~${\check \Phi} \subset {\check \XB}$, the roots, resp.~coroots, of $(\GB_\RG, \TB_\RG)$. We denote by $\Phi^+$ the positive roots, i.e.~the roots appearing in $\bg^+_\RG$. To these data one can associate the affine Weyl group $\Waff$ and the affine braid group $\Baff$ as in~\S\ref{ss:Haff}. We will also denote 
%by $\alpha^\vee$ the coroot associated with a root $\alpha$, and 
by $\rho$ the half-sum of positive roots.

Let $s$ be a finite simple reflection, associated with a simple root $\alpha$.
Recall the associated subscheme $Z_s^\EM \subset (\tgg \times_{\gg^*} \tgg)_\EM$ defined in~\cite[\S\S 1.3--1.5]{br}. (If $\EM=\FM$, then $Z_s^\EM$ is the closure of the graph of the action of $s$ on $\tgg^{\reg}_\EM$.) We denote by
\[
\TM^s_{\tgg}, \, \SM^s_{\tgg} \colon D^{\GB\times \Gm}(\tgg)_\EM \to D^{\GB \times \Gm}(\tgg)_\EM
\]
the Fourier--Mukai transforms with kernels $\OC_{Z_s^\EM} \langle -1 \rangle$ and $\OC_{Z_s^\EM}(-\rho, \rho-\alpha) \langle -1 \rangle$ respectively. (Here, $\OC_{Z_s^\EM}(-\rho, \rho-\alpha)$ denotes the tensor product of $\OC_{Z_s^\EM}$ with the line bundle on $\tgg \times \tgg_\EM$ which is the pullback of the line bundle $\OC_{\Flag_\EM}(-\rho) \boxtimes \OC_{\Flag_\EM}(\rho-\alpha)$ on $\Flag \times \Flag_\EM$.) By \cite[Lemma~1.5.1 \& Proposition~1.10.3]{br}, these functors are quasi-inverse equivalences of categories. We will use the same symbols to denote the similar autoequivalences of the category  $D^{\GB}(\tgg)_\EM$.

By~\cite[Section~1]{br}, there exists a right action\footnote{Here by a (left) action of a group on a category we mean a group morphism from the given group to the group of \emph{isomorphism classes} of autoequivalences of the category. As usual, a right action of a group is a left action of the opposite group.} of the group $\Baff$ on the category $D^{\GB \times \Gm}(\tgg)_\EM$, resp.~$D^{\GB}(\tgg)_\EM$, where $T_s$ acts by the functor $\TM^s_{\tgg}$ (for any finite simple reflection $s$), and where $\theta_\lambda$ acts by tensoring with the line bundle $\OC_{\tgg_\EM}(\lambda)$ (for any $\lambda \in \XB$). (The same remarks as in~\cite[\S 3.3]{mr} on the difference with the conventions of~\cite{riche, br} apply here: the (right) action considered in the present paper differs from the (left) action of~\cite{riche,br} by the composition with the anti-automorphism of $\Baff$ fixing all generators $T_s$ for $s$ a simple reflection and $\theta_\lambda$ for $\lambda \in \XB$.) For $b \in \Baff$, we denote by
\[
\IS^\EM_b \colon D^{\GB \times \Gm}(\tgg)_\EM \simto D^{\GB \times \Gm}(\tgg)_\EM
\]
the action of $b$. (This functor is defined only up to isomorphism.) It is easily checked that we have an isomorphism of functors
\begin{equation}
\label{eqn:F-IS}
\FM \circ \IS_b^\RG \cong \IS_b^\FM \circ \FM
\end{equation}
for any $b \in \Baff$.

For $s$ a finite simple reflection, associated with a simple root $\alpha$, we also set $\tgg_s^\EM := \GB \times^{\PB_s} (\gg / (\pg_s)^{\mathrm{nil}})^*_\EM$, where $\PB^\EM_s$ is the minimal standard parabolic subgroup of $\GB_\EM$ associated with $s$, and $(\pg_s^\EM)^{\mathrm{nil}}$ is the Lie algebra of the unipotent radical of $\PB^\EM_s$. There exists a natural morphism $\widetilde{\pi}_s \colon \tgg_\EM \to \tgg^\EM_s$. 
By~\cite[Corollary~5.3.2]{riche}
there exists natural exact sequences
\begin{equation}
\label{eqn:ses-tgg}
\OC_{\Delta \tgg_\EM} \langle 2 \rangle \hookrightarrow \OC_{\tgg \times_{\tgg_s} \tgg_\EM} \twoheadrightarrow \OC_{Z_s^\EM}, \quad
\OC_{Z_s^\EM}(-\rho, \rho-\alpha) \hookrightarrow \OC_{\tgg \times_{\tgg_s} \tgg_\EM} \twoheadrightarrow \OC_{\Delta \tgg_\EM}
\end{equation}
in $D^{\GB \times \Gm}(\tgg \times \tgg)_\EM$, where in each sequence the surjection is induced by restriction of functions, and where $\Delta \tgg_\EM \subset \tgg \times \tgg_\EM$ is the diagonal copy. (In fact, in~\cite{riche} only the case $\EM=\FM$ is treated, but one can easily check that the same arguments apply in the case $\EM=\RG$.)

%------------------------------------------------------------------
\subsection{Reminder on~\cite{riche3}: Kostant section and universal centralizer}
\label{ss:reminder}
%------------------------------------------------------------------

We denote by $\simple \subset \Phi$ the subset of simple roots. As in~\cite[\S 4.3]{riche3},\footnote{Note that, compared to~\cite{riche3}, we have switched the roles of positive and negative roots. Another difference which appears below is that we work with $\gg^*$ instead of $\gg$, using the identification $\varkappa$.} for each $\alpha \in -\simple$ we choose an element $e_\alpha \in \gg_\ZM$ which forms a $\ZM$-basis of the $\alpha$-weight space in $\gg_\ZM$, and set
\[
e:= \sum_{\alpha \in -\simple} e_\alpha \quad \in \gg_\RG.
\]
We will consider the cocharacter $\lav_\circ:=\sum_{\alpha \in -\Phi^+} \alpha^{\vee}$, and the $\GmR$-actions on $\gg_\RG$ and $\gg_\RG^*$ defined by
\[
x \cdot y = x^{-2} \lav_\circ(x) \cdot y
\]
for $x \in \GmR$ and $y$ either in $\gg_\RG$ or in $\gg_\RG^*$. With these definitions, $\varkappa$ is $\GmR$-equivariant, $e$ is fixed by the action, and $\bg_\RG^+$ and $\ng_\RG^+$ are $\GmR$-stable.

By~\cite[Lemma~4.3.1]{riche3}, the quotient $\bg_\RG^+/[e,\ng^+_\RG]$ is free of rank $r$; therefore one can choose a $\GmR$-stable $\RG$-submodule $\sg_\RG \subset \bg_\RG^+$ such that $\bg_\RG^+=\sg_\RG \oplus [e, \ng_\RG^+]$. We set
\[
\SC_\RG := \varkappa(e + \sg)_\RG, \qquad \tSC_\RG:=\SC \times_{\gg^*} \tgg_\RG.
\]
The $\GmR$-action on $\gg^*_\RG$ defined above stabilizes $\SC_\RG$, and contracts it to $\varkappa(e)$ as $t \to \infty$. Similarly, the action on $\tgg_\RG$ defined by
\[
x \cdot [g:\xi] = [\lav_\circ(x) g : x^{-2} \xi]
\]
(for $x \in \GmR$, $g \in \GB_\RG$ and $\xi \in (\gg/\ng)^*_\RG$) stabilizes $\tSC_\RG$, and is contracting
(see~\cite[\S 3.5]{riche3} for details). We will denote by $\SC_\FM$, resp.~$\tSC_\FM$, the scheme obtained from $\SC_\RG$, resp.~$\tSC_\RG$, by base change to $\FM$.

The following result is proved in~\cite[Propositions~3.5.5 \&~4.5.2]{riche3}. Here we consider the $\GmE$-action on $\tg^*_\EM$ where $x$ acts by multiplication by $x^{-2}$.

\begin{prop}
\label{prop:Kostant-section}
The morphism $\nu \colon \tgg_\EM \to \tg^*_\EM$ induces a $\GmE$-equivariant isomorphism of $\EM$-schemes
\[
\nu_\SC \colon \tSC_\EM \simto \tg^*_\EM.
\]
\end{prop}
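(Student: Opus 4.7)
The plan is to deduce the statement from two ingredients, both of which I would extract from the companion paper \cite{riche3}. First, the $\GmE$-equivariance of $\nu_\SC$ is immediate: since $\nu$ sends $[g:\eta]$ to the restriction of $\eta$ to $\tg$ (and is in particular independent of $g$), the $x^{-2}$ scaling of $\eta$ in the $\GmE$-action on $\tgg_\EM$ matches exactly the prescribed $\GmE$-action on $\tg^*_\EM$ by multiplication by $x^{-2}$. The extra $\lav_\circ$-twist in the action on $\tgg_\EM$ is absorbed in the flag component and is irrelevant for $\nu$. Since $\tSC$ is $\GmE$-stable in $\tgg_\EM$ by construction, $\nu_\SC$ is automatically $\GmE$-equivariant, and all the real work goes into showing that it is an isomorphism of schemes.

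For the scheme-theoretic isomorphism I would use two facts, proved over $\RG$ (and hence over every geometric point $\FM$) in \cite{riche3}. First, a Kostant-type statement that $\SC_\EM$ is contained in the regular locus of $\gg^*_\EM$ and that the composition
\[
\SC_\EM \hookrightarrow \gg^*_\EM \longrightarrow (\gg^* /\!/ \GB)_\EM \simeq (\tg^*/W)_\EM
\]
is an isomorphism, reflecting that each regular coadjoint orbit meets $\SC$ in exactly one point. Second, a Grothendieck simultaneous-resolution statement, namely that on the regular locus the natural map
\[
(\pi, \nu) \colon \tgg^{\reg}_\EM \longrightarrow \gg^{*,\reg}_\EM \times_{(\tg^*/W)_\EM} \tg^*_\EM
\]
is an isomorphism. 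Granting these, the proposition follows formally: since $\SC_\EM$ lies in the regular locus,
\[
\tSC_\EM = \SC_\EM \times_{\gg^*_\EM} \tgg_\EM = \SC_\EM \times_{\gg^{*,\reg}_\EM} \tgg^{\reg}_\EM \;\simeq\; \SC_\EM \times_{(\tg^*/W)_\EM} \tg^*_\EM \;\simeq\; \tg^*_\EM,
\]
and one checks that the composite isomorphism is precisely $\nu_\SC$.

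The main obstacle is that the classical proofs of the two ingredients over $\CM$ rely on principal $\sl_2$-triples and Jacobian-type calculations that do not survive unchanged in positive or mixed characteristic. The role of \cite{riche3} is to supply integral analogues under the very-good-characteristic hypothesis built into our $\RG$, using the specific basis $\{e_\alpha\}_{\alpha \in -\simple}$ and the $\GmR$-stable complement $\sg$ to $[e, \ng^+]$ in $\bg^+$ that have been fixed; the $\GmE$-contraction of both $\SC$ and $\tSC$ onto a single point then provides the flatness needed to pass from an isomorphism on each geometric fiber to a global isomorphism over $\RG$.
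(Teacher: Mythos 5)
The paper offers no proof of this statement: it is imported verbatim from \cite[Propositions~3.5.5 \& 4.5.2]{riche3}, so there is no internal argument to compare yours against. On its own merits, your sketch is the classical Kostant-section argument, and over a field $\FM$ it is complete modulo the two inputs you isolate, both of which are indeed available in \cite{riche3} in very good characteristic (the present paper itself invokes $\SC_\EM \simto \tg^*_\EM/W$ right after Theorem~\ref{thm:Lie-centralizer}, and uses $\tSC_\FM \subset \tgg^{\reg}_\FM$ in the proof of Lemma~\ref{lem:fiber-product-Zs}). Your equivariance computation is also correct: $\nu([g:\xi])=\xi|_{\tg}$ ignores the $\lav_\circ$-twist on the $\GB$-factor, so the $x^{-2}$-scalings match on the nose.

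The thin spot is the case $\EM=\RG$. Your fiber-product chain runs through $\gg^{*,\reg}_\EM$ and $\tgg^{\reg}_\EM$, which are defined in this paper only over a field; over $\RG$ you must instead carry out the reduction to geometric fibers that you gesture at in your last sentence, and that reduction is not purely formal. One needs to know in advance that $\tSC_\RG$ is affine, that $\OC(\tSC_\RG)$ is flat over $\RG$ with finite graded pieces for the contracting $\GmR$-action, and that $\nu_\SC^*$ is a graded morphism between such algebras, before an isomorphism on every geometric fiber can be promoted to an isomorphism over $\RG$ (this is exactly the pattern of \cite[Lemma~1.4.1]{br} as used in the proof of Lemma~\ref{lem:fiber-product-Zs}). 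In particular the $\RG$-flatness of $\tSC_\RG = \SC_\RG \times_{\gg^*_\RG} \tgg_\RG$ is a genuine transversality statement about this fiber product, not a consequence of the contracting $\GmR$-action alone; it is precisely the kind of content \cite{riche3} has to supply. So your proposal is a correct blueprint, but the integral case requires these inputs to be established rather than assumed.
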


The following result is proved in~\cite[Lemmas 3.5.1 \& 4.5.1]{riche3}.

\begin{lem}
\label{lem:a-smooth}
The morphism
\[
a \colon \GB \times \tSC_\EM \to \tgg_\EM
\]
induced by the $\GB_\EM$-action on $\tgg_\EM$ is smooth. When $\EM=\FM$, it factorizes through a surjective morphism $\GB \times \tSC_\FM \to \tgg^\reg_\FM$.
\end{lem}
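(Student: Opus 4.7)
The plan is to reduce the statement, by a Cartesian square argument, to the analogous fact for the Kostant section $\SC_\EM \subset \gg^*_\EM$, which is the content of \cite[Lemma~3.5.1]{riche3}.

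The first step is to identify the square
\[
\begin{array}{ccc}
\GB \times \tSC_\EM & \xrightarrow{\ a\ } & \tgg_\EM \\
\downarrow & & \downarrow \pi \\
\GB \times \SC_\EM & \xrightarrow{\ a'\ } & \gg^*_\EM
\end{array}
\]
as Cartesian, where the left vertical arrow is the obvious projection, $\pi$ is as in~\S\ref{ss:notation-KW}, and $a'(g,s) = g \cdot s$. Concretely, the assignment $(g,(s,x)) \mapsto (g,s,g \cdot x)$ is an isomorphism from $\GB \times \tSC_\EM$ to $(\GB \times \SC_\EM) \times_{\gg^*_\EM} \tgg_\EM$, with inverse $(g,s,y) \mapsto (g,(s,g^{-1} \cdot y))$; this uses only the $\GB$-equivariance of $\pi$ and the identification $\tSC_\EM \cong \pi^{-1}(\SC_\EM)$. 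Granting this, smoothness of $a$ follows from smoothness of $a'$ by base change. For the image statement over $\FM$, since $\tgg^\reg_\FM = \pi^{-1}(\gg^{*,\reg}_\FM)$ by definition while the image of $a'$ is $\gg^{*,\reg}_\FM$, the image of $a$ is exactly $\tgg^\reg_\FM$, yielding the desired surjection.

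All the content is therefore in the assertions about $a'$. To prove $a'$ is smooth, I would use $\GB$-equivariance on both factors together with the contracting $\GmE$-action on $\SC_\EM$ described in~\S\ref{ss:reminder} to reduce smoothness of $a'$ to surjectivity of the differential at the single point $(1,\varkappa(e))$. After identifying $\gg^*$ with $\gg$ via $\varkappa$, this differential becomes the map $(\xi,\eta) \mapsto [\xi,e] + \eta$ on $\gg_\EM \oplus \sg_\EM$, and its surjectivity reduces to the decomposition $\gg_\EM = \sg_\EM + [e, \gg_\EM]$, which follows from the choice of $\sg_\RG$ supplied by the proof of~\cite[Lemma~4.3.1]{riche3}. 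For the surjectivity of $a'$ onto $\gg^{*,\reg}_\FM$ when $\EM = \FM$, I would observe that the smooth image is open and $\GB_\FM \times \GmF$-stable, contains $\varkappa(e)$, and hence contains the $\GB_\FM$-orbit of $\varkappa(e)$, whose closure is all of the regular nilpotent cone; then one invokes the version of Kostant's slice theorem in good characteristic (again \cite{riche3}) to conclude. The main obstacle is precisely making these differential and slice arguments work uniformly over $\RG$ rather than in characteristic zero, which is the technical content of~\cite{riche3} and the reason the ring $\RG$ is chosen with its specific inverted primes.
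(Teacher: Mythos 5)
The paper itself offers no proof of this lemma: it is quoted verbatim from the companion paper, with the reference \cite[Lemmas~3.5.1 \&~4.5.1]{riche3}, so your reconstruction can only be measured against that source. Your reduction is the right one and is essentially how the statement is established there: since $\tSC_\EM = \SC \times_{\gg^*} \tgg_\EM = \pi^{-1}(\SC_\EM)$, the square you write is indeed Cartesian (your explicit mutually inverse maps are correct), so smoothness of $a$ follows from smoothness of $a' \colon \GB \times \SC_\EM \to \gg^*_\EM$ by base change, and the set-theoretic image of $a$ is $\pi^{-1}(\mathrm{im}\, a')$, which is $\tgg^\reg_\FM$ by the very definition of $\tgg^\reg_\FM$ as $\pi^{-1}(\gg^{*,\reg}_\FM)$. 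The smoothness argument for $a'$ (equivariance plus the contracting $\GmF$-action to reduce to the differential at $(1,\varkappa(e))$, then fiberwise criteria to pass from geometric points to $\RG$) is also sound. One small imprecision: the identity $\gg_\EM = \sg_\EM + [e,\gg_\EM]$ does not follow from $\bg^+_\EM = \sg_\EM \oplus [e,\ng^+_\EM]$ alone; you also need the invariant pairing $\varkappa$ to convert injectivity of $\mathrm{ad}(e)$ on $\ng^+$ into surjectivity of $\gg \to \gg/\bg^+$ induced by $\mathrm{ad}(e)$ (a standard duality step, but it should be said).

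The genuine weak point is the last sentence on the image of $a'$. First, the factorization through $\tgg^\reg_\FM$ already requires the inclusion $\SC_\FM \subset \gg^{*,\reg}_\FM$ (every element of $\varkappa(e+\sg)$ is regular), which your argument never addresses. Second, the clause about the orbit of $\varkappa(e)$ ``whose closure is all of the regular nilpotent cone'' is a non sequitur: an open set containing an orbit need not contain its closure, the closure of the regular nilpotent orbit is the full nilpotent cone rather than any ``regular nilpotent cone,'' and in any case regular nilpotents are nowhere near exhausting $\gg^{*,\reg}_\FM$. The statement $\GB_\FM \cdot \SC_\FM = \gg^{*,\reg}_\FM$ \emph{is} Kostant's slice theorem, so ending with ``invoke Kostant's slice theorem to conclude'' concedes that this part is a citation, not an argument. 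The actual route (which is what \cite{riche3} carries out in good characteristic) is: $\SC_\FM \subset \gg^{*,\reg}_\FM$, the composite $\SC_\FM \hookrightarrow \gg^*_\FM \to \tg^*_\FM/W$ is an isomorphism, and each fiber of $\gg^{*,\reg}_\FM \to \tg^*_\FM/W$ is a single $\GB_\FM$-orbit; together these give both the containment and the surjectivity at once, with no need for the contraction or orbit-closure detour.
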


We denote by $\tIB^\EM$ the universal centralizer associated with the action of $\GB_\EM$ on $\tgg_\EM$, see~\eqref{eqn:stabilizer}. We will also denote by $\tIB_\SC^\EM$ the restriction of $\tIB^\EM$ to $\tSC_\EM$. Then by~\cite[Corollary 3.5.8 \& Proposition 4.5.3]{riche3}, $\tIB_\SC^\EM$ is a commutative smooth group scheme over $\tSC_\EM$. We denote by $\tIG_\SC^\EM$ its Lie algebra; it is a locally free sheaf of commutative $\OC_{\tSC_\EM}$-Lie algebras (see~\cite[\S 2.1]{riche3}).

Recall that the quotient $\tg^*_\EM/W$ is a smooth scheme, isomorphic to an affine space (see~\cite[Th{\'e}or{\`e}me~3 \& Corollaire on p.~296]{demazure}).
We will denote by $\varrho \colon \tg^*_\EM \to \tg^*_\EM/W$ the quotient morphism; then we denote by
$\eta_\SC \colon \tSC_\EM \to \tg^*_\EM/W$ the composition
\[
\tSC_\EM \xrightarrow{\nu_\SC} \tg^*_\EM \xrightarrow{\varrho} \tg^*_\EM/W.
\]

The following result follows from~\cite[Theorems~3.5.12 \& 4.5.5]{riche3}, using the fact that the natural morphism $\SC_\EM \to \tg^*_\EM / W$ is an isomorphism (see~\cite[Theorems 3.2.2 \& 4.3.3]{riche3}, combined with~\cite[Propositions 2.3.2 \& 4.2.1]{riche3}).

\begin{thm}
\label{thm:Lie-centralizer}
There exists a canonical isomorphism of sheaves of commutative $\OC_{\tSC_\EM}$-Lie algebras
\[
\tIG_{\SC}^\EM \cong (\eta_\SC)^* \Omega_{\tg^*_\EM/W}.
\]
\end{thm}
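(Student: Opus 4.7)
The plan is to reduce the assertion about $\tSC_\EM$ to the analogous statement for the Kostant slice $\SC_\EM\subset\gg^*_\EM$, which is already covered by~\cite[Theorems~3.5.12 and~4.5.5]{riche3}. Let $\IB^\EM$ denote the universal centralizer of the $\GB_\EM$-action on $\gg^*_\EM$, with restriction $\IB^\EM_{\SC}$ to $\SC_\EM$ and Lie algebra sheaf $\IG^\EM_{\SC}$; the cited results provide a canonical isomorphism $\IG^\EM_{\SC}\cong \eta^*\Omega_{\tg^*_\EM/W}$, where $\eta\colon\SC_\EM\to\tg^*_\EM/W$ is the natural morphism, which by~\cite[Theorems~3.2.2 and~4.3.3]{riche3} combined with~\cite[Propositions~2.3.2 and~4.2.1]{riche3} is itself an isomorphism.

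Next I would clarify the geometry of $\eta_\SC$. The fiber-product description $\tSC_\EM=\SC_\EM\times_{\gg^*}\tgg_\EM$ furnishes a canonical projection $p\colon\tSC_\EM\to\SC_\EM$; via Proposition~\ref{prop:Kostant-section} together with the isomorphism $\SC_\EM\simto\tg^*_\EM/W$, the morphism $p$ is identified with the Weyl-group quotient $\varrho\colon\tg^*_\EM\to\tg^*_\EM/W$. In particular $\eta_\SC=\eta\circ p$, so it suffices to exhibit a canonical isomorphism $\tIG^\EM_{\SC}\cong p^*\IG^\EM_{\SC}$.

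The core step is the comparison of the two universal centralizers. There is an evident closed embedding of smooth commutative $\tSC_\EM$-group schemes
\[
\tIB^\EM_{\SC}\hookrightarrow \IB^\EM_{\SC}\times_{\SC_\EM}\tSC_\EM,
\]
both sides of relative dimension $r$ (the source by~\cite[Corollary~3.5.8 and Proposition~4.5.3]{riche3}, the target by base change of the analogous Kostant-type result for $\SC_\EM$). I claim this embedding is an isomorphism; since both sides are smooth of the same relative dimension over $\tSC_\EM$, it suffices to check bijectivity on geometric fibers. Over a geometric point $(g\BB,\xi)\in\tSC_\EM$ the element $\xi$ is regular, and our standing hypothesis that every non-very-good prime is inverted in $\RG$ guarantees that $\GB^{\xi}$ is connected and abelian of dimension $r$. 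Being connected, $\GB^{\xi}$ acts trivially on the zero-dimensional fiber of $\pi$ over $\xi$ and therefore fixes $(g\BB,\xi)$, so $\GB^{(g\BB,\xi)}=\GB^{\xi}$ as desired.

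Taking Lie algebras and pulling back along the (smooth) morphism $p$ then yields
\[
\tIG^\EM_{\SC}\cong p^*\IG^\EM_{\SC}\cong p^*\eta^*\Omega_{\tg^*_\EM/W}=\eta_\SC^*\Omega_{\tg^*_\EM/W},
\]
and this identification is compatible with brackets since on both sides the bracket is induced by the commutative group-scheme structure. The main obstacle will be making the centralizer comparison genuinely uniform in the integral case $\EM=\RG$: one must ensure connectedness of regular centralizers on every geometric fiber of $\SC_\RG\to\Spec(\RG)$, which is precisely what the good-characteristic condition built into $\RG$ supplies, and one must verify that the map on Lie algebras commutes with the pullback $p^*$ at the sheaf-theoretic level rather than only pointwise.
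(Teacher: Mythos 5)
Your overall strategy---identify $\tIB^\EM_{\SC}$ with the pullback along $p \colon \tSC_\EM \to \SC_\EM$ of the universal centralizer of the coadjoint action restricted to the Kostant section, and then invoke the $\gg^*$-version of the statement---is a sensible one, and it is genuinely different from what the paper does: the paper offers no argument at all for Theorem~\ref{thm:Lie-centralizer}, which is imported wholesale from~\cite[Theorems~3.5.12 and~4.5.5]{riche3}. Note, however, that (as the sentence preceding the theorem indicates) those cited results already concern $\tIG^\EM_{\SC}$ itself, so the comparison between the two universal centralizers that you are trying to carry out is essentially part of the content of the results you are citing, not something you may quote them for and then deduce.

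More seriously, the pivotal step of your comparison rests on a false claim. The scheme-theoretic centralizer $\GB^{\xi}$ of a regular element is \emph{not} connected in general: for $\GB=\mathrm{SL}_2$ and $\xi=\varkappa(e)$ with $e$ regular nilpotent one finds $\GB^{\xi}\cong \mu_2\times \Ga$, which has two components in every very good (hence odd) characteristic; more generally, for a regular element the component group of the centralizer contains the {\'e}tale part of the centre of the Levi centralizing its semisimple part. Very good characteristic guarantees that regular centralizers are smooth, commutative and of dimension $r$, but it does \emph{not} supply connectedness, contrary to what your closing paragraph asserts. Consequently the chain ``connected, hence acts trivially on the finite fiber of $\pi$ over $\xi$, hence fixes $(g\BB,\xi)$'' breaks down. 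The conclusion $\GB^{(g\BB,\xi)}=\GB^{\xi}$ is nevertheless true, but it requires the strictly stronger input that the \emph{full}, possibly disconnected, centralizer of a regular $\xi$ is contained in every Borel subgroup occurring in the fiber $\pi^{-1}(\xi)$ (the group-scheme version of Kostant's lemma), whose validity over $\RG$ and in positive characteristic is precisely one of the technical points established in~\cite{riche3}. Until you either prove that containment or cite it explicitly, the asserted isomorphism $\tIB^\EM_{\SC}\cong p^*\IB^\EM_{\SC}$, and with it your proof, is not established.
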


For the remainder of this subsection, let us
consider the case $\EM=\FM$. First we recall that there exists a unique smooth commutative group scheme $\tJB^\FM$ over $\tg^*_\FM$ whose pull-back under $\nu_\reg$ is the restriction $\tIB^\FM_\reg$ of $\tIB^\FM$ to $\tgg_\FM^\reg$, see~\cite[Proposition~3.3.9]{riche3}.
Now by~\cite[Remark~3.5.9]{riche3}, the composition
\[
\Coh^{\GB}(\tgg^\reg)_\FM \to \Rep(\tIB^\FM_\reg) \to \Rep(\tIB^\FM_\SC) \simto \Rep(\tJB^\FM)
\]
is an equivalence of categories, where the first functor is the functor~\eqref{eqn:stabilizer-functor} in our particular situation, the second functor is induced by restriction to $\tSC$, and the last functor is induced by the isomorphism of Proposition~\ref{prop:Kostant-section}.

Similarly, let $s$ be a finite simple reflection, let $W_s:=\{1,s\} \subset W$, and consider the natural morphism $\tgg_s^\FM \to \tg_\FM^*/W_s$. Let also $\tgg_s^{\FM,\reg} \subset \tgg_s^\FM$ be the inverse image of $\gg^{*,\reg}_\FM$ under the natural morphism $\tgg_s^\FM \to \gg^*_\FM$, and let $\tIB^\FM_{s}$ be the universal stabilizer associated with the action of $\GB_\FM$ on $\tgg^\FM_s$. Then the same arguments as in the case of $\tgg_\FM$ (see in particular~\cite[Remark~3.5.7]{riche3}) show that there exists a unique commutative group scheme $\tJB_s^\FM$ on $\tg^*_\FM / W_s$ whose pull-back under the morphism $\tgg^{\FM,\reg}_s \to \tg_\FM^*/W_s$ is the restriction of $\tIB^\FM_{s}$ to $\tgg_s^{\FM,\reg}$; moreover, as above we have a natural equivalence of categories
$\Coh^{\GB}(\tgg^{\reg}_s)_\FM \simto \Rep(\tJB_s^\FM)$.
There exists a canonical Cartesian diagram
\[
\xymatrix@R=0.4cm{
\tJB^\FM \ar[r] \ar[d] & \tJB_s^\FM \ar[d] \\
\tg^*_\FM \ar[r] & \tg^*_\FM / W_s,
}
\]
so that the direct and inverse image functors under the quotient morphism $\tg^*_\FM \to \tg^*_\FM / W_s$ induce functors $\Rep(\tJB^\FM) \to \Rep(\tJB^\FM_s)$ and $\Rep(\tJB^\FM_s) \to \Rep(\tJB^\FM)$ respectively. Under the equivalences considered above, these functors correspond to the direct and inverse image functors under the restriction of $\widetilde{\pi}_s$ to $\tgg_\FM^\reg$, respectively.

%------------------------------------------------------------------
\subsection{Definition of the functors}
\label{ss:KW-functors}
%------------------------------------------------------------------

Let us denote by $\TF(\tg^*_\EM/W)$ the tangent bundle of the smooth $\EM$-scheme $\tg^*_{\EM} / W$. We consider the $\GmE$-action on $\tg^*_{\EM} / W$ such that the corresponding grading on $\OC(\tg^*_{\EM} / W)=\OC(\tg^*_{\EM})^W$ is obtained by restriction of the grading on $\OC(\tg^*_{\EM})$ where the generators $\tg_\EM \subset \OC(\tg^*_{\EM})$ are in degree $2$, so that the morphism $\varrho$ of~\S\ref{ss:reminder} is $\GmE$-equivariant. This action induces a $\GmE$-equivariant structure on $\Omega_{\tg^*_\EM/W}$. We consider the $\GmE$-action on $\TF(\tg^*_\EM/W)$ such that $\OC(\TF(\tg^*_\EM/W))$ is the symmetric algebra of $\Omega_{\tg^*_\EM/W} \langle -2 \rangle$ as a graded algebra. In other words, the $\GmE$-action on $\TF(\tg^*_\EM/W)$ is the combination of the action induced by the action on $\tg^*_\EM/W$, with multiplication by $x^2$ in the fibers of the projection $\TF(\tg^*_\EM/W) \to \tg^*_\EM/W$.

We will consider the bounded derived categories of (equivariant) coherent sheaves
$D^{\Gm}(\tg^* \times_{\tg^* / W} \TF(\tg^* / W))_\EM$
and $D(\tg^* \times_{\tg^* / W} \TF(\tg^* / W))_{\EM}$.
In this context also we have ``modular reduction functors''
\begin{align*}
\FM(-) \colon D^{\Gm}(\tg^* \times_{\tg^* / W} \TF(\tg^* / W))_{\RG} &\to D^{\Gm}(\tg^* \times_{\tg^* / W} \TF(\tg^* / W))_{\FM}, \\
\FM(-) \colon D(\tg^* \times_{\tg^* / W} \TF(\tg^* / W))_{\RG} &\to D(\tg^* \times_{\tg^* / W} \TF(\tg^* / W))_{\FM}.
\end{align*}

Now we can explain the construction of the ``Kostant--Whittaker reduction'' functors
\begin{align}
\kappa_\EM \colon D^{\GB \times \Gm}(\tgg)_\EM &\to D^{\Gm}(\tg^* \times_{\tg^* / W} \TF(\tg^* / W))_{\EM}, \label{eqn:KW1} \\
\overline{\kappa}_\EM \colon D^{\GB}(\tgg)_\EM &\to D(\tg^* \times_{\tg^* / W} \TF(\tg^* / W))_{\EM}. \label{eqn:KW2}
\end{align}
First, 
consider the functor
\[
\overline{\kappa}'_\EM \colon \Coh^{\GB}(\tgg)_\EM \to \Coh(\tg^*)_{\EM}
\]
defined as the pullback functor associated with the embedding $\tSC_\EM \hookrightarrow \tgg_\EM$, where we identify $\tSC_\EM$ with $\tg^*_{\EM}$ using the isomorphism $\nu_\SC$ from Proposition~\ref{prop:Kostant-section}.

\begin{remark}
Consider the closed subvariety $\tUp_\FM \subset \tgg_\FM$ defined in~\cite[\S 3.5]{riche3}, and the analogously defined closed subscheme $\tUp_\RG \subset \tgg_\RG$. Then $\UB_\EM$ acts freely on $\tUp_\EM$, and $\nu$ induces an isomorphism of $\EM$-schemes
$\tUp / \UB_\EM \simto \tg^*_\EM$. (In the case $\EM=\FM$, this fact follows from~\cite[Proposition~3.2.1, Theorem~3.2.2 \& Proposition~3.5.5]{riche3}; the case $\EM=\RG$ is similar.) Using this isomorphism, the functor $\overline{\kappa}_\EM'$ can be described more canonically as the composition of restriction to $\tUp_\EM$ with the natural equivalences $\Coh^{\UB}(\tUp)_\EM \simto \Coh(\tUp / \UB_\EM) \cong \Coh(\tg^*_{\EM})$.
\end{remark}

\begin{lem}
\label{lem:KW-exact}
The functor $\overline{\kappa}'_\EM$
is exact.
\end{lem}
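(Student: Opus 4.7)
My plan is to factor $\overline{\kappa}'_\EM$ as a composition of two functors, each manifestly exact for geometric reasons, using the smoothness of the action map provided by Lemma~\ref{lem:a-smooth}.

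Concretely, let $i \colon \tSC_\EM \hookrightarrow \tgg_\EM$ denote the closed embedding and let $a \colon \GB \times \tSC_\EM \to \tgg_\EM$ be the action map. Consider also the inclusion $j \colon \tSC_\EM \hookrightarrow \GB \times \tSC_\EM$, $x \mapsto (1,x)$. I will endow $\GB \times \tSC_\EM$ with the $\GB$-action by left translation on the first factor, so that $a$ is $\GB$-equivariant and $\GB$ acts freely. By descent for this free action, restriction along $j$ induces an equivalence of abelian categories
\[
\Coh^{\GB}(\GB \times \tSC)_\EM \xrightarrow{\sim} \Coh(\tSC)_\EM,
\]
which is exact. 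On the other hand, $a$ is smooth (Lemma~\ref{lem:a-smooth}), so the equivariant pullback
\[
a^* \colon \Coh^{\GB}(\tgg)_\EM \longrightarrow \Coh^{\GB}(\GB \times \tSC)_\EM
\]
is exact, as it is the composition of the exact forgetful functor $\Coh^{\GB}(\tgg)_\EM \to \Coh(\tgg)_\EM$, the flat pullback $a^*$ on non-equivariant sheaves, and the enrichment with the induced equivariant structure (which is automatic from the equivariance of the source sheaf).

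Now observe that $a \circ j = i$, so the composition of $a^*$ with restriction along $j$ coincides with $i^*$, and hence, after the identification $\tSC_\EM \simeq \tg^*_\EM$ given by $\nu_\SC$ (Proposition~\ref{prop:Kostant-section}), with $\overline{\kappa}'_\EM$. Since both factors are exact, so is $\overline{\kappa}'_\EM$.

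The only non-formal step is the identification $\Coh^{\GB}(\GB \times \tSC)_\EM \simeq \Coh(\tSC)_\EM$ and the verification that $a^*$ followed by this equivalence gives back $i^*$; this is the standard descent lemma for a free group action along a principal bundle, applied to the trivial bundle $\GB \times \tSC_\EM \to \tSC_\EM$. I do not anticipate any genuine obstacle: the only minor care needed is to keep track of which $\GB$-action is used on $\GB \times \tSC_\EM$ (translation on the first factor, not the diagonal action), and to observe that with this choice $a$ is indeed $\GB$-equivariant.
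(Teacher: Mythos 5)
Your proof is correct and is essentially the paper's own argument: both factor $\overline{\kappa}'_\EM$ as the equivariant pullback $a^*$ along the smooth (hence flat) action map of Lemma~\ref{lem:a-smooth}, followed by the descent equivalence $\Coh^{\GB}(\GB \times \tSC)_\EM \simeq \Coh(\tSC)_\EM$ and the identification via $\nu_\SC$. You merely spell out the descent step and the identity $a \circ j = i$ more explicitly than the paper does.
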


\begin{proof}
Our functor can be written as the composition
\[
\Coh^{\GB}(\tgg)_\EM \xrightarrow{a^*} \Coh^{\GB}(\GB \times \tSC)_\EM \simto \Coh(\tSC)_\EM \xrightarrow[\sim]{(\nu_\SC)_*} \Coh(\tg^*)_\EM,
\]
where $a$ is the morphism considered in Lemma~\ref{lem:a-smooth}, and the middle arrow is the obvious equivalence. Now $a$ is a smooth (in particular, flat) morphism by Lemma~\ref{lem:a-smooth}, which implies exactness.
\end{proof}

Now we can explain the definition of $\overline{\kappa}_\EM$. This functor will be induced by an exact functor $\Coh^{\GB}(\tgg)_\EM \to \Coh(\tg^* \times_{\tg^* / W} \TF(\tg^* / W))_{\EM}$, which we denote similarly. In fact, starting from an equivariant coherent sheaf $\FC$ on $\tgg_\EM$, its restriction to $\tSC_\EM$ is naturally endowed with an action of the universal centralizer $\tIB_\SC^\EM$, see~\S\ref{ss:equiv-coh}. Differentiating this action we obtain an action of the Lie algebra $\tIG_\SC^\EM$ of $\tIB_\SC^\EM$. By Theorem~\ref{thm:Lie-centralizer} one can identify $\tIG_\SC^\EM$ with $(\eta_\SC)^* \Omega_{\tg^*_\EM /W}$, considered as a sheaf of commutative Lie algebras on $\tSC_\EM$. Identifying $\tSC_\EM$ with $\tg^*_\EM$ via $\nu_\SC$ (see Proposition~\ref{prop:Kostant-section}), we obtain an action of the commutative Lie algebra $\varrho^* \Omega_{\tg_\EM^*/W}$ on $\overline{\kappa}'_\EM(\FC)$, hence also an action of its symmetric algebra $\varrho^*\mathrm{S}_{\OC_{\tg_\EM^*/W}}(\Omega_{\tg^*_\EM/W})$. This action defines a coherent sheaf $\overline{\kappa}_\EM(\FC)$ on $\tg^* \times_{\tg^* / W} \TF(\tg^* / W)_{\EM}$ whose direct image under the affine morphism $\tg^* \times_{\tg^* / W} \TF(\tg^* / W)_{\EM} \to \tg^*_\EM$ is $\overline{\kappa}'_\EM(\FC)$.
It follows from Lemma~\ref{lem:KW-exact} that the functor
\[
\overline{\kappa}_\EM \colon \Coh^{\GB}(\tgg)_\EM \to \Coh(\tg^* \times_{\tg^* / W} \TF(\tg^* / W))_{\EM}
\]
that we have just defined is exact. Then the functor~\eqref{eqn:KW2} is defined as the induced functor between bounded derived categories.

The construction of the functor $\kappa_\EM$ is similar, simply keeping track of the appropriate $\GmE$-actions. More precisely, recall the action of $\GmE$ on $\tSC_\EM$ defined in \S\ref{ss:reminder}. One can ``extend'' this action to the group scheme $\GB_\EM \times \tSC_\EM$ via 
\[
x \cdot (g,y)=(\lav_\circ(x) g \lav_\circ(x)^{-1}, x \cdot y)
\]
for $x \in \GmE$, $g \in \GB_\EM$ and $y \in \tSC_\EM$. Then the subgroup $\tIB_{\SC}^\EM$ is $\GmE$-stable, and the projection $\tIB_\SC^\EM \to \tSC_\EM$ is $\GmE$-equivariant. This action induces a $\GmE$-equivariant structure of the coherent sheaf $\tIG_\SC^\EM$ (on $\tSC_\EM$), and it is easily seen that the isomorphism of Theorem~\ref{thm:Lie-centralizer} is $\GmE$-equivariant, where the action on the right-hand side is induced by the action on $\TF(\tg^*_\EM/W)$ considered at the beginning of this subsection (see~\cite[Proof of Theorem~3.4.2]{riche3}). Now if $\FC$ is in $\Coh^{\GB \times \Gm}(\tgg)_\EM$, then the restriction of $\FC$ to $\tSC_\EM$ is a $\GmE$-equivariant coherent sheaf, and the action of $\tIB_\SC^\EM$ is compatible with this structure in the obvious sense, which allows to define the exact functor
\[
\kappa_\EM \colon \Coh^{\GB \times \Gm}(\tgg)_\EM \to \Coh^{\Gm}(\tg^* \times_{\tg^* / W} \TF(\tg^* / W))_{\EM}
\]
by the same recipe as for $\overline{\kappa}_\EM$. Then the functor~\eqref{eqn:KW1} is defined as the induced functor between bounded derived categories.

The proof of the following lemma is easy, and left to the reader.

\begin{lem}
\label{lem:F-kappa}
There exist canonical isomorphisms of functors
\[
\FM \circ \kappa_\RG \cong \kappa_\FM \circ \FM, \qquad \FM \circ \overline{\kappa}_\RG \cong \overline{\kappa}_\FM \circ \FM,
\]
where, in both equations, the functor $\FM$ on the left-hand side is the modular reduction functor on the $\RG$-scheme $(\tg^* \times_{\tg^* / W} \TF(\tg^* / W))_{\RG}$, and the functor $\FM$ on the right-hand side is the similar functor on $\tgg_\RG$.\qed
\end{lem}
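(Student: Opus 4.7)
The plan is to verify that each ingredient in the construction of $\kappa_\EM$ and $\overline{\kappa}_\EM$ is compatible with base change under $\FM \lotimes_\RG (-)$. I would begin with the auxiliary functor $\overline{\kappa}'_\EM$: using the factorization
\[
\overline{\kappa}'_\EM(\FC) \cong (\nu_\SC)_* \bigl( j^* a^*(\FC) \bigr),
\]
where $a \colon (\GB \times \tSC)_\EM \to \tgg_\EM$ is the smooth (hence flat) morphism of Lemma~\ref{lem:a-smooth}, $j \colon \tSC_\EM \hookrightarrow (\GB \times \tSC)_\EM$ is the embedding $y \mapsto (1,y)$, and $\nu_\SC$ is the isomorphism of Proposition~\ref{prop:Kostant-section}. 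Since all three morphisms are induced by base change from the corresponding morphisms of $\RG$-schemes, and since $a^*$ is exact (because $a$ is flat) and $(\nu_\SC)_*$ is an equivalence, a standard base-change argument — together with the exactness of $\overline{\kappa}'_\RG$ from Lemma~\ref{lem:KW-exact} — gives a canonical isomorphism $\FM \circ \overline{\kappa}'_\RG \cong \overline{\kappa}'_\FM \circ \FM$ of functors between bounded derived categories.

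Next, I would check that the additional structure used to upgrade $\overline{\kappa}'_\EM$ to $\overline{\kappa}_\EM$ is likewise compatible with base change. The universal stabilizer $\tIB_\SC^\EM$ is defined by the fiber product~\eqref{eqn:stabilizer}, so $\tIB_\SC^\FM \cong \Spec(\FM) \times_{\Spec(\RG)} \tIB_\SC^\RG$; the induced action of $\tIB_\SC^\EM$ on $\overline{\kappa}'_\EM(\FC)$ described in~\S\ref{ss:equiv-coh} is functorial under base change by construction. The canonical isomorphism of Theorem~\ref{thm:Lie-centralizer} between $\tIG_\SC^\EM$ and $(\eta_\SC)^* \Omega_{\tg^*_\EM/W}$, the passage from a Lie algebra action to an action of the symmetric algebra, and the conversion of the resulting module into a coherent sheaf on $(\tg^* \times_{\tg^*/W} \TF(\tg^*/W))_\EM$ are all universal constructions which commute with base change.

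Finally, I would address the $\GmE$-equivariant refinement required for $\kappa_\EM$. This presents no additional difficulty: the $\GmR$-actions on $\tSC_\RG$ and $\tIB_\SC^\RG$ introduced in~\S\ref{ss:reminder}, the $\GmR$-action on $\TF(\tg^*_\RG/W)_\RG$ of~\S\ref{ss:KW-functors}, and the $\GmR$-equivariant structure on the isomorphism of Theorem~\ref{thm:Lie-centralizer} all base-change to the corresponding $\GmF$-structures, so the same compatibilities that produced the isomorphism $\FM \circ \overline{\kappa}_\RG \cong \overline{\kappa}_\FM \circ \FM$ automatically promote to the equivariant isomorphism $\FM \circ \kappa_\RG \cong \kappa_\FM \circ \FM$. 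The main (and rather modest) obstacle is essentially bookkeeping: tracking the various base-change and adjunction morphisms and verifying that they glue into the two displayed natural isomorphisms of functors. This is routine, and I would only spell it out in enough detail to convince the reader of naturality.
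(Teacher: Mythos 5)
Your argument is correct and is precisely the verification the paper has in mind when it declares this lemma ``easy and left to the reader'': each stage of the construction of $\kappa_\EM$ and $\overline{\kappa}_\EM$ (flat pullback along $a$, the unit-section equivalence $\Coh^{\GB}(\GB \times \tSC)_\EM \simeq \Coh(\tSC)_\EM$, the pushforward along the isomorphism $\nu_\SC$, and the universal-centralizer/Lie-algebra structure via Theorem~\ref{thm:Lie-centralizer}) is manifestly compatible with $\FM \lotimes_\RG (-)$ because all the relevant $\RG$-schemes ($\tSC_\RG$, $\tg^*_\RG/W$, $\tIB_\SC^\RG$) are flat over $\RG$ and base-change to their $\FM$-counterparts. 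The only point worth making explicit is that the middle arrow in your factorization is the descent equivalence for the free $\GB$-action on $\GB \times \tSC$, hence exact and trivially compatible with base change, so no tor-independence beyond the flatness of $a$ is needed.
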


%------------------------------------------------------------------
\subsection{Another geometric braid group action}
\label{ss:weyl-action}
%------------------------------------------------------------------

The goal of this subsection is to define a ``geometric'' (right) action of the group $\Baff$ on the category $D^{\Gm}(\tg^* \times_{\tg^* / W} \TF(\tg^* / W))_{\EM}$. We begin by defining an action of $\Waff$.

First, there exists a natural action of $W$ on the $\EM$-scheme $(\tg^* \times_{\tg^* / W} \TF(\tg^* / W))_{\EM}$, induced by the action on $\tg^*_\EM$. For $w \in W$, we denote by 
\[
{}' \KS_{w}^\EM \colon D^{\Gm}(\tg^* \times_{\tg^* / W} \TF(\tg^* / W))_{\EM} \simto D^{\Gm}(\tg^* \times_{\tg^* / W} \TF(\tg^* / W))_{\EM}
\]
the pullback functor associated with the morphism given by the action of $w$.

Then, let $\lambda \in \XB$. To define the functor associated with $t_\lambda$ we will identify $D^{\Gm}(\tg^* \times_{\tg^* / W} \TF(\tg^* / W))_{\EM}$ with the derived category of finitely generated graded modules over the commutative $\OC(\tg^*_\EM)$-Lie algebra $\bigl( \OC(\tg^*) \otimes_{\OC(\tg^*/W)} \Omega(\tg^*/W) \bigr)_\EM$. We define a module $F_\lambda^\EM$ for this Lie algebra as follows. As an $\OC(\tg^*_\EM)$-module, $F_\lambda^\EM$ is free of rank one. Then to define the action of the commutative Lie algebra $\OC(\tg^*) \otimes_{\OC(\tg^*/W)} \Omega(\tg^*/W)_\EM$ it is enough to define a morphism of $\OC(\tg^*_\EM)$-modules
\begin{equation}
\label{eqn:definition-F-lambda}
\bigl( \OC(\tg^*) \otimes_{\OC(\tg^*/W)} \Omega(\tg^*/W) \bigr)_\EM \to \OC(\tg^*_\EM).
\end{equation}
In order to do so, we interpret the left-hand side as the module of sections of the projection
\begin{equation}
\label{eqn:proj-T*}
(\tg^* \times_{\tg^* / W} \TF^*(\tg^* / W))_\EM \to \tg^*_\EM,
\end{equation}
where $\TF^*(\tg^*_{\EM} / W)$ is the cotangent bundle of the smooth $\EM$-scheme $\tg^*_{\EM} / W$.
The morphism $\varrho \colon \tg^*_\EM \to \tg^*_\EM/W$ defines a natural morphism
\[
d^*\varrho \colon (\tg^* \times_{\tg^* / W} \TF^*(\tg^* / W))_\EM \to \TF^*(\tg^*_\EM).
\]
Now since $\tg^*_\EM$ is an affine space, the right-hand side is canonically isomorphic to $\tg^*_\EM \times \tg_\EM$. Hence, starting with a section $\sigma$ of~\eqref{eqn:proj-T*} the composition of $\sigma$ with
\[
(\tg^* \times_{\tg^* / W} \TF^*(\tg^* / W))_\EM \xrightarrow{d^*\varrho} \TF^*(\tg^*_\EM) \cong \tg^* \times \tg_\EM \twoheadrightarrow \tg_\EM \xrightarrow{d\lambda} \EM
\]
defines an element in $\OC(\tg^*_\EM)$. This construction provides the definition of~\eqref{eqn:definition-F-lambda}, hence also of the module $F_\lambda^\EM$. It is clear that if $\lambda,\mu \in \XB$ and $w \in W$ we have canonical isomorphisms of $\bigl( \OC(\tg^*) \otimes_{\OC(\tg^*/W)} \Omega(\tg^*/W) \bigr)_\EM$-modules
\begin{equation}
\label{eqn:compositions-F}
F_\lambda^\EM \otimes_{\OC(\tg^*_\EM)} F_\mu^\EM \cong F_{\lambda+\mu}^\EM \quad \text{and} \quad {}' \KS_{w}^\EM(F_\lambda^\EM) = F^\EM_{w^{-1}\lambda}.
\end{equation}
(In the first isomorphism, the left-hand side is endowed with the natural action on the tensor product).

With this definition at hand, we define the functor
\[
{}' \KS_{t_\lambda}^\EM \colon D^{\Gm}(\tg^* \times_{\tg^* / W} \TF(\tg^* / W))_\EM \simto D^{\Gm}(\tg^* \times_{\tg^* / W} \TF(\tg^* / W))_{\EM}
\]
as the functor of tensoring (over $\OC(\tg^*_\EM)$) with the module $F_\lambda^\EM$.

Using~\eqref{eqn:compositions-F} one can easily check that we have canonical isomorphisms
\[
{}' \KS_{v}^\EM \circ {}' \KS_{w}^\EM \cong {}' \KS_{w v}^\EM, \quad {}' \KS_{t_\lambda}^\EM \circ {}' \KS_{t_\mu}^\EM \cong {}' \KS_{t_{\lambda+\mu}}^\EM, \quad {}' \KS_{w}^\EM \circ {}' \KS_{t_\lambda}^\EM \cong {}' \KS_{t_{w^{-1} \lambda}}^\EM \circ {}' \KS_{w}^\EM
\]
for all $v,w \in W$ and $\lambda, \mu \in \XB$. In other words, these functors define a right action of the group $\Waff$ on the category $D^{\Gm}(\tg^* \times_{\tg^* / W} \TF(\tg^* / W))_{\EM}$. For $w \in \Waff$, we denote by $'\KS^\EM_w$ the functor giving the action of $w$.

Now we ``renormalize'' this action as follows.
For $w \in W$ and $\lambda \in \XB$, we set
\[
\KS_{T_w}^\EM := {}' \KS_w^\EM \langle -\ell(w) \rangle, \qquad \KS_{\theta_\lambda}^\EM := {}' \KS^\EM_{t_\lambda} \langle \lambda(\lav_\circ)\rangle.
\]
Then these functors extend to a right action of $\Baff$ on $D^{\Gm}(\tg^* \times_{\tg^* / W} \TF(\tg^* / W))_{\EM}$. For any $b \in \Baff$, we denote by $\KS_b^\EM$ the action of $b$. Note that if $\overline{b}$ is the image of $b$ under the canonical surjection $\Baff \twoheadrightarrow \Waff$, then there exists $n(b) \in \ZM$ such that
\begin{equation}
\label{eqn:K-'K}
\KS_b^\EM \cong {}' \KS^\EM_{\overline{b}} \langle n(b) \rangle.
\end{equation}
Note also that if $\omega \in \Omega$ then we have necessarily
\begin{equation}
\label{eqn:n-Tomega}
n(T_\omega)=0.
\end{equation}
%since $T_\omega$ has finite order in $\Baff$.
(Indeed, 
%to prove this claim one can assume that $\GB_\ZM$ is either quasi-simple, or $\mathrm{GL}_{n,\ZM}$. Moreover, 
if the claim is known for a power of $\omega$ then it follows for $\omega$. Now this claim is obvious if $\omega=t_\lambda$ for some $\lambda \in \XB$ such that $\lambda( \alpha^\vee )=0$ for all $\alpha \in \Phi$. Since the quotient of $\Omega$ by the subgroup consisting of such elements is finite, this suffices to imply the claim for all $\omega \in \Omega$.)

%------------------------------------------------------------------
\subsection{Kostant--Whittaker reduction of line bundles}
\label{ss:KW-line-bundle}
%------------------------------------------------------------------

The goal of this subsection is to prove the following result (whose proof is quite technical, even though the statement is very natural).

\begin{prop}
\label{prop:KW-line-bundle}
For any $\lambda \in \XB$, there exists an isomorphism
\[
\kappa_\EM(\OC_{\tgg_\EM}(\lambda)) \cong F_\lambda^\EM \langle \lambda(\lav_\circ) \rangle.
\]
\end{prop}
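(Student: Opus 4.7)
The plan is to describe both sides as graded modules over the sheaf of algebras $\mathrm{S}_{\OC_{\tg^*_\EM/W}}(\Omega_{\tg^*_\EM/W}\langle -2\rangle) \otimes_{\OC_{\tg^*_\EM/W}} \OC_{\tg^*_\EM}$ (whose modules are the same as $\GmE$-equivariant coherent sheaves on $\tg^* \times_{\tg^*/W} \TF(\tg^*/W)_\EM$) and match them directly.

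First, I would compute the underlying graded $\OC(\tg^*_\EM)$-module of $\kappa_\EM(\OC_{\tgg_\EM}(\lambda))$, namely $\overline{\kappa}'_\EM(\OC_{\tgg_\EM}(\lambda))$. Since $\tSC_\EM \cong \tg^*_\EM$ is (affine) isomorphic to an affine space, the restriction is a trivial line bundle, hence free of rank one over $\OC(\tg^*_\EM)$. To pin down the grading, I use that the twisted $\GmE$-action on $\tSC_\EM$ contracts $\tSC_\EM$ to the fixed point $[1:\varkappa(e)]$, and that the $\GmE$-weight of the fiber of $\OC_{\tgg_\EM}(\lambda)$ at this fixed point is $\lambda(\lav_\circ)$, the weight coming from the $\lav_\circ$-twist. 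This yields $\overline{\kappa}'_\EM(\OC_{\tgg_\EM}(\lambda)) \cong \OC_{\tg^*_\EM}\langle \lambda(\lav_\circ)\rangle$ as graded $\OC(\tg^*_\EM)$-modules.

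Second, I would describe the action of $\tIG_\SC^\EM$ on this module. The universal stabilizer acts on $\OC_{\tgg_\EM}(\lambda)$ through the character $\lambda$ of $\BB_\EM$: for $[g:\xi] \in \tSC_\EM$ and $h$ in the stabilizer, the condition $h\cdot[g:\xi]=[g:\xi]$ forces $g^{-1}hg\in\BB_\EM$, and $h$ acts on the fiber of the line bundle by $\lambda(g^{-1}hg)$. Differentiating and using Theorem~\ref{thm:Lie-centralizer} to identify $\tIG_\SC^\EM \cong \eta_\SC^*\Omega_{\tg^*_\EM/W}$, I expect the resulting map $\tIG_\SC^\EM \to \OC_{\tSC_\EM}$ to factor as the composition
\[
\eta_\SC^*\Omega_{\tg^*_\EM/W} \xrightarrow{d^*\varrho} \nu_\SC^*\Omega_{\tg^*_\EM} = \OC_{\tg^*_\EM}\otimes_\EM\tg_\EM \xrightarrow{d\lambda} \OC_{\tg^*_\EM}.
\]
Comparing with the definition of $F_\lambda^\EM$ in~\S\ref{ss:weyl-action}, where the action of $\OC(\tg^*)\otimes_{\OC(\tg^*/W)}\Omega(\tg^*/W)$ is given by exactly this composition, yields the desired isomorphism after incorporating the shift $\langle\lambda(\lav_\circ)\rangle$ from the first step.

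The main obstacle is the second step: verifying that the differential of the $\tIB_\SC^\EM$-action on the line bundle does match the canonical identification of Theorem~\ref{thm:Lie-centralizer} composed with $d^*\varrho$ and $d\lambda$. The isomorphism in Theorem~\ref{thm:Lie-centralizer} is produced in~\cite{riche3} by pullback along the smooth morphism $a \colon \GB\times\tSC_\EM \to \tgg_\EM$ and passage to the regular locus, where $\tJB^\FM$ is defined; accordingly, the cleanest route is to restrict everything to $\tgg^\reg_\FM$ (over a geometric point $\FM$), where the description of $\tJB^\FM$ in terms of the Chevalley map makes the compatibility of the two maps $\tIG_\SC^\FM \rightrightarrows \tg_\FM\otimes \OC_{\tg^*_\FM}$ transparent, then deduce the integral statement by flatness. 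One can finally drop to the point $0\in\tg^*_\EM$ to check there is no ambiguity left in the constant.
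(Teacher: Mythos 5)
Your proposal follows essentially the same route as the paper's proof: trivialize the restriction of $\OC_{\tgg}(\lambda)$ to the Kostant section, read off the shift $\langle \lambda(\lav_\circ) \rangle$ from the contracting $\GmE$-action at the fixed point, and then identify the differentiated stabilizer action with $d\lambda \circ d^*\varrho$ by passing to an open locus where the group scheme $\tJB$ is explicit, reducing the integral case to geometric points. The only sharpening needed in the step you rightly flag as the main obstacle is that the paper works over the regular \emph{semisimple} (not just regular) locus and compares the Kostant section with the constant section $\{1\BB\} \times \varkappa(\tg^\rs)$, where the stabilizer is literally $\TB$ acting on the fibre through $\lambda$; the non-trivial input making the comparison ``transparent'' is the explicit check that the identification of Theorem~\ref{thm:Lie-centralizer} agrees, after conjugation, with the tautological isomorphism $\gg_y = [\gg,y]^\bot \cong \tg$.
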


We start with two preliminary results. 
The first lemma is a generalization of a lemma in~\cite{gk}.

\begin{lem}
\label{lem:S-position}
If $g \in \GB_\FM$ and $\xi \in (\gg/\ng)_\FM^*$ are such that $g \cdot \xi \in \SC_\FM$, then $g \in \UB^+_\FM \cdot \BB_\FM$.
\end{lem}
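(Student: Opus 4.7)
The plan is to analyze $g$ using the Bruhat decomposition $\GB_\FM = \bigsqcup_{w \in W} \UB^+_\FM \dot w \BB_\FM$ and show that the condition $g \cdot \xi \in \SC_\FM$ forces $g$ into the open cell (i.e.\ $w = 1$). First, since the chosen $\GB$-invariant pairing on $\gg_\RG$ pairs $\gg_\alpha$ with $\gg_{-\alpha}$, one has $\ng^\perp = \bg_\FM$ inside $\gg_\FM$, so $\varkappa^{-1}$ identifies $(\gg/\ng)_\FM^*$ with $\bg_\FM = \tg_\FM \oplus \ng_\FM$. Writing $\xi = \varkappa(y)$ for $y \in \bg_\FM$, the hypothesis $g \cdot \xi \in \SC_\FM = \varkappa(e + \sg_\FM)$ translates to $\Ad(g)(y) \in e + \sg_\FM \subset e + \bg^+_\FM$. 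Since $\gg_\FM = \bg^+_\FM \oplus \ng_\FM$, let $q \colon \gg_\FM \twoheadrightarrow \ng_\FM$ denote the corresponding projection; as $e \in \ng_\FM$, the hypothesis becomes $q(\Ad(g)(y)) = e$ in $\ng_\FM$.

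Writing $g = u \dot w b$ with $u \in \UB^+_\FM$ and $b \in \BB_\FM$, I compute $q(\Ad(g)(y))$ in stages. Since $\Ad(b)$ preserves $\bg_\FM$, decompose $\Ad(b)(y) = h + n$ with $h \in \tg_\FM$ and $n \in \ng_\FM$. Then $\Ad(\dot w)(h) \in \tg_\FM \subset \bg^+_\FM$, and I split $\Ad(\dot w)(n) = m^+ + m^-$ with $m^+ \in \ng^+_\FM$ and $m^- \in \ng_\FM$ (sorting the root spaces $\gg_{w \alpha}$, $\alpha \in -\Phi^+$, by sign). Since $\Ad(u)$ stabilizes $\bg^+_\FM$, applying $q$ yields $q(\Ad(g)(y)) = q(\Ad(u)(m^-))$, so the identity to analyze becomes $q(\Ad(u)(m^-)) = e$.

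The key technical step is to understand how $q \circ \Ad(u)$ acts on $\ng_\FM$. Let $V_k \subset \ng_\FM$ be the span of the root spaces $\gg_\beta$ with $\beta \in -\Phi^+$ and $\mathrm{ht}(\beta) \geq -k$, giving a filtration $V_1 \subset V_2 \subset \cdots \subset \ng_\FM$; note that $e \in V_1$ and has nonzero component in every simple negative root space. The standard analysis of $\Ad(\UB^+_\FM)$ via one-parameter root subgroups shows that for $x \in \gg_\beta$ with $\beta \in -\Phi^+$, the difference $\Ad(u)(x) - x$ lies in the sum of root spaces $\gg_\gamma$ with $\mathrm{ht}(\gamma) > \mathrm{ht}(\beta)$. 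Hence $q \circ \Ad(u)$ preserves each $V_k$ and induces the identity on $V_k/V_{k-1}$. Taking $k$ minimal with $m^- \in V_k$ and comparing images in $V_k/V_{k-1}$ in the identity $q(\Ad(u)(m^-)) = e$ forces $k = 1$: otherwise $e \in V_1 \subset V_{k-1}$ would vanish in $V_k/V_{k-1}$, contradicting the nontriviality of the leading part of $m^-$. For $m^- \in V_1$, any correction term $\Ad(u)(x) - x$ with $x \in \gg_\beta$, $\beta \in -\simple$, lands in root spaces $\gg_{\alpha + \beta}$ with $\alpha \in \Phi^+$, which all have height $\geq 0$ and therefore lie in $\bg^+_\FM$; thus $q(\Ad(u)(m^-)) = m^-$, and $m^- = e$.

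Finally, $m^- = \sum_{\gamma \in -\simple} e_\gamma$ requires $-\simple \subset w(-\Phi^+)$, equivalently $w^{-1}(\simple) \subset \Phi^+$. Since every positive root is a nonnegative integer combination of simple roots, $w^{-1}$ then preserves $\Phi^+$, forcing $w = 1$. Thus $g = u b \in \UB^+_\FM \cdot \BB_\FM$. The main obstacle is the filtration analysis in the third paragraph, where one must track carefully how $\Ad(\UB^+_\FM)$ interacts with the height filtration on $\ng_\FM$; once this is in place, the Bruhat decomposition and a standard combinatorial fact about $W$ conclude the argument.
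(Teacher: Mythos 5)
Your proof is correct, but it takes a genuinely different route from the paper's. The paper's argument is dynamical: it uses the contracting $\GmF$-action on $\tSC_\FM$ introduced in~\S\ref{ss:reminder} (via the cocharacter $\lav_\circ$), observing that $\lim_{x \to \infty} x \cdot [g:\xi] = [1:\varkappa(e)]$ forces $\lim_{x\to\infty} \lav_\circ(x) g\BB_\FM = 1\BB_\FM$, and then concludes from the fact that the big cell $\UB^+_\FM\BB_\FM/\BB_\FM$ is open and $\lav_\circ$-stable. Your argument instead unwinds the condition $\Ad(g)(y) \in e + \bg^+_\FM$ directly through the Bruhat decomposition, using the height filtration on $\ng_\FM$ and the standard fact that $\Ad(\UB^+_\FM)$ preserves this filtration and acts trivially on its associated graded (which, as you implicitly need, is a characteristic-free statement about Chevalley groups, provable via the one-parameter root subgroups). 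The two approaches buy different things: the paper's is shorter and reuses the contraction already set up for Proposition~\ref{prop:Kostant-section}, while yours is self-contained at the level of root-space combinatorics and makes explicit \emph{why} the Kostant slice forces the open cell --- namely that $e$ has a nonzero component in every negative simple root space, so the "surviving" part $m^-$ of $\Ad(\dot w b)(y)$ must hit all of $-\simple$, forcing $w^{-1}(\simple) \subset \Phi^+$ and hence $w=1$. Two cosmetic points: you should note explicitly that $m^- \neq 0$ (immediate, since otherwise $q(\Ad(u)(m^-)) = 0 \neq e$), and that the correction terms $\Ad(u)(x)-x$ may also have a component in $\tg_\FM$ (height $0$), which is likewise killed by $q$; neither affects the argument.
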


\begin{proof}
Our assumption ensures that $[g:\xi] \in \tSC_\FM$.
Recall the contracting $\GmF$-actions on $\SC_\FM$ and $\tSC_\FM$ defined in~\S\ref{ss:reminder}. We have $\lim_{x \to \infty} x \cdot [g:\xi] = [1:\varkappa(e)]$, hence $\lim_{x \to \infty} \lav_\circ(x) g \BB_\FM = 1 \BB_\FM$. 
Since $1 \BB_\FM$ belongs to the $\lav_\circ$-stable open subset $\UB_\FM^+ \BB_\FM / \BB_\FM \subset \Flag_\FM$, we deduce that 
$g\BB_\FM$ also belongs to $\UB_\FM^+ \BB_\FM / \BB_\FM$, which finishes the proof.
\end{proof}

\begin{lem}
\label{lem:Flambda-R-F}
Let $\lambda \in \XB$. Let $M$ be an object of $\Coh^{\Gm}(\tg^* \times_{\tg^* / W} \TF(\tg^* / W))_\RG$ which is flat over $\RG$, and whose direct image to $\tg^*_\RG$ is coherent. Assume that for any geometric point $\FM$ of $\RG$ there exists an isomorphism $\FM \otimes_\RG M \cong F_\lambda^\FM$ in $\Coh^{\Gm}(\tg^* \times_{\tg^* / W} \TF(\tg^* / W))_\FM$. Then there exists an isomorphism $M \cong F_\lambda^\RG$.
\end{lem}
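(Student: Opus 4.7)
The plan is to push forward along the affine projection $\pi \colon \tg^* \times_{\tg^*/W} \TF(\tg^*/W) \to \tg^*$, reducing the problem to a statement about graded $B$-modules (with $B := \OC(\tg^*_\RG)$) equipped with an action of the graded $B$-algebra $\AC := \pi_* \OC$. Setting $N := \pi_* M$, we must prove $N \cong F_\lambda^\RG$ as graded $\AC$-modules; since $\pi$ is affine, this is equivalent to the conclusion of the lemma. The proof splits into two parts: first show that $N$ is graded free of rank one over $B$, then match the $\AC$-action with that of $F_\lambda^\RG$.

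For the first part, let $K^\bullet \to \RG$ be the graded Koszul resolution over $B$. Since $M$ is $\RG$-flat, so is each $K^i \otimes_B N$, and in each internal degree $K^\bullet \otimes_B N$ is a bounded complex of finitely generated $\RG$-free modules (using that $\RG$ is a PID). By flat base change and the hypothesis,
\[
\FM \otimes_\RG (K^\bullet \otimes_B N) \cong \FM \otimes_{B_\FM}^L F_\lambda^\FM
\]
is concentrated in cohomological degree $0$ and internal degree $d$ (where $d$ is the generator degree of $F_\lambda^\FM$, visibly independent of $\FM$), where it is $\FM$ of rank one. A bounded complex of finitely generated $\RG$-free modules that becomes acyclic after every geometric-point base change is itself acyclic (since $\RG$ has trivial Jacobson radical, any $\RG$-linear map between $\RG$-free modules vanishes as soon as it vanishes modulo all closed points). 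Hence $\RG \otimes^L_B N$ is concentrated in cohomological degree $0$ and internal degree $d$, where it is $\RG$-free of rank one, and Lemma~\ref{lem:nakayama}\eqref{it:nakayama-complex} yields that $N$ is graded free of rank one over $B$ with generator in degree $d$.

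For the second part, fix any $B$-generator of $N$; the $\AC$-action is encoded in the resulting degree-preserving $B$-linear map $\phi_N \colon B \otimes_{B^W} \Omega(\tg^*_\RG/W) \to B$ recording the action on that generator. A key rigidity observation is that $\phi_N$ is independent of the choice of generator: any two generators of $N$ differ by a unit of $B$ in degree zero, that is, by an element of $\RG^\times$, which commutes with the $\AC$-action. The same recipe applied to $F_\lambda^\RG$ recovers the map $\phi_\lambda$ of~\eqref{eqn:definition-F-lambda}, and the hypothesis of the lemma says $\FM \otimes_\RG (\phi_N - \phi_\lambda) = 0$ for every geometric point $\FM$. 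Since source and target of $\phi_N - \phi_\lambda$ are $\RG$-free in each internal degree and $\RG \hookrightarrow \QM$, already the case $\FM = \QM^{\mathrm{alg}}$ forces $\phi_N = \phi_\lambda$, so that $N \cong F_\lambda^\RG$ as $\AC$-modules; pushing back through $\pi$ gives $M \cong F_\lambda^\RG$. The main obstacle is the first part, where one must track internal and cohomological gradings simultaneously and carefully justify acyclicity of a bounded complex of $\RG$-flat modules from a fiberwise hypothesis; the rigidity argument in part two is then short.
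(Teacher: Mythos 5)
Your proof is correct, but its first half takes a genuinely different route from the paper's. To show that $N=\pi_*M$ is graded free of rank one over $\OC(\tg^*_\RG)$ you resolve $\RG$ by a Koszul complex, check fibre by fibre that $\RG\lotimes_{\OC(\tg^*_\RG)}N$ is concentrated in degree $0$ and of rank one, and invoke Lemma~\ref{lem:nakayama}\eqref{it:nakayama-complex}; the paper argues directly and more elementarily: the internal-degree-$0$ piece $M_0$ is $\RG$-free of rank one (the rank being computed after $\otimes_\RG\CM$), a basis vector gives a graded morphism $\OC(\tg^*_\RG)\to M$ which is an isomorphism after every geometric-point base change, and since all graded pieces are $\RG$-free of finite rank this forces it to be an isomorphism (the determinant argument of Lemma~\ref{lem:morphism-grk}). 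Your homological route works but is heavier, and one step is stated imprecisely: what you actually need is not that a fibrewise acyclic complex is acyclic, but that the cohomology of $K^\bullet\otimes_B N$ vanishes in nonzero degrees and that $H^0$ is $\RG$-free in each internal degree; both follow from the universal coefficient sequence over the principal ideal domain $\RG$, since a finitely generated torsion $\RG$-module killed by every geometric-point base change is zero. The second half of your argument --- encoding the action on a rank-one free module by a character $\phi_N$, observing that $\phi_N$ does not depend on the generator because two generators differ by a unit in $\RG^\times$, and comparing $\phi_N$ with $\phi_\lambda$ after base change to an algebraically closed field of characteristic zero --- is essentially the paper's, which expresses the same rigidity by noting that every graded $\OC(\tg^*_\CM)$-module endomorphism of $F_\lambda^\CM$ is a scalar.
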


\begin{proof}
In this proof we identify the category $\Coh^{\Gm}(\tg^* \times_{\tg^* / W} \TF(\tg^* / W))_\RG$ with the category of finitely generated graded $\OC(\tg^* \times_{\tg^* / W} \TF(\tg^* / W))_\RG$-modules, and similarly for $\FM$.

First we construct an isomorphism of graded $\OC(\tg^*_\RG)$-modules $\OC(\tg^*_\RG) \simto M$. In fact, if $M_0$ denotes the degree $0$-part of $M$ (a free $\RG$-module of finite rank), our assumption ensures that $\dim_\CM(\CM \otimes_\RG M_0)=1$; we deduce that $M_0$ has rank $1$.
Choosing any basis for this module, we obtain a morphism of graded $\OC(\tg^*_\RG)$-modules
\[
\phi \colon \OC(\tg^*_\RG) \to M.
\]
Our assumption implies that the induced morphism $\OC(\tg^*_\FM) \to \FM \otimes_\RG M$ is an isomorphism for all $\FM$. Since all graded pieces in $\OC(\tg^*_\RG)$ and $M$ are free $\RG$-modules of finite rank, we deduce that $\phi$ is an isomorphism.

We claim that $\phi$ induces an isomorphism of graded $\OC(\tg^* \times_{\tg^* / W} \TF(\tg^* / W))_\RG$-modules $F_\lambda^\RG \simto M$. In fact, it is enough to prove that for all $\omega \in \Omega(\tg^*_\RG/W)$ we have $\omega \cdot \phi(1) = (d\lambda)(\omega') \cdot \phi(1)$, where $\omega'$ is the image of $\omega$ in $\Omega(\tg^*_\RG) = \tg_\RG \otimes_\RG \OC(\tg^*_\RG)$ as in~\S\ref{ss:weyl-action} (and we still write $d\lambda$ for the morphism $(d\lambda) \otimes 1 \colon \tg_\RG \otimes_\RG \OC(\tg^*_\RG) \to \OC(\tg^*_\RG)$). However, 
the embedding
\[
\End_{\Mod^{\mathrm{gr}}(\OC(\tg^* \times_{\tg^* / W} \TF(\tg^* / W))_{\CM})}(F_\lambda^\CM) \hookrightarrow \End_{\Mod^{\mathrm{gr}}(\OC(\tg^*_{\CM}))}(F_\lambda^\CM)
\]
is an isomorphism, since the right-hand side is reduced to scalars. Hence $\CM \otimes_\RG \phi$ induces an isomorphism of $\OC(\tg^* \times_{\tg^* / W} \TF(\tg^* / W))_\CM$-modules $F_\lambda^\CM \simto \CM \otimes_\RG M$. We deduce that the image of $\omega \cdot \phi(1) - (d\lambda)(\omega') \cdot \phi(1)$ in $\CM \otimes_\RG M$ is $0$; it follows that this element is zero, which finishes the proof.
\end{proof}

\begin{proof}[Proof of Proposition~{\rm \ref{prop:KW-line-bundle}}]
Using Lemmas~\ref{lem:F-kappa} and~\ref{lem:Flambda-R-F}, it is enough to prove the isomorphism in the case $\EM=\FM$.
For simplicity, in the proof we omit the subscripts ``$\FM$.''

We denote by $\tgg^\diamond$ the inverse image in $\tgg$ of the open subset $\UB^+ \BB / \BB \subset \Flag$; more concretely we have $\tgg^\diamond = \UB^+ \BB \times^\BB (\gg/\ng)^*$. This open subset is stable under the action of $\Gm$ obtained by restricting the action of $\GB \times \Gm$ along the embedding $\Gm \to \GB \times \Gm$ sending $x$ to $(\lav_\circ(x), x)$. Moreover,
by Lemma~\ref{lem:S-position}, we have $\tSC \subset \tgg^\diamond$.

First, we claim that there exists a canonical isomorphism of $\Gm$-equivariant line bundles (in other words a canonical trivialization)
\begin{equation}
\label{eqn:rest-line-bundle}
\OC_{\tgg}(\lambda)_{| \tgg^\diamond} \simto \OC_{\tgg^\diamond} \langle \lambda(\lav_\circ) \rangle.
\end{equation}
In fact, consider the variety $\XC:=\GB \times^{\UB} (\gg/\ng)^*$. This variety is endowed with a (free) action of $\TB$ defined by $t \cdot [g:\xi] = [g t^{-1}: t\cdot \xi]$, and $\tgg$ is the quotient of $\XC$ by this action. It is also endowed with a natural action of $\GB \times \Gm$, such that the quotient morphism $q \colon \XC \to \tgg$ is $\GB \times \Gm$-equivariant. Moreover, it follows from the definition that we have
\[
\OC_{\tgg}(\lambda) = (q_* \OC_{\XC})^{\TB,-\lambda}
\]
as $\GB \times \Gm$-equivariant line bundles,
where on the right-hand side we mean sections on which $\TB$ acts by the character $-\lambda$. Consider now the following commutative diagram:
\[
\xymatrix@R=0.5cm{
\TB \times \UB^+ \times (\gg/\ng)^* \ar[r]^-{\sim} \ar[d]^{q^\diamond} & \UB^+ \BB \times^{\UB} (\gg/\ng)^* \ar[d] \ar@{^{(}->}[r] & \XC \ar[d]^-{q} \\
\UB^+ \times (\gg/\ng)^*  \ar[r]^-{\sim} & \UB^+ \BB \times^\BB (\gg/\ng)^* \ar@{^{(}->}[r] & \tgg.
}
\]
Here $q^\diamond$ is the projection, the right horizontal arrows are the natural (open) embeddings, and the left arrows are defined by $(t,u,\xi) \mapsto [ut^{-1}: t \cdot \xi]$ and $(u,\xi) \mapsto [u:\xi]$ respectively. All the maps in this diagram are $\Gm$-equivariant, if $\Gm$ acts on $\XC$ and $\tgg$ via the morphism $\Gm \to \GB \times \Gm$ considered above, and on $\TB \times \UB^+ \times (\gg/\ng)^*$, resp.~$\UB^+ \times (\gg/\ng)^*$, via 
\begin{multline*}
x \cdot (t,u,\xi)=(t \cdot \lav_\circ^{-1}(x), \lav_\circ(x) u \lav_\circ^{-1}(x), x^{-2}\lav_\circ(x) \cdot \xi), \\
\text{resp.} \quad x \cdot (u,\xi)=(\lav_\circ(x) u \lav_\circ^{-1}(x), x^{-2}\lav_\circ(x) \cdot \xi).
\end{multline*}
We deduce an isomorphism of $\Gm$-equivariant line bundles
\[
\OC_{\tgg}(\lambda)_{| \UB^+ \times (\gg/\ng)^*} \simto \bigl( (q^\diamond)_* \OC_{\TB \times \UB^+ \times (\gg/\ng)^*} \bigr)^{\TB,-\lambda}.
\]
Now we have $(q^\diamond)_* \OC_{\TB \times \UB^+ \times (\gg/\ng)^*} \cong \FM[\TB] \otimes_\FM \OC_{\UB^+ \times (\gg/\ng)^*}$, and the subsheaf where $\TB$ acts by $-\lambda$ is $(\FM \lambda) \otimes_\FM \OC_{\UB^+ \times (\gg/\ng)^*}$. We deduce~\eqref{eqn:rest-line-bundle}.

Using~\eqref{eqn:rest-line-bundle} we obtain a canonical isomorphism of $\Gm$-equivariant $\OC_{\tg^*}$-modules
\begin{equation}
\label{eqn:kappa-line-bundle}
\kappa(\OC_{\tgg}(\lambda)) \cong \OC_{\tg^*} \langle \lambda(\lav_\circ) \rangle.
\end{equation}
To conclude we have to identify the action of $\Omega(\tg^*/W)$ on $\kappa(\OC_{\tgg}(\lambda))$. 

In order to do so we can restrict to the open subset $\tg^{*,\rs} \subset \tg^*$ (the complement of the coroot hyperplanes). Note that we have $\nu^{-1}(\tg^{*,\rs})=\tgg^\rs$. To compute the action we will use another, more elementary section of the restriction of $\nu$ to $\tgg^\rs$. Namely, we set
\[
\Sigma^\rs := \{1 \BB\} \times \varkappa(\tg^\rs) \subset \tgg^\rs \subset \GB/\BB \times \gg^*,
\]
where $\tg^\rs := \gg^\rs \cap \tg$.
Clearly, $\nu$ restricts to an isomorphism $\Sigma^\rs \simto \tg^{*,\rs}$. Hence, if $\tSC^\rs := \tgg^\rs \cap \tSC$, we have canonical identifications
$\tSC^\rs \simto \tg^{*,\rs} \xleftarrow{\sim} \Sigma^\rs$.
In fact, since the restriction of $\tIB$ to $\tgg^\reg$ is the pullback of the group scheme $\tJB$ on $\tg^*$, we also obtain a canonical identification of the corresponding restrictions of $\tIB$, and then of their Lie algebras:
\begin{equation}
\label{eqn:Lie-tIB-sections}
\vcenter{
\xymatrix@R=0.5cm{
\LM \mathrm{ie}(\tIB_\SC^\rs) \ar[r]^-{\sim} \ar[d] & \LM \mathrm{ie}(\tJB^\rs) \ar[d] & \LM \mathrm{ie}(\tIB_{\Sigma}^\rs) \ar[l]_-{\sim} \ar[d] \\
\tSC^\rs \ar[r]^-{\sim} & \tg^{*,\rs} & \Sigma^\rs. \ar[l]_-{\sim}
}
}
\end{equation}
(Here, for $\AB$ a smooth group scheme over a scheme $X$, $\LM \mathrm{ie}(\AB)$ denotes the vector bundle whose sections are the Lie algebra of $A$, considered as an $\OC_X$-module; the superscript ``$\rs$'' means restriction to the regular semisimple locus, and $\tIB^\rs_\Sigma$ is the restriction of $\tIB$ to $\Sigma^\rs$.)

Using the notation we have just introduced,
Theorem~\ref{thm:Lie-centralizer} defines an isomorphism $\LM \mathrm{ie}(\tIB_\SC^\rs) \simto \tSC^\rs \times_{\SC^\rs }\mathsf{T}^*(\SC^\rs)$. In fact, since the morphism $\tSC^\rs \to \SC^\rs$ is {\'e}tale (because the restriction of $\varrho \colon \tg^* \to \tg^*/W$ to $\tg^{*,\rs}$ is {\'e}tale, see~\cite[Proof of Lemma~3.5.3]{riche3}), this can be rewritten as an isomorphism $\LM \mathrm{ie}(\tIB_\SC^\rs) \simto \mathsf{T}^*(\tSC^\rs)$. Now we clearly have $\tIB_\Sigma^\rs = \TB \times \Sigma^\rs \subset \GB \times \Sigma^\rs$, hence $\LM \mathrm{ie}(\tIB_\Sigma^\rs) = \tg \times \Sigma^\rs \simto \mathsf{T}^*(\Sigma^\rs)$, where we identify $\tg$ with $\varkappa(\tg)^*=\gg / (\ng^+ \oplus \ng)$ in the natural way. We claim that the following diagram commutes, where the horizontal isomorphisms are induced by the identifications in~\eqref{eqn:Lie-tIB-sections} and the vertical isomorphisms are the ones we have just defined:
\begin{equation}
\label{eqn:Lie-tIB-sections-2}
\vcenter{
\xymatrix@R=0.5cm{
\LM \mathrm{ie}(\tIB_\SC^\rs) \ar[r]^-{\sim} \ar[d]_-{\wr} & \LM \mathrm{ie}(\tIB_{\Sigma}^\rs) \ar[d]^-{\wr} \\
\mathsf{T}^*(\tSC^\rs) \ar[r]^-{\sim} & \mathsf{T}^*(\Sigma^\rs).
}
}
\end{equation}
In fact, let $y \in \varkappa(\tg^\rs)$, and let $\tilde{z}$ the point of $\tSC^\rs$ corresponding to $\tilde{y}=(1 \BB,y) \in \Sigma^\rs$ under the identification in~\eqref{eqn:Lie-tIB-sections}. Let us also fix $g \in \GB$ such that $\tilde{z}=g \cdot \tilde{y}$. Then, unravelling the various definitions (see in particular~\cite[Remark~3.3.4]{riche3}), we see that the fiber over $\tilde{z} \leftrightarrow \tilde{y}$ of~\eqref{eqn:Lie-tIB-sections-2} can be described as
\[
\xymatrix@R=0.5cm{
\gg_{g \cdot y} \ar[rrrrr]_-{\mathrm{ad}_{g^{-1}}}^-{\sim} \ar@{^{(}->}[d] &&&&& \gg_y  \ar@{=}[d] \\
\gg \ar@{->>}[d] &&&&& \tg \ar[d]^-{\wr} \\
\varkappa(\sg)^* & \gg \ar@{->>}[l] & [\gg,g \cdot y]^\bot \ar@{_{(}->}[l] \ar[r]^-{\mathrm{ad}_{g^{-1}}}_-{\sim} \ar@/_15pt/@{-->}[ll]_-{\sim} & [\gg, y]^\bot \ar@{^{(}->}[r] \ar@/^15pt/@{-->}[rr]^-{\sim} & \gg \ar@{->>}[r] & \varkappa(\tg)^*.
}
\]
Now we have $\gg_{g \cdot y} = [\gg, g \cdot y]^\bot$ and $\gg_y=[\gg, y]^\bot$, so the commutativity is obvious.

With this comparison at hand, we can now identify the restriction of $\kappa(\OC_{\tgg}(\lambda))$ to $\tg^{*,\rs}$ with $\OC_{\tgg}(\lambda)_{| \Sigma^\rs}$, endowed with the action of $\IG_\Sigma^\rs$, identified with $\tg \otimes \OC_{\Sigma^\rs}$ as above. But the restriction of $\OC_{\tgg}(\lambda)$ to $\{1 \BB\} \times (\gg/\ng)^*$ is $\OC_{(\gg/\ng)^*} \otimes_\FM \FM_\lambda$ (where $\FM_\lambda$ is the one-dimensional $\BB$-module defined by $\lambda$), and we finally deduce that~\eqref{eqn:kappa-line-bundle} defines an isomorphism $\kappa(\OC_{\tgg}(\lambda)) \cong F_\lambda \langle \lambda({\check \lambda}_\circ) \rangle$, as desired.
\end{proof}

%------------------------------------------------------------------
\subsection{Kostant--Whittaker reduction and geometric actions}
\label{ss:KW-braid-group}
%------------------------------------------------------------------

The main result of this subsection is the following.

\begin{prop}
\label{prop:KW-actions}
\begin{enumerate}
\item
\label{it:KW-actions-F}
For any $b \in \Baff$, there exists an isomorphism of functors
\[
\KS^\FM_b \circ \kappa_\FM \cong \kappa_\FM \circ \IS^\FM_b.
\]
\item
\label{it:KW-actions-R}
Let $b \in \Baff$, and let $\FC$ be an object of $D^{\GB \times \Gm}(\tgg)_\RG$ such that $\kappa_\RG(\FC)$ is concentrated in degree $0$, and $\RG$-free. Then $\kappa_\RG \circ \IS^\RG_b(\FC)$ is concentrated in degree $0$ and $\RG$-free, and moreover there exists an isomorphism
\[
\KS^\RG_b \circ \kappa_\RG(\FC) \cong \kappa_\RG \circ \IS^\RG_b(\FC)
\]
in $D^{\Gm}(\tg^* \times_{\tg^* / W} \TF(\tg^*/W))_\RG$.
\end{enumerate}
\end{prop}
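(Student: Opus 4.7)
The strategy is to use the Bernstein presentation of $\Baff$ recalled in \S\ref{ss:Haff}, which reduces both statements to the cases $b = \theta_\lambda$ (with $\lambda \in \XB$) and $b = T_s$ (with $s$ a finite simple reflection). For part~\eqref{it:KW-actions-F} the general case then follows because both $b \mapsto \KS^\FM_b \circ \kappa_\FM$ and $b \mapsto \kappa_\FM \circ \IS^\FM_b$ descend to right actions of $\Baff$, so verifying compatibility on generators suffices. For part~\eqref{it:KW-actions-R} we argue by induction on the length of a word in these generators, using that the class of objects which are concentrated in degree $0$ and $\RG$-free is preserved by each $\KS^\RG_{\theta_\lambda}$ (tensoring with the $\OC(\tg^*_\RG)$-free rank-one module $F_\lambda^\RG$) and by $\KS^\RG_{T_s} = {}'\KS^\RG_s \langle -1 \rangle$ (pullback by an automorphism of $\tg^*_\RG$ followed by a grading shift); the inductive hypothesis therefore propagates.

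For the generator $\theta_\lambda$, the functor $\IS^\EM_{\theta_\lambda}$ is tensoring with $\OC_{\tgg_\EM}(\lambda)$. Restriction to $\tSC_\EM$ commutes with tensor products of $\OC_{\tgg_\EM}$-modules, and the $\tIG_\SC^\EM$-action on such a tensor product is given by the Leibniz rule; hence there is a canonical isomorphism
\[
\kappa_\EM(\FC \otimes \OC_{\tgg_\EM}(\lambda)) \cong \kappa_\EM(\FC) \otimes_{\OC(\tg^*_\EM)} \kappa_\EM(\OC_{\tgg_\EM}(\lambda))
\]
of $(\OC(\tg^*) \otimes_{\OC(\tg^*/W)} \Omega(\tg^*/W))_\EM$-modules. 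Proposition~\ref{prop:KW-line-bundle} identifies the second factor with $F_\lambda^\EM \langle \lambda(\lav_\circ) \rangle$, yielding precisely $\KS^\EM_{\theta_\lambda}(\kappa_\EM(\FC))$.

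For the generator $T_s$, the first short exact sequence in~\eqref{eqn:ses-tgg} exhibits $\OC_{Z_s^\EM}$ as the cone of a natural morphism $\OC_{\Delta\tgg_\EM}\langle 2\rangle \to \OC_{\tgg \times_{\tgg_s}\tgg_\EM}$, so $\IS^\EM_{T_s} \FC$ sits in a distinguished triangle involving $\FC$ and $\widetilde{\pi}_s^*(\widetilde{\pi}_s)_* \FC$. The heart of the argument is therefore to show that $\kappa_\EM$ intertwines $\widetilde{\pi}_s^*(\widetilde{\pi}_s)_*$ with $\varrho_s^*(\varrho_s)_*$, where $\varrho_s \colon \tg^*_\EM \to \tg^*_\EM/W_s$ is the quotient. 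Over $\FM$, after restricting to the regular locus $\tgg^\reg_\FM$, this reduces to the equivalences $\Coh^{\GB}(\tgg^\reg)_\FM \simeq \Rep(\tJB^\FM)$ and $\Coh^{\GB}(\tgg^\reg_s)_\FM \simeq \Rep(\tJB_s^\FM)$ together with the Cartesian square relating $\tJB^\FM$, $\tJB_s^\FM$, $\tg^*_\FM$ and $\tg^*_\FM/W_s$ recalled at the end of \S\ref{ss:reminder}; under these identifications $\widetilde{\pi}_s$ literally becomes $\varrho_s$. Combining this with the analog of~\eqref{eqn:ses-tgg} on the Kostant side (coming from the diagonal embedding $\tg^* \hookrightarrow \tg^* \times_{\tg^*/W_s} \tg^*$), and carefully tracking the grading shifts as well as the twist by $\OC(-\rho,\rho-\alpha)$ from the second sequence of~\eqref{eqn:ses-tgg}, one recovers $\KS^\FM_{T_s} = {}'\KS^\FM_s \langle -1\rangle$. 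I expect this bookkeeping of shifts and line-bundle twists to be the main obstacle.

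For part~\eqref{it:KW-actions-R}, once part~\eqref{it:KW-actions-F} is established, Lemma~\ref{lem:F-kappa} together with~\eqref{eqn:F-IS} gives, for every geometric point $\FM$ of $\RG$, an isomorphism $\FM(\KS^\RG_b \circ \kappa_\RG(\FC)) \cong \FM(\kappa_\RG \circ \IS^\RG_b(\FC))$. Since $\KS^\RG_b \circ \kappa_\RG(\FC)$ is concentrated in degree $0$ and $\RG$-free (by the first paragraph), every modular reduction of $\kappa_\RG \circ \IS^\RG_b(\FC)$ is likewise concentrated in degree $0$ and $\FM$-free. A fiberwise argument in the spirit of Lemma~\ref{lem:morphism-grk} then lifts this to the conclusion that $\kappa_\RG \circ \IS^\RG_b(\FC)$ itself is concentrated in degree $0$ and $\RG$-free, and the natural transformations constructed in part~\eqref{it:KW-actions-F} assemble into the required $\RG$-level isomorphism.
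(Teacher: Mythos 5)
Your reduction to the generators $\theta_\lambda$ and $T_s$, and your treatment of $\theta_\lambda$ via compatibility of $\kappa_\EM$ with line-bundle twists and Proposition~\ref{prop:KW-line-bundle}, agree with the paper. The $T_s$ step, however, has a genuine gap. You propose to realize $\IS^\FM_{T_s}\FC$ as the third term of a distinguished triangle built from $\FC\langle 1 \rangle$ and $\Xi_s^\FM \FC$ via the first sequence in~\eqref{eqn:ses-tgg}, to identify $\kappa_\FM \circ \Xi_s^\FM$ with the pull-push along $\tg^*_\FM \to \tg^*_\FM/W_s$, and then to ``recover'' $\KS^\FM_{T_s}$ by tracking shifts. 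But cones are not functorial: even after showing that $\kappa_\FM(\TM^s_{\tgg}\FC)$ and $\KS^\FM_{T_s}\kappa_\FM(\FC)$ are both cones of morphisms $\kappa_\FM(\FC)\langle 1 \rangle \to \kappa_\FM(\FC) \otimes_{\OC(\tg^*_\FM)} D_s$, you get neither an isomorphism of functors nor even a preferred objectwise isomorphism until you match the two connecting morphisms, and that identification is exactly what your sketch does not supply. The clean way out is to work at the level of Fourier--Mukai kernels rather than of objects: the paper shows that the square $\tSC \times_{\tgg} Z_s \to Z_s$, $\tSC \to \tgg$ is tor-independent and that $(\tSC \times_\tgg Z_s)_\EM$ is the graph of $s$ acting on $\tg^*_\EM$ (Lemma~\ref{lem:fiber-product-Zs}), so that base change gives $\kappa'_\EM \circ \IS^\EM_{T_s} \cong (\tau'_s)^* \circ \kappa'_\EM \langle -1 \rangle$ directly, with the sequences~\eqref{eqn:ses-tgg} reserved for the separate statement about $\Xi_s$ (Lemma~\ref{lem:kappa-Xi}).

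For part~(2) there is a second gap. Your concluding step --- an isomorphism after every modular reduction, therefore an isomorphism over $\RG$ --- is not valid: fiberwise isomorphisms do not assemble unless you first produce a single morphism defined over $\RG$ and then check that it is an isomorphism on fibers. That is the \emph{hypothesis} of Lemma~\ref{lem:morphism-grk} (and of Lemma~\ref{lem:key}), not their conclusion. Your deduction that $\kappa_\RG \circ \IS^\RG_b(\FC)$ is concentrated in degree $0$ and $\RG$-free is fine, but the existence of the isomorphism needs an integral construction, and your $T_s$ argument lives only over $\FM$ (the regular locus and the $W$-action on it are only available there in this setup). In the paper the $\RG$-level isomorphism comes for free because Lemma~\ref{lem:fiber-product-Zs} is proved over $\EM = \RG$ as well, so the base-change isomorphism $\varsigma_* ( \KS^\RG_{T_s} \circ \kappa_\RG(\FC) ) \cong \varsigma_* ( \kappa_\RG \circ \IS^\RG_{T_s}(\FC) )$ already exists over $\RG$; the only thing extracted from a characteristic-zero fiber is that this $\OC(\tg^*_\RG)$-linear isomorphism is automatically linear over the full algebra $\OC(\tg^* \times_{\tg^*/W} \TF(\tg^*/W))_\RG$, using $\RG$-freeness. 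To repair your argument you would have to redo the $T_s$ computation integrally, which is precisely what the $\EM = \RG$ case of Lemma~\ref{lem:fiber-product-Zs} accomplishes.
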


\begin{remark}
It is probably true that there exists an isomorphism of functors $\KS^\RG_b \circ \kappa_\RG \cong \kappa_\RG \circ \IS^\RG_b$. The weaker statement in Proposition~\ref{prop:KW-actions}\eqref{it:KW-actions-R} will be sufficient for our purposes.
\end{remark}

Before proving this result we need a preliminary lemma. In this lemma we fix a finite simple reflection $s$, and consider the morphism $Z_s^\EM \to \tgg_\EM$ induced by the second projection.

\begin{lem}
\label{lem:fiber-product-Zs}
\begin{enumerate}
\item
The Cartesian square
\[
\xymatrix@R=0.5cm{
(\tSC \times_{\tgg} Z_s)_\EM \ar@{^{(}->}[r] \ar[d] & Z_s^\EM \ar[d] \\
\tSC_\EM  \ar@{^{(}->}[r] & \tgg_\EM
}
\]
is tor-independent in the sense of~\cite[Definition~3.10.2]{lipman}.
\label{it:square-tor-indep}
\item
The embedding $Z_s^\EM \hookrightarrow (\tgg \times \tgg)_\EM$ identifies $(\tSC \times_{\tgg} Z_s)_\EM$ with a closed subscheme of $(\tSC \times \tSC)_\EM$. Then,
identifying $\tSC_\EM$ with $\tg^*_\EM$ via the isomorphism of Proposition~{\rm \ref{prop:Kostant-section}}, $(\tSC \times_{\tgg} Z_s)_\EM$ identifies with the graph of the action of $s$ on $\tg^*_\EM$.
\label{it:Zs-graph}
\end{enumerate}
\end{lem}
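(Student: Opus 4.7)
The plan is to establish part (2) first and then use it, together with the second exact sequence in~\eqref{eqn:ses-tgg}, to derive part (1).

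For part (2), I would reduce to the regular semisimple locus. Observe that $\tSC \subset \tgg^{\reg}$ (since $\SC \subset \gg^{*,\reg}$ and $\tSC = \SC \times_{\gg^*} \tgg$), and that $\tSC$ is stable under the $W$-action on $\tgg^{\reg}$ because this action commutes with $\pi$. In the case $\EM = \FM$ the relevant $W$-action is the one recalled in~\S\ref{ss:notation-KW}; over $\RG$ one obtains the analogous action on the regular semisimple locus by fiberwise Galois theory, which is all we need. The isomorphism $\nu_\SC \colon \tSC \simto \tg^*$ is then $W$-equivariant by the $W$-equivariance of $\nu$ on $\tgg^\reg$. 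Since $Z_s^\EM$ is by definition the closure in $(\tgg \times \tgg)_\EM$ of the graph of the $s$-action on the regular semisimple locus, the intersection of $(\tSC \times_\tgg Z_s)_\EM$ with $(\tgg \times \tgg^\rs)_\EM$ is precisely $\{(s \cdot x, x) : x \in \tSC^\rs\}$, which sits inside $(\tSC \times \tSC)_\EM$ and, under $\nu_\SC \times \nu_\SC$, corresponds to the graph of $s$ on $\tg^{*,\rs}_\EM$. Taking scheme-theoretic closure, one deduces that $(\tSC \times_\tgg Z_s)_\EM$ is contained in $(\tSC \times \tSC)_\EM$ and identifies with the graph of $s$ on all of $\tg^*_\EM$.

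For part (1), I would apply $\OC_{\tSC_\EM} \lotimes_{\OC_{\tgg_\EM}} (-)$ (using the second projection to $\tgg_\EM$) to the second exact sequence in~\eqref{eqn:ses-tgg}:
\[
0 \to \OC_{Z_s^\EM}(-\rho, \rho-\alpha) \to \OC_{(\tgg \times_{\tgg_s} \tgg)_\EM} \to \OC_{\Delta \tgg_\EM} \to 0.
\]
The natural morphism $(\tgg \times_{\tgg_s} \tgg)_\EM \to \tgg_\EM$ given by the second projection is a smooth $\PM^1$-fibration, as a base change of the $\PM^1$-bundle $\tgg_\EM \to \tgg_s^\EM$. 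Hence the derived pullback of $\OC_{(\tgg \times_{\tgg_s} \tgg)_\EM}$ to $\tSC_\EM$ is concentrated in degree zero and equals $\OC_{(\tSC \times_{\tgg_s} \tgg)_\EM}$, while that of $\OC_{\Delta \tgg_\EM}$ equals $\OC_{\tSC_\EM}$. The associated long exact sequence of derived tensor products then reduces tor-independence to the injectivity of $\OC_{\tSC_\EM} \otimes_{\OC_{\tgg_\EM}} \OC_{Z_s^\EM}(-\rho, \rho-\alpha) \to \OC_{(\tSC \times_{\tgg_s} \tgg)_\EM}$, equivalently to the statement that $(\tSC \times_\tgg Z_s)_\EM$ is an effective Cartier divisor in the smooth scheme $(\tSC \times_{\tgg_s} \tgg)_\EM$. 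The latter is a $\PM^1$-bundle over $\tSC_\EM$ of dimension $r+1$, while by part (2) the closed subscheme $(\tSC \times_\tgg Z_s)_\EM$ is isomorphic to $\tg^*_\EM$ and has dimension $r$, so is of codimension one in a smooth ambient and hence Cartier.

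The principal obstacle is making the closure argument in part (2) rigorous, especially over $\RG$, since the $W$-action on the full regular locus was constructed in the paper only in the case of a field. To handle this I would argue via flatness: $\tSC_\RG$, $Z_s^\RG$ and $(\tgg \times_{\tgg_s} \tgg)_\RG$ are all flat over $\RG$, so it suffices to check the scheme-theoretic identification on every geometric fiber, where the explicit description on the dense open regular semisimple locus together with the closure definition of $Z_s$ pins down the answer.
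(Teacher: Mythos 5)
Your part~(1) rests on a false geometric claim. The morphism $\widetilde{\pi}_s \colon \tgg_\EM \to \tgg_s^\EM$ is \emph{not} a $\PM^1$-bundle: both schemes have dimension $\dim \gg$, and $\widetilde{\pi}_s$ is generically finite of degree $2$ (its fibre over $(\xi, g\PB_s)$ is the degree-two subscheme of $\PB_s/\BB \cong \PM^1$ cut out by $\xi$ on the conic of nilradicals in the rank-one Levi factor), with fibres jumping to all of $\PM^1$ where $\xi$ kills that whole Levi factor. In particular $\widetilde{\pi}_s$ is not flat, so neither is the second projection $(\tgg \times_{\tgg_s} \tgg)_\EM \to \tgg_\EM$, and the assertion that $\OC_{(\tgg \times_{\tgg_s} \tgg)_\EM}$ pulls back to $\tSC_\EM$ in degree $0$ is precisely the content of Lemma~\ref{lem:fiber-product-xis}\eqref{it:square-tor-indep-xi}, not a freebie. (A smaller logical point: once the middle and quotient terms of the sequence are known to pull back in degree $0$, the long exact sequence already forces the vanishing of all higher Tor of the sub-object, so the Cartier-divisor step is not what anything ``reduces to''.) The idea you are missing --- and which the paper uses both here and for Lemma~\ref{lem:fiber-product-xis} --- is to factor the restriction to $\tSC_\EM$ through the smooth, hence flat, morphism $a \colon \GB \times \tSC_\EM \to \tgg_\EM$ of Lemma~\ref{lem:a-smooth}: by $\GB_\EM$-equivariance of $Z_s^\EM$ the base change of $Z_s^\EM \to \tgg_\EM$ along $a$ is $\GB \times (\tSC \times_{\tgg} Z_s)_\EM$, and tor-independence follows from flatness of $a$ composed with the (trivially tor-independent) restriction along $\{e\} \times \tSC_\EM \hookrightarrow \GB \times \tSC_\EM$. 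This treats $\OC_{Z_s^\EM}$ directly, with no exact sequence at all.

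In part~(2), over a field your argument is essentially the paper's, except that you should work on $\tgg^\reg_\FM$ (which contains $\tSC_\FM$, carries the $W$-action, and over which $Z_s^\FM$ \emph{is} the graph) rather than passing to $\tgg^\rs_\FM$ and taking closures: the fibre product is a scheme-theoretic intersection, not a closure, so it is not a priori recovered from its restriction to a dense open subset. Over $\RG$ the phrase ``it suffices to check on every geometric fiber'' is exactly where the difficulty sits and is not justified: two closed subschemes of $(\tg^* \times \tg^*)_\RG$ with the same geometric fibres need not coincide unless one knows, say, that the fibre product is flat over $\RG$ and that there is a containment or finiteness statement to compare them. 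The paper first proves flatness of $(\tSC \times_{\tgg} Z_s)_\RG$ over $\RG$ (again using the morphism $a$), then shows that the first projection to $\tSC_\RG$ is an isomorphism via a finiteness-plus-fibrewise argument (\cite[Lemma~1.4.1]{br}), and only then identifies the resulting automorphism of $\tSC_\RG \cong \tg^*_\RG$ with $s$ by base change to $\CM$. Some argument of this shape is genuinely needed; fibrewise agreement alone does not close the gap.
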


\begin{proof}
\eqref{it:square-tor-indep}
We decompose our Cartesian square into two squares:
\[
\xymatrix@R=0.5cm{
\bigl(\tSC \times_{\tgg} Z_s\bigr)_\EM \ar@{^{(}->}[r] \ar[d] & \GB \times \bigl( \tSC \times_{\tgg} Z_s \bigr)_\EM \ar[r] \ar[d] & Z_s^\EM \ar[d] \\
\tSC_\EM  \ar@{^{(}->}[r] & (\GB \times \tSC)_\EM \ar[r]^-{a} & \tgg_\EM.
}
\]
Here the left arrows are induced by the embedding $\Spec(\EM) \hookrightarrow \GB_\EM$ given by the identity. The left-hand hand square is obviously Cartesian and tor-independent (since $\GB_\EM$ is flat over $\Spec(\EM)$), and the right-hand square is Cartesian by $\GB_\EM$-equivariance, and tor-independent since $a$ is flat (see Lemma~\ref{lem:a-smooth}). The claim follows.

\eqref{it:Zs-graph}
The first claim is a consequence of the definition of $\tSC_\EM$ and of the fact that $Z_s^\EM$ is included in $(\tgg \times_{\gg^*} \tgg)_\EM$. Now we consider the second claim.

In the case $\EM=\FM$, the claim follows from the definition of $Z_s^\FM$, using the facts that $\tSC_\FM$ is included in $\tgg_\FM^\reg$ (see~\cite[Equation~(3.1.1)]{riche3}), and that the morphism $\nu_\reg$ is $W$-equivariant (see~\S\ref{ss:notation-KW}).

Now, let us deduce the case $\EM=\RG$. It follows in particular from the first claim that $(\tSC \times_{\tgg} Z_s)_\RG$ is an affine scheme. Since $\GB_\RG \times (\tSC \times_{\tgg} Z_s)_\RG$ is flat over $Z_s^\RG$ (see the proof of~\eqref{it:square-tor-indep}), which is itself flat over $\RG$, and since $\RG$ is a direct summand in $\OC(\GB_\RG)$, the scheme $(\tSC \times_{\tgg} Z_s)_\RG$ is also flat over $\RG$.

Let us denote by $f$ the morphism $(\tSC \times_{\tgg} Z_s)_\RG \to \tSC_\RG$ induced by the first projection. We claim that $f$ is an isomorphism. In fact, consider the morphism
\[
f^* \colon \OC(\tSC_\RG) \to \OC(\tSC \times_{\tgg} Z_s)_\RG.
\]
Since $f$ is projective, the right-hand side is finite over $\OC(\tSC_\RG)$. Moreover, the morphism $\FM \otimes_\RG (f^*)$ is an isomorphism for any geometric point $\FM$ of $\RG$, by the case of fields treated first. Using~\cite[Lemma~1.4.1]{br} we deduce that $f^*$ is an isomorphism, which finishes the proof of the claim.

Now, consider the morphism $\tau_s \colon \tSC_\RG \to \tSC_\RG$ defined as the composition of the inverse of $f$ with the natural projection $(\tSC \times_{\tgg} Z_s)_\RG \to \tSC_\RG$. 
The induced morphism $\tSC_\CM \to \tSC_\CM$ coincides with the action of $s$ (via the identification $\tSC_\CM \simto \tg^*_\CM$). It follows that the morphism $\tau_s$ itself coincides with the action of $s$, finishing the proof.
\end{proof}

\begin{proof}[Proof of Proposition~{\rm \ref{prop:KW-actions}}]
In each case, it is sufficient to prove the claim when $b=T_s$ for $s$ a finite simple reflection, or when $b=\theta_\lambda$ for some $\lambda \in \XB$.

First, assume that $b=\theta_\lambda$. Since $\kappa_\EM$ is compatible with tensoring with a line bundle, Proposition~\ref{prop:KW-line-bundle} implies that there exists an isomorphism of functors $\KS^\EM_{\theta_\lambda} \circ \kappa_\EM \cong \kappa_\EM \circ \IS^\EM_{\theta_\lambda}$, which proves the claims in~\eqref{it:KW-actions-F} and~\eqref{it:KW-actions-R} in this case.

Now we consider the case $b=T_s$. In this case we have $\IS^\EM_{T_s} \cong \Rder (p_s)_* \circ \Lder (q_s)^* \langle -1 \rangle$, where $p_s, q_s \colon Z_s^\EM \to \tgg_\EM$ are induced by the first and second projections, respectively. Let $\kappa'_\EM$ be the composition of $\kappa_\EM$ with the direct image under the (affine) morphism $\varsigma \colon (\tg^* \times_{\tg^* / W} \TF(\tg^*/W))_\EM \to \tg^*_\EM$. (In other words, $\kappa'_\EM$ is the composition of restriction to $\tSC_\EM$ with the functor $(\nu_\SC)_*$.)
By Lemma~\ref{lem:fiber-product-Zs}\eqref{it:square-tor-indep} and the base-change theorem (see~\cite[Theorem~3.10.3]{lipman}), we have
$\kappa'_\EM \circ \IS^\EM_{T_s} \cong \Rder (p'_s)_* \circ \Lder (q'_s)^* \circ \kappa'_\EM \langle -1 \rangle$, where 
$p_s', q_s' \colon (\tSC \times_{\tgg} Z_s)_\EM \to \tg_\EM^*$
are the compositions of $\nu_\SC$ with the morphisms obtained by restriction from $p_s, q_s$. Using Lemma~\ref{lem:fiber-product-Zs}\eqref{it:Zs-graph}, we deduce a canonical isomorphism
\begin{equation}
\label{eqn:isom-KW-action-Ts}
\kappa'_\EM \circ \IS^\EM_{T_s} \cong (\tau'_s)^* \circ \kappa'_\EM \langle -1 \rangle,
\end{equation}
where $\tau'_s \colon \tg^*_\FM \to \tg^*_\FM$ is the action of $s$.

In the case $\EM=\FM$, since the automorphism $\tau_s \colon \tSC_\FM \simto \tSC_\FM$ is the restriction of a $\GB_\FM \times \GmF$-equivariant automorphism of $\tgg_\FM^\reg$, isomorphism~\eqref{eqn:isom-KW-action-Ts} is induced by an isomorphism of functors
\[
\KS^\FM_{T_s} \circ \kappa_\FM \cong \kappa_\FM \circ \IS^\FM_{T_s},
\]
which finishes the proof of~\eqref{it:KW-actions-F}. Then the case $\EM=\RG$ follows from the case $\EM=\CM$: indeed by~\eqref{eqn:isom-KW-action-Ts} we have an isomorphism
\begin{equation}
\label{eqn:isom-KW-action-Ts-R}
\varsigma_* \bigl( \KS^\RG_{T_s} \circ \kappa_\RG(\FC) \bigr) \cong \varsigma_* \bigl( \kappa_\RG \circ \IS^\RG_{T_s}(\FC) \bigr).
\end{equation}
Hence if $\kappa_\RG (\FC)$ is concentrated in degree $0$ and $\RG$-free, the same is true for $\kappa_\RG \circ \IS^\RG_{T_s}(\FC)$. And in this case, since the image of~\eqref{eqn:isom-KW-action-Ts-R} under $\CM \otimes_\RG (-)$ is $\OC(\tg^* \times_{\tg^* / W} \TF(\tg^*/W))_\CM$-linear, we deduce that~\eqref{eqn:isom-KW-action-Ts-R} is $\OC(\tg^* \times_{\tg^* / W} \TF(\tg^*/W))_\RG$-linear, which finishes the proof of~\eqref{it:KW-actions-R}.
\end{proof}

We finish this section with the following variant of Lemma~\ref{lem:fiber-product-Zs}, to be used later. In this lemma we consider the morphism $(\tgg \times_{\tgg_s} \tgg)_\EM \to \tgg_\EM$ induced by the second projection.

\begin{lem}
\label{lem:fiber-product-xis}
\begin{enumerate}
\item
\label{it:square-tor-indep-xi}
The Cartesian square
\[
\xymatrix@R=0.5cm{
\bigl( \tSC \times_{\tgg} (\tgg \times_{\tgg_s} \tgg) \bigr)_\EM \ar@{^{(}->}[r] \ar[d] & (\tgg \times_{\tgg_s} \tgg)_\EM \ar[d] \\
\tSC_\EM  \ar@{^{(}->}[r] & \tgg_\EM
}
\]
is tor-independent in the sense of~\cite[Definition~3.10.2]{lipman}.
\item
\label{it:xis-graph}
Identifying $\tSC_\EM$ with $\tg^*_\EM$ via the isomorphism of Proposition~{\rm \ref{prop:Kostant-section}}, the fiber product $\bigl( \tSC \times_{\tgg} (\tgg \times_{\tgg_s} \tgg) \bigr)_\EM$ identifies with the closed subscheme of $(\tg^* \times \tg^*)_\EM$ given by $(\tg^* \times_{\tg^* / W_s} \tg^*)_\EM$.
\end{enumerate}
\end{lem}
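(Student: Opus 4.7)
The proof closely parallels that of Lemma~\ref{lem:fiber-product-xis}'s predecessor, Lemma~\ref{lem:fiber-product-Zs}, with $Z_s$ replaced throughout by $\tgg \times_{\tgg_s} \tgg$. For part~(i), I would decompose the Cartesian square as in the proof of Lemma~\ref{lem:fiber-product-Zs}(i) through the intermediate scheme $(\GB \times \tSC)_\EM$ using the smooth morphism $a$ of Lemma~\ref{lem:a-smooth}: the left subsquare is tor-independent because $\GB_\EM$ is flat over $\EM$, and the right subsquare is Cartesian by $\GB_\EM$-equivariance and tor-independent because $a$ is flat.

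For part~(ii), I would first handle the case $\EM=\FM$. Since the composition $\tgg_\FM \xrightarrow{\widetilde{\pi}_s} \tgg_s^\FM \to \tg^*_\FM/W_s$ coincides with $q \circ \nu$ (where $q \colon \tg^*_\FM \to \tg^*_\FM/W_s$ is the quotient), one obtains a natural morphism of schemes
\[
f \colon \bigl( \tSC \times_\tgg (\tgg \times_{\tgg_s} \tgg) \bigr)_\FM \to (\tg^* \times_{\tg^*/W_s} \tg^*)_\FM
\]
defined using $\nu_\SC$ on the first coordinate and $\nu$ on the second. To verify that $f$ is an isomorphism, I would pull back the second short exact sequence in~\eqref{eqn:ses-tgg} along the closed embedding $(\tSC \times \tgg)_\FM \hookrightarrow (\tgg \times \tgg)_\FM$; by part~(i), Lemma~\ref{lem:fiber-product-Zs}(i), and the obvious tor-independence for $\OC_{\Delta \tgg}$, this pullback remains a short exact sequence. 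Using Lemma~\ref{lem:fiber-product-Zs}(ii) together with the triviality of any line bundle pulled back from $\Flag_\FM \times \Flag_\FM$ to $\tSC_\FM \times \tSC_\FM$ (which holds because $\tSC_\FM$ maps into the open Schubert cell by Lemma~\ref{lem:S-position}), the resulting exact sequence matches the standard presentation of $\OC_{(\tg^* \times_{\tg^*/W_s} \tg^*)_\FM}$ as an extension of $\OC_{\Delta \tg^*_\FM}$ by $\OC_{\Gamma_s}$ (where $\Gamma_s$ is the graph of $s$), and this matching forces $f$ to be an isomorphism. The case $\EM=\RG$ would then be deduced from the $\FM$-case by the flatness argument of Lemma~\ref{lem:fiber-product-Zs}(ii): $(\tSC \times_\tgg (\tgg \times_{\tgg_s} \tgg))_\RG$ is flat over $\RG$ (by part~(i) together with flatness of $\tSC$, $\tgg$, $\tgg_s$ over $\RG$), so the projective morphism $f_\RG$ between two schemes finite over $\tSC_\RG$ that becomes an isomorphism after every geometric base change is itself an isomorphism by~\cite[Lemma~1.4.1]{br}.

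The main obstacle will be the canonical matching of extension classes in the $\FM$-step: one must verify that the identifications of the outer terms of the pulled-back short exact sequence with $\OC_{\Gamma_s}$ and $\OC_{\Delta \tg^*_\FM}$ render the inner term canonically isomorphic to $\OC_{(\tg^* \times_{\tg^*/W_s} \tg^*)_\FM}$, and not merely abstractly an extension of the same outer terms. If this proves delicate, an alternative would be to establish first the stronger statement that the natural morphism $\tgg_\EM \to (\tgg_s \times_{\tg^*/W_s} \tg^*)_\EM$ (induced by $\widetilde{\pi}_s$ and $\nu$) is an isomorphism, from which part~(ii) follows at once via $(\tSC \times_\tgg (\tgg \times_{\tgg_s} \tgg))_\EM \cong (\tSC \times_{\tg^*/W_s} \tg^*)_\EM \cong (\tg^* \times_{\tg^*/W_s} \tg^*)_\EM$; over a field, this stronger claim could be checked by comparing fiber dimensions and generic degrees over $\tgg_s$, and then propagated to $\RG$ by the same flatness trick.
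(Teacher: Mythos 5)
Your part~(1) reproduces the paper's argument verbatim and is fine. In part~(2), however, there is a genuine gap, and it is exactly the one you flag yourself: pulling back the second sequence of~\eqref{eqn:ses-tgg} only exhibits $\OC$ of the fibre product as a quotient of $\OC(\tg^* \times \tg^*)_\FM$ that is an extension of $\OC_{\Delta \tg^*_\FM}$ by the structure sheaf of the graph of $s$; to conclude you would still need to know that its ideal is precisely the intersection of the two ideals, and the extension data alone do not force this. Worse, the fallback you propose to repair this is false as stated: the natural morphism $\tgg_\EM \to (\tgg_s \times_{\tg^*/W_s} \tg^*)_\EM$ is \emph{not} an isomorphism. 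The target is finite over $\tgg_s^\EM$ (since $\OC(\tg^*_\EM)$ is free of rank $2$ over $\OC(\tg^*_\EM)^s$), whereas $\widetilde{\pi}_s$ is not finite: over a point of $\tgg_s$ whose second coordinate $\xi$ kills the derived subalgebra of the Levi of the corresponding parabolic (for instance $\xi = 0$), the fibre of $\widetilde{\pi}_s$ is the whole $\PM^1$ of Borel subalgebras contained in that parabolic. The identity $\tgg \cong \tgg_s \times_{\tg^*/W_s} \tg^*$ only holds after restriction to the regular locus, so your suggested check via ``fiber dimensions and generic degrees'' would in fact uncover the failure rather than prove the claim.

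The paper's proof of~(2) sidesteps the extension class entirely and works on the regular locus from the start: for $\EM = \FM$ it uses that the intersection of $(\tgg \times_{\tgg_s} \tgg)_\FM$ with $\tgg_\FM^\reg \times \tgg_\FM^\reg$ is the union of the diagonal and the graph of the $W$-action of $s$ on $\tgg_\FM^\reg$, together with $\tSC_\FM \subset \tgg_\FM^\reg$ and the $W$-equivariance of $\nu_\reg$; this identifies the fibre product with $(\tg^* \times_{\tg^*/W_s} \tg^*)_\FM$ directly. For $\EM = \RG$ it does not apply \cite[Lemma~1.4.1]{br} to a morphism (which you would first have to construct over $\RG$); instead it shows that $A = \OC\bigl(\tSC \times_{\tgg} (\tgg \times_{\tgg_s} \tgg)\bigr)_\RG$ is $\RG$-free using the contracting $\Gm$-action, deduces from the case over $\CM$ that the surjection $\OC(\tg^* \times \tg^*)_\RG \onto A$ kills $x \otimes 1 - 1 \otimes x$ for $x \in \OC(\tg^*_\RG)^s$, and then checks that the induced surjection from $\bigl(\OC(\tg^*) \otimes_{\OC(\tg^*)^s} \OC(\tg^*)\bigr)_\RG$ is an isomorphism degree by degree after reduction at every geometric point. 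To salvage your write-up, either replace your global claim by its regular-locus version or argue directly on $\tgg^\reg$ as the paper does.
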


\begin{proof}
The proof of~\eqref{it:square-tor-indep-xi} is identical to the proof of Lemma~\ref{lem:fiber-product-Zs}\eqref{it:square-tor-indep}. The case $\EM=\FM$ of~\eqref{it:xis-graph} follows from the observation that the intersection of $(\tgg \times_{\tgg_s} \tgg)_\FM$ with $\tgg_\FM^\reg \times \tgg_\FM^\reg$ is the union of the diagonal copy of $\tgg_\FM^\reg$ and the graph of the action of $s$. Now, let us deduce the case $\EM=\RG$.

By the same arguments as in the proof of Lemma~\ref{lem:fiber-product-Zs}\eqref{it:Zs-graph}, the fiber product $\bigl( \tSC \times_{\tgg} (\tgg \times_{\tgg_s} \tgg) \bigr)_\RG$ is a flat $\RG$-scheme, and a closed subscheme of the affine scheme $(\tSC \times \tSC)_\RG$. Consider the algebra 
\[
A:= \OC \bigl( \tSC \times_{\tgg} (\tgg \times_{\tgg_s} \tgg) \bigr)_\RG.
\]
We claim that $A$ is $\RG$-free. Indeed, consider the contracting $\GmR$-action on $\tSC_\RG$ considered in~\S\ref{ss:reminder}. The diagonal action on $(\tSC \times \tSC)_\RG$ stabilizes the subscheme $(\tSC \times_{\tgg} (\tgg \times_{\tgg_s} \tgg))_\RG$, so that $A$ is endowed with a $\ZM$-grading. Each graded piece of $A$ is finite over $\RG$, and flat, hence free, which proves our claim.

Now, consider the surjection
\[
\OC(\tg^* \times \tg^*)_\RG \twoheadrightarrow A.
\]
For any $x \in \OC(\tg^*_\RG)^s$, the image of $x \otimes 1 - 1 \otimes x$ in $A$ becomes zero in $\CM \otimes_\RG A$,
by the case $\EM=\CM$. Since $A$ is $\RG$-free, this implies that this image is $0$, hence that our morphism factors through a surjection
\[
\bigl( \OC(\tg^*) \otimes_{\OC(\tg^*)^s} \OC(\tg^*) \bigr)_\RG \twoheadrightarrow A.
\]
This surjection is compatible with the $\ZM$-gradings, and on both sides the graded pieces are free of finite rank over $\RG$. (In fact, this property has been proved above for $A$. For the left-hand side, we have $\OC(\tg^*_\RG)=\OC(\tg^*_\RG)^s \oplus \OC(\tg^*_\RG)^s \cdot \delta$, where $\delta \in \tg_\RG$ is any element such that $\langle \delta, \alpha \rangle=1$ -- see e.g.~\cite[Claim~3.9]{ew} --
which implies our claim.) Hence to conclude we only have to prove that the induced morphism
\[
\FM \otimes_\RG \bigl( \OC(\tg^*) \otimes_{\OC(\tg^*)^s} \OC(\tg^*) \bigr)_\RG \twoheadrightarrow \FM \otimes_\RG A
\]
is an isomorphism for all geometric points $\FM$ of $\RG$. 

Using again the case $\EM=\FM$ treated above, we only have to prove that the natural morphism
\[
\FM \otimes_\RG \bigl( \OC(\tg^*) \otimes_{\OC(\tg^*)^s} \OC(\tg^*) \bigr)_\RG \to \bigl( \OC(\tg^*) \otimes_{\OC(\tg^*)^s} \OC(\tg^*) \bigr)_\FM
\]
is an isomorphism. In turn, this follows easily from the decompositions $\OC(\tg^*_\EM)=\OC(\tg^*_\EM)^s \oplus \OC(\tg^*_\EM)^s \cdot \delta$ for $\EM=\RG$ and $\FM$ (where $\delta$ is as above, and we denote similarly its image in $\tg_\FM$).
\end{proof}

%%%%%%%%%%%%%%%%
%%%%%%%%%%%%%%%%
\section{Tilting exotic sheaves}
\label{sec:tilting-KW}
%%%%%%%%%%%%%%%%
%%%%%%%%%%%%%%%%

In this section we use the same notation as in Section~\ref{sec:KW-reduction}.

%------------------------------------------------------------------
\subsection{Overview}
\label{ss:overview-exotic}
%------------------------------------------------------------------

In this section we give a description of the category of tilting objects in $\ES^{\GB \times \Gm}(\tNC)$ in terms of ``Soergel bimodules.'' This description is based on a ``Bott--Samelson type'' description of these tilting objects, due to Dodd~\cite{dodd} in the case $p=0$ and generalized to the modular setting in~\cite{mr}, and on the use of the ``Kostant--Whittaker reduction'' of Section~\ref{sec:KW-reduction}.

In~\S\ref{ss:notation-KW-tilting} we review the basic definitions and results on Bezrukavnikov's exotic t-structure, and in~\S\ref{ss:standard-KW} we introduce and study some variants of the associated ``standard'' and ``costandard'' objects. In~\S\ref{ss:BS-objects-geom} we recall the ``Bott--Samelson'' description of tilting objects in $\ES^{\GB \times \Gm}(\tNC)$. In~\S\ref{ss:grk-Hom-tilting} we compute the graded ranks of $\Hom$-spaces between our ``Bott--Samelson objects.'' Finally, in~\S\S\ref{ss:BS-category-tilting}--\ref{ss:equivalence-tilting} we obtain the desired description in terms of Soergel bimodules.

%------------------------------------------------------------------
\subsection{Reminder on the exotic t-structure}
\label{ss:notation-KW-tilting}
%------------------------------------------------------------------

In this section we will
consider the Springer resolution 
\[
\tNC_\EM:=(\GB \times^{\BB} (\gg/\bg)^*)_\EM \hookrightarrow \Flag_\EM \times \gg^*_\EM,
\]
a sub-vector bundle of the Grothendieck resolution $\tgg_\EM$ studied in Section~\ref{sec:KW-reduction}. We denote by $i \colon \tNC_\EM \hookrightarrow \tgg_\EM$ the inclusion. For $\lambda \in \XB$ we denote by $\OC_{\tNC_\EM}(\lambda)$ the restriction of $\OC_{\tgg_\EM}(\lambda)$ to $\tNC_\EM$.
We will consider the derived categories of equivariant coherent sheaves
$D^{\GB \times \Gm}(\tNC)_\EM$ and $D^{\GB}(\tNC)_\EM$.

By~\cite[Section~1]{br}, the geometric braid group actions considered in~\S\ref{ss:braid-groth} ``restrict'' to the categories $D^{\GB \times \Gm}(\tNC)_\EM$ and $D^{\GB}(\tNC)_\EM$ in the following sense. For $s$ a finite simple reflection, associated with a simple root $\alpha$, we define $Z_s^{\prime \EM}:= Z_s^\EM \cap (\tNC_\EM \times \tNC_\EM)$, and denote by
\[
\TM^s_{\tNC}, \, \SM^s_{\tNC} \colon D^{\GB \times \Gm}(\tNC)_\EM \to D^{\GB \times \Gm}(\tNC)_\EM
\]
the Fourier--Mukai transforms with kernels $\OC_{Z_s^{\prime \EM}} \langle -1 \rangle$ and $\OC_{Z_s^{\prime \EM}}(-\rho, \rho-\alpha) \langle -1 \rangle$ respectively. We use the same symbols for the analogous endofunctors of $D^{\GB}(\tNC)_\EM$. (Here $\OC_{Z_s^{\prime \EM}}(-\rho, \rho-\alpha)$ is defined by the same recipe as for $\OC_{Z_s^{\EM}}(-\rho, \rho-\alpha)$ in~\S\ref{ss:braid-groth}.) Then $\TM^s_{\tNC}$ and $\SM^s_{\tNC}$ are quasi-inverse equivalences of categories, and there exists a right action of the group $\Baff$ on the categories $D^{\GB \times \Gm}(\tNC)_\EM$ and $D^{\GB}(\tNC)_\EM$ such that $T_s$ acts by $\TM^s_{\tNC}$ for any finite simple reflection $s$, and $\theta_\lambda$ acts by tensoring with $\OC_{\tNC_\EM}(\lambda)$ for any $\lambda \in \XB$.

For $b \in \Baff$, we denote by
\[
\JS^\EM_b \colon D^{\GB \times \Gm}(\tNC)_\EM \simto D^{\GB \times \Gm}(\tNC)_\EM
\]
the action of $b$. Then there exist isomorphisms of functors
\begin{equation}
\label{eqn:action-i*}
\IS_b^\EM \circ \Rder i_* \cong \Rder i_* \circ \JS_b^\EM, \qquad \Lder i^* \circ \IS_b^\EM \cong \JS_b^\EM \circ \Lder i^*
\end{equation}
and
\begin{equation}
\label{eqn:action-modular-reduction}
\FM \circ \JS_b^\RG \cong \JS_b^\FM \circ \FM.
\end{equation}

Recall the elements $w_\lambda \in \Waff$ defined in~\S\ref{ss:Haff}. We set
\[
\nabla^\lambda_{\tNC,\EM} := \JS^\EM_{T_{w_\lambda}}(\OC_{\tNC_\EM}), \qquad
\Delta^\lambda_{\tNC,\EM} := \JS^\EM_{(T_{w_\lambda^{-1}})^{-1}}(\OC_{\tNC_\EM}).
\]
By~\eqref{eqn:action-modular-reduction}, these objects satisfy
\begin{equation}
\label{eqn:F-D-N-tNC}
\FM(\nabla^\lambda_{\tNC,\RG}) \cong \nabla^\lambda_{\tNC,\FM}, \quad \FM(\Delta^\lambda_{\tNC,\RG}) \cong \Delta^\lambda_{\tNC,\FM}.
\end{equation}

\begin{remark}
\label{rk:nabla-delta-Baff}
For any $w \in W$ we have $\JS^\EM_{T_w}(\OC_{\tNC_\EM}) \cong \OC_{\tNC_\EM} \langle -\ell(w) \rangle$. (See~\cite[Equation~(3.10)]{mr} for the case $\EM=\FM$; the case $\EM=\RG$ can be deduced using the arguments in the proof of~\cite[Proposition~1.4.3]{br}, or proved along the same lines.) Therefore, as in~\cite{mr}, for any $w \in Wt_\lambda$ we have
\[
\nabla^\lambda_{\tNC,\EM} \cong \JS^\EM_{T_w}(\OC_{\tNC_\EM}) \langle -\ell(w_\lambda) + \ell(w) \rangle, \quad
\Delta^\lambda_{\tNC,\EM} \cong \JS^\EM_{(T_{w^{-1}})^{-1}}(\OC_{\tNC_\EM}) \langle - \ell(w) + \ell(w_\lambda) \rangle.
\]
\end{remark}

In the case $\EM=\FM$,
the objects $\nabla^\lambda_{\tNC,\FM}$ and $\Delta^\lambda_{\tNC,\FM}$ were studied in~\cite{mr} (see in particular~\cite[Proposition~3.7]{mr}). In fact, if we denote by $D^{\leq 0}$, resp.~$D^{\geq 0}$, the subcategory of $D^{\GB \times \Gm}(\tNC)_\FM$ generated under extensions by the objects $\Delta^\lambda_{\tNC,\FM} \langle n \rangle [m]$ with $n \in \ZM$ and $m \in \ZM_{\geq 0}$, resp.~by the objects $\nabla^\lambda_{\tNC,\FM} \langle n \rangle [m]$ with $n \in \ZM$ and $m \in \ZM_{\leq 0}$, then the pair $(D^{\leq 0}, D^{\geq 0})$ constitutes a bounded t-structure on $D^{\GB \times \Gm}(\tNC)_\FM$, called the \emph{exotic t-structure}. We denote by $\ES^{\GB \times \Gm}(\tNC)_\FM$ the heart of this t-structure.

By~\cite[Corollary~3.10]{mr}, the objects $\Delta^\lambda_{\tNC,\FM}$ and $\nabla^\lambda_{\tNC,\FM}$ belong to $\ES^{\GB \times \Gm}(\tNC)_\FM$. Let us fix an order $\leq'$ on $\XB$ as in~\cite[\S 2.5]{mr}. Then by~\cite[\S 3.5]{mr}, the category $\ES^{\GB \times \Gm}(\tNC)_\FM$ is a graded highest weight category with weight poset $(\XB, \leq')$, standard objects $\{\Delta^\lambda_{\tNC,\FM}, \, \lambda \in \XB\}$ and costandard objects $\{\nabla^\lambda_{\tNC, \FM}, \, \lambda \in \XB\}$. In particular, it makes sense to consider the \emph{tilting objects} in $\ES^{\GB \times \Gm}(\tNC)_\FM$, i.e.~the objects which admit both a standard filtration (i.e.~a filtration with subquotients of the form $\Delta^\lambda_{\tNC, \FM} \langle m \rangle$ with $\lambda \in \XB$ and $m \in \ZM$), 
and a costandard filtration (i.e.~a filtration with subquotients of the form $\nabla^\lambda_{\tNC, \FM} \langle m \rangle$ with $\lambda \in \XB$ and $m \in \ZM$).
The subcategory consisting of such objects will be denoted by $\Tilt(\ES^{\GB \times \Gm}(\tNC)_\FM)$.

If $X$ admits a standard filtration, resp.~a costandard filtration, we denote by $(X : \Delta^\lambda_{\tNC, \FM} \langle m \rangle)$, resp.~$(X : \nabla^\lambda_{\tNC, \FM} \langle m \rangle)$, the number of times $\Delta^\lambda_{\tNC, \FM} \langle m \rangle$, resp.~$\nabla^\lambda_{\tNC, \FM} \langle m \rangle$, appears in a standard, resp.~costandard, filtration of $X$. (This number does not depend on the filtration.) The general theory of graded highest weight categories implies that for any $\lambda \in \XB$ there exists a unique (up to isomorphism) indecomposable object $\TC^\lambda$ in $\Tilt(\ES^{\GB \times \Gm}(\tNC)_\FM)$ which satisfies 
\[
(\TC^\lambda : \Delta_{\tNC, \FM}^\lambda)=1 \quad \text{and} \qquad (\TC^\lambda : \Delta^\mu_{\tNC, \FM} \langle m \rangle) \neq 0 \Rightarrow \mu \leq' \lambda.
\]
Moreover, every object in $\Tilt(\ES^{\GB \times \Gm}(\tNC)_\FM)$ is a direct sum of objects of the form $\TC^\lambda \langle m \rangle$ for some $\lambda \in \XB$ and $m \in \ZM$. (See e.g.~\cite[Appendix~A]{modrap2} for references on this subject.)

Recall also from~\cite[Equation~(2.3)]{mr} that we have
\begin{equation}
\label{eqn:Hom-Delta-nabla}
\Hom_{D^{\GB \times \Gm}(\tNC)_\FM}(\Delta^\lambda_{\tNC,\FM}, \nabla^\mu_{\tNC,\FM} \langle n \rangle [m]) =
\begin{cases}
\FM & \text{if $\lambda=\mu$ and $n=m=0$;} \\
0 & \text{otherwise.}
\end{cases}
\end{equation}

\begin{remark}
One can easily check that none of our constructions depends on the choice of the order $\leq'$.
\end{remark}

%------------------------------------------------------------------
\subsection{More ``standard'' and ``costandard'' objects}
\label{ss:standard-KW}
%------------------------------------------------------------------

If $\HB$ is an affine group scheme over a ring $k$, we denote by $\Inv^\HB_k$ the functor of derived $\HB$-invariants; see~\cite[\S A.3]{mr}.

\begin{lem}
\label{lem:morphisms-fg}
For any $\FC,\GC$ in $D^{\GB \times \Gm}(\tNC)_\RG$, 
the complex of $\RG$-modules
\[
R\Hom_{D^{\GB \times \Gm}(\tNC)_\RG}(\FC,\GC)
\]
is bounded, and has finitely generated cohomology modules.
\end{lem}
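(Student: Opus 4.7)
The plan is to reduce, via a standard dévissage and the properness of the Springer map, to a finiteness statement for derived $\GB\times\Gm$-invariants of finitely generated modules over the coordinate ring of the nilpotent cone.

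First, by the usual truncation argument in the bounded derived category --- inducting on the number of nonzero cohomology sheaves of $\FC$ and $\GC$ and applying the long exact sequences in $\Ext$ together with the five-lemma --- we may assume $\FC, \GC \in \Coh^{\GB \times \Gm}(\tNC)_\RG$. Since $\tNC_\RG$ is smooth over $\RG$ (being a vector bundle over the smooth projective $\RG$-scheme $\Flag_\RG$), any such coherent sheaf admits a finite $\GB\times\Gm$-equivariant locally free resolution: via the induction equivalence $\Coh^{\GB\times\Gm}(\tNC)_\RG \cong \Coh^{\BB\times\Gm}((\gg/\bg)^*)_\RG$, this reduces to constructing finite projective resolutions for finitely generated $\BB\times\Gm$-equivariant modules over the polynomial ring $\mathrm{S}((\gg/\bg)_\RG)$. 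This already gives the boundedness of $R\Hom_{D^{\GB\times\Gm}(\tNC)_\RG}(\FC,\GC)$.

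For finite generation, I would then write
\[
R\Hom_{D^{\GB\times\Gm}(\tNC)_\RG}(\FC,\GC) \;\cong\; \Inv^{\GB\times\Gm}_\RG\, R\Gamma\bigl(\tNC_\RG,\, \RHOM(\FC,\GC)\bigr),
\]
with $\RHOM(\FC,\GC)$ computed via the finite locally free resolution of $\FC$ above, so that it is a bounded complex of coherent $\GB\times\Gm$-equivariant sheaves on $\tNC_\RG$. The Springer map $\pi \colon \tNC_\RG \to \NC_\RG$ to the nilpotent cone is projective, hence $R\pi_*$ preserves bounded coherent complexes, and $\NC_\RG$ is affine, so $R\Gamma$ degenerates to ordinary global sections. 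The problem thus reduces to showing that, for a finitely generated $\GB\times\Gm$-equivariant $\OC(\NC_\RG)$-module $M$, the derived invariants $\Inv^{\GB\times\Gm}_\RG(M)$ are bounded with finitely generated cohomology modules over $\RG$.

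The main obstacle is the control of derived $\GB$-invariants in mixed characteristic. The crucial input is that our assumptions on $\RG$ ensure, via Chevalley restriction, that $\OC(\NC_\RG)^\GB = \RG$ and that $\OC(\NC_\RG)$ is finite over this invariant subring. Applying the induction equivalence once more, derived $\GB$-invariants of $M$ compute derived $\BB$-invariants of the corresponding object on $(\gg/\bg)^*_\RG$; the decomposition $\Inv^\BB = \Inv^\TB \circ \Inv^\UB$ then shows these are bounded (the $\TB$-factor is exact on weight spaces, and the $\UB$-factor has cohomological dimension at most $\dim\UB$ via a Koszul-type resolution) and preserve finite generation over $\RG$, thanks to the invertibility in $\RG$ of the torsion primes of $\GB$.
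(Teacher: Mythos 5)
The decisive step of this lemma is the control of derived invariants for the reductive group scheme $\GB_\RG$ over $\RG = \ZM[1/N]$, whose geometric points have almost all positive characteristics, and this is exactly where your argument breaks down. You bound the cohomological dimension of $\Inv^{\UB}$ by $\dim \UB$ ``via a Koszul-type resolution'': this is a characteristic-zero argument. Over a field $k$ of characteristic $p>0$ one has $\HM^i(\Ga, k) \neq 0$ for infinitely many $i$, so unipotent groups have infinite cohomological dimension and the trivial module admits no finite Koszul-type resolution; the same failure persists over $\RG$. The boundedness and $\RG$-finiteness of $\Inv^{\GB_\RG}_\RG(M)$ for a $\GB_\RG$-module $M$ finitely generated over $\RG$ is a genuine theorem about reductive group schemes over Noetherian rings (the paper cites Lemma~II.B.5 of~\cite{jantzen} and its proof), whose proof rests on Kempf vanishing and good-filtration-dimension arguments, not on a Koszul complex for $\UB$. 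Two further steps are also incorrect as written: a finite $\GB\times\Gm$-equivariant locally free resolution of $\FC$ does not ``already give'' boundedness of $R\Hom_{D^{\GB\times\Gm}(\tNC)_\RG}(\FC,\GC)$, because equivariant locally free sheaves are not projective objects of $\Coh^{\GB\times\Gm}(\tNC)_\RG$ (the group cohomology still intervenes and is precisely what must be bounded); and $\OC(\NC_\RG)$ is certainly not finite over its invariant subring $\RG$.

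For comparison, the paper's proof is short and avoids all of this. By adjunction (\cite[Proposition~A.6]{mr}) one has $R\Hom_{D^{\GB\times\Gm}(\tNC)_\RG}(\FC,\GC) \cong \Inv^{\GB_\RG}_\RG \circ \Inv^{\GmR}_\RG \bigl( R\Hom_{\Db\Coh(\tNC_\RG)}(\FC,\GC) \bigr)$; the inner $R\Hom$ is a bounded complex of finitely generated $\OC(\gg^*_\RG)$-modules because $\tNC_\RG \to \gg^*_\RG$ is projective (no need to pass through the nilpotent cone, whose definition over $\RG$ requires some care); the functor $\Inv^{\GmR}_\RG$ is exact and yields $\RG$-finite modules because the weight spaces of a finitely generated graded $\OC(\gg^*_\RG)$-module are finitely generated over $\RG$ --- this, rather than any finiteness of $\OC(\NC_\RG)$ over $\RG$, is the mechanism reducing the problem to $\RG$-finite $\GB_\RG$-modules; and one concludes with the cited finiteness theorem for reductive group cohomology. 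If you want to salvage your outline, the step you must replace is the treatment of $\Inv^{\UB}$; the rest can be reorganized to match the paper's argument.
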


\begin{proof}
By~\cite[Proposition~A.6]{mr},
we have a natural isomorphism
\[
R\Hom_{D^{\GB \times \Gm}(\tNC)_\RG}(\FC,\GC) \cong \Inv^{\GB_\RG}_\RG \circ \Inv^{\GmR}_{\RG} \bigl( R\Hom_{\Db \Coh(\tNC_\RG)}(\FC,\GC) \bigr).
\]
Since the natural morphism $\tNC_\RG \to \gg^*_\RG$ (the restriction of the morphism $\pi$ from~\S\ref{ss:notation-KW}) is projective, by~\cite[Theorem~III.8.8]{hartshorne} $R\Hom_{\Db \Coh(\tNC_\RG)}(\FC,\GC)$ is a bounded complex of finitely generated $\OC(\gg^*_\RG)$-modules, which implies that the complex of $\RG$-modules $\Inv^{\GmR}_{\RG} ( R\Hom_{\Db \Coh(\tNC_\RG)}(\FC,\GC) )$ is bounded, and has finitely generated cohomology modules. Hence the claim follows from the fact that if $M$ is a $\GB_\RG$-module which is finitely generated over $\RG$, then $\Inv^{\GB_\RG}_\RG(M)$ is a bounded complex of finitely generated $\RG$-modules, see~\cite[Lemma~II.B.5 and its proof]{jantzen}.
\end{proof}

\begin{prop}
\label{prop:morphism-D-N-tNC}
For $\lambda,\mu \in \XB$ and $n,m \in \ZM$
we have
\[
\Hom_{D^{\GB \times \Gm}(\tNC)_\RG}(\Delta^\lambda_{\tNC,\RG}, \nabla^{\mu}_{\tNC,\RG} \langle n \rangle [m]) = \begin{cases}
\RG & \text{if $\lambda=\mu$ and $n=m=0$;} \\
0 & \text{otherwise.}
\end{cases}
\]
Moreover, the natural morphism
\[
\FM \otimes_\RG \Hom_{D^{\GB \times \Gm}(\tNC)_\RG}(\Delta^\lambda_{\tNC,\RG}, \nabla^{\mu}_{\tNC,\RG} \langle n \rangle [m])
\to \Hom_{D^{\GB \times \Gm}(\tNC)_\FM}(\Delta^\lambda_{\tNC,\FM}, \nabla^{\mu}_{\tNC,\FM} \langle n \rangle [m])
\]
is an isomorphism.
\end{prop}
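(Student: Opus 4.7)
The plan is to prove both statements simultaneously by a universal-coefficients argument, using the known computation over $\FM$ (cited as~\eqref{eqn:Hom-Delta-nabla}) together with a base-change identification between the derived Hom over $\RG$ and the one over $\FM$. Fix $\lambda,\mu\in\XB$ and $n\in\ZM$, and let $X^\RG$ denote the complex of $\RG$-modules computing
\[
H^m(X^\RG)\;=\;\Hom_{D^{\GB\times\Gm}(\tNC)_\RG}\bigl(\Delta^\lambda_{\tNC,\RG},\,\nabla^\mu_{\tNC,\RG}\langle n\rangle[m]\bigr);
\]
concretely, $X^\RG$ is the $n$-th graded piece of $R\Hom_{D^{\GB\times\Gm}(\tNC)_\RG}(\Delta^\lambda_{\tNC,\RG},\nabla^\mu_{\tNC,\RG})$. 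By Lemma~\ref{lem:morphisms-fg}, $X^\RG$ is a bounded complex of finitely generated $\RG$-modules. Define $X^\FM$ analogously for each geometric point $\FM$ of $\RG$; by~\eqref{eqn:Hom-Delta-nabla} together with~\eqref{eqn:F-D-N-tNC}, the complex $X^\FM$ is quasi-isomorphic to $\FM$ concentrated in degree $0$ when $(\lambda,n)=(\mu,0)$, and to $0$ otherwise.

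The central step is a base-change isomorphism $\FM\lotimes_\RG X^\RG\simto X^\FM$. To obtain it, one unwinds $R\Hom$ as $\Inv^{\GB_\RG}_\RG\Inv^{\GmR}_\RG$ of the non-equivariant derived Hom, as in the proof of Lemma~\ref{lem:morphisms-fg}, and applies two compatibilities: (i) the pushforward along the projective morphism $\tNC_\RG\to\gg^*_\RG$ commutes with $\FM\lotimes_\RG(-)$ on bounded-below complexes of coherent sheaves, so that non-equivariant $R\Hom$ base-changes correctly; and (ii) for the reductive $\RG$-group $\GB_\RG$ (and the split torus $\GmR$), derived invariants commute with $\FM\lotimes_\RG(-)$ applied to bounded complexes with coherent cohomology. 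That the $\Baff$-action is compatible with modular reduction, as recorded in~\eqref{eqn:action-modular-reduction}, ensures the objects $\Delta^\lambda_{\tNC,\RG}$ and $\nabla^\mu_{\tNC,\RG}$ are carried to their $\FM$-counterparts, so the two sides of the desired isomorphism compute the ``same'' derived Hom.

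Granting this base-change, we conclude via universal coefficients over the PID $\RG$: for every geometric point $\FM$ and every $m$, there is a short exact sequence
\[
0\to\FM\otimes_\RG H^m(X^\RG)\to H^m(X^\FM)\to\Tor_1^\RG\bigl(\FM,H^{m+1}(X^\RG)\bigr)\to 0.
\]
Since $X^\RG$ is bounded, an induction descending from large $m$ yields: in the ``bad'' case $((\lambda,n)\neq(\mu,0))$, all $H^m(X^\RG)$ vanish, because a finitely generated $\RG$-module killed by every base change $\FM\otimes_\RG(-)$ is zero; in the ``good'' case $(\lambda=\mu,\,n=0)$, one obtains $H^m(X^\RG)=0$ for $m\neq 0$ and $H^0(X^\RG)=M$ with $\FM\otimes_\RG M\cong\FM$ and $\Tor_1^\RG(\FM,M)=0$ for every $\FM$, forcing $M\cong\RG$. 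This yields the first assertion, and the second then follows immediately, since $H^0(X^\RG)$ is $\RG$-free and the base-change map identifies with the natural one.

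The main obstacle is the equivariant base-change isomorphism $\FM\lotimes_\RG R\Hom_\RG\simto R\Hom_\FM$: derived $\GB$- and $\Gm$-invariants do not in general commute with base change, so one must exploit both the projectivity of $\tNC_\RG\to\gg^*_\RG$ (to reduce to finite complexes over $\OC(\gg^*_\RG)$) and the very good characteristic assumption on $\RG$ (which ensures that reductive group cohomology behaves well after base change on bounded complexes), mirroring the arguments used in the appendix of~\cite{mr}.
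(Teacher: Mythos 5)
Your proposal is correct and follows essentially the same route as the paper: establish the base-change isomorphism $\FM \lotimes_\RG R\Hom_\RG \simto R\Hom_\FM$ (which the paper cites from the proof of~\cite[Lemma~4.11]{mr}), invoke the known computation~\eqref{eqn:Hom-Delta-nabla} over $\FM$, and conclude by a universal-coefficients/Nakayama argument using the boundedness and finite generation supplied by Lemma~\ref{lem:morphisms-fg}. The only difference is that you sketch the proof of the base-change step rather than citing it, and your sketch is consistent with how that step is actually carried out.
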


\begin{proof}
By the same arguments as in~\cite[Proof of Lemma~4.11]{mr}, there exists a canonical isomorphism of complexes of $\FM$-vector spaces
\[
\FM \lotimes_\RG R\Hom_{D^{\GB \times \Gm}(\tNC)_\RG}(\Delta^\lambda_{\tNC,\RG}, \nabla^{\mu}_{\tNC,\RG} \langle n \rangle)
\simto R\Hom_{D^{\GB \times \Gm}(\tNC)_\FM}(\Delta^\lambda_{\tNC,\FM}, \nabla^{\mu}_{\tNC,\FM} \langle n \rangle).
\]
By~\eqref{eqn:Hom-Delta-nabla}, the right-hand side is isomorphic to $\FM$ if $\lambda=\mu$ and $n=0$, and is $0$ otherwise. Using Lemma~\ref{lem:morphisms-fg}, we deduce the claim.
\end{proof}

As in~\cite[\S 4.1]{mr}, for $\lambda \in \XB$, we set
\[
\nabla^\lambda_{\tgg,\EM} := \IS^\EM_{T_{w_\lambda}}(\OC_{\tgg_\EM}), \qquad
\Delta^\lambda_{\tgg,\EM} := \IS^\EM_{(T_{w_\lambda^{-1}})^{-1}}(\OC_{\tgg_\EM}).
\]
Then by~\eqref{eqn:F-IS} we have
\[
\FM(\nabla^\lambda_{\tgg,\RG}) \cong \nabla^\lambda_{\tgg,\FM}, \qquad \FM(\Delta^\lambda_{\tgg,\RG}) \cong \Delta^\lambda_{\tgg,\FM},
\]
and by~\eqref{eqn:action-i*} we have
\begin{equation}
\label{eqn:delta-nabla-i^*}
\Lder i^*(\nabla^\lambda_{\tgg,\EM}) \cong \nabla^\lambda_{\tNC,\EM}, \qquad \Lder i^*(\Delta^\lambda_{\tgg,\EM}) \cong \Delta^\lambda_{\tNC,\EM}.
\end{equation}

Recall that for $\FC,\GC$ in $D^{\GB \times \Gm}(\tgg)_\EM$ the morphism
\[
\bigoplus_{n \in \ZM} \Hom_{D^{\GB \times \Gm}(\tgg)_\EM}(\FC, \GC \langle -n \rangle) \to \Hom_{D^{\GB}(\tgg)_\EM}(\FC,\GC)
\]
induced by the forgetful functor
is an isomorphism. In particular, this isomorphism endows the right-hand side with a natural $\ZM$-grading.

\begin{prop}
\label{prop:morphism-D-N-tgg}
For $\lambda,\mu \in \XB$ and $m \in \ZM$ there exists an isomorphism of graded $\OC(\tg^*_\EM)$-modules
\[
\Hom_{D^{\GB}(\tgg)_\EM}(\Delta^\lambda_{\tgg,\EM}, \nabla^{\mu}_{\tgg,\EM} [m]) = \begin{cases}
\OC(\tg^*_\EM) & \text{if $\lambda=\mu$ and $m=0$;} \\
0 & \text{otherwise.}
\end{cases}
\]
Moreover, the natural morphisms
\[
\EM \otimes_{\OC(\tg^*_\EM)} \Hom_{D^{\GB}(\tgg)_\EM}(\Delta^\lambda_{\tgg,\EM}, \nabla^{\mu}_{\tgg,\EM} [m])
\to \Hom_{D^{\GB}(\tNC)_\EM}(\Delta^\lambda_{\tNC,\EM}, \nabla^{\mu}_{\tNC,\EM} [m])
\]
(induced by $\Lder i^*$ via isomorphisms~\eqref{eqn:delta-nabla-i^*})
and
\[
\FM \otimes_\RG \Hom_{D^{\GB}(\tgg)_\RG}(\Delta^\lambda_{\tgg,\RG}, \nabla^{\mu}_{\tgg,\RG} [m])
\to \Hom_{D^{\GB}(\tgg)_\FM}(\Delta^\lambda_{\tgg,\FM}, \nabla^{\mu}_{\tgg,\FM} [m])
\]
are isomorphisms.
\end{prop}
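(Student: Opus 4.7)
The approach is to deduce this from Proposition~\ref{prop:morphism-D-N-tNC} on $\tNC_\EM$, via the closed embedding $i \colon \tNC_\EM \hookrightarrow \tgg_\EM$ together with the graded-Nakayama-type statement of Lemma~\ref{lem:nakayama}\eqref{it:nakayama-complex}. Set $M^\bullet_\EM := R\Hom_{D^{\GB}(\tgg)_\EM}(\Delta^\lambda_{\tgg,\EM}, \nabla^\mu_{\tgg,\EM})$; this is a bounded complex of $\ZM$-graded $\OC(\tg^*_\EM)$-modules, the $\OC(\tg^*_\EM)$-action being given by pulling back functions along $\nu$ to obtain central endomorphisms of each object of $D^{\GB}(\tgg)_\EM$. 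Boundedness of $M^\bullet_\EM$ follows by an argument parallel to Lemma~\ref{lem:morphisms-fg} (carried out in the $\GB$-equivariant rather than $\GB \times \Gm$-equivariant setting), and its cohomology modules are bounded below for the internal grading because the $\Gm$-weights on $\OC_{\tgg_\EM}$ are non-negative and $\Delta^\lambda_{\tgg,\EM}, \nabla^\mu_{\tgg,\EM}$ arise from $\OC_{\tgg_\EM}$ via the braid-group action of~\S\ref{ss:braid-groth}.

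The key step is to produce a natural isomorphism
\[
R\Hom_{D^{\GB}(\tNC)_\EM}(\Delta^\lambda_{\tNC,\EM}, \nabla^\mu_{\tNC,\EM}) \cong M^\bullet_\EM \lotimes_{\OC(\tg^*_\EM)} \EM.
\]
Since $\tNC_\EM$ is the scheme-theoretic fiber of $\nu$ over $0 \in \tg^*_\EM$ and the embedding $\{0\} \hookrightarrow \tg^*_\EM$ is cut out by the regular sequence given by a basis of $\tg_\EM \subset \OC(\tg^*_\EM)$, the equivariant Koszul resolution identifies $\Rder i_* \OC_{\tNC_\EM}$ with $\OC_{\tgg_\EM} \lotimes_{\OC(\tg^*_\EM)} \EM$ as a perfect complex of free $\OC_{\tgg_\EM}$-modules. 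Combining this with the adjunction $\Lder i^* \dashv \Rder i_*$, the projection formula, and the isomorphisms~\eqref{eqn:delta-nabla-i^*}, then using perfectness of the Koszul complex to interchange the derived tensor product with $R\Hom$, yields the displayed identification.

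By Proposition~\ref{prop:morphism-D-N-tNC}, applied in the $\GB$-equivariant setting (noting that $R\Hom$ in $D^{\GB}(\tNC)_\EM$ is the direct sum over the internal grading of $R\Hom$'s in $D^{\GB \times \Gm}(\tNC)_\EM$), the left-hand side of this identification is concentrated in cohomological degree $0$ and internal degree $0$, equal to $\EM$ when $\lambda = \mu$ and vanishing otherwise. Applying Lemma~\ref{lem:nakayama}\eqref{it:nakayama-complex} to $M^\bullet_\EM$, with $k = \EM$ and $A = \OC(\tg^*_\EM)$, forces $M^\bullet_\EM$ to be concentrated in cohomological degree $0$ and to be graded free over $\OC(\tg^*_\EM)$ of graded rank $1$ in internal degree $0$ (when $\lambda = \mu$) or of rank $0$ (otherwise). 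This gives the $R\Hom$ computation on $\tgg_\EM$; the first asserted isomorphism of the proposition then becomes the canonical identification $M^0_\EM \otimes_{\OC(\tg^*_\EM)} \EM \simto M^\bullet_\EM \lotimes_{\OC(\tg^*_\EM)} \EM$. For the modular-reduction compatibility, both $\FM \otimes_\RG M^0_\RG$ and $M^0_\FM$ are graded free $\OC(\tg^*_\FM)$-modules of the same finite graded rank, and the natural morphism between them induced by $\FM(-)$ is visibly nonzero when $\lambda = \mu$, so Lemma~\ref{lem:morphism-grk} shows that it is an isomorphism.

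The principal obstacle will be the careful verification that $R\Hom_{D^{\GB}(\tgg)_\EM}$ commutes with the derived tensor product $(-) \lotimes_{\OC(\tg^*_\EM)} \EM$ in the form used above: while morally immediate from centrality of the $\OC(\tg^*_\EM)$-action and perfectness of the Koszul complex of $\EM$ over $\OC(\tg^*_\EM)$, this requires care in the equivariant derived category, as does verifying that the internal grading of $M^\bullet_\EM$ is indeed bounded below.
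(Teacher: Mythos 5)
Your argument is correct and follows essentially the same route as the paper's: the central point in both is the base-change isomorphism $\EM \lotimes_{\OC(\tg^*_\EM)} \RHom_{D^{\GB}(\tgg)_\EM}(\FC,\GC) \cong \RHom_{D^{\GB}(\tNC)_\EM}(\Lder i^*\FC, \Lder i^*\GC)$ (which the paper imports from~\cite[Lemma~4.11]{mr} and you rederive via the Koszul resolution of $\Rder i_* \OC_{\tNC_\EM}$ and the projection formula), followed by Proposition~\ref{prop:morphism-D-N-tNC} and Lemma~\ref{lem:nakayama}\eqref{it:nakayama-complex}. The only divergence is in the final modular-reduction statement, where the paper base-changes the whole $\RHom$ complex along $\RG \to \FM$ as in Proposition~\ref{prop:morphism-D-N-tNC} while you compare graded ranks; both work, though your ``visibly nonzero'' step is most cleanly justified by composing with the already-established isomorphism onto $\Hom_{D^{\GB}(\tNC)_\FM}(\Delta^\lambda_{\tNC,\FM},\nabla^\mu_{\tNC,\FM})$.
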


\begin{proof}
As in~\cite[Lemma~4.11]{mr},
for $\FC,\GC$ in $D^{\GB}(\tgg)_\EM$
we have a canonical isomorphism
\[
\EM \lotimes_{\OC(\tg_\EM^*)} R\Hom_{D^{\GB}(\tgg)_\EM}(\FC,\GC) \cong R\Hom_{D^{\GB}(\tNC)_\EM}(\Lder i^* \FC, \Lder i^* \GC).
\]
Choosing $\FC=\Delta^\lambda_{\tgg,\EM}$ and $\GC=\nabla^{\mu}_{\tgg,\EM}$ and using~\eqref{eqn:delta-nabla-i^*}, we deduce an isomorphism
\[
\EM \lotimes_{\OC(\tg_\EM^*)} R\Hom_{D^{\GB}(\tgg)_\EM}(\Delta^\lambda_{\tgg,\EM}, \nabla^{\mu}_{\tgg,\EM}) \cong R\Hom_{D^{\GB}(\tNC)_\EM}(\Delta^\lambda_{\tNC,\EM}, \nabla^{\mu}_{\tNC,\EM}).
\]
Using Proposition~\ref{prop:morphism-D-N-tNC}, the right-hand side is concentrated in degree $0$, and isomorphic either to $\EM$ (when $\lambda=\mu$) or to $0$ (when $\lambda \neq \mu$). By Lemma~\ref{lem:nakayama}\eqref{it:nakayama-complex}, this implies the first claim, and the first isomorphism.

The proof of the second isomorphism is similar to the proof of the corresponding claim in Proposition~\ref{prop:morphism-D-N-tNC}.
\end{proof}

%------------------------------------------------------------------
\subsection{``Coherent'' Bott--Samelson objects}
\label{ss:BS-objects-geom}
%------------------------------------------------------------------

As in~\cite[\S 4.2]{mr},
if $s$ is a finite simple reflection,
we denote by 
\[
\Xi_s^\EM \colon D^{\GB \times \Gm}(\tgg)_\EM \to D^{\GB \times \Gm}(\tgg)_\EM
\]
the Fourier--Mukai transform associated with the kernel $\OC_{(\tgg \times_{\tgg_s} \tgg)_\EM} \langle -1 \rangle$ (see~\S\ref{ss:braid-groth} for the notation). We also make a similar definition for $s_0$ an affine simple reflection: by Lemma~\ref{lem:affine-simple-conjugate} we can choose (once and for all) a finite simple reflection $t$ and an element $b \in \Baff$ such that $T_{s_0} = b T_t b^{-1}$, and set $\Xi^\EM_{s_0}:=\IS^\EM_{b^{-1}} \circ \Xi^\EM_t \circ \IS^\EM_b$. 
One can easily check that for any simple reflection $s \in \Waff$ there exists an isomorphism
\begin{equation}
\label{eqn:F-Xi}
\FM \circ \Xi_s^\RG \cong \Xi_s^\FM \circ \FM.
\end{equation}

For $\us=(s_1, \cdots, s_n)$ a sequence of simple reflections and $\omega \in \Omega$, we set
\[
\MC_{\EM}(\omega,\us):=  \Xi^\EM_{s_n} \circ \cdots \circ \Xi^\EM_{s_1} \circ \IS^\EM_{T_\omega} (\OC_{\tgg_\EM}).
\]
By~\eqref{eqn:F-IS} and~\eqref{eqn:F-Xi}, these objects satisfy
\[
\FM(\MC_{\RG}(\omega,\us)) \cong \MC_{\FM}(\omega,\us).
\]

Our interest in the objects $\MC_{\EM}(\omega,\us)$ comes from the following result, which
is proved in~\cite[Corollary~4.2]{mr}.

\begin{prop}
\label{prop:tilting-Bott-Samelson}
For any sequence $\us$ of simple reflections and $\omega \in \O$, the object $\Lder i^* \bigl( \MC_{\FM}(\omega,\us) \bigr)$ is in $\ES^{\GB \times \Gm}(\tNC)_\FM$, and is tilting. Moreover, any indecomposable object of $\Tilt(\ES^{\GB \times \Gm}(\tNC)_\FM)$ is a direct summand in an object of the form $\Lder i^* \bigl( \MC_{\FM}(\omega,\us) \bigr) \langle n \rangle$ with $(\omega, \us)$ as before and $n \in \ZM$. \qed
\end{prop}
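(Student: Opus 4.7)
The plan is to deduce both assertions from the two short exact sequences in~\eqref{eqn:ses-tgg} combined with induction on the length of $\us$. After tensoring~\eqref{eqn:ses-tgg} by $\langle -1 \rangle$ and passing to Fourier--Mukai transforms, one obtains two distinguished triangles of endofunctors of $D^{\GB \times \Gm}(\tgg)_\FM$:
\[
\id \langle 1 \rangle \to \Xi_s^\FM \to \IS^\FM_{T_s} \xrightarrow{[1]}, \qquad \IS^\FM_{(T_s)^{-1}} \to \Xi_s^\FM \to \id \langle -1 \rangle \xrightarrow{[1]}.
\]
Applying $\Lder i^*$ and using~\eqref{eqn:action-i*} yields analogous triangles on $\tNC_\FM$ in which the extremal terms involve the functors $\JS^\FM_{T_s}$ and $\JS^\FM_{(T_s)^{-1}}$.

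For the first assertion, I would argue by induction on the length $n$ of $\us = (s_1, \ldots, s_n)$ that $\Lder i^* \MC_\FM(\omega, \us)$ lies in $\ES^{\GB \times \Gm}(\tNC)_\FM$ and admits both a standard and a costandard filtration. The base case $n=0$ reduces to $\Lder i^* \JS^\FM_{T_\omega}(\OC_{\tNC_\FM})$, which coincides with $\Delta^{\omega}_{\tNC,\FM}$ and $\nabla^\omega_{\tNC,\FM}$ in view of Remark~\ref{rk:nabla-delta-Baff}, since $\ell(\omega)=0$. For the inductive step, set $\us' = (s_1, \ldots, s_{n-1})$ and $s=s_n$, and set $\FC = \Lder i^*\MC_\FM(\omega, \us')$. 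Applying $\Lder i^*$ to the first triangle above applied to $\MC_\FM(\omega, \us')$ gives
\[
\FC \langle 1 \rangle \to \Lder i^* \MC_\FM(\omega, \us) \to \JS^\FM_{T_s}(\FC) \xrightarrow{[1]}.
\]
One then needs to verify that $\JS^\FM_{T_s}$ preserves the subcategory of $\ES^{\GB \times \Gm}(\tNC)_\FM$ consisting of objects with a standard filtration, and analyze the effect on $\Delta^\lambda_{\tNC, \FM}$ using Remark~\ref{rk:nabla-delta-Baff}: this is where the length function on $\Waff$ intervenes, distinguishing the two cases $\ell(w_\lambda s) = \ell(w_\lambda) \pm 1$. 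In the ``length-decreasing'' case one obtains another standard object directly; in the ``length-increasing'' case one obtains a two-step extension of standards, which still has a standard filtration. Combining this with the triangle above shows $\Lder i^* \MC_\FM(\omega, \us)$ has a standard filtration and lies in $\ES^{\GB \times \Gm}(\tNC)_\FM$; the costandard filtration is obtained symmetrically using the second triangle.

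For the ``moreover'' part, given $\lambda \in \XB$, I would choose a reduced expression $w_{-\lambda} = \omega s_1 \cdots s_n$ and argue by induction on $n$ that $\Lder i^* \MC_\FM(\omega, (s_1, \ldots, s_n))$ contains $\TC^\lambda$ as a direct summand, with all remaining indecomposable summands of the form $\TC^\mu \langle m \rangle$ with $\mu <' \lambda$. Indeed, tracking standard multiplicities through the inductive triangles above, the multiplicity of $\Delta^\lambda_{\tNC, \FM}$ in $\Lder i^* \MC_\FM(\omega, \us)$ is $1$ (this follows from the length-additivity along the reduced expression), while every other $\mu$ occurring satisfies $\mu \leq' \lambda$. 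Since $\Tilt(\ES^{\GB \times \Gm}(\tNC)_\FM)$ is Krull--Schmidt and the indecomposable tilting $\TC^\lambda$ is characterized by the above multiplicity properties, it must appear as a summand.

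The main obstacle will be in the inductive step: controlling precisely how $\JS^\FM_{T_s}$ acts on standard and costandard objects, and ensuring that the resulting triangle yields a genuine filtration rather than a more complicated extension. This amounts to lifting the multiplication relations of $\Baff$ on the spherical module of $\Haff$ to the coherent categorification, and is essentially a careful bookkeeping exercise using Remark~\ref{rk:nabla-delta-Baff} together with the vanishing in Proposition~\ref{prop:morphism-D-N-tNC}, which ensures that the relevant extensions either split or produce a two-step filtration of the expected form.
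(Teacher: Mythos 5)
The paper does not prove this statement itself --- it is quoted from \cite[Corollary~4.2]{mr} --- so you are in effect reconstructing the proof of that result, and your overall strategy (induction on the length of $\us$ via the two triangles of functors obtained from~\eqref{eqn:ses-tgg}, then Krull--Schmidt plus multiplicity bookkeeping for the ``moreover'' part) is indeed the strategy of \emph{loc.~cit.} The base case and the ``moreover'' part are fine in outline, granting the compatibility of the order $\leq'$ with the terms occurring in $\mm(\omega,\us)$.

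However, the key inductive step as you set it up contains a genuine error: $\JS^\FM_{T_s}$ does \emph{not} preserve the subcategory of objects admitting a standard filtration, and $\JS^\FM_{T_s}(\Delta^w_{\tNC,\FM})$ is \emph{not} a two-step extension of standard objects in the length-increasing case. Writing $\Delta^w_{\tNC,\FM}=\JS_{(T_{w^{-1}})^{-1}}(\OC_{\tNC_\FM})$, the right action gives $\JS_{T_s}(\Delta^w)=\JS_{(T_{w^{-1}})^{-1}T_s}(\OC_{\tNC_\FM})$, and $(T_{w^{-1}})^{-1}T_s$ simplifies to $(T_{(ws)^{-1}})^{-1}$ only when $\ell(ws)=\ell(w)-1$; when $\ell(ws)=\ell(w)+1$ it is instead $\JS_{(T_s)^{-1}}(\Delta^w)$ that is isomorphic to $\Delta^{ws}$. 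Concretely, for an affine simple reflection $s_0$ the object $\JS_{T_{s_0}}(\OC_{\tNC_\FM})$ is a \emph{costandard} object, and in general, in the length-increasing case one computes $\ch_\Delta$-classes $[\Xi_s(\Delta^w)]=\mm_w(T_s+\vv^{-1})$ and hence $[\JS_{T_s}(\Delta^w)]=\mm_{ws}+(\vv^{-1}-\vv)\mm_w$, whose coefficient of $\mm_w$ is negative whenever $Wws\neq Ww$ --- incompatible with any standard filtration. The correct bookkeeping is that the object carrying a two-step standard filtration is always $\Lder i^*\Xi_s^\FM(\Delta^w_{\tgg,\FM})$: its subquotients are $\Delta^w\langle 1\rangle$ and $\Delta^{ws}$, read off from your \emph{first} triangle, when $\ell(ws)<\ell(w)$, and are $\Delta^{ws}$ and $\Delta^w\langle -1\rangle$, read off from your \emph{second} triangle, when $\ell(ws)>\ell(w)$. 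Since $\Xi_s^\FM$ is triangulated it therefore carries the extension-closure of the shifted standards into itself; applying this factor by factor to $\MC_\FM(\omega,\us)=\Xi_{s_n}\circ\cdots\circ\Xi_{s_1}\circ\IS_{T_\omega}(\OC_{\tgg_\FM})$ gives the standard filtration, and the costandard filtration follows dually with the roles of the two triangles interchanged (note that for costandards one has $\JS_{T_s}(\nabla^w)\cong\nabla^{ws}$ precisely in the length-\emph{increasing} case, so no single choice of triangle can produce both filtrations). You should also record that the triangles are only directly available for finite $s$; for an affine simple reflection they must be transported through the conjugation $\Xi_{s_0}=\IS_{b^{-1}}\circ\Xi_t\circ\IS_b$, using that $\IS_b$ permutes the shifted standard and costandard objects.
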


The properties of $\Hom$-spaces between objects of the form $\MC_{\EM}(\underline{s},\omega)$ are summarized in the following proposition.

\begin{prop}
\label{prop:morphisms-BS-geom}
For any sequences $\underline{s}$ and $\underline{t}$ of simple reflections, for $\omega, \omega' \in \Omega$, and for $k \in \ZM$, we have
\begin{multline*}
\Hom_{D^{\GB}(\tgg)_\EM}(\MC_{\EM}(\omega,\us), \MC_{\EM}(\omega',\ut) [k]) = 0
\\
\text{and} \qquad \Hom_{D^{\GB}(\tNC)_\EM}( \Lder i^*(\MC_{\EM}(\omega,\us)), \Lder i^*(\MC_{\EM}(\omega',\ut)) [k]) = 0
\end{multline*}
unless $k=0$. Moreover, the graded $\OC(\tg_\EM^*)$-module
\[
\Hom_{D^{\GB}(\tgg)_\EM}(\MC_{\EM}(\omega,\us), \MC_{\EM}(\omega',\ut) )
\]
is graded free, and the functor $\Lder i^*$ induces an isomorphism
\begin{multline*}
\EM \otimes_{\OC(\tg_\EM^*)} \Hom_{D^{\GB}(\tgg)_\EM}(\MC_{\EM}(\omega,\us), \MC_{\EM}(\omega',\ut)) \\
\simto \Hom_{D^{\GB}(\tNC)_\EM}(\Lder i^*(\MC_{\EM}(\omega,\us)), \Lder i^*(\MC_{\EM}(\omega',\ut)) ).
\end{multline*}
Finally, the functor $\FM(-)$ induces isomorphisms of graded $\FM$-vector spaces
\[
\FM \otimes_{\RG} \Hom_{D^{\GB}(\tgg)_\RG}( \MC_{\RG}(\omega,\us), \MC_{\RG}(\omega',\ut) )
\simto \Hom_{D^{\GB}(\tgg)_\FM}(\MC_{\FM}(\omega,\us), \MC_{\FM}(\omega',\ut) )
\]
and
\begin{multline*}
\FM \otimes_{\RG} \Hom_{D^{\GB}(\tNC)_\RG}( \Lder i^*(\MC_{\RG}(\omega,\us)), \Lder i^*(\MC_{\RG}(\omega',\ut)) )  \\
\simto \Hom_{D^{\GB}(\tNC)_\FM}( \Lder i^*(\MC_{\FM}(\omega,\us)), \Lder i^* (\MC_{\FM}(\omega',\ut)) ).
\end{multline*}
\end{prop}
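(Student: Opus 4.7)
The plan is to first establish the proposition for $\EM = \FM$ using the tilting structure on the exotic heart $\ES^{\GB \times \Gm}(\tNC)_\FM$, and then bootstrap to $\EM = \RG$ by base change.

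For $\EM = \FM$, Proposition~\ref{prop:tilting-Bott-Samelson} asserts that $\Lder i^*(\MC_\FM(\omega,\us))$ is a tilting object of $\ES^{\GB \times \Gm}(\tNC)_\FM$. Since morphisms between tiltings in a graded highest weight category vanish in nonzero cohomological degrees, and since
\[
\Hom_{D^\GB(\tNC)_\FM}(A,B[k]) = \bigoplus_{n \in \ZM} \Hom_{D^{\GB \times \Gm}(\tNC)_\FM}(A, B \langle -n \rangle [k])
\]
with only finitely many $n$ contributing (standard/costandard filtrations of tiltings are finite), I obtain the vanishing statement on $\tNC$ together with graded-freeness over $\FM$ of the $k=0$ piece. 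The base-change isomorphism
\[
\FM \lotimes_{\OC(\tg^*_\FM)} R\Hom_{D^\GB(\tgg)_\FM}\bigl(\MC_\FM(\omega,\us), \MC_\FM(\omega',\ut)\bigr) \simto R\Hom_{D^\GB(\tNC)_\FM}\bigl(\Lder i^*\MC_\FM(\omega,\us), \Lder i^*\MC_\FM(\omega',\ut)\bigr)
\]
used in the proof of Proposition~\ref{prop:morphism-D-N-tgg} (cf.\ \cite[Lemma~4.11]{mr}), combined with Lemma~\ref{lem:nakayama}\eqref{it:nakayama-complex} applied with $A = \OC(\tg^*_\FM)$ and $k = \FM$ (internal-grading boundedness coming from the contracting $\Gm$-action on the fibers of $\tgg_\FM \to \Flag_\FM$), then forces the left-hand side to be concentrated in degree~$0$ and graded free over $\OC(\tg^*_\FM)$. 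This proves the $\tgg$-vanishing, the freeness, and (by taking $H^0$) the $\Lder i^*$-isomorphism for $\EM = \FM$.

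For $\EM = \RG$ I would first establish the $\tNC$-statements. The analogue of \cite[Lemma~4.11]{mr} (using derived $\GB_\RG$- and $\GmR$-invariants and their compatibility with base change) gives, for every geometric point $\FM$ of $\RG$, an isomorphism
\[
\FM \lotimes_\RG R\Hom_{D^\GB(\tNC)_\RG}\bigl(\Lder i^*\MC_\RG(\omega,\us), \Lder i^*\MC_\RG(\omega',\ut)\bigr) \simto R\Hom_{D^\GB(\tNC)_\FM}\bigl(\Lder i^*\MC_\FM(\omega,\us), \Lder i^*\MC_\FM(\omega',\ut)\bigr).
\]
Lemma~\ref{lem:morphisms-fg}, combined with the finiteness of contributing $\Gm$-weights established in the $\FM$ case, shows that the left-hand side is a bounded complex of finitely generated $\RG$-modules. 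Since the right-hand side is concentrated in degree~$0$ for every $\FM$, and since $\RG$ is a PID (so the spectral sequence $E_2^{p,q} = \Tor_{-p}^\RG(\FM, H^q) \Rightarrow H^{p+q}(\FM \lotimes_\RG \cdot)$ has only two rows and degenerates), inspection of the associated subquotients forces all higher cohomology to vanish and $H^0$ to be $\RG$-free (graded). This establishes the $\tNC$-vanishing and the $\FM(-)$-isomorphism on $\tNC$ for $\EM = \RG$.

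Finally, applying Lemma~\ref{lem:nakayama}\eqref{it:nakayama-complex} with $A = \OC(\tg^*_\RG)$ and $k = \RG$ to the corresponding base-change isomorphism
\[
\RG \lotimes_{\OC(\tg^*_\RG)} R\Hom_{D^\GB(\tgg)_\RG}\bigl(\MC_\RG(\omega,\us), \MC_\RG(\omega',\ut)\bigr) \simto R\Hom_{D^\GB(\tNC)_\RG}\bigl(\Lder i^*\MC_\RG(\omega,\us), \Lder i^*\MC_\RG(\omega',\ut)\bigr)
\]
yields the $\tgg$-vanishing, the freeness, and (via $H^0$) the $\Lder i^*$-isomorphism for $\EM = \RG$. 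The remaining $\FM(-)$-isomorphism on $\tgg$ arises as the $H^0$ of the analogous $\FM \lotimes_\RG$ base-change isomorphism for $\tgg$, both sides being now known to be concentrated in degree~$0$. The main technical obstacle will be the careful setup of the three base-change isomorphisms of derived invariants (cf.\ \cite[Lemma~4.11]{mr}), and the verification of the finite-generation and boundedness hypotheses required to invoke the graded Nakayama lemma and the $\RG$-vs-geometric-points argument.
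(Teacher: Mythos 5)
Your proposal is correct, but it takes a genuinely different route from the paper's. The paper's proof is a short d\'evissage: by \cite[Lemma~4.1]{mr} (whose proof relies only on the exact sequences~\eqref{eqn:ses-tgg} and hence works over $\RG$ as well as over $\FM$), each $\MC_\EM(\omega,\us)$ lies both in the extension-closure of the objects $\Delta^\lambda_{\tgg,\EM}\langle m \rangle$ and in that of the objects $\nabla^\lambda_{\tgg,\EM}\langle m \rangle$, and all the claims then follow formally from the $\Hom(\Delta,\nabla)$-computations of Propositions~\ref{prop:morphism-D-N-tNC} and~\ref{prop:morphism-D-N-tgg}. You instead take as input the tilting property of $\Lder i^*\MC_\FM(\omega,\us)$ (Proposition~\ref{prop:tilting-Bott-Samelson}, itself a consequence of the same filtration lemma, but available here as a black box, so there is no circularity) together with~\eqref{eqn:Hom-Delta-nabla}, and bootstrap through the chain $\FM$-on-$\tNC$ $\Rightarrow$ $\FM$-on-$\tgg$ $\Rightarrow$ $\RG$-on-$\tNC$ $\Rightarrow$ $\RG$-on-$\tgg$, using the graded Nakayama lemma (Lemma~\ref{lem:nakayama}\eqref{it:nakayama-complex}) over $\OC(\tg^*)$ and a two-row Tor spectral sequence over the principal ideal domain $\RG$. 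What your approach buys is that you never need the $\RG$-version of the filtration statement of \cite[Lemma~4.1]{mr}: everything over $\RG$ is deduced from the field case by base change. The cost is that you must set up three derived base-change isomorphisms for arbitrary Bott--Samelson objects (rather than only for the standard and costandard objects, as in Propositions~\ref{prop:morphism-D-N-tNC} and~\ref{prop:morphism-D-N-tgg}) and verify the attendant finiteness hypotheses --- in particular the finiteness of the set of $\Gm$-weights contributing to $\Hom_{D^{\GB}}$, which, as you correctly note, must be imported from the $\FM$-case via Nakayama, and the boundedness below of the internal grading of the relevant cohomology modules, which follows from finite generation over the non-negatively graded ring $\OC(\gg^*_\EM)$ as in the proof of Lemma~\ref{lem:morphisms-fg}. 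These verifications all go through along the lines of \cite[Lemma~4.11]{mr}, so there is no genuine gap, only more bookkeeping than in the paper's argument.
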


\begin{proof}
By~\cite[Lemma~4.1]{mr}, the objects of the form $\MC_{\EM}(\omega,\us)$ belong to the subcategory of $D^{\GB \times \Gm}(\tgg)_\EM$ generated under extensions by the objects $\nabla^\lambda_{\tgg,\EM}\langle m \rangle$ for $\lambda \in \XB$ and $m \in \ZM$, and also to the subcategory generated under extensions by the objects $\Delta^\lambda_{\tgg,\EM}\langle m \rangle$ for $\lambda \in \XB$ and $m \in \ZM$. (In~\cite{mr} only the case $\EM=\FM$ is considered, but the same proof applies in the case $\EM=\RG$; in fact this proof relies on the existence of the exact sequences~\eqref{eqn:ses-tgg}.) Then the claims follow from Propositions~\ref{prop:morphism-D-N-tNC} and~\ref{prop:morphism-D-N-tgg}.
\end{proof}

%------------------------------------------------------------------
\subsection{Graded ranks of $\mathrm{Hom}$-spaces}
\label{ss:grk-Hom-tilting}
%------------------------------------------------------------------

Recall the $\Haff$-module $\Msph$ of \S\ref{ss:Haff}. If $\FC$ is an object of $\ES^{\GB \times \Gm}(\tNC)_\FM$ which admits a standard filtration, we define 
\[
\ch_\Delta(\FC) := \sum_{\substack{\lambda \in \XB \\ n \in \ZM}} (\FC : \Delta^\lambda_{\tNC} \langle n \rangle) \cdot \vv^n \cdot \mm_\lambda \quad \in \Msph.
\]
Similarly, if $\GC$ is an object of $\ES^{\GB \times \Gm}(\tNC)_\FM$ which admits a costandard filtration, we define 
\[
\ch_\nabla(\GC) := \sum_{\substack{\lambda \in \XB \\ n \in \ZM}} (\GC : \nabla^\lambda_{\tNC} \langle n \rangle) \cdot \vv^{-n} \cdot \mm_\lambda \quad \in \Msph.
\]
One can easily check that for $\FC,\GC$ tilting objects in $\ES^{\GB \times \Gm}(\tNC)_\FM$ we have
\begin{equation}
\label{eqn:grk-tilting}
\grk_\FM \bigl( \Hom_{D^{\GB}(\tNC)_\FM}(\FC,\GC) \bigr) = \langle \ch_\Delta(\FC), \ch_\nabla(\GC) \rangle.
\end{equation}

\begin{prop}
\label{prop:ch-tilting}
For any sequence $\us$ of simple reflections and any $\omega \in \Omega$ we have
\[
\ch_\Delta(\Lder i^* \MC_\FM(\omega,\us)) = \ch_\nabla(\Lder i^* \MC_\FM(\omega,\us)) = \mm(\omega,\us).
\]
\end{prop}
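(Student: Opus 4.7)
The plan is to proceed by induction on the length $n$ of the sequence $\us$, establishing both equalities simultaneously.

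\emph{Base case.} For $n=0$, since $\omega \in \Omega$ has length zero, $\omega$ is the shortest element of the coset $Wt_{\lambda_\omega}$ for the unique $\lambda_\omega \in \XB$ with $\omega \in Wt_{\lambda_\omega}$, so $w_{\lambda_\omega} = \omega$. By definition, $\nabla^{\lambda_\omega}_{\tNC,\FM} = \JS^\FM_{T_\omega}(\OC_{\tNC,\FM}) = \Lder i^*\MC_\FM(\omega, \emptyset)$. Moreover, since $T_\omega$ is invertible in $\Haff$ with inverse $T_{\omega^{-1}}$, we have $(T_{\omega^{-1}})^{-1} = T_\omega$, so $\Delta^{\lambda_\omega}_{\tNC,\FM}$ equals the same object. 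Hence $\ch_\Delta = \ch_\nabla = \mm_{\lambda_\omega} = \mm_0 \cdot T_\omega = \mm(\omega, \emptyset)$, as required.

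\emph{Inductive step.} Write $\us = \us' \cup (s)$ and set $X := \Lder i^* \MC_\FM(\omega, \us')$, so that $\Lder i^* \MC_\FM(\omega, \us) \cong \Xi_s^\tNC(X)$, where $\Xi_s^\tNC$ is the analogue of $\Xi_s$ for $\tNC$. Restricting the short exact sequences~\eqref{eqn:ses-tgg} to $\tNC$ yields distinguished triangles of functors on $D^{\GB \times \Gm}(\tNC)_\FM$:
\[
\id \langle 1 \rangle \to \Xi_s^\tNC \to \JS^\FM_{T_s} \xrightarrow{[1]}, \qquad \JS^\FM_{T_s^{-1}} \to \Xi_s^\tNC \to \id\langle -1 \rangle \xrightarrow{[1]}.
\]
By Proposition~\ref{prop:tilting-Bott-Samelson} and the inductive hypothesis, both $X$ and $\Xi_s^\tNC(X)$ are tilting; the triangles thus become short exact sequences in $\ES^{\GB \times \Gm}(\tNC)_\FM$ whose outer terms admit respectively costandard and standard filtrations, so $\ch_\nabla$ and $\ch_\Delta$ are additive on them. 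The essential point is then to establish, for tilting $X$, the compatibility identities
\[
\ch_\nabla(\JS^\FM_{T_s}(X)) = \ch_\nabla(X) \cdot T_s, \qquad \ch_\Delta(\JS^\FM_{T_s^{-1}}(X)) = \ch_\Delta(X) \cdot T_s^{-1}.
\]
Given these, the first triangle yields $\ch_\nabla(\Xi_s^\tNC X) = \vv^{-1}\ch_\nabla(X) + \ch_\nabla(X) T_s = \ch_\nabla(X)(T_s + \vv^{-1})$, while the second yields $\ch_\Delta(\Xi_s^\tNC X) = \ch_\Delta(X) T_s^{-1} + \vv\,\ch_\Delta(X) = \ch_\Delta(X)(T_s + \vv^{-1})$, where the last step uses the Hecke identity $T_s^{-1} + \vv = T_s + \vv^{-1}$ (a direct consequence of the quadratic relation $(T_s+\vv^{-1})(T_s-\vv)=0$). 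Combining with the obvious equality $\mm(\omega, \us) = \mm(\omega, \us')(T_s + \vv^{-1})$ closes the induction.

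\emph{Main obstacle.} The two character-compatibility identities for $\JS_{T_s}$ and $\JS_{T_s^{-1}}$ are the delicate point. I would reduce by additivity to the cases $X = \nabla^\lambda$ and $X = \Delta^\lambda$ and then analyze via Remark~\ref{rk:nabla-delta-Baff} together with the $\Haff$-multiplication rules, distinguishing the sign of $\ell(w_\lambda s) - \ell(w_\lambda)$; in the ``wrong-sign'' case the image $\JS_{T_s}(\nabla^\lambda)$ is not itself costandard but only admits a costandard filtration, to be extracted via the quadratic relation. For an affine simple reflection $s_0$ the claim is deduced from the finite-simple case using the conjugacy $T_{s_0} = bT_tb^{-1}$ of Lemma~\ref{lem:affine-simple-conjugate}. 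A more conceptual packaging is to invoke the Kazhdan--Lusztig--Ginzburg identification of the Grothendieck group $K_0(D^{\GB\times\Gm}(\tNC)_\FM)$ with $\Msph$ as a right $\Haff$-module, which underlies the very construction of the braid group action of \cite{br, riche} and encodes both identities uniformly.
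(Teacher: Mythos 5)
Your proof has essentially the same skeleton as the paper's: induction on the length of $\us$, reduction of the inductive step to the effect of $\Xi_s$ on characters via the two short exact sequences~\eqref{eqn:ses-tgg}, and the observation that the factor $(T_s+\vv^{-1})$ emerges from the quadratic relation. The paper delegates the heart of the case analysis to~\cite[Lemma~4.1]{mr}, and your ``Main obstacle'' paragraph identifies exactly that as the delicate point, so conceptually you and the paper take the same route.

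That said, there are two concrete problems in the inductive step as you wrote it. First, the ``character-compatibility identities'' $\ch_\nabla(\JS^\FM_{T_s}(X))=\ch_\nabla(X)\cdot T_s$ and $\ch_\Delta(\JS^\FM_{T_s^{-1}}(X))=\ch_\Delta(X)\cdot T_s^{-1}$ are not well posed for a general tilting $X$: neither $\JS^\FM_{T_s}$ nor $\JS^\FM_{T_s^{-1}}$ preserves the heart of the exotic t-structure, so one cannot speak of a $\Delta$- or $\nabla$-filtration of $\JS_b(X)$, and the two triangles cannot \emph{both} become short exact sequences in $\ES^{\GB\times\Gm}(\tNC)_\FM$ upon applying them to the same $X$. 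For $X=\Delta^w_{\tNC,\FM}$ exactly one of the two triangles is a short exact sequence in the heart depending on the sign of $\ell(ws)-\ell(w)$, and in the good case one gets $\JS_{T_s^{-1}}(\Delta^w)\cong\Delta^{ws}$ (resp.\ $\JS_{T_s}(\Delta^w)\cong\Delta^{ws}$); the resulting multiplicity statement is $\ch_\Delta(\Lder i^*\Xi_s^\FM(\Delta^w_{\tgg,\FM}))=\mm_w\cdot(T_s+\vv^{-1})$, not a uniform identity for a single $\JS_b$. Second, there is a sign slip: with the paper's normalization $\ch_\Delta(\FC)=\sum(\FC:\Delta^\lambda\langle n\rangle)\vv^{n}\mm_\lambda$, one has $\ch_\Delta(X\langle -1\rangle)=\vv^{-1}\ch_\Delta(X)$, not $\vv\,\ch_\Delta(X)$; and the correct compatibility (when $\JS_{T_s^{-1}}(\Delta^w)$ is standard) is $\ch_\Delta(\JS_{T_s^{-1}}(\Delta^w))=\mm_{ws}=\mm_w T_s=\ch_\Delta(\Delta^w) T_s$, with $T_s$ rather than $T_s^{-1}$. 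These two slips in your $\ch_\Delta$ computation cancel each other, which is why you still arrive at $(T_s+\vv^{-1})$; the $\ch_\nabla$ computation you give is internally consistent. The fix for the first point is exactly what your ``Main obstacle'' paragraph proposes --- distinguish on the sign of $\ell(w_\lambda s)-\ell(w_\lambda)$, use the triangle that is a short exact sequence in the heart, and combine using the relation $T_s^{-1}+\vv=T_s+\vv^{-1}$ --- which is also what the proof of~\cite[Lemma~4.1]{mr} accomplishes.
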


\begin{proof}
We only consider the case of $\ch_\Delta$; the case of $\ch_\nabla$ is similar. For any $w \in \Waff$ we set
\[
\Delta^w_{\tNC,\FM} := \JS_{(T_{w^{-1}})^{-1}}(\OC_{\tNC_\FM}), \qquad \Delta^w_{\tgg,\FM} := \IS_{(T_{w^{-1}})^{-1}}(\OC_{\tgg_\FM})
\]
and
$\mm_w:=\mm_0 \cdot T_w \in \Msph$. Then $\Delta^w_{\tNC,\FM}$ is a standard object in $\ES^{\GB \times \Gm}(\tNC)_\FM$, and we have
\[
\ch_\Delta(\Delta^w_{\tNC,\FM}) = \mm_w.
\]
Indeed, the formula holds by definition if $w$ is minimal in $Ww$. For a general $w \in \Waff$, write $w=uv$ with $u \in W$ and $v$ minimal in $Ww$. Then $T_{w^{-1}}=T_{v^{-1}} T_{u^{-1}}$, so that we have
\begin{multline*}
\Delta^w_{\tNC,\FM} = \JS_{(T_{w^{-1}})^{-1}}(\OC_{\tNC_\FM}) = \JS_{(T_{u^{-1}})^{-1} \cdot (T_{v^{-1}})^{-1}}(\OC_{\tNC_\FM}) \\
= \JS_{(T_{v^{-1}})^{-1}} \circ \JS_{(T_{u^{-1}})^{-1}}(\OC_{\tNC_\FM}) = \JS_{(T_{v^{-1}})^{-1}}(\OC_{\tNC_\FM}) \langle \ell(u) \rangle
\end{multline*}
(see Remark~\ref{rk:nabla-delta-Baff}),
so that
\[
\ch_\Delta(\Delta^w_{\tNC,\FM}) = \vv^{\ell(u)} \ch_{\Delta}(\Delta^v_{\tNC,\FM}) = \vv^{\ell(u)} \mm_v=\mm_w.
\]

Using this fact and the proof of~\cite[Lemma~4.1]{mr}, one can check that for all $w \in \Waff$ and all simple reflections $s$, the object $\Lder i^*(\Xi_s^\FM (\Delta^w_{\tgg,\FM}))$ belongs to $\ES^{\GB \times \Gm}(\tNC)_\FM$ and admits a standard filtration, and that moreover we have
\[
\ch_\Delta \bigl( \Lder i^*(\Xi_s^\FM (\Delta^w_{\tgg,\FM})) \bigr) = \mm_w \cdot (T_s + \vv^{-1}) = \ch_\Delta \bigl( \Lder i^* (\Delta^w_{\tgg,\FM}) \bigr) \cdot (T_s + \vv^{-1}).
\]
Then one deduces that for any object $\FC$ in the subcategory of $D^{\GB \times \Gm}(\tgg)_\FM$ generated under extensions by the objects $\Delta^\lambda_{\tgg,\FM} \langle n \rangle$ ($\lambda \in \XB$, $n \in \ZM$), the object $\Lder i^*(\Xi_s^\FM ( \FC))$ belongs to $\ES^{\GB \times \Gm}(\tNC)_\FM$ and admits a standard filtration, and that moreover we have
\[
\ch_\Delta \bigl( \Lder i^*(\Xi_s^\FM ( \FC)) \bigr) = \ch_\Delta(\Lder i^*(\FC)) \cdot (T_s + \vv^{-1}).
\]
Since, for $\omega \in \Omega$, we have
\[
\ch_\Delta \bigl( \Lder i^*(\IS^\FM_{T_\omega}(\OC_{\tgg_\FM})) \bigr) = \ch_\Delta \bigl( \JS^\FM_{(T_{\omega^{-1}})^{-1}}(\OC_{\tNC_\FM}) \bigr) = \ch_\Delta(\Delta^\omega_{\tNC,\FM}) = \mm_\omega = \mm_0 \cdot T_\omega,
\]
the formula follows.
\end{proof}

\begin{prop}
\label{prop:grk-Hom-tilting}
Let $\us,\ut$ be sequences of simple reflections, and let $\omega,\omega' \in \Omega$. Then the graded $\OC(\tg^*_\RG)$-module
\[
\Hom_{D^{\GB}(\tgg)_\RG}( \MC_{\RG}(\omega,\us), \MC_{\RG}(\omega',\ut))
\]
is graded free, of graded rank $\langle \mm(\omega,\us),\mm(\omega',\ut) \rangle$.
\end{prop}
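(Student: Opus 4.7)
The plan is to reduce everything to the case already handled over the field $\FM$ in the preceding results of this subsection. By Proposition~\ref{prop:morphisms-BS-geom} the graded $\OC(\tg^*_\RG)$-module
\[
H_\RG := \Hom_{D^{\GB}(\tgg)_\RG}(\MC_{\RG}(\omega,\us), \MC_{\RG}(\omega',\ut))
\]
is already known to be graded free, so only the graded rank remains to be computed. Since $\OC(\tg^*_\RG)$ is a polynomial $\RG$-algebra with generators in degree $2$ and $\tg^*_\RG$ has a canonical $\RG$-point $0$, the graded rank of a graded free $\OC(\tg^*_\RG)$-module $M$ coincides with the graded $\FM$-dimension of $\FM \otimes_{\OC(\tg^*_\RG)} M$ for any geometric point $\FM$ of $\RG$ (using the composite $\OC(\tg^*_\RG) \twoheadrightarrow \RG \twoheadrightarrow \FM$).

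The key step is then to compute this graded dimension. I would factor the base change as
\[
\FM \otimes_{\OC(\tg^*_\RG)} H_\RG \;\cong\; \FM \otimes_{\OC(\tg^*_\FM)} \bigl( \FM \otimes_\RG H_\RG \bigr),
\]
using $\OC(\tg^*_\FM) \cong \FM \otimes_\RG \OC(\tg^*_\RG)$. Applying the last isomorphism in Proposition~\ref{prop:morphisms-BS-geom} (modular reduction on $\tgg$) identifies $\FM \otimes_\RG H_\RG$ with $\Hom_{D^{\GB}(\tgg)_\FM}(\MC_{\FM}(\omega,\us), \MC_{\FM}(\omega',\ut))$ as a graded $\OC(\tg^*_\FM)$-module. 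Then applying the first isomorphism of Proposition~\ref{prop:morphisms-BS-geom} in the case $\EM=\FM$ (restriction to $\tNC$) identifies
\[
\FM \otimes_{\OC(\tg^*_\FM)} \Hom_{D^{\GB}(\tgg)_\FM}(\MC_{\FM}(\omega,\us), \MC_{\FM}(\omega',\ut))
\]
with $\Hom_{D^{\GB}(\tNC)_\FM}(\Lder i^*(\MC_\FM(\omega,\us)), \Lder i^*(\MC_\FM(\omega',\ut)))$.

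Finally, by Proposition~\ref{prop:tilting-Bott-Samelson} both $\Lder i^*(\MC_\FM(\omega,\us))$ and $\Lder i^*(\MC_\FM(\omega',\ut))$ are tilting objects in $\ES^{\GB \times \Gm}(\tNC)_\FM$, so formula~\eqref{eqn:grk-tilting} applies and gives
\[
\grk_\FM \bigl( \Hom_{D^{\GB}(\tNC)_\FM}(\Lder i^*\MC_\FM(\omega,\us), \Lder i^*\MC_\FM(\omega',\ut)) \bigr) = \langle \ch_\Delta(\Lder i^*\MC_\FM(\omega,\us)), \ch_\nabla(\Lder i^*\MC_\FM(\omega',\ut)) \rangle.
\]
Combined with Proposition~\ref{prop:ch-tilting}, which computes both characters as $\mm(\omega,\us)$ and $\mm(\omega',\ut)$ respectively, this yields the desired value $\langle \mm(\omega,\us), \mm(\omega',\ut) \rangle$. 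There is no genuine obstacle here: all the substantive input (freeness, compatibility with $\Lder i^*$ and with modular reduction, tiltingness of $\Lder i^*\MC_\FM(\omega,\us)$, and the character formula) has already been established; what remains is only a careful bookkeeping of the base-change isomorphisms.
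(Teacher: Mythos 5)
Your proof is correct and follows essentially the same route as the paper: freeness from Proposition~\ref{prop:morphisms-BS-geom}, reduction of the graded rank to the graded dimension of the $\Hom$-space on $\tNC_\FM$ via the base-change isomorphisms of that same proposition, and then the combination of~\eqref{eqn:grk-tilting} with Proposition~\ref{prop:ch-tilting}. The paper merely compresses the base-change bookkeeping that you spell out explicitly.
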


\begin{proof}
The first assertion follows from Proposition~\ref{prop:morphisms-BS-geom}. For the second assertion, we observe that, by Proposition~\ref{prop:morphisms-BS-geom} again, the graded rank under consideration is equal to the graded dimension of the graded $\FM$-vector space
\[
\Hom_{D^{\GB}(\tNC)_\FM} \bigl( \Lder i^*(\MC_{\FM}(\omega,\us)), \Lder i^*(\MC_{\RG}(\omega',\ut)) \bigr)
\]
(where $\FM$ is any geometric point of $\RG$).
Then the formula follows from~\eqref{eqn:grk-tilting} and Proposition~\ref{prop:ch-tilting}.
\end{proof}

%------------------------------------------------------------------
\subsection{``Coherent'' Bott--Samelson category}
\label{ss:BS-category-tilting}
%------------------------------------------------------------------

We define a ``coherent'' category of Bott--Samelson objects $\BSgeom$ as follows. The objects in this category are the triples $(\omega, \us, n)$ as in \S\ref{ss:topological-BS}. The morphisms are given by
\[
\Hom_{\BSgeom} \bigl( (\omega, \us, n), (\omega', \ut,  m) \bigr) =
\Hom_{D^{\GB \times \Gm}(\tgg)_\RG}(\MC_{\RG}(\omega,\us) \langle -n \rangle, \MC_{\RG}(\omega',\ut) \langle -m \rangle ).
\]

\begin{prop}
\label{prop:tilt-Karoubi}
The category $\Tilt(\ES^{\GB \times \GM}(\tNC)_\FM)$ can be recovered from the category $\BSgeom$, in the sense that it is equivalent to the Karoubian closure of the additive envelope of the category 
which has the same objects as $\BSgeom$, and morphisms from $(\omega, \us, n)$ to $(\omega', \ut, m)$ which are given by the $(m-n)$-th piece of the graded vector space
\[
\FM \otimes_{\OC(\tg^*_\RG)} \left( \bigoplus_{k \in \ZM} \Hom_{\BSgeom} \bigl( (\omega, \us, 0), (\omega', \ut, k) \bigr) \right).
\]
\end{prop}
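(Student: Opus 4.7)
The plan is to construct an explicit functor from the category $\mathsf{A}$ described in the statement into $\Tilt(\ES^{\GB \times \Gm}(\tNC)_\FM)$, verify that it is fully faithful, and conclude essential surjectivity after passing to the Karoubian closure of the additive envelope. On objects, I would send $(\omega,\underline{s},n)$ to $\Lder i^* \MC_\FM(\omega,\underline{s}) \langle n \rangle$, which is a tilting object by Proposition~\ref{prop:tilting-Bott-Samelson}. To define the functor on morphisms, I would first interpret the direct sum
\[
\bigoplus_{k \in \ZM} \Hom_{\BSgeom}\bigl((\omega,\underline{s},0),(\omega',\underline{t},k)\bigr)
\]
as the graded $\OC(\tg^*_\RG)$-module $\Hom_{D^{\GB}(\tgg)_\RG}(\MC_\RG(\omega,\underline{s}), \MC_\RG(\omega',\underline{t}))$, where the grading comes from the $\GmR$-action, and then compose with $\Lder i^*$ followed by the modular reduction functor $\FM(-)$.

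The core step is to match Hom-spaces. Combining the two isomorphisms in Proposition~\ref{prop:morphisms-BS-geom} (the first with $\EM=\RG$, the second passing to $\FM$) gives a canonical graded isomorphism
\[
\FM \otimes_{\OC(\tg^*_\RG)} \Hom_{D^{\GB}(\tgg)_\RG}(\MC_\RG(\omega,\underline{s}), \MC_\RG(\omega',\underline{t})) \simto \Hom_{D^{\GB}(\tNC)_\FM}(\Lder i^* \MC_\FM(\omega,\underline{s}), \Lder i^* \MC_\FM(\omega',\underline{t})).
\]
Its $(m-n)$-th graded piece is $\Hom_{D^{\GB \times \Gm}(\tNC)_\FM}(\Lder i^* \MC_\FM(\omega,\underline{s})\langle n\rangle, \Lder i^* \MC_\FM(\omega',\underline{t})\langle m\rangle)$, which, since $\Ext^{>0}$ vanishes between tilting objects of $\ES^{\GB \times \Gm}(\tNC)_\FM$, coincides with the corresponding Hom-space in $\Tilt(\ES^{\GB \times \Gm}(\tNC)_\FM)$. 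This furnishes the required fully faithful functor on the category in the statement.

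Essential surjectivity after Karoubization is then immediate from Proposition~\ref{prop:tilting-Bott-Samelson}: every indecomposable tilting object is a direct summand of some $\Lder i^* \MC_\FM(\omega,\underline{s}) \langle n \rangle$, and $\Tilt(\ES^{\GB \times \Gm}(\tNC)_\FM)$ is Krull--Schmidt (hence Karoubian), being a full subcategory of the graded highest weight category $\ES^{\GB \times \Gm}(\tNC)_\FM$ closed under direct summands. The only delicate point I expect is bookkeeping: namely, tracking the $\GmR$-equivariant grading consistently through the successive isomorphisms so that the ``$(m-n)$-th graded piece'' of the algebraic side matches the internal shift $\langle m-n\rangle$ on the coherent side; this is a routine but careful verification using the definition of $\MC_\RG(\omega,\underline{s})$ as an iterated composition of $\GmR$-equivariant Fourier--Mukai transforms.
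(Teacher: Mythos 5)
Your argument is correct and follows essentially the same route as the paper: identify the Hom-spaces via Proposition~\ref{prop:morphisms-BS-geom} (combining the modular-reduction and $\Lder i^*$ isomorphisms) to realize the category in the statement as the full subcategory of $D^{\GB \times \Gm}(\tNC)_\FM$ on the objects $\Lder i^* \MC_\FM(\omega,\us)\langle -n \rangle$, then invoke Proposition~\ref{prop:tilting-Bott-Samelson} for the Karoubian closure of the additive envelope. The only (harmless) discrepancy is the sign in your object assignment: with the paper's conventions for $\BSgeom$ and for the grading on $\Hom_{D^{\GB}(\tNC)_\FM}$, the object $(\omega,\us,n)$ should go to $\Lder i^* \MC_\FM(\omega,\us)\langle -n\rangle$ rather than $\langle n\rangle$.
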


\begin{proof}
Consider the category which has the same objects as $\BSgeom$, and whose morphisms are defined as in the statement of the proposition. By Proposition~\ref{prop:morphisms-BS-geom}, this category is equivalent to the full subcategory $\mathsf{A}$ of $D^{\GB \times \Gm}(\tNC)_\FM$ whose objects are of the form $\Lder i^*(\MC_\FM(\omega,\us))$.
It follows from Proposition~\ref{prop:tilting-Bott-Samelson} that the category $\Tilt(\ES^{\GB \times \Gm}(\tNC)_\FM)$ is equivalent to the Karoubian closure of the additive envelope of $\mathsf{A}$; the claim follows.
\end{proof}

%------------------------------------------------------------------
\subsection{Kostant--Whittaker reduction and Bott--Samelson objects}
%------------------------------------------------------------------

Let us consider the constructions of~\S\S\ref{ss:deformations}--\ref{ss:algebraic-BS}, with $\XB$ now defined as $X^*(\TB_\RG)$. Then the space $\tg^*$ of Section~\ref{sec:constructible-side} coincides with the space denoted $\tg^*_\RG$ in the present section, and we can define the algebra $C$ and the category $\BSalg$ of ``Bott--Samelson'' $C$-modules with the present data. We use the same notation as in~\S\ref{ss:algebraic-BS} for the modules $E_w$ and $D_s$.

\begin{lem}
\label{lem:isom-C-tangent}
There exists a natural $W$-equivariant isomorphism of graded algebras
\[
C \simto \OC \bigl( \tg^* \times_{\tg^* / W} \TF(\tg^* / W) \bigr)_\RG.
\]
\end{lem}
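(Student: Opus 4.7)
The plan is to reduce the lemma to an analogous statement for $C'_\hbar$, which is the deformation to the normal cone of the diagonal of $\tg^*/W \times \tg^*/W$, and then invoke the smoothness of $\tg^*/W$ (Lemma~\ref{lem:t/W-affine-space}) to identify the reduction modulo $\hbar$ with the tangent bundle.

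More precisely, using the first isomorphism in~\eqref{eqn:isom-Chbar} and reducing modulo $\hbar$, we obtain
\[
C \;\cong\; \OC(\tg^*/W \times \tg^*) \otimes_{\OC(\tg^*/W \times \tg^*/W)} \bigl( C'_\hbar / \hbar \cdot C'_\hbar \bigr).
\]
So it suffices to construct a natural $W$-equivariant isomorphism of graded $\OC(\tg^*/W \times \tg^*/W)$-algebras $C'_\hbar / \hbar \cdot C'_\hbar \cong \OC(\TF(\tg^*/W))_\RG$, where both factors of $\OC(\tg^*/W)$ act through the tangent bundle projection $\TF(\tg^*/W) \to \tg^*/W$, because substituting this identification gives precisely the fiber product $\tg^* \times_{\tg^*/W} \TF(\tg^*/W)$.

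The isomorphism for $C'_\hbar / \hbar \cdot C'_\hbar$ is essentially the computation already performed in the proof of Lemma~\ref{lem:deformation-flat}\eqref{it:def-flat-1}: choosing an identification $\tg^*/W \cong \mathbb{A}^n_\RG$ with coordinates $x_1, \ldots, x_n$ (possible by Lemma~\ref{lem:t/W-affine-space}), one has
\[
C'_\hbar \cong \RG[x_1, \ldots, x_n, z_1, \ldots, z_n, \hbar],
\]
where $z_i = \hbar^{-1}(y_i - x_i)$, with the two natural maps $\OC(\tg^*/W) \to C'_\hbar$ sending the $i$-th coordinate to $x_i$ and $x_i + \hbar z_i$ respectively. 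Setting $\hbar = 0$, both maps send the $i$-th coordinate to $x_i$, and the result is $\RG[x_1, \ldots, x_n, z_1, \ldots, z_n] \cong \OC(\TF(\mathbb{A}^n_\RG))$, with the $x_i$ being base coordinates and the $z_i$ fiber coordinates. To upgrade this coordinate-dependent computation to the canonical statement, I would observe that $C'_\hbar / \hbar \cdot C'_\hbar$ is the associated graded algebra of $\OC(\tg^*/W \times \tg^*/W)$ with respect to the ideal $I_1$ of the diagonal; since $\tg^*/W$ is smooth (so that the diagonal is a regular closed immersion), this associated graded algebra is naturally isomorphic to the symmetric algebra $\mathrm{S}_{\OC(\tg^*/W)}(\Omega_{\tg^*/W})$, which equals $\OC(\TF(\tg^*/W))$ by our grading conventions.

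Finally, for $W$-equivariance: the action of $W$ on $C$ is induced from its action on $\OC(\tg^*/W \times \tg^*)$ through the second factor (trivial on $\tg^*/W$), which preserves the ideal $I_2$ because the closed subscheme it defines, namely the image of $(\mathrm{id}, \varrho) \colon \tg^* \hookrightarrow \tg^*/W \times \tg^*$, is $W$-stable. On the right-hand side, $W$ acts on $\tg^* \times_{\tg^*/W} \TF(\tg^*/W)$ through its action on $\tg^*$, trivially on $\TF(\tg^*/W)$ and $\tg^*/W$. Tracking these actions through the tensor product above shows they correspond under our isomorphism. The main (minor) subtlety will be keeping track of which copy of $\OC(\tg^*/W)$ acts where and verifying that the two projections from $\TF(\tg^*/W)$ induced by the two maps from $\OC(\tg^*/W)$ really do coincide after reduction modulo $\hbar$; this is clear in coordinates but is the conceptual content of ``passing to the associated graded collapses the two projections to the diagonal projection.''
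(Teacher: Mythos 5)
Your proof is correct, and the isomorphism you end up constructing is the same one as in the paper (it is determined by $\OC(\tg^*_\RG) \to \OC(\tg^*_\RG)$ and $\hbar^{-1}(f \otimes 1 - 1 \otimes f) \mapsto d(f)$ for $f \in \OC(\tg^*_\RG)^W$), but the route is organized differently. The paper simply writes down this assignment on generators and asserts that ``one can easily check'' it induces a $W$-equivariant isomorphism of graded algebras; you instead reduce via the base-change isomorphism~\eqref{eqn:isom-Chbar} to the algebra $C'_\hbar$, identify $C'_\hbar/\hbar C'_\hbar$ with the associated graded algebra $\gr_{I_1}\OC(\tg^*/W \times \tg^*/W)$ (which is what reduction mod $\hbar$ of a deformation to the normal cone always produces), and then use the regularity of the diagonal immersion of the affine space $\tg^*_\RG/W$ (Lemma~\ref{lem:t/W-affine-space}) to identify this associated graded with $\mathrm{S}_{\OC(\tg^*/W)}(\Omega_{\tg^*/W})_\RG = \OC(\TF(\tg^*/W))_\RG$, after which the fiber-product description drops out because the two structure maps from $\OC(\tg^*/W)$ collapse to a single one modulo $I_1$. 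What your approach buys is a systematic justification of the step the paper leaves to the reader, together with a conceptual explanation (normal cone of the diagonal equals the tangent bundle) of why the surjection $\mathrm{S}(I_1/I_1^2) \to \gr_{I_1}$ is injective here; what the paper's direct approach buys is brevity and an explicit formula for the map that is then reused (e.g.\ in the proof of Proposition~\ref{prop:kappa-BS}, where one must match $F^\RG_\lambda$ with $E_{t_\lambda}$ under this identification). Your handling of the $W$-equivariance and of the grading (the degree of $\hbar^{-1}(f\otimes 1 - 1\otimes f)$ matching that of $d(f)$ in $\Omega_{\tg^*/W}\langle -2\rangle$) is consistent with the paper's conventions.
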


\begin{proof}
By definition, the algebra $C$ is generated by $\OC(\tg^*_\RG)$ and the images of the elements of the form $\hbar^{-1}(f \otimes 1 - 1 \otimes f) \in C_\hbar$ for $f \in \OC(\tg^*_\RG)^W$. On the other hand, the algebra  $\OC \bigl( \tg^* \times_{\tg^* / W} \TF(\tg^* / W) \bigr)_\RG$ is the symmetric algebra (over $\OC(\tg^*_\RG)$) of the module
\[
\bigl( \OC(\tg^*) \otimes_{\OC(\tg^*)^W} \Omega(\tg^*/W) \bigr)_\RG.
\]
(Note that this module is free of finite rank since $\tg^*_\RG/W$ is an affine space, see Lemma~\ref{lem:t/W-affine-space}.) One can easily check that the assignment
\[
\hbar^{-1}(f \otimes 1 - 1 \otimes f) \mapsto d(f)
\]
induces a morphism of graded $\OC(\tg^*_\RG)$-algebras $C \to \OC \bigl( \tg^* \times_{\tg^* / W} \TF(\tg^* / W) \bigr)_\RG$, and that this morphism is a $W$-equivariant isomorphism.
\end{proof}

From now on we identify the two algebras in Lemma~\ref{lem:isom-C-tangent} via the isomorphism constructed in its proof. Then one can consider the functor $\kappa_\RG$ of~\S\ref{ss:KW-functors} as a functor taking values in $\Db \Modgr(C)$.

\begin{prop}
\label{prop:kappa-BS}
For any sequence $\underline{s}$ of simple reflections, any $\omega \in \Omega$, and any $n \in \ZM$, there exists an isomorphism
\[
\kappa_\RG \bigl( \MC_\RG(\omega,\us) \langle n \rangle \bigr) \cong D(\omega,\us) \langle n \rangle \qquad \text{in $\Db  \Modgr(C)$.}
\]
\end{prop}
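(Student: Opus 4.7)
The plan is to proceed by induction on the length $n = |\us|$ of the sequence of simple reflections.

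For the base case $n = 0$, I want $\kappa_\RG(\IS^\RG_{T_\omega}(\OC_{\tgg_\RG})) \cong E_\omega$. By Proposition~\ref{prop:KW-line-bundle} applied with $\lambda=0$, one has $\kappa_\RG(\OC_{\tgg_\RG}) \cong F_0^\RG$, which is $\OC(\tg^*_\RG)$ as a graded $C$-module with trivial action of $\Omega(\tg^*_\RG/W)$ (i.e.~it coincides with $E_1$). In particular it is concentrated in degree $0$ and $\RG$-free, so Proposition~\ref{prop:KW-actions}\eqref{it:KW-actions-R} applies and yields $\kappa_\RG(\IS^\RG_{T_\omega}(\OC_{\tgg_\RG})) \cong \KS^\RG_{T_\omega}(E_1)$. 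By~\eqref{eqn:n-Tomega} we have $\KS^\RG_{T_\omega} \cong {}'\KS^\RG_\omega$, and writing $\omega = v t_\mu \in W \ltimes \XB$ one checks directly from the definitions of ${}'\KS^\RG_v$ (pullback along $v$) and ${}'\KS^\RG_{t_\mu}$ (tensor with $F_\mu^\RG$) in~\S\ref{ss:weyl-action} that ${}'\KS^\RG_\omega(E_1) \cong E_\omega$ as graded $C$-modules.

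For the inductive step, it suffices to establish the following general claim: if $s$ is any simple reflection and $\FC \in D^{\GB \times \Gm}(\tgg)_\RG$ is such that $\kappa_\RG(\FC)$ is concentrated in degree $0$ and graded free over $\OC(\tg^*_\RG)$, then there is a natural isomorphism
\[
\kappa_\RG(\Xi^\RG_s(\FC)) \cong \kappa_\RG(\FC) \otimes_{\OC(\tg^*_\RG)} D_s
\]
in $\Db\Modgr(C)$, where the right-hand side is equipped with the $C$-module structure coming from the fact that $D_s$ carries a natural $\tC$-module structure (see~\S\ref{ss:algebraic-BS}) and from the coalgebra morphism $C \to C \otimes_{\OC(\tg^*)} \tC$ of~\S\ref{ss:deformations}. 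For $s$ finite, this follows along the lines of the proof of Proposition~\ref{prop:KW-actions}: Lemma~\ref{lem:fiber-product-xis} shows that the relevant Cartesian square is tor-independent with fiber product $(\tg^* \times_{\tg^*/W_s} \tg^*)_\RG$, and applying base change~\cite[Theorem~3.10.3]{lipman} computes the underlying $\OC(\tg^*_\RG)$-module of $\varsigma_* \kappa_\RG(\Xi^\RG_s \FC)$ as $\kappa'_\RG(\FC) \otimes_{\OC(\tg^*_\RG)^s} \OC(\tg^*_\RG)\langle -1\rangle$, which matches $\varsigma_*(\kappa_\RG(\FC) \otimes_{\OC(\tg^*_\RG)} D_s)$ by~\eqref{eqn:Ds-tensor-product}. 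Concentration in degree $0$ and $\RG$-freeness of the result then follow from our hypothesis on $\FC$ together with the fact that $D_s$ is graded free over $\OC(\tg^*_\RG)^s$. For $s = s_0$ affine simple, we use Lemma~\ref{lem:affine-simple-conjugate} to write $T_{s_0} = b T_t b^{-1}$ for $t$ finite simple and $b \in \Baff$, and conclude by applying $\IS^\RG_b$ on the coherent side and $\KS^\RG_b$ on the algebraic side, using Proposition~\ref{prop:KW-actions}\eqref{it:KW-actions-R} together with the base case to pass the $\RG$-freeness and degree concentration hypotheses across.

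The main obstacle is the promotion of the isomorphism of underlying $\OC(\tg^*_\RG)$-modules (which comes cleanly from base change) to an isomorphism of $C$-modules, i.e.~the verification that the $\Omega(\tg^*/W)$-action deduced from the infinitesimal action of the universal centralizer $\tIB^\RG_\SC$ on the restriction to the Kostant section transports correctly through the Fourier--Mukai transform $\Xi^\RG_s$. Concretely, one must show that the action of $\tIB^\RG_\SC$ on $\kappa_\RG(\Xi^\RG_s \FC)$ is the one induced, via the coalgebra morphism of~\S\ref{ss:deformations}, by the two actions of $\tIB^\RG_\SC$ on $\kappa_\RG(\FC)$ and on $D_s$; this compatibility is built into Theorem~\ref{thm:Lie-centralizer} (which identifies $\tIG^\RG_\SC$ canonically with $(\eta_\SC)^* \Omega_{\tg^*_\RG/W}$) and the observation that the fiber product appearing in Lemma~\ref{lem:fiber-product-xis}\eqref{it:xis-graph} is precisely the spectrum of the relevant component of $\tC$, so that the two $\tIB^\RG_\SC$-actions coming from the two projections combine in exactly the way encoded by the comultiplication.
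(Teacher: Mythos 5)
Your proof follows essentially the same route as the paper: the base case $\omega \in \Omega$ is handled via Proposition~\ref{prop:KW-line-bundle} and the $\KS$-action exactly as in the paper, and your ``general claim'' for the inductive step is precisely the paper's Lemma~\ref{lem:kappa-Xi}, proved the same way (tor-independence from Lemma~\ref{lem:fiber-product-xis} plus base change for finite $s$, conjugation via Lemma~\ref{lem:affine-simple-conjugate} for affine $s$). The one place where you are thinner than the paper is the promotion from an isomorphism of graded $\OC(\tg^*_\RG)$-modules to one of $C$-modules: you assert that the compatibility of the infinitesimal centralizer action with the Fourier--Mukai transform is ``built into'' Theorem~\ref{thm:Lie-centralizer} over $\RG$, but the group-scheme picture ($\tJB$, $\tJB_s$ and the compatibility of $(\widetilde{\pi}_s)_*$ and $(\widetilde{\pi}_s)^*$ with restriction to the Kostant section) is only developed in the paper over a field; the paper instead checks $C$-linearity over $\FM$ (or $\CM$) and then deduces it over $\RG$ from the $\RG$-freeness of the modules involved, a cheap trick you should invoke to close this step rather than arguing directly over $\RG$.
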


Before proving the proposition we begin with a lemma.

\begin{lem}
\label{lem:kappa-Xi}
Let $s$ be a finite simple reflection. Let $\FC$ be an object of $D^{\GB \times \Gm}(\tgg)_\RG$ such that $\kappa_\RG(\FC)$ is concentrated in degree $0$, and $\RG$-free. Then $\kappa_\RG \circ \Xi^\RG_s(\FC)$ is concentrated in degree $0$ and $\RG$-free, and moreover there exists an isomorphism of graded $C$-modules
\[
\kappa_\RG \circ \Xi^\RG_s(\FC) \cong \kappa_\RG(\FC) \otimes_{\OC(\tg^*_\RG)} D_s.
\]
\end{lem}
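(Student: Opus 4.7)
The strategy is to combine the tor-independent base change provided by Lemma~\ref{lem:fiber-product-xis} (which will handle the underlying $\OC(\tg^*_\RG)$-module structure) with a check that the resulting isomorphism respects the action of the universal centralizer (i.e.~the $C$-module structure).

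First, I would write the Fourier--Mukai transform explicitly: $\Xi_s^\RG(\FC) = R(p_2)_*(\OC_{(\tgg \times_{\tgg_s}\tgg)_\RG} \lotimes L p_1^* \FC)\langle -1 \rangle$, where $p_1,p_2$ are the two projections from $\tgg\times_{\tgg_s}\tgg$ to $\tgg$. Apply base change along the tor-independent Cartesian square of Lemma~\ref{lem:fiber-product-xis}\eqref{it:square-tor-indep-xi} (via \cite[Theorem~3.10.3]{lipman}), using the identification in Lemma~\ref{lem:fiber-product-xis}\eqref{it:xis-graph} of $\tSC \times_{\tgg}(\tgg\times_{\tgg_s}\tgg)$ with $(\tg^* \times_{\tg^*/W_s} \tg^*)_\RG$. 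Since $\OC(\tg^*_\RG)$ is finite flat (of rank $2$) over $\OC(\tg^*_\RG)^s$ — which is verified in the proof of Lemma~\ref{lem:fiber-product-xis} — both projections $\pi_1,\pi_2 \colon (\tg^*\times_{\tg^*/W_s}\tg^*)_\RG \to \tg^*_\RG$ are finite flat. Thus the derived pullback and pushforward degenerate, and we obtain a canonical isomorphism of $\OC(\tg^*_\RG)$-modules
\[
\overline{\kappa}'_\RG\bigl(\Xi_s^\RG(\FC)\bigr) \;\cong\; \overline{\kappa}'_\RG(\FC)\otimes_{\OC(\tg^*_\RG)^s}\OC(\tg^*_\RG)\,\langle -1 \rangle,
\]
where on the right the two $\OC(\tg^*_\RG)$-actions correspond to $\pi_1$ and $\pi_2$. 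This already gives that $\kappa_\RG\circ \Xi_s^\RG(\FC)$ is concentrated in degree $0$, and $\RG$-free (the tensor product of an $\RG$-free module by the $\RG$-free module $\OC(\tg^*_\RG)$ over $\OC(\tg^*_\RG)^s$).

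Next, I would upgrade this to an isomorphism of $C$-modules. Recall that the $C$-module structure on $\kappa_\RG(\GC)$ encodes the differentiated action of the universal centralizer $\tIB_\SC^\RG$ on $\GC_{|\tSC}$, identified via Theorem~\ref{thm:Lie-centralizer} and Proposition~\ref{prop:Kostant-section} with the pullback of $\Omega_{\tg^*/W}$. On the right-hand side, the $C$-module structure on $\kappa_\RG(\FC)\otimes_{\OC(\tg^*_\RG)}D_s$ is obtained by using the comultiplication morphism $C \to C\otimes_{\OC(\tg^*_\RG)}\tC$ of~\S\ref{ss:deformations} together with the natural $\tC$-action on $D_s$. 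The point is that the action of $\tIB$ on $\tgg\times_{\tgg_s}\tgg$ is diagonal (using either projection: the two agree after passing to $\tgg_s$), so by naturality of the universal centralizer under base change along the square of Lemma~\ref{lem:fiber-product-xis}, the induced action on $(\tg^*\times_{\tg^*/W_s}\tg^*)_\RG$ is exactly the one defining the $\tC$-module structure on $D_s$ (via the identification of $\OC(\tg^*\times_{\tg^*/W_s}\tg^*)_\RG$ with $\OC(\tg^*)\otimes_{\OC(\tg^*)^s}\OC(\tg^*)_\RG \cong D_s\langle 1\rangle$ from~\eqref{eqn:Ds-tensor-product}). The compatibility of the comultiplication on $C$ with these two actions is then a direct unwinding.

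The main obstacle will be this second step, because naturality of the centralizer action must be combined with the algebraic description of $D_s$ given in~\S\ref{ss:algebraic-BS}. If the direct check above seems delicate, an alternative (which I regard as a fallback) is to invoke Proposition~\ref{prop:KW-actions}\eqref{it:KW-actions-R} for $b=T_s$ together with the first distinguished triangle in~\eqref{eqn:ses-tgg}, which, after applying $\kappa_\RG$, yields a short exact sequence
\[
0 \to \kappa_\RG(\FC)\langle 1\rangle \to \kappa_\RG\circ\Xi_s^\RG(\FC) \to {}'\KS_s^\RG\bigl(\kappa_\RG(\FC)\bigr)\langle -1\rangle \to 0
\]
in $\Modgr(C)$ (using that the outer terms are concentrated in degree $0$ and $\RG$-free). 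One compares this with the analogous sequence for $\kappa_\RG(\FC)\otimes_{\OC(\tg^*)}D_s$ coming from the standard short exact sequence of bimodules relating $D_s$, $\OC(\tg^*)\langle 1\rangle$ and $s^*\OC(\tg^*)\langle -1\rangle$, and verifies that the resulting extension class agrees on both sides — this amounts to an explicit computation on generators that is feasible because the flanking terms are free over $\OC(\tg^*_\RG)$.
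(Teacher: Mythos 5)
Your first step coincides with the paper's: Lemma~\ref{lem:fiber-product-xis} plus tor-independent base change gives the isomorphism at the level of $\kappa'_\RG$ (i.e.\ after pushing forward to $\tg^*_\RG$), and this already yields concentration in degree $0$ and $\RG$-freeness. The gap is in your second step. The paper does \emph{not} identify the differentiated centralizer action directly over $\RG$: it first proves the $C$-module isomorphism over a field $\FM$, using the presentation $\Xi_s^\FM \cong \Lder(\widetilde{\pi}_s)^* \circ \Rder(\widetilde{\pi}_s)_*$ together with the equivalences $\Coh^{\GB}(\tgg^\reg)_\FM \simeq \Rep(\tJB^\FM)$ and $\Coh^{\GB}(\tgg_s^{\reg})_\FM \simeq \Rep(\tJB_s^\FM)$ from the end of~\S\ref{ss:reminder}, under which push/pull along $\widetilde{\pi}_s$ becomes push/pull along $\tg^*_\FM \to \tg^*_\FM/W_s$; it then descends to $\RG$ by observing that an $\OC(\tg^*_\RG)$-linear map between $\RG$-free modules which is $C$-linear after $\otimes_\RG \CM$ is already $C$-linear. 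Your ``naturality of the universal centralizer'' argument is asserted directly over $\RG$, where none of this machinery is available ($\tJB$, $\tJB_s$ and the regular locus are only set up over a field in~\S\ref{ss:reminder}), and the actual content --- that the differentiated $\tIG_\SC^\RG$-action on the restriction of the kernel $\OC_{(\tgg\times_{\tgg_s}\tgg)_\RG}$ to the Kostant section matches the $\tC$-module structure on $D_s$ under~\eqref{eqn:Ds-tensor-product} --- is precisely what needs proof; calling it ``a direct unwinding'' hides the whole difficulty.

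The fallback is not airtight either. The short exact sequence you write down does exist (the outer terms of the triangle obtained from~\eqref{eqn:ses-tgg} are concentrated in degree $0$ and $\RG$-free, by hypothesis and by Proposition~\ref{prop:KW-actions}\eqref{it:KW-actions-R}, so the long exact sequence collapses), and so does the analogous sequence for $\kappa_\RG(\FC)\otimes_{\OC(\tg^*_\RG)}D_s$. But two extensions of ${}' \KS_s^\RG(\kappa_\RG(\FC))\langle -1\rangle$ by $\kappa_\RG(\FC)\langle 1\rangle$ with free middle terms of the same graded rank need not be isomorphic as $C$-modules --- the split extension already has a free middle term of that rank --- and the relevant $\Ext^1$ group is not obviously controllable for an arbitrary $\FC$. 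Pinning down the extension class would require essentially the same geometric input as the direct argument. The efficient repair is the paper's two-stage scheme: prove the statement over $\FM$ via the $\tJB_s$ formalism, then transport $C$-linearity from the case $\CM$ back to $\RG$ using $\RG$-freeness, exactly as in the proof of Proposition~\ref{prop:KW-actions}\eqref{it:KW-actions-R}.
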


\begin{proof}
The proof is very similar to the proof of the case $b=T_s$ of Proposition~\ref{prop:KW-actions}\eqref{it:KW-actions-R}. In fact, using the same notation as in this proof, by Lemma~\ref{lem:fiber-product-xis} and the base change theorem (and using also~\eqref{eqn:Ds-tensor-product}) we have a canonical isomorphism of functors
\begin{equation}
\label{eqn:isom-kappa-Xi}
\kappa'_\EM \circ \Xi^\EM_s(-) \cong \kappa'_\EM(-) \otimes_{\OC(\tg^*_\EM)} (\EM \otimes_\RG D_s)
\end{equation}
for $\EM=\RG$ or $\FM$. (Note that $D_s$ is free over $\OC(\tg^*_\RG)$ -- see the proof of Lemma~\ref{lem:fiber-product-xis}\eqref{it:xis-graph} -- so that the tensor product on the right-hand side makes sense in the derived category.)

When $\EM=\FM$, using the fact that $\Xi_s^\FM \cong \Lder (\widetilde{\pi}_s)^* \circ \Rder (\widetilde{\pi}_s)_*$ 
(see~\cite[Proposition~5.2.2]{riche}) and the remarks at the end of~\S\ref{ss:reminder}, one can check that this isomorphism is induced by an isomorphism of functors
\[
\kappa_\FM \circ \Xi^\FM_s(-) \cong \kappa_\FM(-) \otimes_{\OC(\tg^*_\FM)} (\FM \otimes_\RG D_s)
\]
(where the tensor product on the right-hand side is defined a similar way as in the case of $\RG$).
Then we can come back to the case $\EM=\RG$: if $\kappa_\RG(\FC)$ is concentrated in degree $0$ and $\RG$-free, \eqref{eqn:isom-kappa-Xi} shows that the same holds for $\kappa_\RG \circ \Xi^\RG_s(\FC)$. And the final claim follows from the case $\EM=\CM$ treated above.
\end{proof}

\begin{proof}[Proof of Proposition~{\rm \ref{prop:kappa-BS}}]
We remark that the statement of Lemma~\ref{lem:kappa-Xi} also holds when $s$ is an \emph{affine} simple reflection, by definition of $\Xi_s$ in this case and Proposition~\ref{prop:KW-actions}\eqref{it:KW-actions-R} (see also~\eqref{eqn:K-'K}). This observation reduces the proof of Proposition~\ref{prop:kappa-BS} to the proof that
\[
\kappa_\RG(\IS^\RG_{T_\omega}(\OC_{\tgg_\RG})) \cong E_\omega
\]
when $\omega \in \Omega$. However by Proposition~\ref{prop:KW-actions} (see also~\eqref{eqn:K-'K} and~\eqref{eqn:n-Tomega}) we have
\[
\kappa_\RG(\IS^\RG_{T_\omega}(\OC_{\tgg_\RG})) \cong {}' \KS^\RG_\omega(\OC_{\tg_\RG^*}),
\]
where $\tg_\RG^*$ is identified with the zero section in $\tg^* \times_{\tg^*/W} \TF(\tg^*/W)_\RG$. Now writing $\omega=v t_\lambda$ (with $v \in W$ and $\lambda \in \XB$) we have
\[
{}' \KS^\RG_\omega(\OC_{\tg_\RG^*}) \cong {}' \KS^\RG_{t_\lambda} \circ {}' \KS^\RG_v (\OC_{\tg_\RG^*}) \cong {}' \KS^\RG_{t_\lambda} (\OC_{\tg_\RG^*}) \cong F_\lambda^\RG.
\]
Similarly we have $E_\omega \cong E_{t_\lambda}$, and one can check that $F_\lambda^\RG$ and $E_{t_\lambda}$ are isomorphic under our identification $C \cong \OC \bigl( \tg^* \times_{\tg^* / W} \TF(\tg^* / W) \bigr)_\RG$.
\end{proof}

%------------------------------------------------------------------
\subsection{Equivalence}
\label{ss:equivalence-tilting}
%------------------------------------------------------------------

Let us fix isomorphisms as in Proposition~\ref{prop:kappa-BS}, for all $\us$ and $\omega$. Then the functor $\kappa_\RG$ induces a functor
\[
\kappa_{\mathsf{BS}} \colon \BSgeom \to \BSalg.
\]

The main result of this section is the following.

\begin{thm}
\label{thm:main-geom}
The functor $\kappa_{\mathsf{BS}}$ is an equivalence of categories.
\end{thm}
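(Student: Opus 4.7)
The functor $\kappa_{\mathsf{BS}}$ is a bijection on objects by construction, so the theorem amounts to fully-faithfulness. Fix a pair of objects $(\omega,\us,n), (\omega',\ut,m)$, and consider the graded $\OC(\tg^*_\RG)$-modules
\[
M := \Hom_{D^{\GB}(\tgg)_\RG}(\MC_\RG(\omega,\us), \MC_\RG(\omega',\ut)), \qquad N := \bigoplus_{k \in \ZM} \Hom_{\BSalg}((\omega,\us,0), (\omega',\ut,k)).
\]
Together with the isomorphisms of Proposition~\ref{prop:kappa-BS}, the functor $\kappa_\RG$ induces a graded $\OC(\tg^*_\RG)$-linear morphism $\varphi \colon M \to N$, and the theorem reduces to showing that $\varphi$ is an isomorphism in every graded degree.

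The first step would be to compare graded ranks. By Propositions~\ref{prop:morphisms-BS-geom} and~\ref{prop:grk-Hom-tilting}, $M$ is graded free over $\OC(\tg^*_\RG)$ of graded rank $\langle \mm(\omega,\us), \mm(\omega',\ut) \rangle$. On the algebraic side $N$ vanishes by definition when $\omega \ne \omega'$, while Corollary~\ref{cor:grk-Hom-parity} shows that when $\omega = \omega'$ the module $N$ is graded free over $\OC(\tg^*_\RG)$ of the same graded rank. Because every simple reflection lies in $\Waff^\Cox$, the $\Omega$-component of $\mm(\omega,\us) \in \Msph$ is $\omega$, so the pairing $\langle \mm(\omega,\us), \mm(\omega',\ut) \rangle$ vanishes when $\omega \ne \omega'$. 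In every case $M$ and $N$ are therefore free graded $\OC(\tg^*_\RG)$-modules of equal graded rank.

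Next I would invoke Lemma~\ref{lem:morphism-grk}, applied with underlying scheme $\tg^*_\RG$ and open subscheme $V'_\RG = \tg^{*,\rs}_\RG$ (the regular semisimple locus). It then suffices to verify that for every geometric point $\FM$ of $\RG$ the base-changed morphism
\[
\varphi'_\FM \colon \OC(\tg^{*,\rs}_\FM) \otimes_{\OC(\tg^*_\RG)} M \longrightarrow \OC(\tg^{*,\rs}_\FM) \otimes_{\OC(\tg^*_\RG)} N
\]
is injective. Via Lemma~\ref{lem:F-kappa} and the base-change isomorphisms of Proposition~\ref{prop:morphisms-BS-geom}, and using that $\tgg^\rs_\FM = \nu^{-1}(\tg^{*,\rs}_\FM)$, one identifies $\varphi'_\FM$ with the map on derived $\Hom$-spaces induced by $\kappa_\FM$ after restricting the Bott--Samelson sheaves to $\tgg^\rs_\FM$.

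The main obstacle is verifying this injectivity. The crucial geometric input is Lemma~\ref{lem:a-smooth}: the morphism $a \colon \GB \times \tSC_\FM \to \tgg^\reg_\FM$ is smooth and surjective, hence faithfully flat onto $\tgg^\reg_\FM$. Consequently $\overline{\kappa}'_\FM$ factors as $a^*$ followed by the natural equivalence $\Coh^{\GB}(\GB \times \tSC)_\FM \simto \Coh(\tg^*_\FM)$; faithfully-flat descent applied to the $\GB$-equivariant derived $\Hom$ between objects supported on the regular locus then yields an injection on derived $\Hom$-spaces in $D^{\GB}(\tgg^\rs)_\FM$. Since $\kappa_\FM$ records the additional commutative Lie algebra structure of Theorem~\ref{thm:Lie-centralizer} on top of $\overline{\kappa}'_\FM$, it merely cuts down these morphism spaces further, so injectivity for $\overline{\kappa}'_\FM$ a fortiori yields injectivity for $\kappa_\FM$. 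This closes the argument through Lemma~\ref{lem:morphism-grk}.
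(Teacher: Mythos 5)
Your proof is correct and follows the same overall strategy as the paper: reduce to a morphism between two free graded $\OC(\tg^*_\RG)$-modules of equal graded rank (Corollary~\ref{cor:grk-Hom-parity} versus Proposition~\ref{prop:grk-Hom-tilting}), apply Lemma~\ref{lem:morphism-grk} with $V'$ the regular semisimple locus, and obtain the required injectivity from the smoothness and surjectivity of the action map $a$ over the regular locus (Lemma~\ref{lem:a-smooth}), i.e.\ from faithfulness of restriction to the Kostant section. The one genuine difference is how you handle the passage from the group scheme $\tIB_\SC$ to its Lie algebra. The paper factors the comparison map through $\Hom_{\Rep(\tIB_\SC^\RG)}$ and therefore needs Lemma~\ref{lem:differentiation-fully-faithful} (full faithfulness of differentiation on $\OC(\tSC_\RG)$-free modules, itself an application of Lemma~\ref{lem:key} comparing the enveloping algebra of $\tIG_\SC^\RG$ with the distribution algebra of $\tIB_\SC^\RG$ generically) to know that the second factor of its composition is injective. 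You instead post-compose with the forgetful inclusion $\Hom_{\Modgr(C)}(-,-) \hookrightarrow \Hom_{\Modgr(\OC(\tg^*_\RG))}(-,-)$ and observe that the resulting composite is exactly the plain restriction-to-$\tSC$ map $\overline{\kappa}'_\RG$, so that injectivity of the latter suffices a fortiori; this validly sidesteps Lemma~\ref{lem:differentiation-fully-faithful}, at the cost of nothing, since both routes terminate in the same geometric input. Your treatment of the final identification is compressed --- one still has to run the chain of base-change isomorphisms identifying $\OC(\tg^{*,\rs}_\FM) \otimes_{\OC(\tg^*_\RG)} M$ with $\Hom$'s between the restrictions to $\tgg^\rs_\FM$ (using the freeness and base-change statements of Proposition~\ref{prop:morphisms-BS-geom}, and the fact that these restrictions are honest equivariant coherent sheaves) --- but all the needed ingredients are cited, and your explicit remark that $\langle \mm(\omega,\us), \mm(\omega',\ut)\rangle = 0$ for $\omega \neq \omega'$ correctly accounts for the vanishing built into the definition of $\BSalg$.
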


Before proving the theorem we first establish a lemma. In this statement, we identify quasi-coherent sheaves on $\tSC_\RG$ with $\OC(\tSC_\RG)$-modules, and denote by $\Rep(\tIG_\SC^\RG)$ the abelian category of representations of the commutative $\OC(\tSC_\RG)$-Lie algebra $\tIG_\SC^\RG$.

\begin{lem}
\label{lem:differentiation-fully-faithful}
The natural functor
\[
\Rep(\tIB_{\SC}^\RG) \to \Rep(\tIG_\SC^\RG)
\]
defined by differentiating the action is fully faithful on representations which are free over $\OC(\tSC_\RG)$.
\end{lem}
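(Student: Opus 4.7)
The plan is to deduce the statement from Lemma~\ref{lem:key} applied to the base ring $k := \OC(\tSC_\RG)$. By Proposition~\ref{prop:Kostant-section}, $k$ is isomorphic to the polynomial ring $\OC(\tg^*_\RG)$, so $k$ is an integral domain with fraction field $\KM$ of characteristic zero. The two natural $k$-algebras to consider are $A := \mathrm{S}_k(\tIG_\SC^\RG)$, which is the universal enveloping algebra of the commutative Lie algebra $\tIG_\SC^\RG$ (so that $\Rep(\tIG_\SC^\RG) = \Mod(A)$ tautologically), and the relative distribution algebra $B := \Dist(\tIB_\SC^\RG/\tSC_\RG)$ of the smooth commutative group scheme $\tIB_\SC^\RG$. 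Differentiating at the identity section gives a $k$-algebra morphism $\psi \colon A \to B$ and a natural functor $\Phi \colon \Rep(\tIB_\SC^\RG) \to \Mod(B)$, whose composition with the restriction-of-scalars functor $\Mod(B) \to \Mod(A)$ is precisely the differentiation functor of the lemma.

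The first step is to observe that $\KM \otimes_k \psi$ is an isomorphism: after base change to the characteristic-zero field $\KM$, the group scheme $(\tIB_\SC^\RG)_\KM$ is smooth and commutative, so standard theory identifies its distribution algebra with the enveloping algebra of its Lie algebra via the differentiation map $\psi_\KM$. Lemma~\ref{lem:key} then yields that restriction of scalars $\Mod(B) \to \Mod(A)$ is fully faithful on $k$-free modules.

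It remains to check that $\Phi$ itself is fully faithful on $k$-free representations. Given $k$-free $\tIB_\SC^\RG$-representations $M, N$ with coactions $\rho_M, \rho_N$, and a $k$-linear map $f \colon M \to N$ commuting with the $B$-action, one forms the obstruction
\[
\mu := \rho_N \circ f - (f \otimes \id) \circ \rho_M \colon M \to N \otimes_k \OC(\tIB_\SC^\RG).
\]
Since $\tIB_\SC^\RG \to \tSC_\RG$ is smooth, $\OC(\tIB_\SC^\RG)$ is flat over $k$, so $N \otimes_k \OC(\tIB_\SC^\RG)$ is $k$-torsion-free; hence vanishing of $\mu$ can be checked after localizing at $\KM$. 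Over $\KM$, commutativity of $f$ with the full $B$-action forces $\mu_\KM$ to vanish modulo arbitrarily high powers of the augmentation ideal at the identity section, and smoothness together with characteristic zero then forces $\mu_\KM = 0$.

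The main obstacle is precisely this last step, and more generally the comparison of representations of $\tIB_\SC^\RG$ with modules over its distribution algebra: the argument relies in an essential way on smoothness of $\tIB_\SC^\RG$ (ensuring that the local ring at the identity section embeds into its completion and that the hyperalgebra is well-behaved) and on working over a characteristic-zero field (so that no divided-power or Frobenius phenomena obstruct the comparison $\Dist(G) \cong U(\Lie(G))$ used to verify $\KM \otimes_k \psi$ is an isomorphism).
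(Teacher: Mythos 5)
Your overall strategy --- factor the differentiation functor through modules over the relative distribution algebra $\Dist(\tIB_\SC^\RG)$, observe that the natural map $\UC_{\OC(\tSC_\RG)}(\tIG_\SC^\RG) \to \Dist(\tIB_\SC^\RG)$ becomes an isomorphism after tensoring with the fraction field $\KM$ because $\KM$ has characteristic zero, and then invoke Lemma~\ref{lem:key} --- is exactly the route taken in the paper. The gap is in your final step, the comparison of $\Rep(\tIB_\SC^\RG)$ with $\Mod(\Dist(\tIB_\SC^\RG))$. After reducing the vanishing of the obstruction $\mu$ to the generic fiber (which is legitimate, by torsion-freeness of $N \otimes_k \OC(\tIB_\SC^\RG)$), you assert that ``smoothness together with characteristic zero'' forces $\mu_\KM = 0$ once it vanishes modulo all powers of the augmentation ideal along the identity section. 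Smoothness and characteristic zero are not sufficient: a function on a group scheme that vanishes to all orders along the identity section need not vanish globally unless the group scheme is connected (integral). Indeed, for a disconnected smooth group scheme over a characteristic-zero field --- already the constant group scheme $\ZM/2\ZM$ --- the functor to distribution-algebra modules fails to be faithful, so the statement you are using is simply false without a connectedness hypothesis.

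The paper supplies precisely this missing input. It verifies that $\tIB_\SC^\RG$ is infinitesimally flat (a consequence of smoothness, via~\cite[Expos{\'e}~VII, Proposition~1.10]{sga6}) and \emph{integral} (it is flat over $\tSC_\RG$ and its pullback to $\tSC_\CM \cap \tgg_\CM^\rs$ is irreducible, being a torus bundle there), and then applies~\cite[Lemma~I.7.16]{jantzen} to obtain full faithfulness of $\Rep(\tIB_\SC^\RG) \to \Mod(\Dist(\tIB_\SC^\RG))$ on $\OC(\tSC_\RG)$-projective representations directly over the base, with no detour through $\KM$ for this step. To repair your argument you would at least need connectedness of the generic fiber $\tIB_\KM$, which is a genuine geometric fact about the universal centralizer and must be proved (the paper deduces it from~\cite[Lemma~2.3.3]{riche3}, which identifies the restriction of $\tIB^\CM$ to the regular semisimple locus with $\TB \times \tgg_\CM^\rs$); it is not formal from smoothness.
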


\begin{proof}
The group scheme $\tIB_\SC^\RG$ is smooth over $\tSC_\RG$ (see~\S\ref{ss:reminder}), hence it is infinitesimally flat as an $\OC(\tSC_\RG)$-group scheme (in the sense of~\cite[\S I.7.4]{jantzen}) by~\cite[Expos{\'e}~VII, Proposition~1.10]{sga6}. This group scheme is also integral, since it is flat over $\tSC_\RG$ and its pullback to $\tSC_\CM \cap \tgg_\CM^\rs$ is irreducible. (In fact, it can be deduced from~\cite[Lemma~2.3.3]{riche3} that the restriction of $\tIB^\CM$ to $\tgg_\CM^\rs$ identifies with $\TB \times \tgg_\CM^\rs$.) Using~\cite[Lemma~I.7.16]{jantzen} we deduce that, if we denote by $\mathrm{Dist}(\tIB_\SC^\RG)$ the distribution algebra of this group scheme, the natural functor
\[
\Rep(\tIB_\SC^\RG) \to \Mod \bigl( \mathrm{Dist}(\tIB_\SC^\RG) \bigr)
\]
is fully faithful on representations which are projective over $\OC(\tSC_\RG)$.

Now, consider the natural algebra morphism
\[
\UC_{\OC(\tSC_\RG)}(\tIG_\SC^\RG) \to \mathrm{Dist}(\tIB_\SC^\RG),
\]
where the left-hand side is the universal enveloping algebra of $\tIG_\SC^\RG$, see~\cite[\S I.7.10]{jantzen}. If $\KM$ denotes the fraction field of $\OC(\tSC_\RG)$, we claim that this morphism induces an isomorphism
\begin{equation}
\label{eqn:morph-Lie-Dist}
\KM \otimes_{\OC(\tSC_\RG)} \UC_{\OC(\tSC_\RG)}(\tIG_\SC^\RG) \simto \KM \otimes_{\OC(\tSC_\RG)} \Dist(\tIB_\SC^\RG).
\end{equation}
Indeed, the left-hand side is isomorphic to $\UC_{\KM}(\KM \otimes_{\OC(\tSC_\RG)} \tIG_\SC^\RG)$ and, if we set $\tIB_\KM:=\Spec(\KM) \times_{\tSC_\RG} \tIB_\SC^\RG$, by~\cite[Lemma~2.1.1]{riche3} and~\cite[\S I.7.4, Equation (1)]{jantzen} respectively, there are natural isomorphisms
\[
\KM \otimes_{\OC(\tSC_\RG)} \tIG_\SC^\RG \cong \Lie(\tIB_\KM), \qquad \KM \otimes_{\OC(\tSC_\RG)} \Dist(\tIB_\SC^\RG) \cong \Dist(\tIB_\KM),
\]
so that~\eqref{eqn:morph-Lie-Dist} gets identified with the natural morphism
\[
\UC_\KM(\Lie(\tIB_\KM)) \to \Dist(\tIB_\KM).
\]
Since $\KM$ is a field of characteristic zero, the latter morphism is an isomorphism by~\cite[\S I.7.10, Equation (1)]{jantzen}, which finishes the proof of our claim.

Now that the claim is established, the lemma follows from Lemma~\ref{lem:key}.
\end{proof}

\begin{proof}[Proof of Theorem~{\rm \ref{thm:main-geom}}]
By definition, $\kappa_{\mathsf{BS}}$ is essentially surjective. Hence what we have to prove is that for any sequences $\us, \ut$ of simple reflections and any $\omega, \omega' \in \Omega$, the morphism
\[
\Hom_{D^{\GB}(\tgg)_\RG}(\MC_\RG(\omega,\us), \MC_\RG(\omega',\ut)) \to \Hom_{C}(D(\omega,\us), D(\omega',\ut))
\]
induced by $\overline{\kappa}_\RG$ (via the isomorphisms of Proposition~\ref{prop:kappa-BS}) is an isomorphism. By Corollary~\ref{cor:grk-Hom-parity} and Proposition~\ref{prop:grk-Hom-tilting}, both sides are graded free over $\OC(\tg^*_\RG)$, of the same graded rank. Hence, using Lemma~\ref{lem:morphism-grk}, to finish the proof it suffices to prove that the image of our morphism under the functor $\OC(\tg^{*,\rs}_{\FM}) \otimes_{\OC(\tg^*_\RG)} (-)$ is injective for any geometric point $\FM$ of $\RG$. Then, using the definition of $\overline{\kappa}_\RG$ (see~\S\ref{ss:KW-functors}) and Lemma~\ref{lem:differentiation-fully-faithful}, it suffices to prove that the morphism
\begin{multline*}
\OC(\tg^*_{\rs,\FM}) \otimes_{\OC(\tg^*_\RG)} \Hom_{D^{\GB}(\tgg)_\RG}(\MC_\RG(\omega,\us), \MC_\RG(\omega',\ut)) \\
\to \OC(\tg^{*,\rs}_{\FM}) \otimes_{\OC(\tg^*_\RG)} \Hom_{\Rep(\tIB_\SC^\RG)}(\MC_\RG(\omega,\us){}_{| \tSC_\RG}, \MC_\RG(\omega',\ut){}_{| \tSC_\RG})
\end{multline*}
induced by restriction to $\tSC_\RG$ is injective. And finally, to prove this injectivity it suffices to prove that the natural morphism
\begin{multline*}
\OC(\tg^{*,\rs}_{\FM}) \otimes_{\OC(\tg^*_\RG)} \Hom_{D^{\GB}(\tgg)_\RG}(\MC_\RG(\omega,\us), \MC_\RG(\omega',\ut)) \\
\to \Hom_{\Coh(\tSC_\FM^\rs)}(\MC_\FM(\omega,\us){}_{| \tSC_\FM^\rs}, \MC_\FM(\omega',\ut){}_{| \tSC_\FM^\rs})
\end{multline*}
is injective.

Consider the following chain of isomorphisms:
\begin{multline*}
\OC(\tg^{*,\rs}_{\FM}) \otimes_{\OC(\tg^*_\RG)} \Hom_{D^{\GB}(\tgg)_\RG}(\MC_\RG(\omega,\us), \MC_\RG(\omega', \ut)) \\
\cong \OC(\tg^{*,\rs}_{\FM}) \otimes_{\OC(\tg^*_\FM)} \bigl( \FM \otimes_\RG \Hom_{D^{\GB}(\tgg)_\RG}(\MC_\RG(\omega,\us), \MC_\RG(\omega',\ut)) \bigr) \\
\cong \OC(\tg^{*,\rs}_{\FM}) \otimes_{\OC(\tg^*_\FM)} \bigl( \Hom_{D^{\GB}(\tgg)_\FM}(\MC_\FM(\omega,\us), \MC_\FM(\omega',\ut)) \bigr) \\
\cong \Hom_{\Coh^{\GB}(\tgg^\rs)_\FM}(\MC_\FM(\omega,\us){}_{|_{\tgg^\rs_\FM}}, \MC_\FM(\omega',\ut){}_{|_{\tgg^\rs_\FM}}).
\end{multline*}
Here the second isomorphism follows from Proposition~\ref{prop:morphisms-BS-geom}, and the other ones are easy. (Observe that, on the last line, the objects $\MC_\FM(\omega,\us){}_{|_{\tgg^\rs_\FM}}$ and $\MC_\FM(\omega',\ut){}_{|_{\tgg^\rs_\FM}}$ are concentrated in degree $0$, i.e.~equivariant coherent sheaves.)
Using these identifications, the morphism under consideration is the morphism
\begin{multline*}
\Hom_{\Coh^{\GB_\FM}(\tgg^\rs_\FM)} \bigl( \MC_\FM(\omega,\us){}_{|_{\tgg^\rs_\FM}}, \MC_\FM(\omega',\ut){}_{|_{\tgg^\rs_\FM}} \bigr) \\
\to \Hom_{\Coh(\tSC^\rs)_\FM} \bigl( \MC_\FM(\omega,\us){}_{| \tSC_\FM^\rs}, \MC_\FM(\omega',\ut){}_{| \tSC_\FM^\rs} \bigr)
\end{multline*}
induced by restriction to $\tSC_\FM^\rs$. Then injectivity follows from the observation that the functor $\Coh^{\GB}(\tgg^\rs)_\FM \to \Coh(\tSC^\rs)_\FM$ is faithful, since it can be written as the composition
\[
\Coh^{\GB}(\tgg^\rs)_\FM \to \Coh^{\GB}(\GB \times \tSC^\rs)_\FM \simto \Coh(\tSC^\rs)_\FM
\]
where the first functor is the inverse image under the natural morphism $a^{\rs} \colon \GB_\FM \times \tSC_\FM^\rs \to \tgg_\FM^\rs$ (which is faithful since $a^\rs$ is smooth and surjective, see Lemma~\ref{lem:a-smooth}), and the second functor is the natural equivalence.
\end{proof}

%%%%%%%%%%%%%%%%
%%%%%%%%%%%%%%%%
\section{Proofs of the main results}
\label{sec:proofs}
%%%%%%%%%%%%%%%%
%%%%%%%%%%%%%%%%

In this section (except in~\S\ref{ss:proof-parity-perverse}) we come back to the assumptions of~\S\ref{ss:intro}: $\GB$ is a product of simply-connected quasi-simple groups and general linear groups over $\FM$, and the characteristic $p$ of $\FM$ is very good for each quasi-simple factor of $\GB$. Such a group can be obtained by base change to $\FM$ from a split connected reductive group scheme $\GB_\ZM$ over $\ZM$, and we set $\GB_\RG=\Spec(\RG) \times_{\Spec(\ZM)} \GB_\ZM$, where $\RG$ is the localization of $\ZM$ at all the prime numbers which are not very good for a quasi-simple factor of $\GB$. These data satisfy the assumptions of Sections~\ref{sec:KW-reduction}--\ref{sec:tilting-KW}, and we use the same notation as in these sections. (The only exception is that we drop the subscripts ``$\FM$,'' since the geometric point will not vary anymore.)

We let also $\GD$ be the complex Langlands dual group. This group and $\RG$ satisfy the assumptions of Section~\ref{sec:constructible-side}, and we also use the notation of this section. (Note a slight conflict of notation:  $\tg$ denotes ${\check \XB} \otimes_\ZM \RG$ in Section~\ref{sec:constructible-side}, while in the present section it will denote ${\check \XB} \otimes_\ZM \FM$.)
We assume that the roots of $\BD$ with respect to $\TD$ are the coroots of $\BB$ with respect to $\TB$.

%------------------------------------------------------------------
\subsection{Proof of Theorem~\ref{thm:main}}
\label{ss:proof-main}
%------------------------------------------------------------------

Combining Theorem~\ref{thm:main-top} and Theorem~\ref{thm:main-geom} we obtain an equivalence of categories
\begin{equation}
\label{eqn:equiv-BS}
\BStop \simto \BSgeom
\end{equation}
which is the identity morphism on objects. Then using Proposition~\ref{prop:parity-Karoubi} and Proposition~\ref{prop:tilt-Karoubi} we deduce the desired equivalence of additive categories
\[
\Theta \colon
\Parity_{(\ID)}(\Gr, \FM) \simto \Tilt(\ES^{\GB \times \Gm}(\tNC)).
\]
By construction, this equivalence
satisfies 
\begin{equation}
\label{eqn:Theta'-BS}
\Theta \bigl( \EC_\FM(\omega,\us) \bigr) \cong \Lder i^* \MC_\FM(\omega,\us)
\end{equation}
for any sequence $\us$ of simple reflections and any $\omega \in \Omega$, and $\Theta \circ [1] \cong \langle -1 \rangle \circ \Theta$. 
%Composing this equivalence with the natural equivalence
%\[
%\Parity_{(\ID)}(\Gr, \FM) \simto \Parity_{(\ID)}(\Gr', \FM)
%\]
%(see~\eqref{eqn:equiv-Gr-Gr'}) provides the desired equivalence $\Theta$. This equivalence satisfies property~\eqref{it:main-thm-shifts} by construction.
%Next, we prove p
It also satisfies property~\eqref{it:main-thm-indec},
%. In fact it suffices to prove that we have $\Theta'(\EC_\lambda') \cong \TC^\lambda$ for any $\lambda \in \XB$. This property follows from
by~\eqref{eqn:Theta'-BS} and a standard argument based on induction on $\ell(w_\lambda)$ (see in particular Remark~\ref{rk:indec-parity} and \cite[Remark~4.3]{mr}).

Finally we prove property~\eqref{it:main-thm-ch}. In fact, this property can be expressed as the following equalities for any $\EC$ in $\Parity_{(\ID)}(\Gr,\FM)$:
\begin{equation}
\label{eqn:equality-characters}
\ch_{\Gr}^*(\EC) = \ch_\Delta(\Theta(\EC)); \qquad \ch_{\Gr}^!(\EC) = \ch_{\nabla}(\Theta(\EC)).
\end{equation}
However these formulas hold when $\EC=\EC_\FM(\omega,\us)$ by Proposition~\ref{prop:ch-BS-parity}, Proposition~\ref{prop:ch-tilting}, and~\eqref{eqn:Theta'-BS}. And one can easily check that if they hold for $\EC$ then they hold for $\EC[1]$. Using these observations one can prove, by standard arguments, that the formulas~\eqref{eqn:equality-characters} hold when $\EC=\EC_\lambda[i]$ for some $\lambda \in \XB$ and $i \in \ZM$, by induction on $\ell(w_\lambda)$. The general case follows.

%\begin{remark}
%\label{rmk:mult-tilt-order}
%It follows in particular from Theorem~\ref{thm:main}\eqref{it:main-thm-ch} that $(\TC^\lambda : \Delta^\mu_{\tNC} \langle m \rangle) \neq 0 \Rightarrow \Gr_{-\mu} \subset \overline{\Gr_{-\lambda}}$.
%\end{remark}

%------------------------------------------------------------------
\subsection{Proof of Theorem~\ref{thm:equivalence-tNC}}
\label{ss:proof-equivalence-tNC}
%------------------------------------------------------------------

We construct the equivalence $\Phi$ as the composition
%\begin{multline*}
\[
\Dmix_{(\ID)}(\Gr, \FM) := \Kb \Parity_{(\ID)}(\Gr, \FM) \xrightarrow[\sim]{\Kb(\Theta)} \Kb \Tilt(\ES^{\GB \times \Gm}(\tNC))
\xrightarrow{\sim} D^{\GB \times \Gm}(\tNC),
\]
%\end{multline*}
where the last equivalence is provided by~\cite[Proposition~3.11]{mr}.
Then it follows from this construction and the properties of $\Theta$ proved in Theorem~\ref{thm:main} that we have
\[
\Phi \circ \langle 1 \rangle \cong \langle 1 \rangle [1] \circ \Phi \qquad \text{and} \qquad \Phi(\EC^\mix_\lambda) \cong \TC^{-\lambda}.
\]
It remains to prove that
\[
\Phi(\dmix_{-\lambda}) \cong \Delta_{\tNC}^{\lambda} \qquad \text{and} \qquad \Phi(\nmix_{-\lambda}) \cong \nabla_{\tNC}^{\lambda}
\]
for any $\lambda \in \XB$.
%These properties are equivalent to
%\[
%\Phi'(\Delta^{\prime \mix}_{\lambda}) \cong \Delta_{\tNC}^{\lambda} \qquad \text{and} \qquad \Phi'(\nabla^{\prime \mix}_{\lambda}) \cong \nabla_{\tNC}^{\lambda},
%\]
%where $\Phi' \colon \Dmix_{(\ID)}(\Gr',\FM) \simto D^{\GB \times \Gm}(\tNC)$ is defined by the same recipe as $\Phi$, replacing $\Theta$ by $\Theta'$, and $\Delta^{\prime \mix}_{\lambda}$, $\nabla^{\prime \mix}_{\lambda}$ are the standard and costandard mixed perverse sheaves in $\Dmix_{(\ID)}(\Gr',\FM)$.
We explain the proof of the first isomorphism; the proof of the second one is similar.

Our proof is similar to the proof of a similar claim in~\cite[Lemma~5.2]{modrap2}. Namely, we prove the isomorphism by induction on $\ell(w_\lambda)$. If $\ell(w_\lambda)=0$, then we have $\Delta^{\mix}_{-\lambda} \cong \EC^{\mix}_{-\lambda}$ and $\Delta^\lambda_{\tNC} \cong \TC^\lambda$, hence our claim is clear. For a general $\lambda$, we consider a non-zero morphism $\Delta^\lambda_{\tNC} \to \TC^\lambda$ (which is unique up to an invertible scalar), and the associated triangle
\begin{equation}
\label{eqn:triangle-Delta-Phi}
\Delta^\lambda_{\tNC} \to \TC^\lambda \to N \xrightarrow{[1]}.
\end{equation}
Then $N$ belongs to the triangulated subcategory of $D^{\GB \times \Gm}(\tNC)$ generated by the objects $\Delta^\mu_{\tNC} \langle m \rangle$ with $m \in \ZM$ and $\mu \in \XB$ which satisfies $\Gr_{-\mu} \subset \overline{\Gr_{-\lambda}}$ and $\mu \neq \lambda$ (see Theorem~\ref{thm:main}\eqref{it:main-thm-ch}--\eqref{it:main-thm-indec}). In particular, using induction we deduce that $\Phi^{-1}(N)$ is supported on $\overline{\Gr_{-\lambda}} \smallsetminus \Gr_{-\lambda}$.

Using again induction we also observe that
\[
\Hom_{\Dmix_{(\ID)}(\Gr, \FM)}(\Phi^{-1}(\Delta^\lambda_{\tNC}), \Delta^{\mix}_{-\mu} \langle m \rangle [n]) = 0
\]
for all $n,m \in \ZM$ and $\mu \in \XB$ such that $\Gr_{-\mu} \subset \overline{\Gr_{-\lambda}}$ and $\mu \neq \lambda$. Hence the triangle
\[
\Phi^{-1}(\Delta^\lambda_{\tNC}) \to \EC^\mix_{-\lambda} \to \Phi^{-1}(N) \xrightarrow{[1]}
\]
obtained from~\eqref{eqn:triangle-Delta-Phi}
satisfies the two properties which characterize uniquely the triangle whose first arrow is the unique (up to scalar) non-zero morphism $\Delta^{\mix}_{-\lambda} \to \EC^{\mix}_{-\lambda}$. In particular, we deduce the wished-for isomorphism $\Phi^{-1}(\Delta^\lambda_{\tNC}) \cong \Delta^{\mix}_{-\lambda}$.

\begin{remark}
\label{rk:real-tNC}
The equivalence $\Kb \Tilt(\ES^{\GB \times \Gm}(\tNC))
\xrightarrow{\sim} D^{\GB \times \Gm}(\tNC)$ used in the proof of Theorem~\ref{thm:equivalence-tNC} is the composition
\begin{equation}
\label{eqn:real-1}
\Kb \Tilt(\ES^{\GB \times \Gm}(\tNC)) \hookrightarrow \Kb \ES^{\GB \times \Gm}(\tNC) \xrightarrow{\mathsf{real}} D^{\GB \times \Gm}(\tNC)
\end{equation}
where $\mathsf{real}$ is the functor constructed in~\cite[Lemme~3.1.11]{bbd}. On the other hand it follows from~\cite[Corollary~4.16]{mr} that the objects in $\Tilt(\ES^{\GB \times \Gm}(\tNC))$ are concentrated in degree $0$, so that one can also construct such an equivalence as the composition
\begin{equation}
\label{eqn:real-2}
\Kb \Tilt(\ES^{\GB \times \Gm}(\tNC)) \hookrightarrow \Kb \Coh^{\GB \times \Gm}(\tNC) \xrightarrow{\mathsf{can}} D^{\GB \times \Gm}(\tNC),
\end{equation}
where $\mathsf{can}$ is the canonical functor. We claim that~\eqref{eqn:real-1} and~\eqref{eqn:real-2} are isomorphic. Indeed, it is clear that $\mathsf{can}$ coincides with the realization functor from~\cite[Lemme~3.3.11]{bbd} associated with the tautological t-structure on $D^{\GB \times \Gm}(\tNC)$. Then the claim follows from the observation that if $\mathscr{C}$ and $\mathscr{C}'$ are the hearts of two bounded t-structures on $\Db \mathscr{A}$ for some abelian category $\mathscr{A}$, then the following diagram commutes up to isomorphism:
\[
\xymatrix@R=0.6cm{
\Kb (\mathscr{C} \cap \mathscr{C}') \ar@{^{(}->}[r] \ar@{^{(}->}[d] & \Kb \mathscr{C} \ar[d]^-{\mathsf{real}} \\
\Kb\mathscr{C}' \ar[r]^{\mathsf{real'}} & \Db \mathscr{A}.
}
\]
\end{remark}

%------------------------------------------------------------------
\subsection{Proof of Theorem~\ref{thm:equivalence-tgg}}
\label{ss:proof-equivalence-tgg}
%------------------------------------------------------------------

Before considering the proof of Theorem~\ref{thm:equivalence-tgg}, we need some preliminary results.

First, recall that 
we use the same notation for the indecomposable objects in $\Parity_{\ID}(\Gr, \FM)$ 
and in $\Parity_{(\ID)}(\Gr, \FM)$, see~\S\ref{ss:notation-constructible}.
We define the additive subcategory ${\EuScript Tilt}$ of $D^{\GB \times \Gm}(\tgg)$ as the Karoubian closure of the additive envelope of the subcategory generated by the objects $\MC_\FM(\omega,\us)$ for $\us$ a sequence of simple reflections and $\omega \in \Omega$. Using~\eqref{eqn:equiv-BS} it is not difficult to construct an equivalence of categories
\begin{equation}
\label{eqn:equiv-Parity-Tilt-equiv}
\Parity_{\ID}(\Gr, \FM) \simto {\EuScript Tilt}
\end{equation}
sending $\EC_\FM(\omega,\us) [n]$ to $\MC_\FM(\omega,\us) \langle -n \rangle$.

This equivalence shows that the category ${\EuScript Tilt}$ is Krull--Schmidt, and that one can transfer the known classification of indecomposable objects in $\Parity_{\ID}(\Gr, \FM)$ proved in~\cite{jmw} to deduce a classification of the indecomposable objects in ${\EuScript Tilt}$. Namely, for any $\lambda \in \XB$, if $w_{\lambda}=\omega s_1 \cdots s_r$ is a reduced decomposition, then there exists a unique indecomposable factor $\tTC^\lambda$ of $\MC_\FM(\omega,(s_1, \cdots, s_r))$ which is not isomorphic to any object of the form $\MC_\FM(\omega',\ut) \langle m \rangle$ where $m \in \ZM$, $\omega' \in \Omega$, and $\ut$ is a sequence of simple reflections of length at most $r-1$. This object does not depend on the choice of the reduced decomposition up to isomorphism. Moreover, any indecomposable object in ${\EuScript Tilt}$ is isomorphic to some $\tTC^\lambda \langle m \rangle$ for some $\lambda \in \XB$ and $m \in \ZM$. Finally, the image of $\EC_{-\lambda}$ under~\eqref{eqn:equiv-Parity-Tilt-equiv} is $\tTC^\lambda$.

The following lemma implies that the objects $\tTC^\lambda$ are ``deformations'' of the tilting exotic sheaves $\TC^\lambda$.

\begin{lem}
\label{lem:tTC-i*}
For any $\lambda \in \XB$ we have $\Lder i^*(\tTC^\lambda) \cong \TC^\lambda$.
\end{lem}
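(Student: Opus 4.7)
The plan is to identify $\tTC^\lambda$ and $\TC^\lambda$ by tracking the indecomposable parity sheaf $\EC_{-\lambda}$ through a commutative square built from the two equivalences we have already produced. More precisely, the strategy is to show that the diagram of additive categories
\[
\xymatrix@R=0.5cm{
\Parity_{\ID}(\Gr, \FM) \ar[r]^-{\eqref{eqn:equiv-Parity-Tilt-equiv}}_-{\sim} \ar[d]_-{\For} & {\EuScript Tilt} \ar[d]^-{\Lder i^*} \\
\Parity_{(\ID)}(\Gr, \FM) \ar[r]^-{\Theta}_-{\sim} & \Tilt(\ES^{\GB \times \Gm}(\tNC))
}
\]
commutes up to isomorphism, and then to read off the claim by chasing $\EC_{-\lambda}$.

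First I would check that the right-hand vertical arrow is well defined: by Proposition~\ref{prop:tilting-Bott-Samelson} each object $\Lder i^*(\MC_\FM(\omega,\us))$ belongs to $\Tilt(\ES^{\GB \times \Gm}(\tNC))$, and the category $\Tilt(\ES^{\GB \times \Gm}(\tNC))$ is Karoubian and additive, so $\Lder i^*$ sends ${\EuScript Tilt}$ into $\Tilt(\ES^{\GB \times \Gm}(\tNC))$. Next, both compositions in the square send a Bott--Samelson generator $\EC_\FM(\omega,\us)[n]$ to $\Lder i^*(\MC_\FM(\omega,\us))\langle -n\rangle$: along the top-right route this is the definition of \eqref{eqn:equiv-Parity-Tilt-equiv} followed by $\Lder i^*$, and along the left-bottom route this is \eqref{eqn:Theta'-BS} combined with Theorem~\ref{thm:main}\eqref{it:main-thm-shifts}. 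Because both horizontal functors were constructed from the single equivalence~\eqref{eqn:equiv-BS} between $\BStop$ and $\BSgeom$ via Propositions~\ref{prop:parity-Karoubi} and~\ref{prop:tilt-Karoubi}, the identifications on objects extend naturally to an isomorphism of functors after passage to Karoubian closures; this is the one genuinely structural step, and will be the main thing to check.

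Once commutativity is established, the conclusion is immediate. On the one hand, applying the top row to $\EC_{-\lambda} \in \Parity_{\ID}(\Gr, \FM)$ yields $\tTC^\lambda$: if $w_{-\lambda} = \omega s_1 \cdots s_r$ is a reduced expression, then $\EC_{-\lambda}$ is characterized inside $\Parity_{\ID}(\Gr, \FM)$ as the unique indecomposable summand of $\EC_\FM(\omega,(s_1,\dots,s_r))$ which is not a summand of any $\EC_\FM(\omega',\underline{t})[i]$ with $\ell(\underline{t}) < r$ (cf.~Remark~\ref{rk:indec-parity} and Lemma~\ref{lem:parity-indec-For}), and \eqref{eqn:equiv-Parity-Tilt-equiv} transports this characterization to the defining characterization of $\tTC^\lambda$. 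On the other hand, the forgetful functor sends $\EC_{-\lambda}$ to $\EC_{-\lambda} \in \Parity_{(\ID)}(\Gr, \FM)$, and by Theorem~\ref{thm:main}\eqref{it:main-thm-indec} (applied with $-\lambda$ in place of $\lambda$) we have $\Theta(\EC_{-\lambda}) \cong \TC^\lambda$. Commutativity of the square then gives the required isomorphism $\Lder i^*(\tTC^\lambda) \cong \TC^\lambda$.

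The main obstacle is the verification of the commutativity of the square as a diagram of functors (not merely on isomorphism classes of generating objects); everything else is a direct application of results already proved. This should nevertheless go through because both horizontal equivalences are produced by Karoubi-completing the same additive envelope of the equivalence~\eqref{eqn:equiv-BS}, so the desired natural isomorphism can be exhibited on Bott--Samelson generators and extended by universality.
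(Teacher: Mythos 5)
Your proposal is correct, but it takes a genuinely different route from the paper's. The paper argues locally: since $\tTC^\lambda$ is a direct summand of some $\MC_\FM(\omega,\us)$, Proposition~\ref{prop:morphisms-BS-geom} shows that $\Lder i^*$ induces an isomorphism $\FM \otimes_{\OC(\tg^*)} \End_{D^{\GB}(\tgg)}(\tTC^\lambda) \simto \End_{D^{\GB}(\tNC)}(\Lder i^*(\tTC^\lambda))$, so the latter ring is a quotient of a local ring, hence local; thus $\Lder i^*(\tTC^\lambda)$ is indecomposable (exactly as in Lemma~\ref{lem:parity-indec-For}), and the identification with $\TC^\lambda$ then follows from the Krull--Schmidt property and induction on $\ell(w_\lambda)$. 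Your route instead goes through commutativity of the square formed by~\eqref{eqn:equiv-Parity-Tilt-equiv}, $\Theta$, $\For$ and $\Lder i^*$ --- which is precisely the square the paper later declares commutative ``from construction'' in the proof of Proposition~\ref{prop:Phi-Psi-For}; since that commutativity does not use the present lemma, there is no circularity. Both arguments rest on the same key input, Proposition~\ref{prop:morphisms-BS-geom}: in your version it (together with Lemma~\ref{lem:properties-parity}\eqref{it:morph-parity-For}) identifies both vertical arrows with $\FM \otimes_{\OC(\tg^*)}(-)$ on $\Hom$-spaces between Bott--Samelson objects, while the horizontal equivalences are $\OC(\tg^*)$-linear because both arise from the single $C$-linear equivalence~\eqref{eqn:equiv-BS}; this is what makes the square commute functorially. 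You are right to insist on functorial rather than object-level commutativity (agreement on generators alone would not descend to the summand $\EC_{-\lambda}$), though one could alternatively combine object-level agreement on Bott--Samelson objects with cancellation in a Krull--Schmidt category and induction on $\ell(w_\lambda)$. What your approach buys is that Proposition~\ref{prop:Phi-Psi-For} becomes essentially free afterwards; what the paper's buys is that it avoids having to exhibit a natural isomorphism of functors.
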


\begin{proof}
The result easily follows if we can prove that $\Lder i^*(\tTC^\lambda)$ is indecomposable. However, since $\tTC^\lambda$ is a direct summand in an object of the form $\MC_\FM(\omega,\us)$, it follows from Proposition~\ref{prop:morphisms-BS-geom} that the functor $\Lder i^*$ induces an isomorphism
\[
\FM \otimes_{\OC(\tg^*)} \End_{D^{\GB}(\tgg)}(\tTC^\lambda) \simto \End_{D^{\GB}(\tNC)} \bigl( \Lder i^* (\tTC^\lambda) \bigr).
\]
Then the proof proceeds as for Lemma~\ref{lem:parity-indec-For}.
\end{proof}

\begin{lem}
\label{lem:Tilt-generates}
The subcategory ${\EuScript Tilt}$ generates $D^{\GB \times \Gm}(\tgg)$ as a triangulated category.
\end{lem}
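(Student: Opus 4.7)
Plan: Let $\langle {\EuScript Tilt} \rangle$ denote the triangulated subcategory of $D^{\GB \times \Gm}(\tgg)$ generated by ${\EuScript Tilt}$. The strategy is to show that $\langle {\EuScript Tilt} \rangle$ contains the line bundles $\OC_\tgg(\lambda) \langle n \rangle$ for all $\lambda \in \XB$ and $n \in \ZM$, since these classically generate $D^{\GB \times \Gm}(\tgg)$. The latter generation fact uses the identification $\tgg \cong \GB \times^{\BB} (\gg/\ng)^*$, under which $\Coh^{\GB \times \Gm}(\tgg)$ is equivalent to the category of finitely generated $\BB \times \Gm$-equivariant graded $\mathrm{Sym}(\gg/\ng)$-modules; since $\mathrm{Sym}(\gg/\ng)$ has finite global dimension and any finite-dimensional $\BB \times \Gm$-representation admits, by unipotence of $\UB$, a composition series with one-dimensional subquotients, every object of $D^{\GB \times \Gm}(\tgg)$ is built by iterated extensions from the line bundles $\OC_\tgg(\lambda) \langle n \rangle$.

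To show each such line bundle lies in $\langle {\EuScript Tilt} \rangle$, I would prove by induction on word length the more general statement that $\IS^\FM_b(\OC_\tgg) \in \langle {\EuScript Tilt} \rangle$ for every $b \in \Baff$; specializing to $b = \theta_\lambda$ yields $\OC_\tgg(\lambda)$, and the $\langle n \rangle$-twists come for free. The induction reduces to verifying that $\langle {\EuScript Tilt} \rangle$ is preserved by the endofunctors $\Xi_s$ (for every simple reflection $s$ of $\Waff$), $\IS_{T_s}^{\pm 1}$, and $\IS_{T_\omega}$ (for $\omega \in \Omega$). Stability under $\Xi_s$ is immediate from the definition: since $\Xi_s(\MC_\FM(\omega, \us)) = \MC_\FM(\omega, \us \cdot s) \in {\EuScript Tilt}$, the standard argument on preimages of triangulated subcategories under triangulated functors shows $\Xi_s(\langle {\EuScript Tilt} \rangle) \subseteq \langle {\EuScript Tilt} \rangle$. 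Stability under $\IS_{T_s}^{\pm 1} = \TM^s_\tgg, \SM^s_\tgg$ then follows from the distinguished triangles
\[
\FC \langle 1 \rangle \to \Xi_s(\FC) \to \TM^s_\tgg(\FC) \xrightarrow{[1]}, \qquad \SM^s_\tgg(\FC) \to \Xi_s(\FC) \to \FC \langle -1 \rangle \xrightarrow{[1]}
\]
obtained by Fourier--Mukai from the exact sequences~\eqref{eqn:ses-tgg}. Finally, stability under $\IS_{T_\omega}$ for $\omega \in \Omega$ rests on a commutation relation $\IS_{T_\omega} \circ \Xi_s \cong \Xi_{\omega s \omega^{-1}} \circ \IS_{T_\omega}$: iterating it, together with $\IS_{T_\omega} \circ \IS_{T_{\omega'}} \cong \IS_{T_{\omega' \omega}}$, shows that $\IS_{T_\omega}$ sends any Bott--Samelson object $\MC_\FM(\omega',\us)$ to another Bott--Samelson object, hence into ${\EuScript Tilt}$.

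The main obstacle is verifying this commutation relation, which reflects the fact that $\Omega$ acts on the set of simple reflections of $\Waff$ by diagram automorphisms. In the Fourier--Mukai picture, this amounts to identifying the convolutions on $\tgg \times \tgg$ of the kernel $\OC_{\tgg \times_{\tgg_s} \tgg}\langle -1 \rangle$ of $\Xi_s$ with the kernel of $\IS_{T_\omega}$ in the two possible orders, and requires careful tracking of how $\Omega$ acts on the incidence varieties $\tgg \times_{\tgg_s} \tgg$ (noting that for $s_0$ an affine simple reflection, $\Xi_{s_0}$ is itself defined by conjugation as $\IS_{b^{-1}} \circ \Xi_t \circ \IS_b$ for some finite simple $t$). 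An alternative route uses the Bernstein presentation of $\Baff$: expressing $T_\omega$ as a product of $T_w$ ($w \in W$) and $\theta_\lambda$ reduces the problem to stability under $\IS_{T_w}$ (already covered by the $\IS_{T_s}^{\pm 1}$ case) and under tensoring by $\OC_\tgg(\lambda)$, whose interaction with $\Xi_s$ is governed by a similar but more tractable commutation at the level of Fourier--Mukai kernels.
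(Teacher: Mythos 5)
Your overall strategy is sound and runs parallel to the paper's: both proofs reduce the statement to the fact that the line bundles $\OC_{\tgg}(\lambda)\langle n\rangle$ generate $D^{\GB\times\Gm}(\tgg)$ (the paper cites the argument of Achar for $\tNC$; your sketch via $\BB\times\Gm$-equivariant graded $\mathrm{Sym}(\gg/\ng)$-modules is the same argument), and both then connect the line bundles to the Bott--Samelson objects through the braid group action. The paper's route is different in the middle: it identifies $\langle{\EuScript Tilt}\rangle$ with the subcategory generated by the objects $\Delta^\lambda_{\tgg}\langle m\rangle$ (via the standard-filtration results of \cite{mr}), and then identifies that with the subcategory generated by line bundles using the categorified Bernstein relations of \cite{br}. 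Your route instead establishes stability of $\langle{\EuScript Tilt}\rangle$ under the functors $\IS_b$ directly; the parts of this concerning $\Xi_s$ and $\IS_{T_s}^{\pm 1}$ are correct (including for affine $s$, since the triangles $\id\langle 1\rangle\to\Xi_t\to\TM^t\to$ and $\SM^t\to\Xi_t\to\id\langle -1\rangle\to$ conjugate under $\IS_{b^{\pm1}}$ to the corresponding triangles for $s_0$).

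The one genuine soft spot is the step you yourself flag: stability under $\IS_{T_\omega}$ for $\omega\in\Omega$, which you reduce to a commutation $\IS_{T_\omega}\circ\Xi_s\cong\Xi_{\omega s\omega^{-1}}\circ\IS_{T_\omega}$ that you do not prove (and your Bernstein-presentation alternative runs into the same kind of unproved commutation with line-bundle twists). As written this is a gap, but it is entirely avoidable: you never need $\IS_{T_\omega}$ to act on a general object of $\langle{\EuScript Tilt}\rangle$, only on $\OC_{\tgg}$. Since conjugation by $T_\omega$ permutes the generators $T_s$, every $b\in\Baff$ can be written as $b=T_\omega b'$ with $\omega\in\Omega$ and $b'$ a word in the $T_s^{\pm1}$, and with the right-action convention $\IS_b=\IS_{b'}\circ\IS_{T_\omega}$. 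Hence
\[
\IS_b(\OC_{\tgg}) \cong \IS_{b'}\bigl(\IS_{T_\omega}(\OC_{\tgg})\bigr) = \IS_{b'}\bigl(\MC_\FM(\omega,\emptyset)\bigr),
\]
which lies in $\langle{\EuScript Tilt}\rangle$ because $\MC_\FM(\omega,\emptyset)$ is a Bott--Samelson object and $\IS_{b'}$ is a composition of the functors $\IS_{T_s}^{\pm1}$ you have already shown preserve $\langle{\EuScript Tilt}\rangle$. Taking $b=\theta_\lambda$ then yields all line bundles, and your argument closes without any commutation relation for $\Omega$.
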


\begin{proof}
Looking at the proof of~\cite[Corollary~4.2]{mr}, one can check that the triangulated subcategory generated by ${\EuScript Tilt}$ coincides with the triangulated subcategory generated by the objects $\Delta^\lambda_{\tgg} \langle m \rangle$ for $\lambda \in \XB$ and $m \in \ZM$. By~\cite[Lemma~1.11.3(2)]{br}, for $w \in W$ we have $\IS_{T_w}(\OC_{\tgg}) \cong \OC_{\tgg} \langle - \ell(w) \rangle$. We deduce (as in Remark~\ref{rk:nabla-delta-Baff}) that we have
\[
\Delta^\lambda_{\tgg} \cong \IS_{(T_{t_{-\lambda}})^{-1}}(\OC_{\tgg}) \langle - \ell(t_\lambda) + \ell(w_\lambda) \rangle.
\]
Then, using~\cite[Lemma~2.3]{mr} and~\cite[Lemma~1.11.3]{br}, one can check that the triangulated subcategory generated by the object $\Delta^\lambda_{\tgg}\langle m \rangle$ coincides with the triangulated subcategory generated by the objects $\OC_{\tgg}(\lambda) \langle m\rangle$ for $\lambda \in \XB$ and $m \in \ZM$. Hence the claim of the lemma is equivalent to the claim that $D^{\GB \times \Gm}(\tgg)$ is generated by this collection of objects, which can be proved by the same arguments as the corresponding claim for $\tNC$ in~\cite[Corollary~5.8]{achar}.
\end{proof}

\begin{lem}
\label{lem:KbTilt-tgg}
There exists an equivalence of triangulated categories
\[
\Kb {\EuScript Tilt} \simto D^{\GB \times \Gm}(\tgg)
\]
commuting with $\langle 1 \rangle$ and
sending $\tTC^\lambda$ to $\tTC^\lambda$ for any $\lambda \in \XB$.
\end{lem}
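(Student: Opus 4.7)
The plan is to realize $\Kb {\EuScript Tilt}$ inside $D^{\GB \times \Gm}(\tgg)$ via the canonical ``totalization'' functor $\iota \colon \Kb {\EuScript Tilt} \to D^{\GB \times \Gm}(\tgg)$, and then verify fully-faithfulness and essential surjectivity from the inputs already at hand. By construction $\iota$ commutes with $\langle 1 \rangle$ and, viewing $\tTC^\lambda$ as a complex concentrated in degree $0$, sends $\tTC^\lambda$ to $\tTC^\lambda$, so once $\iota$ is shown to be an equivalence the claimed properties are automatic.

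The first key step is to upgrade the Ext-vanishing of Proposition~\ref{prop:morphisms-BS-geom} to the $\GB \times \Gm$-equivariant setting. Since the forgetful functor identifies $\Hom_{D^{\GB}(\tgg)}(\FC, \GC[k])$ with $\bigoplus_{n \in \ZM} \Hom_{D^{\GB \times \Gm}(\tgg)}(\FC, \GC \langle -n \rangle [k])$ (as recalled before Proposition~\ref{prop:morphism-D-N-tgg}), the vanishing $\Hom_{D^{\GB}(\tgg)}(\MC_\FM(\omega,\us), \MC_\FM(\omega',\ut)[k]) = 0$ for $k \neq 0$ of Proposition~\ref{prop:morphisms-BS-geom} forces each graded piece on the right to vanish. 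Since every object of ${\EuScript Tilt}$ is a direct summand of some $\MC_\FM(\omega,\us) \langle n \rangle$, this gives
\[
\Hom_{D^{\GB \times \Gm}(\tgg)}(\TC, \TC'[k]) = 0 \qquad \text{for all } \TC, \TC' \in {\EuScript Tilt},\ k \neq 0.
\]

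Second, I would deduce fully-faithfulness of $\iota$ by the standard homological argument (the same one that underlies the analogous statement for $\tNC$ used in Theorem~\ref{thm:equivalence-tNC} and discussed in Remark~\ref{rk:real-tNC}). Given the Ext-vanishing just established, one proves by induction on the combined length of $X^\bullet, Y^\bullet \in \Kb {\EuScript Tilt}$ — using the stupid truncation triangles and five-lemma — that $\Hom_{\Kb {\EuScript Tilt}}(X^\bullet, Y^\bullet) \to \Hom_{D^{\GB \times \Gm}(\tgg)}(\iota X^\bullet, \iota Y^\bullet)$ is an isomorphism, the base case being the tautology $\Hom_{{\EuScript Tilt}}(T, T') = \Hom_{D^{\GB \times \Gm}(\tgg)}(T, T')$ for objects concentrated in degree $0$. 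Essential surjectivity is then immediate: the essential image of $\iota$ is a strictly full triangulated subcategory containing ${\EuScript Tilt}$, hence contains the triangulated subcategory generated by ${\EuScript Tilt}$, which by Lemma~\ref{lem:Tilt-generates} is all of $D^{\GB \times \Gm}(\tgg)$.

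There is no real obstacle in this argument: the two nontrivial inputs (Ext-vanishing between Bott--Samelson objects on $\tgg$, and generation of $D^{\GB \times \Gm}(\tgg)$ by ${\EuScript Tilt}$) have already been secured in Proposition~\ref{prop:morphisms-BS-geom} and Lemma~\ref{lem:Tilt-generates} respectively. The lemma is a formal consequence of the general principle that a Karoubian, Ext-trivial, generating additive subcategory of a triangulated category realizes that category as the bounded homotopy category on the subcategory.
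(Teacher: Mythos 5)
The formal skeleton of your argument (upgrading the Ext-vanishing to the $\GB \times \Gm$-equivariant setting via the grading on non-equivariant $\Hom$'s, d{\'e}vissage by stupid truncations for fully-faithfulness, and Lemma~\ref{lem:Tilt-generates} for essential surjectivity) is exactly what the paper summarizes as ``standard arguments.'' The gap is at the very first step: the functor $\iota \colon \Kb {\EuScript Tilt} \to D^{\GB \times \Gm}(\tgg)$ does not exist ``canonically.'' The category ${\EuScript Tilt}$ is defined as a full additive subcategory of the triangulated category $D^{\GB \times \Gm}(\tgg)$, so its objects are a priori complexes; since cones in a triangulated category are not functorial, there is no general procedure extending the inclusion of an additive subcategory to a triangulated functor out of its bounded homotopy category. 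For $\tNC$ the analogous functor is the realization functor attached to the exotic t-structure (see Remark~\ref{rk:real-tNC}), but no such t-structure is available on $D^{\GB \times \Gm}(\tgg)$, so that route is closed here. Your phrase ``by construction $\iota$ commutes with $\langle 1 \rangle$ and sends $\tTC^\lambda$ to $\tTC^\lambda$'' presupposes a construction that has not been given.

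This is precisely where the paper spends the non-formal part of its proof: it first shows that every object of ${\EuScript Tilt}$ is concentrated in degree $0$, i.e.~is an honest equivariant coherent sheaf on $\tgg$. This uses the fact that $\TC^\lambda$ is concentrated in degree $0$ by \cite[Corollary~4.16]{mr}, the isomorphism $\Lder i^*(\tTC^\lambda) \cong \TC^\lambda$ of Lemma~\ref{lem:tTC-i*}, and the graded Nakayama statement of Lemma~\ref{lem:nakayama}\eqref{it:nakayama-complex} applied over an affine open covering of $\tgg$ (with the base ring playing the role of $\OC(\tg^*)$, since $\Lder i^*$ is locally derived reduction along $\nu$). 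Once this is known, the equivalence is the composition $\Kb {\EuScript Tilt} \to \Kb \Coh^{\GB \times \Gm}(\tgg) \to D^{\GB \times \Gm}(\tgg)$, and the properties you list become genuinely automatic. So the one missing ingredient in your proposal is the degree-$0$ concentration of the objects of ${\EuScript Tilt}$; the remainder of your argument then goes through as you describe.
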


\begin{proof}
It follows from~\cite[Corollary~4.16]{mr} that the complexes $\TC^\lambda$ are concentrated in degree $0$. Using Lemma~\ref{lem:tTC-i*} and Lemma~\ref{lem:nakayama}\eqref{it:nakayama-complex} (over an affine open covering of $\tgg$) we deduce that the complexes $\tTC^\lambda$ are also concentrated in degree $0$. We construct the functor of the lemma as the composition
\[
\Kb {\EuScript Tilt} \to \Kb \Coh^{\GB \times \Gm}(\tgg) \to D^{\GB \times \Gm}(\tgg).
\]
It follows from standard arguments, using Proposition~\ref{prop:morphisms-BS-geom} and Lemma~\ref{lem:Tilt-generates}, that this functor is an equivalence of categories.
\end{proof}

\begin{proof}[Proof of Theorem~{\rm \ref{thm:equivalence-tgg}}]
%As in~\S\ref{ss:proof-equivalence-tNC}, to construct an equivalence $\Psi$ as in Theorem~\ref{thm:equivalence-tgg} it is enough to construct an equivalence
%\[
%\Psi' \colon \Dmix_{\ID}(\Gr',\FM) \simto D^{\GB \times \Gm}(\tgg)
%\]
%which satisfies
%\[
%\Psi' \circ \langle 1 \rangle \cong \langle 1 \rangle [1] \circ \Psi', \quad \Psi'(\Delta^{\prime \mix}_{\ID, \lambda}) \cong \Delta_{\tgg}^{\lambda}, \quad \Psi'(\nabla^{\prime \mix}_{\ID, \lambda}) \cong \nabla_{\tgg}^{\lambda}, \quad \Psi'(\EC^{\prime \mix}_{\lambda}) \cong \tTC^{\lambda}
%\]
%for any $\lambda \in \XB$. (Here, $\Delta^{\prime \mix}_{\ID, \lambda}$ and $\nabla^{\prime \mix}_{\ID, \lambda}$ are the standard and costandard objects in $\Dmix_{\ID}(\Gr',\FM)$, respectively.) 
We define $\Psi$ as the composition
\[
\Dmix_{\ID}(\Gr, \FM) := \Kb \Parity_{\ID}(\Gr, \FM) \xrightarrow[\sim]{\eqref{eqn:equiv-Parity-Tilt-equiv}} \Kb {\EuScript Tilt}
\xrightarrow[\sim]{{\rm Lemma~\ref{lem:KbTilt-tgg}}} D^{\GB \times \Gm}(\tgg).
\]
By construction, this equivalence satisfies $\Psi \circ \langle 1 \rangle \cong \langle 1 \rangle [1] \circ \Psi$ and $\Psi(\EC^{\mix}_{-\lambda}) \cong \tTC^{\lambda}$. The isomorphisms involving standard and costandard objects can be proved using the same arguments as for their counterparts in Theorem~\ref{thm:equivalence-tNC}.
\end{proof}

%------------------------------------------------------------------
\subsection{Compatibility}
\label{ss:compatibilities}
%------------------------------------------------------------------

In this subsection we prove
%some compatibility properties for 
that the functors $\Phi$ and $\Psi$ as constructed in~\S\S\ref{ss:proof-equivalence-tNC}--\ref{ss:proof-equivalence-tgg}
are compatible in the natural way.
We will denote by
$\For \colon \Dmix_{\ID}(\Gr, \FM) \to \Dmix_{(\ID)}(\Gr, \FM)$
the ``forgetful functor'' induced by the forgetful functor $\Parity_{\ID}(\Gr, \FM) \to \Parity_{(\ID)}(\Gr, \FM)$.

\begin{prop}
\label{prop:Phi-Psi-For}
The following diagram commutes up to isomorphisms of functors:
\[
\xymatrix@R=0.6cm@C=1.5cm{
\Dmix_{\ID}(\Gr, \FM) \ar[r]^-{\Psi}_-{\sim} \ar[d]_-{\For} & D^{\GB \times \Gm}(\tgg) \ar[d]^-{\Lder i^*} \\
\Dmix_{(\ID)}(\Gr, \FM) \ar[r]^-{\Phi}_-{\sim} & D^{\GB \times \Gm}(\tNC).
}
\]
\end{prop}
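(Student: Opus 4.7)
The plan is to trace both $\Phi$ and $\Psi$ back to the common equivalence~\eqref{eqn:equiv-BS} between $\BStop$ and $\BSgeom$, and to observe that under this identification the two vertical arrows in the diagram correspond, on $\Hom$-spaces between Bott--Samelson objects, to the \emph{same} algebraic operation. Via~\eqref{eqn:cohomology-TD} we identify $\HM^\bullet_\ID(\pt;\RG)$ with $\OC(\tg^*_\RG)$. A first step is to verify that~\eqref{eqn:equiv-BS} is $\OC(\tg^*_\RG)$-linear on morphism spaces: both $\co_{\mathsf{BS}}$ (Theorem~\ref{thm:main-top}) and $\kappa_{\mathsf{BS}}$ (Theorem~\ref{thm:main-geom}) intertwine the respective actions of $\OC(\tg^*_\RG)$ with its right action on $\BSalg$-morphism spaces (as a subalgebra of $C$), which can be seen by unwinding the constructions of \S\ref{ss:topological-BS} and \S\ref{ss:BS-category-tilting}.

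Combining this linearity with Lemma~\ref{lem:properties-parity}\eqref{it:morph-parity-For} (identifying $\For$ on morphisms with $\FM \otimes_{\HM^\bullet_\ID(\pt;\RG)} (-)$) and Proposition~\ref{prop:morphisms-BS-geom} (identifying $\Lder i^*$ on morphisms of Bott--Samelson objects in ${\EuScript Tilt}$ with $\FM \otimes_{\OC(\tg^*_\RG)} (-)$) yields, on the additive envelopes of Bott--Samelson objects, a commutative square
\[
\xymatrix@R=0.4cm@C=1.2cm{
\Parity_\ID(\Gr,\FM) \ar[r]^-{\Psi_0}_-{\sim} \ar[d]_-{\For} & {\EuScript Tilt} \ar[d]^-{\Lder i^*} \\
\Parity_{(\ID)}(\Gr,\FM) \ar[r]^-{\Theta}_-{\sim} & \Tilt(\ES^{\GB\times\Gm}(\tNC)),
}
\]
where $\Psi_0$ denotes the additive equivalence underlying $\Psi$ (from the proof of Theorem~\ref{thm:equivalence-tgg}). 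Taking Karoubian envelopes (Propositions~\ref{prop:parity-Karoubi} and~\ref{prop:tilt-Karoubi}) extends commutativity to all of $\Parity_\ID(\Gr,\FM)$, and applying $\Kb$ yields an isomorphism of triangulated functors between the bounded homotopy categories.

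Finally, I would identify the termwise $\Lder i^*$ functor $\Kb{\EuScript Tilt} \to \Kb\Tilt(\ES^{\GB\times\Gm}(\tNC))$ with the honest $\Lder i^* \colon D^{\GB\times\Gm}(\tgg) \to D^{\GB\times\Gm}(\tNC)$ under the equivalences of Lemma~\ref{lem:KbTilt-tgg} and Remark~\ref{rk:real-tNC}. Since each $\MC_\FM(\omega,\us)$ is a coherent sheaf concentrated in degree $0$ (cf.\ the proof of Lemma~\ref{lem:KbTilt-tgg}) whose derived restriction $\Lder i^*\MC_\FM(\omega,\us)$ is also concentrated in degree $0$ (by Proposition~\ref{prop:tilting-Bott-Samelson} together with \cite[Corollary~4.16]{mr}), every object of ${\EuScript Tilt}$ is $\Lder i^*$-acyclic, so on a bounded complex of such objects $\Lder i^*$ coincides with termwise $i^*$. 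Assembling these identifications yields the desired isomorphism $\Phi \circ \For \cong \Lder i^* \circ \Psi$. The main technical obstacle is the $\OC(\tg^*_\RG)$-linearity check of the first step---a bookkeeping exercise that requires carefully unwinding several layers of definitions.
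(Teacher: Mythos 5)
Your argument is correct and follows essentially the same route as the paper: the paper's proof also reduces to (a) commutativity of the additive-level square relating $\Parity_{\ID}(\Gr,\FM)$, ${\EuScript Tilt}$, $\Parity_{(\ID)}(\Gr,\FM)$ and $\Tilt(\ES^{\GB\times\Gm}(\tNC))$ (which it dismisses as ``clear from construction''---your $\OC(\tg^*_\RG)$-linearity check is precisely the content being suppressed there), and (b) the identification of the termwise and derived versions of $i^*$ on $\Kb{\EuScript Tilt}$ via Remark~\ref{rk:real-tNC} and the $i^*$-acyclicity of objects of ${\EuScript Tilt}$, exactly as you do.
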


\begin{proof}
It is clear from construction that the diagram
\[
\xymatrix@R=0.6cm@C=1.5cm{
\Parity_{\ID}(\Gr, \FM) \ar[r]_-{\sim}^-{\eqref{eqn:equiv-Parity-Tilt-equiv}} \ar[d]_-{\For} & {\EuScript Tilt} \ar[d]^-{\Lder i^*} \\
\Parity_{(\ID)}(\Gr, \FM) \ar[r]_-{\sim}^-{\eqref{eqn:equiv-main}} & \Tilt(\ES^{\GB \times \Gm}(\tNC))
}
\]
commutes. Hence to conclude we only have to prove that the diagram
\[
\xymatrix@R=0.6cm@C=1.5cm{
\Kb {\EuScript Tilt} \ar[r]^-{\sim} \ar[d]_-{\Kb(\Lder i^*)} & D^{\GB \times \Gm}(\tgg) \ar[d]^-{\Lder i^*} \\
\Kb \Tilt(\ES^{\GB \times \Gm}(\tNC)) \ar[r]^-{\sim} & D^{\GB \times \Gm}(\tgg)
}
\]
commutes, where the horizontal arrows are the functors considered in~\S\ref{ss:proof-equivalence-tgg} and~\S\ref{ss:proof-equivalence-tNC} respectively. The latter fact follows from Remark~\ref{rk:real-tNC} and the fact that the objects in ${\EuScript Tilt}$, considered as equivariant coherent sheaves, are acyclic for the functor $i^*$ (see Lemma~\ref{lem:nakayama}\eqref{it:nakayama-complex}).
\end{proof}

\subsection{Proof of Corollary~\ref{cor:parity-perverse}}
\label{ss:proof-parity-perverse}
%------------------------------------------------------------------

We begin with an easy lemma. Let $\Bbbk$ be a field, and $X=\bigsqcup_{s \in \mathscr{S}} X_s$ be an algebraic variety endowed with an algebraic stratification, where $X_s$ is simply connected for any $s \in \mathscr{S}$. We denote by $\DM_X$ the Grothendieck--Verdier duality functor.

\begin{lem}
\label{lem:perverse-criterion}
Let $\FC$ be an object in $\Db_{\mathscr{S}}(X,\Bbbk)$ which satisfies $\DM_X(\FC) \cong \FC$ and $\Hom(\FC, \FC[n])=0$ for any $n \in \ZM_{<-1}$. Then $\FC$ is perverse.
\end{lem}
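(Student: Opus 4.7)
The plan is to argue by contradiction: assume $\FC$ is not perverse and produce a nonzero morphism $\FC \to \FC[-2b]$ for some $b \ge 1$, which contradicts the hypothesis since $-2b \le -2 < -1$. Let $b$ be the largest integer with ${}^p\HC^b(\FC) \ne 0$; the self-duality $\DM_X(\FC) \cong \FC$, combined with the fact that $\DM_X$ exchanges ${}^p\HC^k$ with ${}^p\HC^{-k}$, forces $b \ge 1$ (otherwise $\FC$ would already be perverse) and yields a canonical isomorphism $Q \cong \DM_X(P)$, where $P := {}^p\HC^b(\FC)$ and $Q := {}^p\HC^{-b}(\FC)$.

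The first step reduces the problem to constructing a nonzero morphism $f \colon P \to Q$. Since $\FC \in {}^p D^{\le b}$ and $\FC[-2b] \in {}^p D^{\ge b}$, the truncation triangle ${}^p\tau_{<b}\FC \to \FC \to P[-b] \xrightarrow{[1]}$ and the vanishing $\Hom({}^p D^{\le b-1}, {}^p D^{\ge b}) = 0$ imply that precomposition with the canonical projection $\alpha \colon \FC \to P[-b]$ yields an isomorphism
\[
\Hom(P[-b], \FC[-2b]) \simto \Hom(\FC, \FC[-2b]),
\]
whose source equals $\Hom(P, \FC[-b])$. Using that ${}^p\HC^0(\FC[-b]) = Q$, the truncation triangle $Q \to \FC[-b] \to {}^p\tau_{>0}(\FC[-b]) \xrightarrow{[1]}$ together with the vanishing $\Hom(P, {}^p\tau_{>0}(\FC[-b])[-1]) = 0$ (which holds by the t-structure axioms since $P$ is perverse and ${}^p\tau_{>0}(\FC[-b])[-1] \in {}^p D^{\ge 2}$) produces an injection $\Hom(P, Q) \hookrightarrow \Hom(P, \FC[-b])$. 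Composing these maps, any nonzero $f$ gives rise to a nonzero morphism $\FC \to \FC[-2b]$.

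To construct $f$ I use the hypothesis that each stratum $X_s$ is simply connected: this forces every $\mathscr{S}$-constructible local system on $X_s$ to be trivial, so the simple objects of the abelian category of $\mathscr{S}$-constructible perverse sheaves on $X$ are precisely the intersection cohomology sheaves $\IC(\overline{X_s}, \underline{\Bbbk})$, each of which is self-dual under $\DM_X$. Taking any simple quotient $L$ of the nonzero perverse sheaf $P$, self-duality $L \cong \DM_X(L)$ combined with applying $\DM_X$ to the surjection $P \twoheadrightarrow L$ yields an inclusion $L \hookrightarrow \DM_X(P) = Q$, and the composition $P \twoheadrightarrow L \hookrightarrow Q$ is the desired nonzero $f$.

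The main obstacle is this last step, in which the simply-connectedness hypothesis is used essentially to guarantee self-duality of all simple perverse sheaves and thus the existence of a nonzero morphism $P \to Q$; everything else is a formal manipulation of perverse truncation triangles and the standard orthogonality properties of the perverse t-structure.
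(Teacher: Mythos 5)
Your proof is correct and follows essentially the same strategy as the paper's: both locate the top nonvanishing perverse cohomology degree $b>0$, use self-duality of $\FC$ together with the self-duality of simple $\mathscr{S}$-constructible perverse sheaves (via simple-connectedness of the strata) to produce a nonzero map ${}^p\HC^b(\FC) \to {}^p\HC^{-b}(\FC)$, and splice it with the truncation morphisms to get a nonzero element of $\Hom(\FC,\FC[-2b])$. Your verification that the composite is nonzero, via the $\Hom$-group isomorphisms coming from the truncation triangles, is just a more explicit version of the paper's observation that ${}^p\HC^b$ of the composite is nonzero.
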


\begin{proof}
Assume that $\FC$ is not perverse, and
let $N=\max\{n \in \ZM_{> 0} \mid {}^p \hspace{-1pt} \HC^n(\FC) \neq 0\}$. Then since $\DM_X(\FC) \cong \FC$, $N$ is also the largest integer such that ${}^p \hspace{-1pt} \HC^{-N}(\FC) \neq 0$, and we have $\HC^{-N}(\FC) \cong \DM_X(\HC^{N}(\FC))$. In particular, we deduce that $\mathrm{top}(\HC^{N}(\FC)) \cong \DM_X(\mathrm{soc}(\HC^{-N}(\FC)))$. Under our assumptions each simple $\mathscr{S}$-constructible perverse sheaf on $X$ is stable under $\DM_X$, hence we deduce that there exists an isomorphism $\mathrm{top}(\HC^{N}(\FC)) \cong \mathrm{soc}(\HC^{-N}(\FC))$; in particular there exists a non-zero morphism $\phi \colon \HC^{N}(\FC) \to \HC^{-N}(\FC)$. Now consider the following morphism (where the first and third morphisms come from the appropriate perverse truncation triangles):
\[
\psi \colon \FC \to \HC^{N}(\FC)[-N] \xrightarrow{\phi[-N]} \HC^{-N}(\FC)[-N] \to \FC[-2N].
\]
Then ${}^p \hspace{-1pt} \HC^N(\psi) \neq 0$, hence $\psi$ is a non zero element in $\Hom(\FC, \FC[-2N])$, contradicting our assumption.
\end{proof}

\begin{proof}[Proof of Corollary~{\rm \ref{cor:parity-perverse}}]
By standard reductions (see e.g.~\cite[Lemma~3.6]{jmw2}) one can assume that $\GB$ is a product of simply connected quasi-simple groups not of type $A$ and general linear groups. Then $\GB$ and $\FM$ satisfy the assumptions of Theorem~\ref{thm:main}.

Using Lemma~\ref{lem:perverse-criterion}, to prove our claim it suffices to prove that for $\lambda \in -\XB^+$ and $n<-1$ we have
\[
\Hom_{\Parity_{(\ID)}(\Gr, \FM)}(\EC_\lambda, \EC_\lambda[n])=0.
\]
However we have $\Theta(\EC_\lambda) \cong \TC^{-\lambda}$ by Theorem~\ref{thm:main}\eqref{it:main-thm-indec}, hence $\Theta(\EC_\lambda) \cong \mathsf{T}(-\lambda) \otimes \OC_{\tNC}$ by~\cite[Corollary~4.8]{mr}. We deduce that
\[
\Hom_{\Parity_{(\ID)}(\Gr, \FM)}(\EC_\lambda, \EC_\lambda[n]) \cong \Hom_{D^{\GB \times \Gm}(\tNC)}(\mathsf{T}(-\lambda) \otimes \OC_{\tNC}, \mathsf{T}(-\lambda) \otimes \OC_{\tNC} \langle -n \rangle).
\]
Now by~\cite[Proposition~A.6]{mr} the right-hand side is isomorphic to
\begin{multline*}
\mathsf{H}^0 \bigl( \Inv^{\GB} \circ \Inv^{\Gm}(\mathsf{T}(-\lambda)^* \otimes \mathsf{T}(-\lambda) \otimes R\Gamma(\tNC, \OC_{\tNC}) \langle -n \rangle) \bigr) \\
\cong \mathsf{H}^0 \Bigl( \Inv^{\GB} \bigl(\mathsf{T}(-\lambda)^* \otimes \mathsf{T}(-\lambda) \otimes \Inv^{\Gm} ( R\Gamma(\tNC, \OC_{\tNC}) \langle -n \rangle) \bigr) \Bigr).
\end{multline*}
Now, using the same arguments as in~\cite[Lemma~1.4.2]{br} one can check that
\[
\Inv^{\Gm}(R\Gamma(\tNC, \OC_{\tNC}) \langle -n \rangle)=0
\]
unless $n \in 2\ZM_{\geq 0}$. The claim follows.
\end{proof}

%------------------------------------------------------------------
\subsection{Proof of Proposition~\ref{prop:equiv-parity-tilt}}
\label{ss:proof-Satake}
%------------------------------------------------------------------

%Recall the equivalence $\Theta'$ considered in~\S\ref{ss:proof-main}. This equivalence satisfies $\Theta'(\EC'_\lambda) \cong \mathsf{T}(\lambda) \otimes \OC_{\tNC}$ for all $\lambda \in \XB^+$ (see~\cite[Corollary~4.8]{mr}). 

By~\cite[Corollary~4.8]{mr}, for any $\lambda \in \XB^+$ we have $\TC^\lambda \cong \mathsf{T}(\lambda) \otimes \OC_{\tNC}$, hence $\Theta(\EC_{-\lambda}) \cong \mathsf{T}(\lambda) \otimes \OC_{\tNC}$.
Now we observe that, by the same arguments as in the proof of Corollary~\ref{cor:parity-perverse} and since $\Inv^{\Gm} \bigl( R\Gamma(\tNC, \OC_{\tNC}) \bigr) = \FM$, the functor
\[
\Tilt(\GB) \to D^{\GB \times \Gm}(\tNC) \colon V \mapsto V \otimes \OC_{\tNC}
\]
is fully faithful. 
Hence $\Theta$ induces an equivalence
\begin{equation}
\label{eqn:Theta'-perv}
\PParity_{(\GD(\mathscr{O}))}(\Gr) \simto \Tilt(\GB)
\end{equation}
sending $\EC^\lambda = \EC_{w_0 \lambda}$ to $\mathsf{T}(-w_0 \lambda)$ for all $\lambda \in \XB^+$.
%, where $\PParity_{(\GD(\mathscr{O}))}(\Gr')$ is the subcategory of $\Parity_{(\ID)}(\Gr')$ consisting of parity sheaves which are $\GD(\mathscr{O})$-constructible and perverse (i.e.~which are direct sums of objects $\EC_\lambda'$ with $\lambda \in \XB^+$).

Now by~\cite[Proposition~2.1]{mv} the categories of $\GD(\mathscr{O})$-constructible and $\GD(\mathscr{O})$-equivariant perverse sheaves on $\Gr$ are canonically equivalent. Using this property and the antiautomorphism of $\GD(\mathscr{K})$ defined by $g \mapsto g^{-1}$, one can construct an autoequivalence of $\PParity_{(\GD(\mathscr{O}))}(\Gr)$ sending $\EC^\lambda$ to $\EC^{-w_0\lambda}$.
%Hence the same property holds for $\Gr'$, and we can consider the canonical equivalence
%\[
%\Perv_{(\GD(\mathscr{O}))}(\Gr) \simto \Perv_{\GD(\mathscr{O})}(\Gr) \simto \Perv_{\GD(\mathscr{O})}(\Gr') \simto \Perv_{(\GD(\mathscr{O}))}(\Gr'),
%\]
%which restricts to an equivalence $\PParity_{(\GD(\mathscr{O}))}(\Gr) \simto \PParity_{(\GD(\mathscr{O}))}(\Gr')$ sending $\EC^\lambda=\EC_{w_0 \lambda}$ to $\EC'_{\lambda}$, for any $\lambda \in \XB^+$. 
Composing this equivalence with~\eqref{eqn:Theta'-perv} provides the equivalence $\overline{\mathsf{S}}_\FM$.

Formula~\eqref{eqn:formula-stalks-parity} follows from the following chain of equalities for $\lambda,\mu \in \XB^+$:
\begin{align*}
\sum_{k \in \ZM} \dim \bigl( \HM^{k-\dim(\Gr^{\mu})}(\imath_\mu^* \EC^\lambda) \bigr) \cdot \vv^k &= \sum_{k \in \ZM} \dim \bigl( \HM^{k-\dim(\Gr_{w_0 \mu})}(\Gr_{w_0 \mu}, i_{w_0 \mu}^* \EC_{w_0\lambda}) \bigr) \cdot \vv^k \\
&= \sum_{k \in \ZM} (\TC^{-w_0\lambda} : \Delta_{\tNC}^{-w_0 \mu} \langle k \rangle) \cdot \vv^k \\
&= \sum_{\nu \in \XB^+} (\mathsf{T}(-w_0\lambda) : \mathsf{M}(\nu)) \cdot \MC_\nu^{-w_0 \mu}(\vv^{-2})
\\
& = \sum_{\nu \in \XB^+} \bigl( \mathsf{T}(\lambda) : \mathsf{N}(-w_0\nu) \bigr) \cdot \MC^{-w_0\mu}_\nu(\vv^{-2}).
\end{align*}
Here the second equality follows from
Theorem~\ref{thm:main}\eqref{it:main-thm-ch}, the third one follows from the formula for $\ch_\Delta(\mathsf{T}(-w_0 \lambda) \otimes \OC_{\tNC})$ provided by~\cite[Proposition~4.6]{mr}, and the last one from the fact that $(\mathsf{T}(-w_0 \lambda) : \mathsf{N}(\nu)) = (\mathsf{T}(\lambda) : \mathsf{M}(-w_0 \nu))$ (since $\mathsf{T}(\lambda)^* \cong \mathsf{T}(-w_0 \lambda)$).

%------------------------------------------------------------------
\subsection{Proof of Proposition~\ref{prop:cohomology}}
\label{ss:proof-cohomology}
%------------------------------------------------------------------

Before giving the proof of the proposition, we start with a lemma describing the objects $\tTC^\lambda$ introduced in~\S\ref{ss:proof-equivalence-tgg} in the case $\lambda \in \XB^+$.

\begin{lem}
\label{lem:ttilting-dominant}
For $\lambda \in \XB^+$, we have $\tTC^\lambda \cong T(\lambda) \otimes \OC_{\tgg}$ in $D^{\GB \times \Gm}(\tgg)$.
\end{lem}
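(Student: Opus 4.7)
The plan is to lift the identification $\TC^\lambda \cong \mathsf{T}(\lambda) \otimes \OC_{\tNC}$ of~\cite[Corollary~4.8]{mr} along the closed embedding $i \colon \tNC \hookrightarrow \tgg$, taking advantage of the fact that both $\tTC^\lambda$ and $\mathsf{T}(\lambda) \otimes \OC_{\tgg}$ are $\OC(\tg^*)$-flat lifts of the same object on $\tNC$.

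First I would observe that both objects are concentrated in cohomological degree~$0$ and have isomorphic images under $\Lder i^*$, namely $\TC^\lambda \cong \mathsf{T}(\lambda) \otimes \OC_{\tNC}$: this combines Lemma~\ref{lem:tTC-i*} with~\cite[Corollary~4.8]{mr} for $\tTC^\lambda$, and is immediate from local freeness for $\mathsf{T}(\lambda) \otimes \OC_{\tgg}$. Hence by Lemma~\ref{lem:nakayama}\eqref{it:nakayama-complex} (applied on an affine cover of $\tgg$) it suffices to produce a morphism $f \colon \mathsf{T}(\lambda) \otimes \OC_{\tgg} \to \tTC^\lambda$ whose image under $\Lder i^*$ is an isomorphism; such an $f$ will automatically be an isomorphism.

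Next I would establish that $\mathsf{T}(\lambda) \otimes \OC_{\tgg}$ is indecomposable by computing
\[
\End_{D^{\GB \times \Gm}(\tgg)}(\mathsf{T}(\lambda) \otimes \OC_{\tgg}) \cong \End_{\GB}(\mathsf{T}(\lambda)),
\]
where the right-hand side is a local ring; this uses that the $\GB \times \Gm$-invariants of $R\Gamma(\tgg, \OC_{\tgg})$ in cohomological and internal degree zero reduce to $\Gamma(\Flag, \OC_\Flag) = \FM$. To produce the desired $f$, I would then show that $\mathsf{T}(\lambda) \otimes \OC_{\tgg}$ is a direct summand of a Bott--Samelson $\MC_\FM(\omega, \us)$ for a reduced expression $w_\lambda = \omega s_1 \cdots s_r$, by adapting to $\tgg$ the argument of~\cite[Corollary~4.8]{mr} carried out on $\tNC$. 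Concretely, one exhibits standard and costandard filtrations of $\mathsf{T}(\lambda) \otimes \OC_{\tgg}$, with subquotients of the form $\Delta^\mu_{\tgg} \langle m \rangle$ and $\nabla^\mu_{\tgg} \langle m \rangle$ respectively, inherited from the $\mathsf{M}(\mu)$- and $\mathsf{N}(\mu)$-filtrations of $\mathsf{T}(\lambda)$ as a $\GB$-module, together with the identification of $\mathsf{M}(\mu) \otimes \OC_{\tgg}$ (resp.~$\mathsf{N}(\mu) \otimes \OC_{\tgg}$) as an iterated extension of objects $\Delta^\nu_{\tgg}\langle m \rangle$ (resp.~$\nabla^\nu_{\tgg}\langle m \rangle$). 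Once both $\tTC^\lambda$ and $\mathsf{T}(\lambda) \otimes \OC_{\tgg}$ are realized as indecomposable direct summands of the same Bott--Samelson with isomorphic images under $\Lder i^*$, composing the projection onto $\tTC^\lambda$ with the inclusion of $\mathsf{T}(\lambda) \otimes \OC_{\tgg}$ yields the morphism $f$.

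The hardest step will be to establish the standard and costandard filtrations of $\mathsf{T}(\lambda) \otimes \OC_{\tgg}$ on $\tgg$; while this should parallel the proof of the corresponding statement on $\tNC$ in~\cite[Proposition~4.6 \& Corollary~4.8]{mr}, it requires using the $\OC(\tg^*)$-linear $\Ext$-vanishing between standard and costandard objects provided by Proposition~\ref{prop:morphism-D-N-tgg} in place of its $\tNC$ counterpart, and verifying that the inductive construction of the filtration via $\BB$-stable subspaces of $\mathsf{T}(\lambda)$ deforms properly from $\tNC$ to $\tgg$.
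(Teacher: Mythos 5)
Your overall strategy (lift the isomorphism $\TC^\lambda \cong \mathsf{T}(\lambda) \otimes \OC_{\tNC}$ from $\tNC$ to $\tgg$) is the right one, and your first two steps are sound: the graded Nakayama reduction (it suffices to find $f$ with $\Lder i^* f$ invertible) and the computation $\End_{D^{\GB \times \Gm}(\tgg)}(\mathsf{T}(\lambda) \otimes \OC_{\tgg}) \cong \End_{\GB}(\mathsf{T}(\lambda))$ are both correct. The problem is your step realizing $\mathsf{T}(\lambda) \otimes \OC_{\tgg}$ as a summand of a Bott--Samelson object $\MC_\FM(\omega,\us)$. On $\tNC$ the implication ``has a $\Delta$- and a $\nabla$-filtration $\Rightarrow$ summand of a Bott--Samelson'' comes from the graded highest weight structure of the exotic heart; on $\tgg$ no t-structure or highest weight theory is set up in this paper, so even granting the $\Delta^\mu_{\tgg}\langle m\rangle$- and $\nabla^\mu_{\tgg}\langle m\rangle$-filtrations of $\mathsf{T}(\lambda)\otimes\OC_{\tgg}$ (which themselves require a deformed version of~\cite[Proposition~4.6]{mr}), you have no tool to conclude membership in ${\EuScript Tilt}$. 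This is a genuine missing ingredient, not a routine adaptation. There is also a smaller slip at the end: for two indecomposable summands of the same object with isomorphic images under $\Lder i^*$, the composite $\mathrm{pr} \circ \mathrm{incl}$ need not become an isomorphism after applying $\Lder i^*$ (it can even vanish). This last point is fixable --- once you know $\mathsf{T}(\lambda)\otimes\OC_{\tgg}$ is an indecomposable object of ${\EuScript Tilt}$, the classification of indecomposables forces $\mathsf{T}(\lambda)\otimes\OC_{\tgg} \cong \tTC^\mu\langle m\rangle$, and applying $\Lder i^*$ pins down $\mu=\lambda$, $m=0$ --- but the membership in ${\EuScript Tilt}$ remains unproved.

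The paper's proof bypasses all of this. The base-change isomorphism
\[
\FM \lotimes_{\OC(\tg^*)} R\Hom_{D^{\GB}(\tgg)}(\FC,\GC) \cong R\Hom_{D^{\GB}(\tNC)}(\Lder i^* \FC, \Lder i^* \GC)
\]
holds for \emph{arbitrary} $\FC,\GC$, so one applies it directly to the four $\Hom$-complexes between $\tTC^\lambda$ and $\mathsf{T}(\lambda)\otimes\OC_{\tgg}$: since the corresponding $\Hom$'s on $\tNC$ are concentrated in cohomological degree $0$, Lemma~\ref{lem:nakayama}\eqref{it:nakayama-complex} shows the $\Hom$'s on $\tgg$ are graded free over $\OC(\tg^*)$ and specialize isomorphically. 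Because $\End_{D^{\GB}(\tNC)}(\TC^\lambda)$ sits in non-negative internal degrees, the degree-$0$ (i.e.~$\GB\times\Gm$-equivariant) parts already map isomorphically, so the mutually inverse isomorphisms $f,g$ on $\tNC$ lift to morphisms $f',g'$ on $\tgg$ whose composites map to the identity, hence equal the identity. No Bott--Samelson presentation of $\mathsf{T}(\lambda)\otimes\OC_{\tgg}$ and no filtration theory on $\tgg$ is needed.
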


\begin{proof}
Recall that, by~\cite[Corollary~4.8]{mr}, we have $\TC^\lambda \cong \mathsf{T}(\lambda) \otimes \OC_{\tNC}$. 
Using this fact,
the isomorphisms
\[
\Lder i^*(\tTC^\lambda) \cong \TC^\lambda, \qquad \Lder i^*(\mathsf{T}(\lambda) \otimes \OC_{\tgg}) \cong \mathsf{T}(\lambda) \otimes \OC_{\tNC}
\] 
(see Lemma~\ref{lem:tTC-i*}),
and the same arguments as in the proof of Proposition~\ref{prop:morphism-D-N-tgg},
one can check that the graded $\OC(\tg^*)$-modules $\Hom_{D^{\GB}(\tgg)}(\tTC^\lambda, \mathsf{T}(\lambda) \otimes \OC_{\tgg})$, $\Hom_{D^{\GB}(\tgg)}(\mathsf{T}(\lambda) \otimes \OC_{\tgg}, \tTC^\lambda)$, $\End_{D^{\GB}(\tgg)}(\tTC^\lambda)$ and $\End_{D^{\GB}(\tgg)}(\mathsf{T}(\lambda) \otimes \OC_{\tgg})$ are free, and that the morphisms
\begin{align*}
\FM \otimes_{\OC(\tg^*)} \Hom_{D^{\GB}(\tgg)}(\tTC^\lambda, \mathsf{T}(\lambda) \otimes \OC_{\tgg}) & \to \Hom_{D^{\GB}(\tNC)}(\TC^\lambda, \mathsf{T}(\lambda) \otimes \OC_{\tNC}), \\
\FM \otimes_{\OC(\tg^*)} \Hom_{D^{\GB}(\tgg)}(\mathsf{T}(\lambda) \otimes \OC_{\tgg}, \tTC^\lambda) & \to \Hom_{D^{\GB}(\tNC)}(\mathsf{T}(\lambda) \otimes \OC_{\tNC}, \TC^\lambda), \\
\FM \otimes_{\OC(\tg^*)} \End_{D^{\GB}(\tgg)}(\tTC^\lambda) & \to \End_{D^{\GB}(\tNC)}(\TC^\lambda), \\
\FM \otimes_{\OC(\tg^*)} \End_{D^{\GB}(\tgg)}(\mathsf{T}(\lambda) \otimes \OC_{\tgg}) & \to \End_{D^{\GB}(\tNC)}(\mathsf{T}(\lambda) \otimes \OC_{\tNC})
\end{align*}
induced by $\Lder i^*$
are isomorphisms. Since $\End_{D^{\GB}(\tNC)}(\mathsf{T}(\lambda) \otimes \OC_{\tNC}) \cong \End_{D^{\GB}(\tNC)}(\TC^\lambda)$ is concentrated in non-negative degrees (see the proof of Corollary~\ref{cor:parity-perverse}), we deduce that the morphisms
\begin{align*}
\End_{D^{\GB \times \Gm}(\tgg)}(\tTC^\lambda) &\to \End_{D^{\GB \times \Gm}(\tNC)}(\TC^\lambda), \\  \End_{D^{\GB \times \Gm}(\tgg)}(\mathsf{T}(\lambda) \otimes \OC_{\tgg}) &\to \End_{D^{\GB \times \Gm}(\tNC)}(\mathsf{T}(\lambda) \otimes \OC_{\tNC})
\end{align*}
induced by $\Lder i^*$ are isomorphisms.

Choose a pair of inverse isomorphisms $f \colon \TC^\lambda \simto \mathsf{T}(\lambda) \otimes \OC_{\tNC}$ and $g \colon \mathsf{T}(\lambda) \otimes \OC_{\tNC} \simto \TC^\lambda$ in $D^{\GB \times \Gm}(\tNC)$. Then by the preceding observations there exist morphisms $f' \colon \tTC^\lambda \to \mathsf{T}(\lambda) \otimes \OC_{\tgg}$ and $g' \colon \mathsf{T}(\lambda) \otimes \OC_{\tgg} \to \tTC^\lambda$ in $D^{\GB \times \Gm}(\tgg)$ such that $\Lder i^*(f')=f$, $\Lder i^*(g')=g$. And these observations also show that the fact that $g \circ f=\mathrm{id}$, resp.~$f \circ g = \mathrm{id}$, implies that $g' \circ f'=\mathrm{id}$, resp.~$f' \circ g' = \mathrm{id}$.
\end{proof}

\begin{proof}[Proof of Proposition~{\rm \ref{prop:cohomology}}]
First we prove~\eqref{it:cohom}.
It follows from the construction of our equivalence~\eqref{eqn:equiv-Parity-Tilt-equiv} that the following diagram commutes:
\[
\xymatrix@R=0.5cm{
\Parity_{\ID}(\Gr, \FM) \ar[rr]_-{\sim}^-{\eqref{eqn:equiv-Parity-Tilt-equiv}} \ar[rd]_-{\HM^\bullet_{\ID}(\Gr,-)} & & {\EuScript Tilt} \ar[ld]^-{\kappa} \\
& \Mod^\gr(\OC(\tg^*)). &
}
\]
(In both downward arrows, we omit the forgetful functor from graded $C$-modules to graded $\OC(\tg^*)$-modules.) It is clear that $\kappa(\mathsf{T}(\lambda) \otimes \OC_{\tgg}) \cong \mathsf{T}(\lambda) \otimes \OC(\tg^*)$, with the grading indicated in the statement of the proposition. On the other hand, we have
\begin{multline*}
\HM^\bullet_{\ID}(\Gr, \EC^\lambda) \cong \HM^\bullet_{\ID}(\mathrm{pt}; \FM) \otimes_{\HM^\bullet_{\GD(\mathscr{O})}(\mathrm{pt}; \FM)} \HM^\bullet_{\GD(\mathscr{O})}(\Gr, \EC^\lambda) \\
\cong \HM^\bullet_{\ID}(\mathrm{pt}; \FM) \otimes_{\HM^\bullet_{\GD(\mathscr{O})}(\mathrm{pt}; \FM)} \HM^\bullet_{\GD(\mathscr{O})}(\Gr, \EC^{-w_0 \lambda}) \cong \HM^\bullet_{\ID}(\Gr, \EC_{-\lambda}).
\end{multline*}
(Here we use the autoequivalence considered in~\S\ref{ss:proof-Satake} and the fact that the two natural morphisms $\HM^\bullet_{\GD(\mathscr{O})}(\mathrm{pt}; \FM) \to \HM^\bullet_{\GD(\mathscr{O})}(\Gr; \FM)$ coincide, see the proof of
Lemma~\ref{lem:morphism-cohomology-factorization}.)
These observations prove the second isomorphism in~\eqref{it:cohom}. The first one follows, using Lemma~\ref{lem:properties-parity}\eqref{it:parity-cohomology}.

Now we prove the first isomorphism in~\eqref{it:cohom-stalks}. By adjunction, for $m \in \ZM$ we have
\begin{multline*}
\Hom_{\Dmix_{(\ID)}(\Gr,\FM)}^{m}(\Delta^{\mix}_{-\mu}, \EC^{\mix}_{-\lambda} \langle - m \rangle) \\
\cong \Hom_{\Dmix_{(\ID)}(\Gr_{-\mu},\FM)}(\underline{\FM}_{\Gr_{-\mu}}\{\dim (\Gr_{-\mu})\}, (i_{-\mu})^!  \EC^{\mix}_{-\lambda} \{ m \}).
\end{multline*}
(Here, as in~\cite{modrap2}, $\{1\} = \langle - 1 \rangle [1]$ is the autoequivalence of the triangulated category $\Dmix_{(\ID)}(\Gr_{-\mu},\FM)=\Kb \Parity_{(\ID)}(\Gr_{-\mu}, \FM)$ induced by the cohomological shift in $\Parity_{(\ID)}(\Gr_{-\mu},\FM)$.)
By~\cite[Remark~2.7]{modrap2}, $(i_{-\mu})^!  \EC^{\mix}_{-\lambda}$ is the complex whose $0$-th term is the parity complex $(i_{-\mu})^!  \EC_{-\lambda}$, and whose other terms vanish. We deduce that
\begin{multline*}
\Hom_{\Dmix_{(\ID)}(\Gr_{-\mu},\FM)}(\underline{\FM}_{\Gr_{-\mu}}\{\dim(\Gr_{-\mu})\}, (i_{-\mu})^!  \EC^{ \mix}_{-\lambda} \{ m \}) \\
\cong \HM^{m-\dim(\Gr_{-\mu})}(\Gr_{-\mu}, (i_{-\mu})^!  \EC_{-\lambda}).
\end{multline*}
Finally, using the same considerations as in~\S\ref{ss:proof-Satake} (or as in the proof of~\eqref{it:cohom}), we deduce a canonical isomorphism
\[
\Hom_{\Dmix_{(\ID)}(\Gr,\FM)}^{m}(\Delta^{\mix}_{-\mu}, \EC^{\mix}_{-\lambda} \langle - m \rangle) \cong \HM^{m-\dim(\Gr^\mu)}(\imath_\mu^! \EC^\lambda).
\]

On the other hand, using the equivalence $\Phi$
% of \S\ref{ss:proof-equivalence-tNC}, 
we obtain that
\[
\Hom_{\Dmix_{(\ID)}(\Gr,\FM)}^{m}(\Delta^{\mix}_{-\mu}, \EC^{\mix}_{-\lambda} \langle - m \rangle) \cong \Hom_{D^{\GB \times \Gm}(\tNC)}(\Delta^\mu_{\tNC}, \TC^\lambda \langle -m \rangle).
\]
Using~\cite[Equation~(4.10)]{mr} and the fact that $\TC^\lambda \cong \mathsf{T}(\lambda) \otimes \OC_{\tNC}$, we deduce that
\[
\Hom_{\Dmix_{(\ID)}(\Gr,\FM)}^{m}(\Delta^{\mix}_{-\mu}, \EC^{\mix}_{-\lambda} \langle - m \rangle) \cong \bigl( \mathsf{T}(\lambda) \otimes \Gamma(\tNC, \OC_{\tNC}(-w_0 \mu))_m \bigr)^\GB,
\]
where the subscript ``$m$'' denotes the $m$-th graded part.
This finishes the proof.

The proof of the second isomorphism in~\eqref{it:cohom-stalks} is similar, using Lemma~\ref{lem:ttilting-dominant} and replacing $\Dmix_{(\ID)}(\Gr,\FM)$ by $\Dmix_{\ID}(\Gr,\FM)$, ordinary cohomology by equivariant cohomology, $\Phi$ by $\Psi$, and $\tNC$ by $\tgg$.
\end{proof}

\end{document}